\documentclass[11pt]{amsart}
\usepackage{amssymb}
 \usepackage{amsmath,amscd}
\usepackage{graphicx}
\usepackage{color}
\usepackage{latexsym}\usepackage{ifthen}
\usepackage{wrapfig}
\usepackage{amssymb,amsmath,amstext}
\usepackage{epsfig}
\usepackage{graphics}
\usepackage{subfigure}
\usepackage{euscript}
\usepackage{pgf,tikz}
\usepackage{mathrsfs}
\usetikzlibrary{calc,matrix,arrows,decorations.pathmorphing,decorations.pathreplacing,3d}
\usepackage{amsthm}
\usepackage{url}
\usepackage{hyperref}
\usepackage{fullpage}

\newcommand{\midarrow}{\tikz \draw[-triangle 45] (0,0) -- +(.1,0);}

\theoremstyle{plain}%
\newtheorem{theorem}{Theorem}[section]
\newtheorem{proposition}[theorem]{Proposition}
\newtheorem{lemma}[theorem]{Lemma}
\newtheorem{corollary}[theorem]{Corollary}

\newtheorem*{theorem*}{Theorem}

\theoremstyle{definition}
\newtheorem{definition}[theorem]{Definition}
\newtheorem{remark}[theorem]{Remark}
\newtheorem{algorithm}[theorem]{Algorithm}
\newtheorem{example}[theorem]{Example}

\DeclareMathOperator{\Div}{\operatorname{Div}}

\DeclareMathOperator{\mR}{\mathbb{R}}

\DeclareMathOperator{\mC}{\mathbb{C}}

\DeclareMathOperator{\divisor}{\operatorname{div}}
\DeclareMathOperator{\red}{\operatorname{red}}
\DeclareMathOperator{\Red}{\operatorname{Red}}

\DeclareMathOperator{\slope}{\operatorname{sl}}
\DeclareMathOperator{\Comp}{\operatorname{Comp}}
\DeclareMathOperator{\Tan}{\operatorname{Tan}}
\DeclareMathOperator{\imag}{\operatorname{Im}}
\DeclareMathOperator{\In}{\operatorname{In}}
\DeclareMathOperator{\Out}{\operatorname{Out}}

\DeclareMathOperator{\Leaf}{\operatorname{Leaf}}
\DeclareMathOperator{\BifB}{\operatorname{Bif}(\mathcal{B})}
\DeclareMathOperator{\Bif}{\operatorname{Bif}_\rho}

\DeclareMathOperator{\phiLambda}{\phi^\Lambda}

\DeclareMathOperator{\BPDHone}{\operatorname{BP}^{(1)}_{\mathcal{D},\mathcal{H}}}
\DeclareMathOperator{\BPDHtwo}{\operatorname{BP}^{(2)}_{\mathcal{D},\mathcal{H}}}
\DeclareMathOperator{\BPDHthree}{\operatorname{BP}^{(3)}_{\mathcal{D},\mathcal{H}}}
\DeclareMathOperator{\BPDHfour}{\operatorname{BP}^{(4)}_{\mathcal{D},\mathcal{H}}}

\DeclareMathOperator{\LDHone}{\Lambda^{(1)}_{\mathcal{D},\mathcal{H}}}
\DeclareMathOperator{\LDHtwo}{\Lambda^{(2)}_{\mathcal{D},\mathcal{H}}}
\DeclareMathOperator{\LDHthree}{\Lambda^{(3)}_{\mathcal{D},\mathcal{H}}}
\DeclareMathOperator{\LDHfour}{\Lambda^{(4)}_{\mathcal{D},\mathcal{H}}}

\begin{document}
\title[Smoothing of Limit Linear Series of Rank One]{Smoothing of Limit Linear Series of Rank One on Saturated Metrized Complexes of Algebraic Curves}
\author{Ye Luo}
\address{School of Information Science and Engineering, Xiamen University}
\email{luoye80@gmail.com}
\thanks{Part of this work was done while Ye Luo was at Rice University and Georgia Institute of Technology.}
\author{Madhusudan Manjunath}
\thanks{Madhusudan Manjunath was supported by the Feoder-Lynen Fellowship of the Humboldt Foundation during this work and part of this work was done while the author was at the  University of California, Berkeley and Georgia Institute of Technology.}
\address{School of Mathematical Sciences, Queen Mary University of London}
\email{ m.manjunath@qmul.ac.uk}

\begin{abstract}
We investigate the smoothing problem of limit linear series of rank one on an enrichment of the notions of nodal curves and metrized complexes called saturated metrized complexes.  We give a finitely verifiable full criterion for smoothability of a limit linear series of rank one on saturared metrized complexes,  characterize the space of all such smoothings, and extend the criterion to metrized complexes. As applications, we prove that all limit linear series of rank one are smoothable on saturated metrized complexes corresponding to curves of compact-type, and prove an analogue for saturated metrized complexes of a theorem of Harris and Mumford on the characterization of nodal curves contained in a given gonality stratum. In addition, we give a full combinatorial criterion for smoothable limit linear series of rank one on saturated metrized complexes corresponding to nodal curves whose dual graphs are made of separate loops.
\end{abstract}

\maketitle

\section{Introduction}\label{S:introduction}

A saturated metrized complex is an object that encodes information about a degenerating family of smooth curves. Roughly speaking, a saturated metrized complex $\mathfrak{C}$ over a field $\kappa$ consists of a metric graph and  an algebraic curve over $\kappa$ associated to each point of the metric graph (Definition~\ref {D:MetComp}). A limit linear series of rank $r$ on $\mathfrak{C}$ consists of a linear series of rank $r$ on each associated algebraic curve of $\mathfrak{C}$ satisfying certain compatibility conditions. The main purpose of this paper is to provide a full criterion for lifting a limit linear series of rank $1$ to a linear series of the same rank on a smooth curve $C$. 

{\bf Context and Motivation:} Degeneration to singular curves has been one of the most important tools in the theory of smooth algebraic curves.  Fundamental results on algebraic curves such as the Brill-Noether Theorem and Gieseker-Petri Theorem are established via degeneration to singular curves~\cite{GH80,Gieseker82,EH86}\cite[Chapter 5]{HM98}.
While degeneration to irreducible curves such as cuspidal curves and nodal curves was considered by Castelnuovo~\cite{Castelnuovo1889} and several researchers subsequently, Eisenbud and Harris~\cite{EH86} developed a theory of degeneration of linear series to certain reducible curves called limit linear series. A limit linear series is usually denoted by limit $g^r_d$ where the integer $d$ and $r$ are called the degree and rank of the limit linear series respectively. The theory of limit linear series has numerous applications, for instance a proof of non-unirationality of $M_{23}$ \cite{EH87}, a detailed study of the monodromy of Weierstrass points \cite{EH87-2} and a proof of irreducibility of certain families of special linear series of curves \cite{EH89}. However, until recently the notion of limit linear series was largely restricted to curves of compact-type, i.e., nodal curves whose dual graph is a tree. We refer to Osserman's survey for a more recent treatment of limit linear series on curves of compact type \cite{Osserman12}.

Recently,  Amini and Baker \cite{AB12} defined a notion of limit linear series on nodal curves in general. In fact, instead of working with nodal curves per se, they considered an enrichment of nodal curves called metrized complexes (nodal curves with a metric assigned to the corresponding dual graph) and formulated a notion of limit linear series on a metrized complex.  They show that on a  metrized complex associated to a curve of compact-type their notion of limit linear series coincides with that of Eisenbud and Harris. Independently, Ossermann \cite{Osserman14} has also generalized the notion of limit linear series of Eisenbud-Harris to curves of non-compact type.

The notions of limit linear series of Eisenbud and Harris,  Amini and Baker and Osserman satisfy two key properties:
\begin{enumerate}
\item \label{smooth_item} For any family of smooth curves degenerating to  a curve of compact type (a metrized complex or a nodal curve), any family of linear series on each smooth curve in the family degenerates to a limit linear series on the curve of compact type (a metrized complex or a nodal curve respectively). This property is called the specialization property.

\item The limit linear series is formulated in terms of linear series on each irreducible component and with relations between the linear series on each irreducible component that depend on the dual graph (see Definition~\ref{D:LimLinSeries} for a precise definition).
\end{enumerate}

However,  even in the case of curves of compact type the converse of Property-(\ref{smooth_item}) does not hold in general. In other words not every limit $g^r_d$ arises as a limit of linear series. A limit linear series is said to be smoothable if it arises as a limit of linear series (see Definition~\ref{D:smoothable} for a precise definition of smoothability). Eisenbud and Harris  also considered a refinement of the notion of limit linear series called refined limit linear series. They showed that every refined limit $g^1_d$ is smoothable. For $r \geq 2$, they constructed a moduli space of limit $g^r_d$'s and showed that any limit $g^r_d$ in irreducible components of the expected dimension in the moduli space of limit $g^r_d$'s is smoothable, which they call the regeneration theorem (\cite[Theorem 3.4]{EH86}).

\subsection{Smoothing Criterion for Limit Linear Series of Rank One on Saturated Metrized Complexes}

In this paper, we consider a refinement of the notion of metrized complex called \emph{saturated metrized complexes} (see Definition~\ref{D:MetComp}) and undertake a detailed study of smoothability of limit $g^1_d$'s on saturated metrized complexes of algebraic curves. Roughly speaking, a saturated metrized complexes can be considered as  a metric graph $\Gamma$ with algebraic curves associated to all points of $\Gamma$. On the other hand, for a metrized complex, only the points in a finite subset $A$ of $\Gamma$ are associated with algebraic curves. Therefore, a saturated metrized complex can be derived from a metrized complex by inserting curves to the points in $\Gamma\setminus A$ (the process is called a saturation of the metrized complex). Conversely, a metrized complex can be derived from a saturated metrized complex by ignoring the curves associated to the points in $\Gamma\setminus A$.

The main goal in this paper is to provide a full  smoothing criterion for limit $g^1_d$'s on saturated metrized complexes. We briefly explain our approach first and state the criterion in Theorem~\ref{thm:main_criterion}.

In a recent work of \cite{ABBR13,ABBR14}, Amini, Baker, Brugall\'{e} and Rabinoff  show that harmonic morphisms between metrized complexes can essentially be lifted to finite morphisms between curves. Let $\mathbb{K}$ be an algebraically closed field of characteristic $0$  complete with respect to a non-archimedean valuation of value group $\mathbb{R}$. Let the corresponding residue field be $\kappa$. Suppose that $X$ is a smooth proper curve over $\mathbb{K}$. Let $\Sigma$ be a skeleton of the Berkovich analytification $X^{\rm an}$ of $X$.  Let $\mathfrak{C}(\Sigma)$ be the saturated metrized  complex over the residue field $\kappa$ associated to $\Sigma$ (see Appendix~\ref{subS:SkelRefMet} for a precise construction). A base point free $g^1_d$ on $X$ induces a morphism $\phi: X \rightarrow \mathbb{P}^1$ of degree $d$. By the functoriality of analytification, we have an induced map $\phi^{\rm an}: X^{\rm an} \rightarrow \mathbb{P}^1_{\rm Berk}$ where $\mathbb{P}^1_{\rm Berk}$ is the Berkovich projective line. The retraction from $X^{\rm an}$ to the skeleton $\Sigma$ induces a pseudo-harmonic morphism $\mathfrak{C}\phi$ (see Section~\ref{S:harmmor} for a precise definition) from the saturated metrized complex $\mathfrak{C}(\Sigma)$ to a saturated metrized complex $\mathfrak{C}(T)$ of genus zero where the underlying metric tree $T$ of $\mathfrak{C}(T)$ is a skeleton of $\mathbb{P}^1_{\rm Berk}$. On the other hand, by Theorem A of \cite{ABBR13} there is a harmonic morphism $\mathfrak{C}\phi^{\rm mod}$ from a modification $\mathfrak{C}(\Sigma^{\rm mod})$ of $\mathfrak{C}(\Sigma)$ to $\mathfrak{C}(T)$ which is compatible with $\phi^{\rm an}$ . More precisely, we have the following commutative diagram:
\[
\begin{tikzpicture}[scale=1.8,descr/.style={fill=white}, text height=1.2ex, text depth=0.2ex]\label{commutdia}
\node (X) at (0,0) {$X$};
\node (Xan) at (0,-0.6) {$X^{\rm an}$};
\node (Cmod) at (-1.2,-1.2) {$\mathfrak{C}(\Sigma^{\rm mod})$};
\node (C) at (0,-1.2) {$\mathfrak{C}(\Sigma)$};
\node (P1) at (1.2,0) {$\mathbb{P}^1$};
\node (P1Berk) at (1.2,-0.6) {$\mathbb{P}^1_{\rm Berk}$};
\node (CT) at (1.2,-1.2) {$\mathfrak{C}(T)$};
\path[->,font=\scriptsize,>=angle 90]
(X) edge node[auto]{$\phi$} (P1)
(Xan) edge node[auto]{$\phi^{\rm an}$} (P1Berk)
(Cmod) edge[out=-45,in=-135] node[auto]{$\mathfrak{C}\phi^{\rm mod}$} (CT)
(C) edge node[auto]{$\mathfrak{C}\phi$} (CT)
(X) edge (Xan)
(Xan) edge  (Cmod)
(Xan) edge (C)
(P1) edge  (P1Berk)
(P1Berk) edge (CT)
(Cmod) edge (C);
\end{tikzpicture}
\]

Recall that we can represent a $g^1_d$ on $X$ by $(D,H)$ where $D$ is an effective divisor of degree $d$ and $H$ is a two-dimensional linear subspace of $H^0(X,\mathcal{L}(D))$. The specialization $(\mathcal{D},\mathcal{H})$ of $(D,H)$ to $\mathfrak{C}(\Sigma)$ represents a limit $g^1_d$ on $\mathfrak{C}(\Sigma)$ which is smoothable. On the other hand, the lifting theorem (Theorem B, \cite{ABBR13}) guarantees that any harmonic morphism onto a genus-$0$ (saturated) metrized complex can be lifted to a finite morphism onto a projective line. Therefore, to investigate whether a limit $g^1_d$ represented by $(\mathcal{D},\mathcal{H})$ on a saturated metrized complex is smoothable, we must characterize when there exist a genus-$0$ saturated metrized complex $\mathfrak{C}(T)$ and a modification $\mathfrak{C}(\Sigma^{\rm mod})$ of $\mathfrak{C}(\Sigma)$ such that there is a harmonic morphism $\mathfrak{C}\phi^{\rm mod}:\mathfrak{C}(\Sigma^{\rm mod})\rightarrow\mathfrak{C}(T)$ which is ``compatible'' with the data $(\mathcal{D},\mathcal{H})$.

An important aspect is that starting from a smoothable limit $g^1_d$ represented by $(\mathcal{D},\mathcal{H})$ on $\mathfrak{C}(\Sigma)$, there generally exist different choices of $\mathfrak{C}(\Sigma)^{\rm mod}$, $\mathfrak{C}(T)$ and $\mathfrak{C}\phi^{\rm mod}:\mathfrak{C}(\Sigma^{\rm mod})\rightarrow\mathfrak{C}(T)$ in the above commutative diagram. Therefore, to give a full smoothing criterion, the main challenge is to determine all possible $\mathfrak{C}(\Sigma^{\rm mod})$, $\mathfrak{C}(T)$ and $\mathfrak{C}\phi^{\rm mod}$ which are compatible with the data $(\mathcal{D},\mathcal{H})$.

In this paper, the key to addressing this challenge is to first reorganize the information in $(\mathcal{D},\mathcal{H})$ and associate a combinatorial object called \emph{global diagram} on $\Gamma$. A global diagram consists of (a) a ``piecewise-constant differential form'' defined on $\Gamma$  and (b) a partition of the set of tangent directions at every point in $\Gamma$. It turns out that the failure of exactness of this differential form is an obstruction to smoothing $(\mathcal{D},\mathcal{H})$. We say that $(\mathcal{D},\mathcal{H})$ is solvable if this differential form is exact. Hence, by integrations of the exact differential form for a solvable $(\mathcal{D},\mathcal{H})$, we derive a rational function $\rho$ on $\Gamma$ with everywhere nonzero slopes. The bifurcation tree $\mathcal{B}$  associated $\rho$ is a metric tree whose points correspond to superlevel components of $\Gamma$ at all values of $\rho$. Moreover, there is a canonical projection $\pi_\mathcal{B}$ from $\Gamma$ to $\mathcal{B}$. By properly gluing the bifurcation tree $\mathcal{B}$ along its branches, we can derive a metric tree which is called a partition tree with respect to $\mathcal{B}$. (See Section~\ref{S:BifParTrees}  for precise definitions of bifurcation trees and partition trees.) We denote by $\LDHone$ the space of all such partition trees with respect to $\mathcal{B}$. One observation is that in the above commutative diagram, the underlying metric tree $T$ of $\mathfrak{C}(T)$ can only possibly be a partition tree. 

More smoothing obstructions arise from the compatibility between $\pi_{\mathcal{B}}$ and the data in $(\mathcal{D},\mathcal{H})$.  We organize these obstructions into three additional levels and associate a subspace (denoted by  $\LDHtwo$, $\LDHthree$ and $\LDHfour$ respectively) of $\LDHone$  to each of these three additional levels (Section~\ref{S:ParTreeComp}).  In particular, $\LDHone\supseteq\LDHtwo\supseteq\LDHthree\supseteq\LDHfour$. We will show that any genus zero saturated  metrized complex $\mathfrak{C}(T)$ that can arise in the above commutative diagram must have an underlying metric tree $T$ in $\LDHfour$, and conversely we can build such a $\mathfrak{C}(T)$ from any metric tree $T$ in $\LDHfour$. See Figure \ref{F:Lattice} for an example of $\LDHone$, $\LDHtwo$, $\LDHthree$ and $\LDHfour$.

In summary,  we have the following theorem as the smoothing criterion.
\begin{theorem} {\rm ({\bf Smoothing Criterion})} \label{thm:main_criterion}
A pre-limit $g^1_d$ (or a limit $g^1_d$) represented by $(\mathcal{D},\mathcal{H})$ on a saturated metrized complex is smoothable if and only if the space of metric trees $\LDHfour$ associated to $(\mathcal{D},\mathcal{H})$ is nonempty.
\end{theorem}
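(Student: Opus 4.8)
The plan is to prove the criterion by splitting it into two equivalences and chaining them together: first, that smoothability of $(\mathcal{D},\mathcal{H})$ is equivalent to the existence of a genus-zero saturated metrized complex $\mathfrak{C}(T)$, a modification $\mathfrak{C}(\Sigma^{\rm mod})$, and a harmonic morphism $\mathfrak{C}\phi^{\rm mod}:\mathfrak{C}(\Sigma^{\rm mod})\to\mathfrak{C}(T)$ compatible with $(\mathcal{D},\mathcal{H})$; and second, that such a compatible harmonic morphism exists if and only if $\LDHfour$ is nonempty.

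For the first equivalence I would use the commutative diagram above in both directions. Given a smoothing, a base-point-free representative induces a degree-$d$ morphism $\phi:X\to\mathbb{P}^1$; retracting $\phi^{\rm an}$ to the skeleton and invoking Theorem A of \cite{ABBR13} produces a harmonic morphism onto a genus-zero target, and its compatibility with the specialization $(\mathcal{D},\mathcal{H})$ is automatic from the functoriality of specialization. Conversely, given a compatible harmonic morphism onto a genus-zero $\mathfrak{C}(T)$, the lifting theorem (Theorem B of \cite{ABBR13}) lifts it to a finite morphism $X\to\mathbb{P}^1$ on a smooth curve, whose associated $g^1_d$ specializes to $(\mathcal{D},\mathcal{H})$; this exhibits $(\mathcal{D},\mathcal{H})$ as a limit of linear series, hence smoothable.

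The heart of the argument is the second equivalence, which I would prove via the combinatorial machinery of the global diagram. First I would show that any compatible harmonic morphism forces solvability: the slopes of the induced map along the edges of $\Gamma$ assemble into the piecewise-constant differential form of the global diagram, and harmonicity (the balancing condition at each point) forces this form to be exact, yielding the rational function $\rho$ with everywhere nonzero slope. The bifurcation tree $\mathcal{B}$ and its projection $\pi_{\mathcal{B}}$ are then determined by $\rho$, and the underlying tree $T$ of $\mathfrak{C}(T)$ is realized as a partition tree with respect to $\mathcal{B}$, placing $T$ in $\LDHone$. I would then read off the additional compatibility constraints imposed by $(\mathcal{D},\mathcal{H})$ at each point of $\Gamma$ — the matching of local degrees, the partition of tangent directions, and the linear-algebraic data carried by $\mathcal{H}$ — and verify that these are precisely the three successive refinements cutting $\LDHone$ down through $\LDHtwo$ and $\LDHthree$ to $\LDHfour$. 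Conversely, starting from any $T\in\LDHfour$, I would construct a genus-zero saturated metrized complex $\mathfrak{C}(T)$ and a harmonic morphism $\mathfrak{C}\phi^{\rm mod}$ realizing $T$, building the morphism on each stratum so that harmonicity and compatibility with $(\mathcal{D},\mathcal{H})$ hold by construction.

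The main obstacle will be the converse direction of the second equivalence — the explicit construction of a compatible harmonic morphism from a point $T\in\LDHfour$. Here one must simultaneously arrange the correct slopes along every edge (so that the balancing condition holds at each point of the source), the correct local degree data and tangent-direction partitions, and a choice of maps on the residue curves compatible with $\mathcal{H}$; the difficulty is that these conditions interact across strata, and showing that membership in $\LDHfour$ — rather than merely in $\LDHone$, $\LDHtwo$, or $\LDHthree$ — supplies exactly enough freedom to satisfy all of them at once is what makes the four-level stratification both necessary and sufficient, and thereby closes the equivalence.
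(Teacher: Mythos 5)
Your overall architecture is exactly the paper's: you split the criterion into (i) smoothability $\Leftrightarrow$ existence of a compatible harmonic morphism from a modification onto a genus-zero saturated metrized complex, and (ii) existence of such a morphism $\Leftrightarrow$ $\LDHfour\neq\emptyset$. Equivalence (i) is the paper's Theorem~\ref{T:SmoothingHar}, proved there just as you propose (specialization/retraction of $\phi^{\rm an}$ in one direction, the lifting theorem of Amini--Baker--Brugall\'e--Rabinoff in the other, with base points stripped off first), and the forward direction of (ii) is also argued as you describe: the expansion factors of the morphism reproduce the multiplicities of the global diagram, and the pullback $d^0_T\circ\phi_\Gamma$ of the distance-to-root function is the solution $\rho$ (note it is the existence of the map to the \emph{rooted} tree, not the balancing condition per se, that yields exactness).

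The one place your proposal stops short of a proof is precisely where you say the main obstacle lies: producing the harmonic morphism from a point $T\in\LDHfour$. The paper closes this in two concrete steps that dissolve your worry about conditions ``interacting across strata.'' First, Level-IV compatibility is not merely a combinatorial condition on $T$: by definition it hands you an admissible collection $G=\{g_p\}_{p\in\Gamma}\in\mathcal{H}$ together with a function $\xi$ on forward tangent directions of $T$, injective on each $\Tan^+_T(x)$. These data determine everything at once: one builds $\mathfrak{C}(T)$ by taking each fiber curve to be $\kappa_\infty\cong\mathbb{P}^1_\kappa$ with reduction maps $\gamma_x\circ\bar{\xi}_x$, and sets $\phi_p=\gamma_{\pi_T(p)}\circ\bar{g}_p$; the identity $\bar g_p\circ\red_p=\bar\xi\circ\pi_{T*}$ makes $(\pi_T,\{\phi_p\})$ a pseudo-harmonic morphism with no further choices to reconcile. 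Second, this pseudo-harmonic morphism generally fails the balancing condition at finitely many points, and the paper's Proposition~\ref{P:modification} repairs this by attaching extra branches built from partially glued and rescaled copies of subtrees of $T$, equipped with projective lines; the needed degree-$m$ maps on these inserted $\mathbb{P}^1$'s always exist, which is why the construction never gets stuck. Without these two ingredients (the use of the admissibility witness $G$, and the modification proposition), your claim that $\LDHfour$ ``supplies exactly enough freedom'' remains an assertion rather than an argument.
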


One feature that distinguishes the study of smoothability of limit $g^1_d$s in comparison with the lifting of harmonic morphisms of \cite{ABBR13,ABBR14} is the appearance of the space of trees $\LDHone$, $\LDHtwo$, $\LDHthree$ and $\LDHfour$. 
These spaces are naturally endowed with a partial order (see Appendix~\ref{S:TreeSpace}) and lead to the several directions for future work.  For example, they are essentially spaces of phylogenetic trees and seem to be related to similar spaces occurring in other contexts such as the space of trees, studied by \cite{BHV01} and in the context of dynamics in \cite{DM08}.   Another direction of investigation is that the work of Abramovich-Caporaso-Payne \cite{ACP12} suggests that  $\LDHfour$ can be interpreted in terms of the Berkovich analytification of the moduli space of all smoothings of the underlying limit $g^1_d$. 

Furthermore, the above criterion is an effective characterization of smoothable limit $g^1_d$'s, since it is not only a full criterion but  also finitely verifiable given the data $(\mathcal{D},\mathcal{H})$ (for finite verifications, see Section~\ref{S:Finite} and an equivalent form of the smoothing criterion in Theorem~\ref{T:main}).

We also expect the global diagram technique employed in setting up the smoothing criterion to be a useful tool for characterizing the gonality stratum  and for studying moduli spaces of metrized complexes (and  tropical curves) with a given gonality. These topics will be pursued in the future. 

We would like to mention that Omid Amini has independently obtained a smoothing theorem on limit $g^1_d$s  for limit linear series in the framework developed in an upcoming paper~\cite{A14_1} (also see \cite{A14_2})  which is a refinement of ~\cite{AB12}. The problem of smoothing divisors of rank two and higher has also been studied by Cartwright recently~\cite{Cartwright12}. Cartwright considers lifting of rank two divisors on tropical curves (and metrized complexes).  Given a matroid and an infinite field $\kappa$, he constructs a graph (and a metrized complex) and a divisor of rank two called a matroid divisor on it and shows that the problem of lifting the matroid divisor to a smooth curve over $\kappa[[t]]$ is equivalent to realizability of the underlying matroid over $\kappa$. These results show that the lifting problem for higher rank divisors (rank two and higher) is sensitive to the underlying field and is  evidence for the difficulty in generalizing our smooth criterion for higher rank divisors. In addition, Cartwright-Jensen-Payne~\cite {CJP14} and Jensen-Ranganathan~\cite{JR17} have proved higher rank lifting theorems for divisors on tropical curves which are general and special chain of loops respectively.

\subsection{Applications}
We present four concrete applications of the smoothing criterion.

{\bf Application 1: Saturated Metrized Complexes of Compact-Type}

In Subsection \ref{S:CompactType}, we prove a version of the smoothing theorem of Eisenbud and Harris \cite{EH86} for curves of compact-type in the setting of saturated metrized complexes with underlying metric graphs being trees (we call them  saturated metrized complexes of compact type) as follows:
on a saturated metrized complex of compact type, every diagrammatic pre-limit linear series of rank one must be smoothable.

{\bf Application 2: Saturated Metrized Complexes with Genus-$g$ Underlying Metric Graphs containing $g$ Separate Loops}

In Subsection~\ref{S:GenusOne}, we follow the same philosophy in the proof of the above Eisenbud-Harris Theorem and generalize it to a full combinatorial characterization of smoothable limit $g^1_d$'s on a saturated metrized complex whose underlying metric graph $\Gamma$ containing $g$ separate loops. Note that the case for metric graphs made of chain of loops which are used for tropical proofs of  the Brill-Noether Theorem \cite{CDPR12} and Gieseker-Petri Theorem \cite{JP14} falls into this category.

{\bf Application 3: Saturated Metrized Complexes of  Harris-Mumford Type}

Harris and Mumford in Theorem~5 of \cite{HM82} consider specific families of nodal curves and prove a smoothing theorem for limit $g^1_d$s for curves in these families. Using this, they obtain a partial characterization of the gonality stratum. We refer to saturated metrized complexes corresponding to these families of nodal curves as Harris-Mumford saturated metrized complexes and discuss them in detail in Subsection~\ref{S:harrismumapplication}. The setting of Theorem~5 of \cite{HM82} is slightly different from ours since they work with nodal curves rather than saturated metrized complexes. In particular, the Harris-Mumford types of saturated metrized complexes have flower-like underlying metric trees (see Figure~\ref{F:HarrisMumford}). In Theorem \ref{T:HarrisMumford} we prove an analogue of  \cite[Theorem 5]{HM82}, while Theorem~\ref{thm:main_criterion} can actually be considered as a generalization of Theorem~5 of \cite{HM82} in the above sense.

{\bf Application 4: Extending the Smoothing Criterion to  Metrized Complexes}

Theorem~\ref{thm:main_criterion} also suggests the following approach to an analogous smoothing criterion for  limit $g^1_d$'s on metrized complexes: a limit $g^1_d$ on a metrized complex is smoothable if and only if this limit $g^1_d$ can be extended to a smoothable limit $g^1_d$ on a saturated metrized complex which is a saturation of the original metrized complex. We have more detailed discussions and give a concrete example in Subsection~\ref{S:SmoothingMC} to show that even in the setting of metrized complexes, the smoothing criterion is still finitely verifiable. 

\subsection{Outline of the Rest of the Paper}
The rest of the paper is organized as follows. In Section~\ref{S:SmoothingThm}, we explain some notions and terminologies essential to the paper.   In Section~\ref{S:NonArch}, we study the relation between smoothable limit $g^1_d$'s and harmonic morphisms between saturated metrized complexes. In Section~\ref{S:BifParTrees}, we  discuss the notions of bifurcation trees and partition trees. In Section~\ref{S:ParTreeComp}, we study properties of the spaces $\LDHone$, $\LDHtwo$, $\LDHthree$ and $\LDHfour$ that arise from different levels of obstructions of the limit $g^1_d$ from being smoothable. In Section~\ref{S:SmoothingProof},  we prove the smoothing criterion. Section \ref{application_sect} is devoted to examples and applications of our smoothing criterion.

\section{Some Notions and Terminologies related to the Smoothing Criterion} \label{S:SmoothingThm}
In this section, we first present the basic notions and terminologies, and then state an alternative version of the smoothing criterion which is directly verifiable.

 Let $\mathbb{K}$ be an algebraically closed field of characteristic $0$  complete with respect to a non-trivial non-archimedean absolute value. We assume that the value group of $\mathbb{K}$ is the group of real numbers $(\mathbb{R},+)$.  Let $R$ be the valuation ring of $\mathbb{K}$, and $\kappa$ be the residue field of $\mathbb{K}$ that we also assume to be algebraically closed and of characteristic $0$. (On the other hand, starting from $\kappa$, we can also construct such a field $\mathbb{K}$ with value group $\mathbb{R}$ and residue field $\kappa$ using generalized Puiseux series~\cite{Markwig07}.)
For an algebraically closed field $K$, we let $K_\infty=K\bigcup\{\infty\}$ be the projective line $\mathbb{P}^1_K$ with coordinate. The above notations and assumptions will be applied throughout this paper.
 
 \subsection{Saturated Metrized Complexes}

\begin{definition}{\rm {\bf (Saturated Metrized Complex)}}\label{D:MetComp}
 A \emph{saturated metrized complex} $\mathfrak{C}$ over an algebraically closed field $\kappa$ consists of the following data:

\begin{itemize}

\item A metric graph $\Gamma$.

\item A smooth complete irreducible algebraic curve $C_p$ over $\kappa$ associated to each point $p\in\Gamma$ such that $C_p$ is a projective line except for points in a finite subset of $\Gamma$. (For simplicity, we also use $C_p$ to represent the set of closed points of $C_p$.)

 \item For every point  $p \in \Gamma$, there is an injection $\red_p: \Tan_\Gamma(p) \to C_p$ called the reduction map at $p$ where $\Tan_\Gamma(p)$ is the set of tangent directions on $\Gamma$ incident to $p$,  $\red_p(t)$ is called the marked point in $C_p$ associated to the tangent direction $t\in\Tan_\Gamma(p)$, and  $A_p=\imag(\red_p)$ is the set of marked points of $C_p$. For convenience, we let $\Red:\coprod_{p\in\Gamma}\Tan_\Gamma(p)\to\coprod_{p\in\Gamma}C_p$ be defined as $\Red(t)=\red_p(t)$ when $t\in\Tan_\Gamma(p)$. 
 
\end{itemize}
The \emph{genus} $g(\mathfrak{C})$ of $\mathfrak{C}$ is defined as $g(\Gamma)+\sum_{p \in \Gamma} g(C_p)$ where $g(\Gamma)$ is the genus (the first Betti number) of the metric graph $\Gamma$ and $g(C_p)$ is the genus of the algebraic curve $C_p$.  The genus of a saturated metrized complex is finite since $C_p$ has genus zero for all but a finite number of points $p\in\Gamma$.
\end{definition}

\begin{remark} \label{R:restriction}
The notion of saturated metrized complex follows the same philosophy as that of metrized complex in \cite{AB12} while in the latter, only the points in a finite vertex set $A$ of $\Gamma$ are associated with curves. Hence, by ignoring the curves associated to the points in $\Gamma\setminus A$ for a vertex set $A$ of $\Gamma$, we can derive a metrized complex $\mathfrak{C}'$ from a saturated metrized complex $\mathfrak{C}$. We say that $\mathfrak{C}'$ is the \emph{restriction} of $\mathfrak{C}$ to $A$. Conversely, given a metrized complex $\mathfrak{C}'$ and a saturated metrized complex $\mathfrak{C}$, we say that $\mathfrak{C}$ is a \emph{saturation} of $\mathfrak{C}'$ if $\mathfrak{C}'$ is a restriction of $\mathfrak{C}$ and they have the same genus (the newly inserted curves in $\mathfrak{C}$ are all projective lines). 
\end{remark}

\begin{remark}
Saturated metrized complexes appear as the inverse image of a skeleton in the map from the Huber adic space to the Berkovich analytification of a curve~\cite{Foster15}, and are closely related to ``exploded tropicalizations''~\cite{Payne09}.
\end{remark}

\subsection{Divisor Theory on a Saturated Metrized Complex}
Here we give a natural extension of the  divisor theory on metrized complexes in  \cite{AB12} to a divisor theory on saturated metrized complexes. This divisor theory on metrized complexes introduced in  \cite{AB12} combines the conventional divisor theory on algebraic curves and an analogous divisor theory  on metric graphs \cite{GK08,Luo13}.

Let $\mathfrak{C}$ be a saturated metrized complex with underlying metric graph $\Gamma$ and algebraic curve $C_p$ at point $p$.  A \emph{pseudo-divisor} $\mathcal{D}$ on a saturated metrized complex $\mathfrak{C}$ is the data $(D_\Gamma,\{D_p\}_{p \in \Gamma})$ where $D_{\Gamma}$ is a divisor on $\Gamma$ and $D_p$ is a divisor on the curve $C_p$ satisfying the relation $D_{\Gamma}(p)=\deg(D_p)$ for every point $p \in \Gamma$. We also say that $D_\Gamma$ is the tropical part of $\mathcal{D}$ and $D_p$ is the $C_p$-part of $\mathcal{D}$. We say that a point $u\in C_q$ is a \emph{supporting point} of $\mathcal{D}$ if $D_q(u)\neq 0$. The \emph{degree} of $\mathcal{D}$ is defined to be the degree of $D_{\Gamma}$. If in addition $D_p=0$ for all but finitely many points $p\in\Gamma$, then we call $\mathcal{D}$ a \emph{divisor} on $\mathfrak{C}$.  We say $\mathcal{D}$ is \emph{effective} if $D_p$ is effective for all points $p \in \Gamma$.

Note that for any pseudo-divisor $\mathcal{D}$ on a saturated metrized complex, $D_p$ will have degree $0$ for all but finitely many points $p \in \Gamma$. However, $D_p$ can be non-zero for infinitely many points in $\Gamma$, which is  unconventional from the viewpoint of divisor theory. The notion of divisors on a saturated metrized complex rectifies this aspect.

\begin{remark} \label{R:DivFreeAb}
All divisors on $\mathfrak{C}$ form a group  isomorphic to $\bigoplus_{p\in\Gamma}\Div(C_p)$ (the free abelian group on $\coprod_{p\in\Gamma}C_p$). Therefore, we also write a divisor $\mathcal{D}$ on $\mathfrak{C}$ formally as $\mathcal{D}=\sum_{p\in\Gamma}D_p$ where $D_p$ is the $C_p$-part of $\mathcal{D}$.
\end{remark}

A \emph{pseudo-rational function} $\mathfrak{f}=(f_{\Gamma},\{f_p\}_{p \in \Gamma})$  where $f_{\Gamma}$ is a tropical rational function (piecewise-linear function with integer slopes) on $\Gamma$ and $f_p$ is a rational function on the algebraic curve $C_p$. We also say that $f_{\Gamma}$ is the \emph{tropical part} of $\mathfrak{f}$ and $f_p$ is the \emph{$C_p$-part} of $\mathfrak{f}$. We say that $\mathfrak{f}$ is \emph{nonzero} if $f_p$ is nonzero for all $p\in\Gamma$.  The \emph{principal pseudo-divisor} $\divisor(\mathfrak{f})$ associated with a nonzero pseudo-rational function $\mathfrak{f}$ is defined as $(\divisor(f_\Gamma), \{\divisor(f_p)+ \divisor_p(f_\Gamma)\}_{p \in \Gamma})$ where $\divisor_p(f_\Gamma)=\sum_{t \in \Tan_\Gamma(p)} \slope_t(f_\Gamma) (\red_p(t))$ and $\slope_t(f_\Gamma)$ is the outgoing slope of the function $f_\Gamma$ along the tangent direction $t$.   A \emph{rational function} is a pseudo-rational function whose associated principal pseudo-divisor is a divisor. Divisors $\mathcal{D}_1$ and $\mathcal{D}_2$ are \emph{linearly equivalent} if they differ by a principal divisor.

As in the case of principal divisors on an algebraic curve or on a metric graph, the set of the principal divisors on a saturated metrized complex forms an abelian group under addition.

\subsection{Limit Linear Series on a Saturated Metrized Complex}

\begin{definition}{\rm {\bf (Pre-Limit Linear Series and Limit Linear Series)}} \label{D:LimLinSeries}
Let $\mathfrak{C}$ be a a saturated metrized complex.
\begin{enumerate}
\item  A \emph{pre-limit linear series} of rank $r$ and degree $d$ (also known as a pre-limit $g^{r}_d$) on $\mathfrak{C}$   is represented by the data $(\mathcal{D}, \mathcal{H})$  where $\mathcal{D}= (D_\Gamma,\{ D_p\}_{p \in \Gamma})$ is an effective divisor of degree $d$ on $\mathfrak{C}$ and  $\mathcal{H} = \{ H_p\}_{p \in \Gamma}$ where $H_p$ is an $(r+1)$-dimensional subspace of the function field of $C_p$.

\item A \emph{limit linear series} of rank $r$  and degree $d$ (also known as a limit $g^{r}_d$) on $\mathfrak{C}$  is a pre-limit $g^r_d$ represented by $(\mathcal{D}, \mathcal{H})$ with $\mathcal{H}=\{ H_p\}_{p \in \Gamma}$ which satisfies the following additional condition: for every effective divisor $\mathcal{E}=(E_{\Gamma},\{E_p\}_{p \in \Gamma})$ on $\mathfrak{C}$ of degree $r$ such that the support of $E_p$ does not intersect the set $A_p$ of marked points of $C_p$ for every $p \in \Gamma$, there exists a rational function $\mathfrak{f}=(f_{\Gamma},\{f_p\}_{p \in \Gamma})$ such that $f_p \in H_p$ for all points $p \in \Gamma$ and $\mathcal{D}-\mathcal{E}+{\rm div}(\mathfrak{f}) \geq 0$.
\end{enumerate}
\end{definition}	

Our definition of limit linear series is a slight modification of the notion of crude limit linear series in \cite[Section 5.3]{AB12}. For instance, we impose the additional restriction that the support of $E_p$ does not intersect the set $A_p$. This restriction is justified by Theorem~\ref{T:DHspecialization} which states that a specialization of linear series is a limit linear series.

Throughout this paper, unless otherwise specified, we let $\mathcal{D}=(D_\Gamma,\{D_p\}_{p \in \Gamma})$ and $\mathcal{H}=\{H_p\}_{p \in \Gamma}$ for convenience. When we say $F=\{f_p\}_{p\in\Gamma}$ is an element of $\mathcal{H}$, it means $f_p\in H_p$ for all $p\in\Gamma$. In addition, we may also consider $F$ as a function from $\coprod_{p\in\Gamma}C_p$ to $\kappa$ which sends $u\in C_p$ to $f_p(u)$. 

We say a  pre-limit $g^r_d$ (respectively, a limit $g^r_d$) represented by $(\mathcal{D}, \mathcal{H})$ is \emph{refined} if $(\mathcal{D}, \mathcal{H})$ has the following additional properties:
\begin{enumerate}
\item The constant function is contained in $H_p$ for every point $p \in \Gamma$.
\item For every point $p$, the support of $D_p$ is disjoint from the set $A_p$ of marked points of $C_p$.
\end{enumerate}

The two conditions are justified by Lemma~\ref{L:SmoothRefined} that a smoothable pre-limit $g^r_d$ must be refined.


\subsection{Alternative Version of the Smoothing Criterion}

We have the following equivalent version of the smoothing criterion in Theorem~\ref{thm:main_criterion}. This version is finitely verifiable (see Section~\ref{S:Finite}).

\begin{theorem}\label{T:main}{\rm ({\bf Smoothing Criterion, Version II})}
A pre-limit $g^1_d$ (or a limit $g^1_d$) represented by $(\mathcal{D}, \mathcal{H})$ on a saturated metrized complex is smoothable if and only if $(\mathcal{D}, \mathcal{H})$ is diagrammatic,  solvable and satisfies the intrinsic global compatibility conditions.
\end{theorem}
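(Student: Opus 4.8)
The plan is to derive Theorem~\ref{T:main} from Theorem~\ref{thm:main_criterion}, which I take as given, by proving the purely combinatorial equivalence
\[
\LDHfour \neq \emptyset \iff (\mathcal{D},\mathcal{H}) \text{ is diagrammatic, solvable, and satisfies the intrinsic global compatibility conditions.}
\]
Once this is established, the biconditional of Theorem~\ref{T:main} follows by composing it with the biconditional of Theorem~\ref{thm:main_criterion}. Thus the whole task reduces to unwinding the definition of $\LDHfour$ into a finite, tree-free checklist, and the argument is organized along the filtration $\LDHone\supseteq\LDHtwo\supseteq\LDHthree\supseteq\LDHfour$.

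\textbf{The two outermost conditions.} First I would dispose of diagrammaticity and solvability. By construction, the space $\LDHone$ of partition trees with respect to the bifurcation tree $\mathcal{B}$ exists only after $\mathcal{B}$ is defined, and $\mathcal{B}$ in turn requires the rational function $\rho$ obtained by integrating the piecewise-constant differential form of the global diagram. Hence, if $(\mathcal{D},\mathcal{H})$ is not diagrammatic, no such differential form exists and $\LDHone=\emptyset$; and if it is diagrammatic but not solvable, the form is not exact, no $\rho$ exists, and again $\LDHone=\emptyset$. Conversely, assuming $(\mathcal{D},\mathcal{H})$ is diagrammatic and solvable, I would verify that $\rho$ and $\mathcal{B}$ are well defined and that $\LDHone\neq\emptyset$, since at least one partition tree exists once $\mathcal{B}$ is defined (the bifurcation tree itself arising from the trivial gluing). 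This reduces the problem to: granting diagrammaticity and solvability, $\LDHfour\neq\emptyset$ if and only if the global compatibility conditions hold.

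\textbf{The nested inclusions.} Next I would analyze the three inclusions $\LDHtwo\supseteq\LDHthree\supseteq\LDHfour$ inside $\LDHone$. Each is cut out by imposing one level of compatibility between the canonical projection $\pi_\mathcal{B}$ and the data in $(\mathcal{D},\mathcal{H})$ (the constraints arising from the $C_p$-parts $H_p$, the reduction maps $\red_p$, and the marked points $A_p$). The key step is to show that the statement ``\emph{some} tree $T\in\LDHone$ already lies in $\LDHfour$'' can be detected without enumerating trees: the intrinsic global compatibility conditions are precisely the conditions on $(\mathcal{D},\mathcal{H})$, $\rho$ and $\mathcal{B}$ that are simultaneously satisfiable by some choice of gluing. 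Concretely, I would prove that each level of obstruction is monotone with respect to the partial order on the tree space, so that satisfiability at the fourth level can be read off from finitely many local conditions at the branch points of $\mathcal{B}$; this yields both the equivalence with the intrinsic conditions and the finite verifiability asserted just before the statement.

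\textbf{Main obstacle.} The hard part will be exactly this last passage: showing that ``a tree in $\LDHfour$ exists'' is equivalent to an intrinsic, tree-free checklist. The danger is that the three levels of compatibility interact, so that conditions individually satisfiable at distinct branches of $\mathcal{B}$ need not be realizable by a single global gluing. Overcoming this requires establishing that the obstructions decouple appropriately — that a tree can be assembled branch by branch whenever each local compatibility condition holds — which I expect to follow from the tree structure of $\mathcal{B}$ together with the monotonicity of each obstruction along the filtration. The geometric content behind these conditions — that a tree in $\LDHfour$ produces a harmonic morphism $\mathfrak{C}\phi^{\rm mod}:\mathfrak{C}(\Sigma^{\rm mod})\to\mathfrak{C}(T)$ liftable to a finite morphism $X\to\mathbb{P}^1$ by Theorem~B of \cite{ABBR13}, and conversely — is already encapsulated in Theorem~\ref{thm:main_criterion}, so here it serves only to explain why these are the correct conditions.
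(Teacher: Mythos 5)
Your overall reduction---grant Version I (Theorem~\ref{thm:main_criterion}) and prove the combinatorial equivalence $\LDHfour\neq\emptyset \iff$ diagrammatic $+$ solvable $+$ intrinsic global compatibility---is consistent with the paper's own logical structure, since the paper states that the equivalence of the two versions follows from Proposition~\ref{P:LDH_BPDH}, and your handling of the outer conditions (non-diagrammatic or non-solvable forces $\LDHone=\emptyset$; otherwise $\mathcal{B}\in\LDHone$) matches the paper's conventions. The genuine gap is in your mechanism for the key step. You propose to prove that ``each level of obstruction is monotone with respect to the partial order on the tree space'' and to assemble a Level-IV tree branch by branch from local conditions at the branch points of $\mathcal{B}$. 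Monotonicity is false: the paper proves only that $\LDHtwo$ is lower-closed (Lemma~\ref{L:LDH2_lowerclosed}), and its own Example~\ref{E:lattice} shows that $\LDHthree$ and $\LDHfour$ are monotone in neither direction. In that example (a tripod with a degree-$2$ series and local partition $\{\{t_1,t_2\},\{t_3\}\}$) one has $\LDHtwo=[O,P_1]$ but $\LDHthree=\LDHfour=(O,P_1]$: this set excludes the minimum $O=\mathcal{B}$, so it is not downward closed, and it excludes the coarser tree $P_0=(1,1,1)$, which dominates every point of $(O,P_1]$, so it is not upward closed either. In particular the bifurcation tree itself---the natural extremal candidate---generically fails Levels III and IV, so Level-IV satisfiability cannot be ``read off'' by passing to extremal or locally determined elements; your branch-by-branch assembly argument has no footing.

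What the paper actually does at this point is constructive, not order-theoretic. The intrinsic global compatibility conditions assert the existence of a single admissible collection $G=\{g_p\}_{p\in\Gamma}\in\mathcal{H}$ (equivalently $\BPDHfour\neq\emptyset$); this is already a global simultaneous-satisfiability statement, not a family of local conditions at branch points. Proposition~\ref{P:LDH_BPDH} then shows $\phiLambda$ restricted to $\LDHfour$ surjects onto $\BPDHfour$: in one direction a Level-IV tree pushes forward to a Level-IV bifurcation partition system via $\Theta^{\mathcal{B}}_{\mathcal{T}}$; in the other, from $\{\vec{P}_x\}\in\BPDHfour$ one builds the $\delta$-glued partition tree and---this is the step your sketch omits entirely---re-tunes $G$ at the ordinary points using Lemma~\ref{L:tune} so as to produce a function $\xi$ injective on each $\Tan^+_{\mathcal{T}}(x)$, which is required for Level-IV compatibility of the tree. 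The finite verifiability you mention likewise comes from Lemma~\ref{L:tune} and Algorithm~\ref{A:IGC} (the conditions are automatic away from the finitely many exceptional points), not from any decoupling of the obstruction levels. Finally, note that the paper never proves Version I independently: Section~\ref{S:SmoothingProof} establishes (1) $\iff$ (2) $\iff$ (3) in one proof, with the geometric core (Theorem~\ref{T:SmoothingHar} and Proposition~\ref{P:modification}) carrying the equivalence between smoothability and $\LDHfour\neq\emptyset$. So even with the combinatorial step repaired, your proposal proves only the equivalence of the two versions, with all of the geometric content deferred to a theorem that, in the paper, has no proof separate from the statement you were asked to prove.
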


We primarily employ this version in the rest of the paper, in particular in Section~\ref{application_sect}. The equivalence of these two versions of the smoothing criterion follows from Proposition~\ref{P:LDH_BPDH}. In the rest of this section, we explain the terms ``diagrammatic'', ``solvable'' and ``intrinsic global compatibility'' conditions that appear in the smoothing criterion.

In Subsection~\ref{S:SmoothingMC}, we extend this smoothing criterion on saturated metrized complexes to the case for metrized complexes in a natural way. 

\subsection{Diagrams Induced by $(\mathcal{D}, \mathcal{H})$ and Solvability} 

For a  pre-limit $g^1_d$ represented by $(\mathcal{D},\mathcal{H})$ be refined, we extract the information in the space $H_p$ and associate a combinatorial object called local diagram
to a point $p\in\Gamma$. A global diagram is formed by assembling the local diagrams at all points in $\Gamma$ in a ``continuous'' way.

First let us give  a more precise description of local and global diagrams on a metric graph $\Gamma$.
\begin{enumerate}
\item A \emph{local diagram} at a point $p$ of $\Gamma$  is made of following data:
\begin{itemize}
\item A nonzero integer $m(p,t)$ called the \emph{multiplicity} associated to each tangent direction $t \in \Tan_\Gamma(p)$, where $\Tan_\Gamma(p)$ is the set of tangent directions emanating from $p$. We refer to those tangent directions with negative multiplicities as incoming tangent directions and denote the set they form by $\In_\Gamma(p)$. Similarly, we refer to those tangent directions with positive multiplicities as outgoing tangent directions and denote the set they form by $\Out_\Gamma(p)$.
\item  The elements in $\Tan_\Gamma(p)$ are partitioned into equivalence classes with one of the equivalence classes being exactly ${\rm In}(p)$. We refer to this partition of $\Tan_\Gamma(p)$ as the \emph{local partition} at $p$ and refer to the tangent directions that belong to the same equivalence class as \emph{locally equivalent}.
\end{itemize}
\item An open neighborhood of a point $p \in \Gamma$ is called a simple neighborhood if it is simply connected and every point in the neighborhood except possibly $p$ has valence two. For a simple neighborhood $U$ of a point $p \in \Gamma$,  a local diagram at a point $p$ induces a local diagram at any point $q$ in $U\setminus \{p\}$ as follows. Suppose that $q$ lies along the tangent direction $t \in \Tan_\Gamma(p)$. Assign the integer $-m(p,t)$ to the tangent direction at $q$ corresponding to the edge joining $p$ and $q$ and assign the integer $m(p,t)$ to the other tangent direction at $q$. Assign the two tangent directions at $q$ to different equivalence classes. 
\item A \emph{global diagram} on a metric graph $\Gamma$ is a collection of local diagrams at all the points in $\Gamma$ such that the local diagrams satisfy the following continuity property: for every point $p$, there is a simple neighborhood $U$ of $p$ such that for every point $q \in U\setminus\{p\}$ the local diagram induced by $p$ at $q$ coincides with the local diagram at $q$.
\item A global diagram on $\Gamma$ is called \emph{solvable} if there exists a tropical rational function $\rho$ on $\Gamma$ such that the outgoing slope ${\rm sl}_{t}(\rho)$ of $\rho$ along the tangent direction $t \in \Tan_\Gamma(p)$ for any point $p \in \Gamma$ coincides with the multiplicity. Formally, this means that the  differential equation ${\rm sl}_{t}(\rho)=m(p,t)$ is satisfied.  This equation is referred to as the \emph{characteristic equation} of the global diagram.
\end{enumerate}

\begin{remark} \label{R:GlobalDiagram}
Recall that a vertex set of $\Gamma$ (the set of all points of valence at least three) induces an edge-weighted graph called a \emph{model} of $\Gamma$ (see e.g. \cite{BPR11}).
Since $\Gamma$ is compact, the multiplicity aspect of a global diagram can be represented in terms of the following data: a model for $\Gamma$, an orientation of each edge of the model and a non-zero integer associated to each edge called the \emph{multiplicity} of that edge. In other words, a global diagram induces a piecewise-constant integer-valued differential form on $\Gamma$ and this differential form is exact if the global diagram is solvable.
\end{remark}

\begin{remark}
When a global diagram is solvable, the solution of the characteristic equation is unique up to a translation by a constant. Since $m(p,t)$ is nowhere zero for all $p$ and $t\in\Tan_\Gamma(p)$, the solution must have everywhere nonzero slopes. 
\end{remark}

For a two-dimensional linear space $H_p$ of rational functions on $C_p$ which contains the constant functions, we can naturally associate a local diagram defined to each point of $\Gamma$. The details of such a procedure are presented in the following construction. This construction translate the algebraic data encoded in $\mathcal{H}$ to combinatorial data on $\Gamma$. Furthermore, the two aspects of the definition of local diagrams, multiplicity and local partition, are both motivated from the notion of harmonic morphisms (Subsection~\ref{S:harmmor}).

 \textbf{Construction of local diagrams from $\mathcal{H}$:}\label{R:LocalHp}
 In the following, given a refined pre-limit $g^1_d$ represented by $(\mathcal{D},\mathcal{H})$, we associate a local diagram to every point on the metric graph using the data $\mathcal{H}$.  From the definition of a refined pre-limit $g^1_d$, we know that the two dimensional linear space $H_p$ of rational functions on $C_p$ has a basis $\{1,f_p\}$ where $f_p$ is a nonconstant rational function on $C_p$.  Let $\bar{f}_p:C_p\rightarrow \kappa_\infty$ be the function on $C_p$ extending $f_p$ to its poles.  We construct the local diagram at $p$ as follows:  for a tangent direction $t$ of $C_p$, we let the multiplicity $m(p,t)$ be the ramification index of $\bar{f}_p$ at the marked point corresponding to the tangent direction $t$ with sign `$-$' if $\red_p(t)$ is a pole of $f_p$ and sign `$+$' otherwise.  The local partition at $p$ is defined by declaring that two tangent directions are locally equivalent if and only if their marked points are in the same level set of $f_p$.
 Hence  $\In_\Gamma(p)$ is the set of tangent directions whose corresponding marked points are the poles of $f_p$  and the elements of $\In_\Gamma(p)$ are all locally equivalent. In this way, we construct a local diagram at $p$ from $f_p$. Moreover, since different choices of nonconstant functions $f_p$ in $H_p$ afford the same local diagram at $p$, we also say that this local diagram at $p$ is induced by $H_p$.

\textbf{Compatibility between $\mathcal{D}$ and $\mathcal{H}$:}
For a nonconstant function $f_p\in H_p$, we say that $D^+_{f_p}$ and $D^-_{f_p}$ are the effective and non-effective parts of $\divisor(f_p)$ respectively if $\divisor(f_p)=D^+_{f_p}-D^-_{f_p}$ and both $D^+_{f_p}$ and $D^-_{f_p}$ are effective with no overlapping supporting points. Note that by the construction of the local diagrams from $f_p$, the marked point associated to an incoming tangent direction in $\In_\Gamma(p)$ is a pole of $f_p$ and we must have $D^-_{f_p}+ \Sigma_{t\in\In_\Gamma(p)} m(p,t)(\red_p(t))\geqslant 0$ (note that $m(p,t)<0$ for $t\in\In_\Gamma(p)$). By comparing $D_p$ with $D^-_{f_p}+\Sigma_{t\in\In_\Gamma(p)} m(p,t)(\red_p(t))$, we introduce the compatibility between $D_p$ and $H_p$. More precisely, we say that $H_p$ is \emph{compatible} with $D_p$ or $\mathcal{D}$ if $D_p\geqslant D^-_{f_p}+\Sigma_{t\in\In_\Gamma(p)} m(p,t)(\red_p(t))$. We say that a supporting point of $D_p-\Sigma_{t\in\In_\Gamma(p)} m(p,t)(\red_p(t)) - D^-_{f_p}$ is a \emph{base point} of $(\mathcal{D},\mathcal{H})$ when $D_p$ is compatible with $H_p$. In this sense, a supporting point of $D_p$ is either a base point of $(\mathcal{D},\mathcal{H})$ or a pole of a nonconstant rational function in $H_p$. We say $\mathcal{D}$ and $\mathcal{H}$ are compatible if $D_p$ and $H_p$ are compatible for all $p\in\Gamma$. Note that the total number of base points must be finite when $\mathcal{D}$ and $\mathcal{H}$ are compatible.

\begin{example}
Figure~\ref{F:Local} illustrates the construction of a local diagram at $p\in\Gamma$ using a $2$-dimensional linear space $H_p$ of rational functions on a genus-1 curve $C_p$. We suppose that each $H_p$ contains a constant function. Consider a nonconstant rational function $f_p\in H_p$. Suppose $f_p$ has degree $3$ and the poles of $f_p$ are $u_1$, $u_2$ and $u_3$ with all ramification indices being $1$, and points $u_1$ and $u_2$ are marked points with associated tangent directions $t^u_1$ and $t^u_2$. Then $t^u_1$ and $t^u_2$ are incoming tangent directions with $m(p,t^u_1)=m(p,t^u_2)=-1$. Let $c$ and $c'$ be distinct values in $\kappa$ and suppose $f_p^{-1}(c)=\{v_1,v_2,v_3\}$ with all ramification indices being $1$ and $f_p^{-1}(c')=\{w_1,w_2\}$ with ramification indices of $w_1$ and $w_2$ being $1$ and $2$ respectively. Suppose $v_1$, $w_1$ and $w_2$ are marked points with associated tangent directions $t^v_1$, $t^w_1$ and $t^w_2$ respectively. Then
 $t^v_1$, $t^w_1$ and $t^w_2$ are outgoing tangent directions with $m(p,t^v_1)=1$, $m(p,t^w_1)=1$ and $m(p,t^w_2)=2$. Moreover, the corresponding local partition of $\Tan_\Gamma(p)$ is $\{\{t^u_1,t^u_2\},\{t^v_1\},\{t^w_1,t^w_2\}\}$. Moreover, a divisor $D_p$ on $C_p$ is compatible with $H_p$ if and only if $u_3$ is a supporting point of $D_p$.

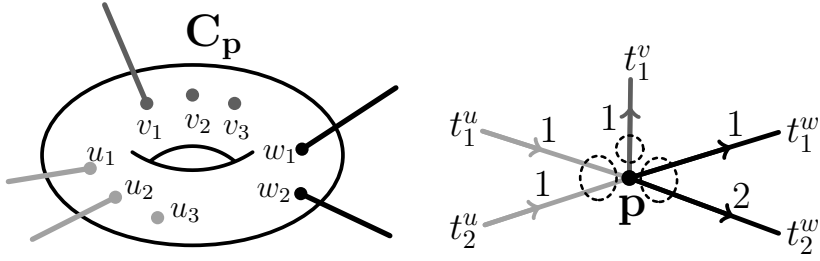
\begin{figure}[tbp]
\centering
\definecolor{light}{rgb}{0.37,0.37,0.37}
\definecolor{lighter}{rgb}{0.63,0.63,0.63}
\begin{tikzpicture}[line cap=round,line join=round,>=to,x=0.8cm,y=0.8cm]
\clip(0.5,1.3) rectangle (14.5,5.6);
\draw [rotate around={0.:(4.,3.)},line width=1.4pt] (4.,3.) ellipse (2.5 and 1.5);
\draw [shift={(4.,2.045)},line width=1.4pt]  plot[domain=0.8847484459534889:2.2568442076363042,variable=\t]({1.*1.105*cos(\t r)+0.*1.105*sin(\t r)},{0.*1.105*cos(\t r)+1.*1.105*sin(\t r)});
\draw [shift={(4.,4.566666666666666)},line width=1.4pt]  plot[domain=4.129475391428955:5.295302569340424,variable=\t]({1.*1.8166666666666664*cos(\t r)+0.*1.8166666666666664*sin(\t r)},{0.*1.8166666666666664*cos(\t r)+1.*1.8166666666666664*sin(\t r)});
\draw [line width=2pt,color=lighter] (2.3,2.78)-- (0.94,2.56);
\draw [line width=2pt,color=lighter] (2.72,2.3)-- (1.34,1.6);
\draw [line width=2pt,color=light] (3.24,3.86)-- (2.54,5.5);
\draw [line width=2pt] (5.82,3.1)-- (7.38,4.12);
\draw [line width=2pt] (5.8,2.36)-- (7.28,1.66);
\draw (2.07,3.307) node[anchor=north west] {\large $u_1$};
\draw (2.70,2.747) node[anchor=north west] {\large $u_2$};
\draw (3.46,2.36) node[anchor=north west] {\large $u_3$};
\draw (2.90,3.74) node[anchor=north west] {\large $v_1$};
\draw (3.70,3.84) node[anchor=north west] {\large $v_2$};
\draw (4.34,3.74) node[anchor=north west] {\large $v_3$};
\draw (5,3.34) node[anchor=north west] {\large $w_1$};
\draw (4.9,2.70) node[anchor=north west] {\large $w_2$};
\draw (3.7,5.5) node[anchor=north west] {\LARGE $\mathbf{C_p}$};
\draw [line width=1.8pt,color=light] (11.26,2.62)-- (11.28,4.26);
\draw [line width=1.8pt,color=lighter] (11.26,2.62)-- (8.82,3.4);
\draw [line width=1.8pt] (11.26,2.62)-- (13.74,3.38);
\draw [line width=1.8pt,color=lighter] (11.26,2.62)-- (8.86,1.84);
\draw [line width=1.8pt] (11.26,2.62)-- (13.74,1.7);
\draw [->,line width=1.8pt,color=lighter] (8.86,1.84) -- (9.784583741429966,2.140489715964739);
\draw [->,line width=1.8pt,color=light] (11.26,2.62) -- (11.274423333957904,3.802713384548106);
\draw [->,line width=1.8pt] (11.26,2.62) -- (12.9545034922563,3.139283328272092);
\draw [->,line width=1.8pt] (11.26,2.62) -- (12.991689915389893,1.9775989023553624);
\draw [->,line width=1.8pt,color=lighter] (8.82,3.4) -- (9.817968731303692,3.080977208845541);
\draw [rotate around={-88.56037399565767:(10.744719174055081,2.6199727248739078)},line width=1pt,dash pattern=on 1.5pt off 2pt] (10.744719174055081,2.6199727248739078) ellipse (0.2931348800710146cm and 0.241510347438542cm);
\draw [rotate around={-88.56037399565767:(11.75056279545427,2.589190831869807)},line width=1pt,dash pattern=on 1.5pt off 2pt] (11.75056279545427,2.589190831869807) ellipse (0.2931348800709107cm and 0.2415103474384564cm);
\draw [line width=1pt,dash pattern=on 1.5pt off 2pt] (11.265878556458151,3.102041629568428) circle (0.18cm);
\draw (8.1,3.9) node[anchor=north west] {\Large $t^u_1$};
\draw (8.1,2.2) node[anchor=north west] {\Large $t^u_2$};
\draw (11,5.0) node[anchor=north west] {\Large $t^v_1$};
\draw (13.7,3.8) node[anchor=north west] {\Large $t^w_1$};
\draw (13.7,2.1) node[anchor=north west] {\Large $t^w_2$};
\draw (9.6,3.8) node[anchor=north west] {\Large $1$};
\draw (9.5,2.9) node[anchor=north west] {\Large $1$};
\draw (10.6,4) node[anchor=north west] {\Large $1$};
\draw (12.7,3.9) node[anchor=north west] {\Large $1$};
\draw (12.8,2.7) node[anchor=north west] {\Large $2$};
\draw (10.9,2.5) node[anchor=north west] {\LARGE $\mathbf{p}$};
\begin{scriptsize}
\fill [lighter] (2.3,2.78) circle (2.5pt);
\fill [lighter] (2.72,2.3) circle (2.5pt);
\fill [light] (3.24,3.86) circle (2.5pt);
\fill [black] (5.82,3.1) circle (2.5pt);
\fill [black] (5.8,2.36) circle (2.5pt);
\fill [light] (4.,4.) circle (2.5pt);
\fill [light] (4.7,3.86) circle (2.5pt);
\fill [lighter] (3.42,1.96) circle (2.5pt);
\fill [black] (11.26,2.62) circle (3pt);
\end{scriptsize}
\end{tikzpicture}
\caption{A local diagram (right) at $p$ derived from $H_p$ on the curve $C_p$ (left) of genus $1$.}\label{F:Local}
\end{figure}
\end{example}

\begin{definition}{\rm{\bf (Diagrammatic and Solvable $(\mathcal{D},\mathcal{H})$)}} \label{D:DiaSolvDH}
We say that a refined pre-limit $g^1_d$ (respectively limit $g^1_d$) represented by $(\mathcal{D},\mathcal{H})$ on a saturated metrized complex $\mathfrak{C}=(\Gamma,\{C_p\}_{p \in \Gamma})$ is \emph{diagrammatic}  if $\mathcal{D}$ and $\mathcal{H}$ are compatible and the local diagrams induced by $\mathcal{H}$ form a global diagram. In addition, if the global diagram induced by $(\mathcal{D},\mathcal{H})$  is solvable, then we say that $(\mathcal{D},\mathcal{H})$ is \emph{solvable}.
\end{definition}

\subsection{Intrinsic Global Compatibility Conditions} \label{S:IGC}
In general, the extra condition in the definition of a (diagrammatic) limit $g^1_d$ over that of a (diagrammatic) pre-limit $g^1_d$ represented by $(\mathcal{D},\mathcal{H})$  does not guarantee solvability (see the example in Section~\ref{S:Nonsolvable}).  However, if $(\mathcal{D},\mathcal{H})$ is smoothable, then it must be solvable (see Theorem~\ref{T:main}).

On the other hand, solvability is only a necessary condition for $(\mathcal{D},\mathcal{H})$ being smoothable. In particular, it does not fully utilize information in $H_p$'s. In order to determine the smoothability of $(\mathcal{D},\mathcal{H})$, we will construct the bifurcation tree $\mathcal{B}$ and the natural surjection $\pi_{\mathcal{B}}: \Gamma \rightarrow \mathcal{B}$ from a solution to the characteristic equation and introduce the intrinsic global compatibility conditions which are compatibility conditions between $H_p$'s and the map $\pi_{\mathcal{B}}$.

Let $\rho$ be a rational function on $\Gamma$ with everywhere nonzero slopes.  For a point $p$ in $\Gamma$, recall that $\Tan_\Gamma(p)$ is the set of tangent directions emanating from $p$.  By $\Tan^{\rho+}_\Gamma(p)$, we denote the set of tangent directions in $\Tan_\Gamma(p)$ where $\rho$ locally increases. We canonically associate to $\rho$ a pair $(\mathcal{B},\pi_{\mathcal{B}})$ where (1) $\mathcal{B}$ is a metric tree called the \emph{bifurcation tree} with respect to $\rho$, and (2) $\pi_\mathcal{B}:\Gamma\rightarrow\mathcal{B}$ is a canonical projection from $\Gamma$ onto $\mathcal{B}$ (see details of the construction in Section~\ref{subS:BifTree}). In addition, $\mathcal{B}$ has a distinguished point called the \emph{root}. Moreover, $\pi_\mathcal{B}$ induces a push forward map $\pi_{\mathcal{B}*}$ from the tangent directions at all points in $\Gamma$ to the tangent directions at all points in $\mathcal{B}$. More precisely, if we let $\Tan^+_\mathcal{B}(x)$  be the set of forward tangent directions at $x\in\mathcal{B}$ (meaning that the distance function from the root increases along these directions), then by the construction in Section~\ref{subS:BifTree}, we have $\pi_{\mathcal{B}*}(t)\in\Tan^+_\mathcal{B}(\pi_\mathcal{B}(p))$ for all $p\in\Gamma$ and $t\in\Tan^{\rho+}_\Gamma(p)$.

 Using the above notions, we formulate the intrinsic global compatibility conditions. For a solvable  $(\mathcal{D},\mathcal{H})$ with a solution $\rho$, let $\mathcal{B}$ be the bifurcation tree  and $\pi_{\mathcal{B}}$ be the corresponding projection from $\Gamma$ onto $\mathcal{B}$.

\begin{definition}{\rm{\bf (Intrinsic Global Compatibility Conditions) }} \label{D:IGC}
A collection of non-constant rational functions $G=\{g_p\}_{p\in\Gamma}\in\mathcal{H}$ is called \emph{admissible} if it has one of the following equivalent properties:
\begin{enumerate}
 \item There is a function $\eta:\coprod_{x\in\mathcal{B}} \Tan^+_\mathcal{B}(x)\rightarrow \kappa$ such that $G\circ\Red(t) = \eta\circ\pi_\mathcal{B*}(t)$ for all  $t\in\coprod_{p\in\Gamma}\Tan^{\rho+}_\Gamma(p)$.
 \item For each pair of tangent directions $t_1\in \Tan^{\rho+}_\Gamma(p_1)$ and $t_2\in\Tan^{\rho+}_\Gamma(p_2)$ such that $\pi_\mathcal{B}(p_1)=\pi_\mathcal{B}(p_2)$ and $\pi_\mathcal{B*}(t_1)=\pi_\mathcal{B*}(t_2)$, we have  $g_{p_1}(\red_{p_1}(t_1))=g_{p_2}(\red_{p_2}(t_2))$.
\end{enumerate}
We say that $(\mathcal{D},\mathcal{H})$ satisfies the \emph{intrinsic global compatibility conditions} if $\mathcal{H}$ contains an \emph{admissible} collection of non-constant rational functions.
\end{definition}

\subsection{Finite Verification of Intrinsic Global Compatibility Conditions}\label{S:Finite}
In this section, we will show that the intrinsic global compatibility conditions can be verified in finitely many steps which makes  Theorem~\ref{T:main} an  effective smoothing criterion. 

A point $p\in\Gamma$ is called an \emph{ordinary point} of $\rho$ if (1) its valence is two, (2) the slopes of $\rho$ at $p$ in the two tangent directions have the same magnitude and opposite signs, and (3) the curve $C_p$ has genus $0$. Denote the set of ordinary points by $\mathcal{O}_{\rho}$. The points in  $\mathcal{E}_{\rho}:=\Gamma\setminus\mathcal{O}_{\rho}$ and the values in the image of $\rho$ restricted to $\mathcal{E}_{\rho}$ are called \emph{exceptional points} and \emph{exceptional values} of $\rho$ respectively. Note that $\mathcal{E}_{\rho}$ is a finite set.

The intrinsic global compatibility conditions are finitely verifiable since the verification can be restricted to the  set $\mathcal{E}_\rho$ of exceptional points which is finite. This is because for any ordinary point $p\in\mathcal{O}_{\rho}$, there is only one forward tangent direction $t$  in $\Tan^{\rho+}_\Gamma(p)$,  and by Lemma~\ref{L:tune} we can always find some non-constant $g_p\in H_p$ such that  $g_p(\red_p(t))=c$ for whatever desirable  $c\in\kappa$.

\begin{lemma} \label{L:tune}
	Let $C$ be a curve over $\kappa$ and $H$ is the linear subspace of the function field of $C$  spanned by $\{1,f\}$ where $f$ is a non-constant rational function on $C$. 
\begin{enumerate}
\item Suppose $u$ is a point on $C$ which is not a pole of $f$. Then for any value $c\in\kappa$, the space of all rational functions in $H$ taking value $c$ at the point $u$ is a line in $H$. 
\item Suppose $u_1$ and $u_2$ are points on $C$ which are not poles of $f$ and $f(u_1)\neq f(u_2)$. Then for any distinct values $c_1,c_2\in\kappa$, we can always find a unique non-constant $g\in H$ such that $g(u_1)=c_1$ and $g(u_2)=c_2$. 
\end{enumerate}
\end{lemma}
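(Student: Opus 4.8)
The plan is to reduce both parts to elementary linear algebra on the two-dimensional space $H = \kappa\cdot 1 + \kappa\cdot f$, using the evaluation functionals at the prescribed points. The key preliminary observation is that since $u$ (respectively $u_1,u_2$) is \emph{not} a pole of $f$, the values $f(u)$ (resp. $f(u_1),f(u_2)$) lie in $\kappa$, so for a general element $g=a+bf\in H$ the expression $g(\cdot)=a+bf(\cdot)$ is well defined and $\kappa$-linear in $(a,b)$. This is exactly the role played by the hypothesis excluding poles.

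For part (1), I would consider the evaluation functional $\mathrm{ev}_u:H\to\kappa$, $g\mapsto g(u)=a+bf(u)$. This functional is nonzero because $\mathrm{ev}_u(1)=1$, so its kernel is one-dimensional. Consequently the fiber $\{g\in H : g(u)=c\}$ over any $c\in\kappa$ is a coset of this kernel, i.e. an (affine) line in the two-dimensional space $H$, which is the assertion.

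For part (2), I would assemble the two evaluations into a single linear map $\Phi:H\to\kappa^2$, $g\mapsto(g(u_1),g(u_2))$. In the basis $\{1,f\}$ its matrix is $\left(\begin{smallmatrix}1 & f(u_1)\\ 1 & f(u_2)\end{smallmatrix}\right)$, with determinant $f(u_2)-f(u_1)$. The hypothesis $f(u_1)\neq f(u_2)$ makes this determinant nonzero, so $\Phi$ is an isomorphism and there is a \emph{unique} $g\in H$ with $\Phi(g)=(c_1,c_2)$. Solving the resulting system yields $b=(c_1-c_2)/\bigl(f(u_1)-f(u_2)\bigr)$, and since $c_1\neq c_2$ we obtain $b\neq 0$, so $g=a+bf$ is non-constant.

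There is no genuine obstacle in the argument; the only points that require care are (i) confirming the evaluation maps are defined on all of $H$, which is precisely why the statement forbids poles of $f$, and (ii) verifying that the solution in part (2) is non-constant. The latter follows from $c_1\neq c_2$: the constant functions in $H$ are exactly the multiples of $1$, and $\Phi$ sends them to the diagonal $\{(a,a)\}\subset\kappa^2$, which misses the off-diagonal target $(c_1,c_2)$.
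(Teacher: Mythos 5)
Your proof is correct and follows essentially the same route as the paper's: writing $g=\alpha+\beta f$ and reducing both parts to solving linear equations in the coefficients, with the determinant $f(u_2)-f(u_1)\neq 0$ giving uniqueness in part (2). You even make explicit one point the paper leaves implicit, namely that $c_1\neq c_2$ forces $\beta\neq 0$ so the solution is non-constant.
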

\begin{proof}
All rational functions in $g\in H$ can be expressed as a linear combination of $1$ and $f$, i.e., $g=\alpha+\beta f\in H$ for some $\alpha,\beta\in \kappa$. For (1), the space $\{g\in H\mid g(u)=c \}=\{\alpha + \beta f\mid \alpha+\beta f(u)=c$ is a line in $H$. For (2), the linear equations $\alpha+\beta f(u_1)=c_1$ and $\alpha+\beta f(u_2)=c_2$ have a unique solution for $\alpha$ and $\beta$. 
\end{proof}

More accurately, we have the following algorithm to determine whether $(\mathcal{D},\mathcal{H})$ is smoothable (see also Example~\ref{E:BPDH}).
\begin{algorithm} \label{A:IGC}																							
\textbf{Input:} A diagrammatic pre-limit  $g^1_d$ represented by $(\mathcal{D},\mathcal{H})$. \textbf{Output:} Whether  $(\mathcal{D},\mathcal{H})$  is smoothable.
\begin{enumerate}
\item Determine whether $(\mathcal{D},\mathcal{H})$ is solvable. If not, then $(\mathcal{D},\mathcal{H})$ is not smoothable. If yes, let $\mathcal{E}_{\rho}$ be the set of exceptional points.
 \item Fix a basis $\{1,f_p\}$ of $H_p$ for all exceptional points $p\in\mathcal{E}_\rho$ and consider a collection of finitely many variables $\{\alpha_p,\beta_p\}_{p\in\mathcal{E}_\rho}$. 
 \item For each pair $(t_1,t_2)$ such that $t_1\in \Tan^{\rho+}_\Gamma(p_1)$, $t_2\in\Tan^{\rho+}_\Gamma(p_2)$, $p_1,p_2\in\mathcal{E}_{\rho}$, $\pi_\mathcal{B}(p_1)=\pi_\mathcal{B}(p_2)$ and $\pi_{\mathcal{B}*}(t_1)=\pi_{\mathcal{B}*}(t_2)$, add a linear restriction 
 $$\alpha_{p_1} + f_{p_1}(\red_{p_1}(t_1))\beta_{p_1}=\alpha_{p_2} + f_{p_2}(\red_{p_2}(t_2))\beta_{p_2}.$$

 \item $(\mathcal{D},\mathcal{H})$ satisfies the intrinsic global compatibility conditions and thus is smoothable  if and only if  there exists a solution for $\{\alpha_p,\beta_p\}_{p\in\mathcal{E}_\rho}$ satisfying the linear restrictions in (3) and $\beta_p\neq 0$ for all $p\in\mathcal{E}_\rho$. In particular, in the case $\{\alpha_p,\beta_p\}_{p\in\mathcal{E}_\rho}$ is a solution.  If we let $g_p=\alpha_p+\beta_p f_p$ for $p\in\mathcal{E}_\rho$ and $g_p=f_p$ for $p\in\mathcal{O}_\rho$, then $\{g_p\}_{p\in\Gamma}$ is admissible. 
\end{enumerate}
\end{algorithm}

\section{Morphisms of  Saturated Metrized Complexes and Their Relations to Smoothability}\label{S:NonArch}

In this section, we give precise definitions of pseudo-harmonic morphisms and harmonic morphisms of saturated metrized complexes and smoothability of a pre-limit $g^1_d$ (Definition~\ref{D:smoothable}) and study their close relations to  smoothable limit $g^1_d$s (Subsection~\ref{subS:SmoothProp}).  We use notions from the theory of  Berkovich analytic spaces with explanations in Appendix~\ref{S:Berkovich}. The reader is urged to use references (e.g. \cite{Berkovich12}, \cite{BPR11}) with an elaborate treatment of this analytical construction.

\subsection{Pseudo-Harmonic Morphisms Harmonic Morphisms of Saturated Metrized Complexes}\label{S:harmmor}

We give a natural extension of the notion of harmonic morphism of metrized complexes from \cite{ABBR13} and \cite{ABBR14} to saturated metrized complexes. We start with the notion of pseudo-harmonic morphism of saturated metrized complexes.

Let $\mathfrak{C}$ and $\mathfrak{C}'$ be saturated metrized complexes. The underlying metric graphs of $\mathfrak{C}$ and $\mathfrak{C}'$ are $\Gamma$ and $\Gamma'$ respectively, and the associated curves of $\mathfrak{C}$ and $\mathfrak{C}'$ are $\{C_p\}_{p\in\Gamma}$ and $\{C'_q\}_{q\in\Gamma'}$ respectively.

\begin{definition}{\rm {\bf (Pseudo-harmonic morphisms of saturated metrized complexes)}} \label{D:pseu_har_morph}
A \emph{pseudo-harmonic morphism} between $\mathfrak{C}$ and $\mathfrak{C}'$ consists of the data $\{\phi_\Gamma, \{ \phi_p \}_{p \in \Gamma }\}$ where $\phi_\Gamma: \Gamma \rightarrow \Gamma'$ is a continuous finite surjective piecewise-linear map with integral slopes (which is called a \emph{pseudo-harmonic morphism} between $\Gamma$ and $\Gamma'$) and $\phi_p: C_p \rightarrow C'_{\phi_\Gamma(p)}$ is a finite morphism of curves that satisfies the following compatibility conditions:
\begin{enumerate}{\label{compat_prop}}
\item  For all $p\in\Gamma$, two tangent directions $t_1, t_2 \in \Tan_\Gamma(p)$ are mapped to the same tangent direction $t' \in \Tan_{\Gamma'}(\phi_\Gamma(p))$ by the induced map of $\phi_\Gamma$ if and only if the marked points corresponding to $t_1$ and $t_2$ are mapped to the marked point corresponding to the tangent direction $t'$ by $\phi_p$.

\item  For all $p\in\Gamma$ and all tangent directions $t \in \Tan_\Gamma(p)$, the expansion factor $d_t(\phi_\Gamma)$ of $\phi_\Gamma$ along $t$ coincides with the ramification index of $\phi_p$ at the marked point corresponding to the tangent direction $t$. Here the expansion factor $d_t(\phi_\Gamma)$ is the absolute value of the slope of $\phi_\Gamma$ along $t$, i.e., the ratio of the length between $\phi_\Gamma(p)$ and $\phi_\Gamma(q)$ over the length between $p$ and $q$ where $q$ is near $p$ in the direction $t$.
\end{enumerate}

\qed
\end{definition}

\begin{definition}{\rm {\bf (Harmonic morphisms of saturated metrized complexes)}} \label{D:har_morph}
 A pseudo-harmonic morphism $\{\phi_\Gamma, \{ \phi_p \}_{p \in \Gamma }\}$ between  saturated metrized complexes $\mathfrak{C}$ and $\mathfrak{C}'$ is called a \emph{harmonic morphism} at a point $p\in\Gamma$ if $\phi_\Gamma$ is harmonic at $p$ and the degree of $\phi_\Gamma$ at $p\in\Gamma$ is the same as the degree of $\phi_p$. More explicitly, we say $\phi_\Gamma$ is harmonic at $p$ if it is a pseudo-harmonic morphism of metric graphs satisfying the following \emph{balancing} condition:
 for any tangent direction $t' \in \Tan_{\Gamma'}(\phi(p))$, the  sum of the expansion factors $d_t(\phi_\Gamma)$ over all tangent directions $t$ in $\Tan_\Gamma(p)$ that map to $t'$,  i.e., the integer $$\sum_{t \in \Tan_\Gamma(p),~t \mapsto t'}d_t(\phi_\Gamma),$$ is independent of $t'$ and is called the \emph{degree} of $\phi_\Gamma$ at $q$.

 We say $\phi_\Gamma$ is a harmonic morphism of metric graphs if $\phi_\Gamma$ is harmonic at each $p\in\Gamma$, and $\{\phi_\Gamma, \{ \phi_p \}_{p \in \Gamma }\}$ is a harmonic morphism of saturated metrized complexes if $\{\phi_\Gamma, \{ \phi_p \}_{p \in \Gamma }\}$ is harmonic at  each $p\in\Gamma$.
\qed
\end{definition}

\begin{remark}
More precisely, our notion of harmonic morphism of saturated metrized complexes corresponds to the notion of \emph{finite} harmonic morphism of metrized complexes in \cite{ABBR13}.
\end{remark}

The notion of harmonic morphism allows us to define the notion of isomorphism of saturated metrized complexes.
Two saturated metrized complexes $\mathfrak{C_1}$ and $\mathfrak{C_2}$ are \emph{isomorphic} if there is a harmonic morphism  $\mathfrak{C} \phi_1: \mathfrak{C}_1 \rightarrow \mathfrak{C}_2$ and a harmonic morphism $\mathfrak{C}\phi_2 : \mathfrak{C_2} \rightarrow \mathfrak{C_1}$ such that $\mathfrak{C}\phi_2 \circ \mathfrak{C\phi_1}$ and $\mathfrak{C}\phi_1 \circ \mathfrak{C\phi_2}$ are identity maps on $\mathfrak{C}_1$ and $\mathfrak{C}_2$ respectively.

The following theorem summarizes the lifting results of saturated metrized complexes which is a direct corollary of the lifting theorems for metrized complexes in Amini et al. \cite{ABBR13}. See Appendix~\ref{subS:SkelRefMet} for the association of a saturated metrized complex to a skeleton of a Berkovich analytic curve.

\begin{theorem} \label{T:lifting}
We have the following lifting properties for saturated metrized complexes and harmonic morphisms of saturated metrized complexes:
\begin{enumerate}

  \item Let $\mathfrak{C}$ be a saturated metrized complex of curves over $\kappa$. There exists a smooth curve $X/\mathbb{K}$ and a skeleton $\Sigma$ of the Berkovich analytification $X^{\rm an}$ of $X$ such that $\mathfrak{C}$ is isomorphic to the associated saturated metrized complex of $\Sigma$.
  \item If $\mathfrak{C}\phi:\mathfrak{C}\rightarrow\mathfrak{C'}$ is a harmonic morphism of saturated metrized complexes where $\mathfrak{C'}$ is isomorphic to the saturated metrized complex associated to a skeleton of the Berkovich analytification $X'^{\rm an}$ of a smooth curve $X'/\mathbb{K}$, then there exists a finite morphism $\phi:X\rightarrow X'$ of curves over $\mathbb{K}$ such that $\mathfrak{C}$ is isomorphic to the associated saturated metrized complex of a skeleton of $X^{\rm an}$ and $\phi$ induces $\mathfrak{C}\phi$.
\end{enumerate}
\end{theorem}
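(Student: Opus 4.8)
The plan is to deduce both statements from the corresponding lifting theorems for ordinary metrized complexes in \cite{ABBR13} by passing back and forth between a saturated metrized complex and one of its finite restrictions in the sense of Remark~\ref{R:restriction}. The guiding principle is that a saturated metrized complex carries genuinely new geometric data only at the finitely many points where $C_p$ has positive genus; at every other point $C_p$ is a projective line whose morphism behaviour is pinned down by the metric and reduction data alone. Hence restricting to a suitable finite vertex set $A$ loses nothing essential, and saturating back is canonical.

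For part (1), I would first choose a finite vertex set $A \subseteq \Gamma$ large enough to (i) contain every point with $g(C_p) > 0$, (ii) contain every point of valence $\geq 3$, and (iii) support every marked point, so that the restriction $\mathfrak{C}'$ of $\mathfrak{C}$ to $A$ is a bona fide metrized complex with $g(\mathfrak{C}') = g(\mathfrak{C})$. Applying the realization results of \cite{ABBR13} to $\mathfrak{C}'$ produces a smooth proper curve $X/\mathbb{K}$ and a skeleton $\Sigma$ of $X^{\rm an}$ whose associated metrized complex is isomorphic to $\mathfrak{C}'$. It then remains to check that the \emph{saturated} metrized complex of $\Sigma$ (built from residue curves at all points of $\Sigma$, as in Appendix~\ref{subS:SkelRefMet}) is isomorphic to $\mathfrak{C}$. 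This I would derive from the genus formula for skeleta: since $g(X) = g(\mathfrak{C})$ and the positive-genus residue curves occur exactly over $A$, the residue curve at every point of $\Sigma \setminus A$ is forced to have genus $0$, i.e.\ to be a projective line, matching the saturation prescription and so reproducing $\mathfrak{C}$ up to isomorphism.

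For part (2), I would restrict the harmonic morphism $\mathfrak{C}\phi = \{\phi_\Gamma, \{\phi_p\}_{p\in\Gamma}\}$ to metrized complexes. The key preparatory step is to choose vertex sets $A \subseteq \Gamma$ and $A' \subseteq \Gamma'$ compatible with $\phi_\Gamma$ --- that is, with $A$ containing all positive-genus points, all points of valence $\geq 3$, all points where $\phi_\Gamma$ is not locally linear, and all of $\phi_\Gamma^{-1}(A')$, while $A'$ is a vertex set realizing $\mathfrak{C}'$ as a skeleton of $X'^{\rm an}$ --- so that $\phi_\Gamma(A) \subseteq A'$ and the restricted data forms a finite harmonic morphism of metrized complexes $\mathfrak{C}_0 \to \mathfrak{C}'_0$; here one uses that the compatibility and balancing conditions of Definitions~\ref{D:pseu_har_morph} and \ref{D:har_morph} are precisely the defining conditions of a finite harmonic morphism in \cite{ABBR13}. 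Since $\mathfrak{C}'_0$ is a skeleton of $X'^{\rm an}$ and $\kappa$ has characteristic $0$ (so the morphism is automatically tame), the lifting theorem (Theorem~B of \cite{ABBR13}) yields a finite morphism $\phi: X \to X'$ inducing the restricted harmonic morphism, with $\mathfrak{C}_0$ realized as a skeleton of $X^{\rm an}$. Upon saturating, the pseudo-harmonic morphism of saturated metrized complexes induced by $\phi$ agrees with $\mathfrak{C}\phi$ at the finitely many exceptional points by construction, and at every remaining point the two $\mathbb{P}^1 \to \mathbb{P}^1$ maps coincide because both are determined, up to the automorphisms recorded in the isomorphism, by the common expansion factors and reduction data.

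The step I expect to be the main obstacle is the bookkeeping that makes the restriction and saturation operations interact cleanly with the morphism in part (2): one must produce a single finite pair $(A, A')$ that is simultaneously a valid model on each side, compatible with $\phi_\Gamma$ in both directions, and inclusive of all positive-genus and ramification data, and then verify that re-saturating genuinely recovers $\mathfrak{C}\phi$ rather than some other harmonic morphism agreeing with it only on the finite part. The crux is the claim that along the continuum of valence-two genus-$0$ points the harmonic morphism of saturated metrized complexes carries no information beyond the tropical map $\phi_\Gamma$ together with the finite exceptional data --- equivalently, that a finite morphism $\mathbb{P}^1 \to \mathbb{P}^1$ whose degree equals the prescribed expansion factor and which matches the two reduction points is unique up to the relevant coordinate changes --- which is exactly what allows the finite lift of \cite{ABBR13} to extend uniquely and canonically to the saturation.
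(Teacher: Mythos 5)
Your proposal is correct and follows essentially the same route as the paper's proof: both parts are reduced to the lifting theorems of \cite{ABBR13} by restricting the saturated metrized complex (and, in part (2), the harmonic morphism) to a sufficiently fine finite vertex set, lifting the resulting metrized complex data, and then checking that saturation recovers the original object. The paper states these steps tersely ("choose vertex sets fine enough\ldots straightforward to verify"), while you fill in the same verification — genus bookkeeping forcing projective lines off the vertex set, and rigidity of the totally ramified $\mathbb{P}^1\to\mathbb{P}^1$ maps at ordinary points — so no discrepancy in substance.
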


\begin{proof}
For part 1, we can choose a vertex set $V$ of the underlying metric graph $\Gamma$ of $\mathfrak{C}$ such that the associated curves to points in $\Sigma\setminus V$ are all projective lines over $\kappa$ with two marked points. Let $\mathfrak{C}_0$ be the metrized complex derived from restricting $\mathfrak{C}$ to $V$ (Remark~\ref{R:restriction}). We can lift $\mathfrak{C}_0$ to a smooth curve $X$ over $\mathbb{K}$ with skeleton $\Sigma$ by the lifting theorem of metrized complexes. It is then straight forward to verify that $\mathfrak{C}$ is isomorphic to the associated saturated metrized complex of $\Sigma$.

For part 2, we choose vertex sets of $\mathfrak{C}$ and $\mathfrak{C}'$ fine enough to derive metrized complexes $\mathfrak{C}_0$ and $\mathfrak{C}'_0$ respectively such that the harmonic morphism $\mathfrak{C}\phi$ of saturated metrized complex can be restricted to harmonic morphism $\mathfrak{C}\phi_0:\mathfrak{C}_0\rightarrow \mathfrak{C}'_0$ of metrized complexes. By the lifting theorem of harmonic morphism of metrized complexes, we can  lift $\mathfrak{C}\phi_0$ to a finite morphism $\phi:X\rightarrow X'$. Then $\mathfrak{C}\phi$ is induced by $\phi$.
\end{proof}

The rest of this subsection contains the notion of pullback of a harmonic morphism and the notion of modification of saturated metrized complex.  This will be used in the proof of the smoothing critrion (Section~\ref{S:SmoothingProof}).

\begin{remark}{\rm {\bf (Pullback divisor and pullback function of a harmonic morphism)} \label{R:pullback}}
Let $\mathfrak{C}\phi=(\phi_\Gamma,\{\phi_p\}_{p\in\Gamma})$ be a harmonic morphism between saturated metrized complexes $\mathfrak{C}$ and $\mathfrak{C'}$ whose underlying metric graphs are $\Gamma$ and $\Gamma'$ respectively.  Let $p'$ be a point in $\Gamma'$ and $u'$ be a point in the associated curve $C'_{p'}$ of $p'$ in $\mathfrak{C'}$. Let $E_{u'}$ be the degree one effective divisor on $\mathfrak{C'}$ whose only supporting point is $u'$.
Then we can naturally associate a \emph{pullback divisor} $\mathfrak{C}\phi^*(E_{u'})\in\Div(\mathfrak{C})$ of $E_{u'}$ defined as follows:
(1) the tropical part of $\mathfrak{C}\phi^*(E_{u'})$ is the pullback divisor $\phi_\Gamma^*((p'))\in\Div(\Gamma)$ of the divisor $(p')\in\Div(\Gamma')$, (2)
the $C_p$-part of $\mathfrak{C}\phi^*(E_{u'})$ is the pullback divisor $\phi_p^*((u'))\in\Div(C_p)$ of the divisor $(u')\in\Div(C'_{p'})$ if $p\in\phi_\Gamma^{-1}(p')$, and (3) the $C_p$-part of $\mathfrak{C}\phi^*(E_{u'})$ is $0$ if $p\notin\phi_\Gamma^{-1}(p')$. Note that the properties of harmonic morphisms guarantee that $\mathfrak{C}\phi^*(E_{u'})$ is a well-defined divisor on $\mathfrak{C}$. We may also simply call the pullback divisor of $E_{u'}$ as the pullback divisor of the point $u'$ sometimes. Since we can formally write $E_{u'}=(u')$ (Remark~\ref{R:DivFreeAb}), we can also formally write
$$\mathfrak{C}\phi^*((u'))=\sum_{p\in\phi_\Gamma^{-1}(p')}\phi_p^*((u')).$$

Moreover, by letting $\mathfrak{C}\phi^*$ preserve linear combinations, we can naturally associate a pullback divisor $\mathfrak{C}\phi^*(\mathcal{D'})$ on $\mathfrak{C}$ to all divisors $\mathcal{D}'$ on $\mathfrak{C'}$.

On the other hand, if $\mathfrak{f}'=(f'_{\Gamma'},\{f'_{p'}\}_{p' \in \Gamma'})$ is a rational function on $\mathcal{C}'$, then we can also pullback $\mathfrak{f}'$ to a rational function $\mathfrak{C}\phi^*(\mathfrak{f}')$ on $\mathfrak{C}$ in a natural way: the tropical part of $\mathfrak{C}\phi^*(\mathfrak{f}')$ is the pullback function $\phi_\Gamma^*(f'_{\Gamma'})$ of $f'_{\Gamma'}$, and the $C_p$-part of $\mathfrak{C}\phi^*(\mathfrak{f}')$ is the pullback function $\phi_p^*(f'_{\phi_\Gamma(p)})$ of the rational function $f'_{\phi_\Gamma(p)}$. It is straightforward to verify that the principal divisor associated to the pullback function of $\mathfrak{f}'$ is the same as the pullback divisor of the principal divisor associated to $\mathfrak{f}'$.

\qed
\end{remark}

A metric graph $\Gamma^\text{mod}$ is called a \emph{modification} of a metric graph $\Gamma$ if $\Gamma$ is isometric to a subgraph of $\Gamma^\text{mod}$ and the genus of $\Gamma^\text{mod}$ is the same as the genus of $\Gamma$.

A saturated metrized complex $\mathfrak{C}^\text{mod}$ is called a \emph{modification} of a saturated metrized complex $\mathfrak{C}$ if  (1) the metric graph $\Gamma^\text{mod}$ underlying $\mathfrak{C}^\text{mod}$ is a modification of the metric graph $\Gamma$ underlying $\mathfrak{C}$, (2) $g(\mathfrak{C}^\text{mod})=g(\mathfrak{C})$, and (3) for each point $p\in\Gamma$, when $p$ is also considered as a point in $\Gamma^\text{mod}$, the curve associated to $p$ in $\mathfrak{C}$ is identical to the curve associated to $p$ in $\mathfrak{C}^\text{mod}$ and the reduction map of $\mathfrak{C}$ at $p$ is identical to the reduction map of $\mathfrak{C}^\text{mod}$ at $p$ restricted to $\Tan_\Gamma(p)$ (note that in this setting we have $\Tan_{\Gamma^\text{mod}}(p)\supseteq\Tan_\Gamma(p)$).

\begin{remark}
If $\mathfrak{C}^\text{mod}$ is a modification of $\mathfrak{C}$ as saturated metrized complexes with underlying metric graphs $\Gamma^\text{mod}$ and $\Gamma$ respectively. Then by the retraction map from $\Gamma^\text{mod}$ to $\Gamma$, a divisor on $\Gamma^\text{mod}$ also naturally retracts to a unique divisor on $\Gamma$. In addition, a divisor on $\mathfrak{C}^\text{mod}$ naturally retracts to a divisor on $\mathfrak{C}$ and any specialization map of divisors factors through the retraction map of divisors.
\end{remark}

 \subsection{Smoothability}

The following theorem is the analogue of the specialization theorem (Theorem 5.9 in \cite{AB12}) for saturated metrized complexes.

In Appendix~\ref{S:Berkovich}, we show that for a smooth curve $X/\mathbb{K}$, we can associate a saturated metrized complex $\mathfrak{C}(\Sigma)$ to a Berkovich skeleton $\Sigma$ of $X^{\rm an}$, and there exist a specialization map $\tau_{*}$ that takes a divisor on $X$ to a divisor on $\mathfrak{C} (\Sigma)$ and a reduction map that takes a rational function on $X$ to a rational function on $\mathfrak{C}(\Sigma)$.

 \begin{theorem} \label{T:DHspecialization}
 For any $g^r_d$ on $X$ represented by the pair $(D,H)$ where $H$ is an $(r+1)$-dimensional linear space of rational functions on $X$, the data $(\tau_{*}(D), \{H_p\}_{p \in \Gamma})$ where  $H_p$ is the image of $H$ under the reduction map at $p$ is a limit $g^r_d$ on $\mathfrak{C}(\Sigma)$.
 \end{theorem}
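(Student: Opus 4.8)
The goal is to show that the specialization $(\tau_*(D),\{H_p\}_{p\in\Gamma})$ of a $g^r_d$ on $X$ is a limit $g^r_d$ on $\mathfrak{C}(\Sigma)$ in the sense of Definition~\ref{D:LimLinSeries}(2). The plan is to reduce the statement to the specialization theorem for metrized complexes (Theorem~5.9 of \cite{AB12}) by exploiting the fact that a saturated metrized complex only differs from an ordinary metrized complex by the insertion of genus-zero curves (carrying marked points) at the remaining points of $\Gamma$. First I would fix notation: write $\Gamma=\Sigma$ for the underlying metric graph, let $\tau\colon X^{\rm an}\to\Gamma$ be the retraction, and recall from Appendix~\ref{S:Berkovich} that $\tau_*$ sends a divisor on $X$ to a divisor $\mathcal{D}=(D_\Gamma,\{D_p\}_{p\in\Gamma})$ on $\mathfrak{C}(\Sigma)$, and that the reduction map sends a rational function on $X$ to a rational function on $\mathfrak{C}(\Sigma)$ whose $C_p$-part lies in $H_p$ by construction. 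The first step is to verify the purely structural requirements: that $\mathcal{D}=\tau_*(D)$ is an effective divisor of degree $d$ (effectivity of $D$ is preserved because each local reduction of an effective divisor is effective, and the degree is preserved by $\tau_*$), and that each $H_p$, being the image of the $(r+1)$-dimensional space $H$ under reduction, is $(r+1)$-dimensional on $C_p$. Here one must check that no dimension is lost under reduction; this follows because reductions of linearly independent rational functions remain independent on each $C_p$ — a standard consequence of the fact that reduction is compatible with the valuation and the residue field is the function field of $C_p$.

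The heart of the argument is the main condition in Definition~\ref{D:LimLinSeries}(2): for every effective divisor $\mathcal{E}=(E_\Gamma,\{E_p\}_{p\in\Gamma})$ on $\mathfrak{C}(\Sigma)$ of degree $r$ whose $C_p$-parts avoid the marked points $A_p$, there must exist a rational function $\mathfrak{f}=(f_\Gamma,\{f_p\}_{p\in\Gamma})$ with $f_p\in H_p$ for all $p$ and $\mathcal{D}-\mathcal{E}+\divisor(\mathfrak{f})\geq 0$. The strategy is to lift $\mathcal{E}$ to an effective divisor $E$ of degree $r$ on $X$: since the $C_p$-parts of $\mathcal{E}$ avoid the marked points (which correspond to tangent directions of $\Gamma$), each supporting point of $E_p$ lies in a ``smooth'' locus over which one can choose a $\mathbb{K}$-point of $X$ specializing to it, and the tropical part $E_\Gamma$ records the combinatorial location of these lifts. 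Having produced $E$ with $\tau_*(E)=\mathcal{E}$, I invoke that $(D,H)$ is a genuine $g^r_d$: since $\deg E=r$ and $\dim H=r+1$, the space $H(-E)=\{f\in H:\divisor(f)+D-E\geq 0\}$ is nonzero, so there is $f\in H$ with $D-E+\divisor(f)\geq 0$ on $X$. The final step is to specialize this inequality: applying $\tau_*$ and using that the reduction of $f$ is a rational function $\mathfrak{f}$ on $\mathfrak{C}(\Sigma)$ with $f_p\in H_p$, together with the compatibility of $\tau_*$ with principal divisors (i.e. $\tau_*(\divisor f)$ dominates $\divisor(\mathfrak{f})$ in the appropriate sense), yields $\mathcal{D}-\mathcal{E}+\divisor(\mathfrak{f})\geq 0$.

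I expect the main obstacle to be the lifting of the test divisor $\mathcal{E}$ and the precise behavior of specialization on the inequality. The subtlety is that $\tau_*$ is not an exact inverse to reduction — specializing a principal divisor on $X$ need not give exactly a principal divisor on $\mathfrak{C}(\Sigma)$, and one only gets a domination inequality (this is the source of the Baker--Norine type specialization inequality). Consequently, care is needed to ensure that the effective divisor $\mathcal{D}-\mathcal{E}+\divisor(\mathfrak{f})$ stays effective at every point $p\in\Gamma$, both in the tropical part and in each $C_p$-part, including at the marked points where the reduction map interacts with the tangent-direction slopes $\divisor_p(f_\Gamma)$. The condition that $E_p$ avoids $A_p$ is precisely what prevents interference between the combinatorial (tropical) contributions and the marked points, so the verification should be organized by treating marked and non-marked supporting points separately. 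The secondary technical point is to confirm that every $\mathcal{E}$ arising in Definition~\ref{D:LimLinSeries}(2) genuinely lifts; this is where the saturation hypothesis (that all but finitely many $C_p$ are projective lines) and the freedom to choose lifts in the smooth locus are used, and it mirrors the corresponding argument in \cite{AB12} adapted to the saturated setting.
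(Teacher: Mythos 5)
Your overall strategy coincides with the paper's proof: preservation of dimension under reduction (the paper's Lemma~\ref{dimpre_lem}), lifting the test divisor $\mathcal{E}$ to an effective divisor $E$ on $X$ with $\tau_*(E)=\mathcal{E}$ via the characterization of the image of the specialization map (Lemma~\ref{L:specialization}), invoking the $g^r_d$ property on $X$ to produce $f\in H$ with $D-E+\divisor(f)\geq 0$, and then specializing this inequality to $\mathfrak{C}(\Sigma)$.

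However, your final step has a genuine flaw. You assert that $\tau_*(\divisor f)$ ``dominates'' $\divisor(\mathfrak{f})$, and in your last paragraph you claim that specializing a principal divisor on $X$ need not give exactly a principal divisor on $\mathfrak{C}(\Sigma)$, with only a domination inequality available. Both claims are off. First, a domination $\tau_*(\divisor f)\geq \divisor(\mathfrak{f})$ points the wrong way: from $\tau_*(D)-\mathcal{E}+\tau_*(\divisor f)\geq 0$ one cannot deduce $\tau_*(D)-\mathcal{E}+\divisor(\mathfrak{f})\geq 0$, since the latter divisor is smaller; so the conclusion simply does not follow from what you have stated. Second, and this is exactly what rescues the argument, the correct statement in this setting is an equality: the Poincar\'e--Lelong formula for saturated metrized complexes (Theorem~\ref{T:PoicareLelong}, proved in the appendix from the slope formula and the Amini--Baker version) gives $\tau_*(\divisor(f))=\divisor(\mathfrak{f})$, where $\mathfrak{f}$ is the reduction of $f$, whose $C_p$-parts lie in $H_p$ by construction. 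With this equality, the desired inequality follows immediately because $\tau_*$ is a homomorphism of divisor groups preserving effective divisors. The Baker--Norine-type loss of information you worry about occurs at the level of ranks (not every effective divisor on the skeleton lifts to $X$), not at the level of principal divisors: linear equivalence is preserved exactly by $\tau_*$ here, and your proof needs that exact preservation, not a one-sided estimate.
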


\begin{proof}
By Lemma~\ref{dimpre_lem}, the dimension of the space $H$ is preserved by the specialization map. From Lemma~\ref{L:specialization}, we know that for any effective divisor $\mathcal{E}=(E_{\Gamma}, \{ E_p \}_{p \in \Gamma})$ such that $E_p$ has support in $S_p$ for every $p \in \Gamma$, there exists an effective divisor $E$ on $X$  such that $\tau_*(E)=\mathcal{E}$.  Since $(D,H)$ represents a $g^r_d$ on $X$, there must be a rational function $f \in H$ such that $D-E+{\rm div}(f) \geq 0$. We apply the specialization map to this inequality. Using the property that  the specialization map is a homomorphism between divisor groups that preserves effective divisors combined with Theorem~\ref{T:PoicareLelong} , we conclude that  $\tau_*(D)-\mathcal{E}+{\rm div}(\mathfrak{f}) \geq 0$.
\end{proof}

The following definition of smoothability of a pre-limit (or limit) $g^r_d$ represented by $(\mathcal{D}, \mathcal{H})$ accounts for whether $(\mathcal{D}, \mathcal{H})$ can be ``lifted'' to some $g^r_d$ represented by $(D,H)$.

\begin{definition}{\rm{{ \bf (Smoothable Pre-Limit Linear Series and Limit Linear Series)}}}\label{D:smoothable}
A  pre-limit $g^r_d$ (respectively, a limit $g^r_d$) represented by $(\mathcal{D}, \mathcal{H})$ on a saturated metrized complex $\mathfrak{C}$ is said to be \emph{smoothable} if there exists a smooth proper curve $X$ over $\mathbb{K}$ and  a skeleton $\Sigma$ of the Berkovich analytification $X^{\rm an}$  of $X$ such that $\mathfrak{C}$ is isomorphic to the saturated metrized complex associated to $\Sigma$ and there exists a $g^r_d$ on $X$ which is represented by $(D,H)$ such that the associated pre-limit $g^r_d$ (respectively, a limit $g^r_d$) on $\mathfrak{C}$  is represented by $(\mathcal{D},\mathcal{H})$.
\end{definition}

\begin{remark}
Since $\mathbb{K}$ is a large field with value group $\mathbb{R}$, we have no restrictions on the edge lengths of the underlying metric graph of $\mathfrak{C}$ and the above definition of smoothability is in the most general form.
\end{remark}
\begin{remark}
Theorem~\ref{T:DHspecialization} actually tells us that we do not need to distinguish the notion of smoothable pre-limit linear series and the notion of smoothable limit linear series. The extra restriction on limit linear series over pre-limit linear series is guaranteed by smoothability.
\end{remark}

\subsection{Smoothable Pre-Limit $g^1_d$ and Harmonic Morphisms}\label{subS:SmoothProp}
Let $K$ be any algebraically closed field. Let $H$ be a two-dimensional linear space of rational functions on a smooth proper curve $X$ over $K$. Assume constant functions are contained in $H$. Then all nonconstant rational functions in $H$ have the same poles and same order on the poles. We say the degree of $H$ is the degree of any nonconstant function in $H$. Therefore, $H$ defines a morphism $\phi:X\rightarrow \mathbb{P}^1_K$ of the same degree where $\mathbb{P}^1_K$ has a marked point $\infty$ such that all nonconstant rational functions $f$ in $H$ factor through $\phi$ via a degree one rational function on $\mathbb{P}^1_K$ such that all poles of $f$ map to $\infty$ in $\mathbb{P}^1_K$. Conversely, given a morphism $\phi:X\rightarrow \mathbb{P}^1_K$ where $\mathbb{P}^1_K$ has a marked point $\infty$, we know that the linear space $\mathcal{L}((\infty))$ associated to the divisor $(\infty)$ is a degree one two-dimensional linear space of rational functions on $\mathbb{P}^1_K$ which pulls back to a two-dimensional linear space $H$ of rational functions on $X$ by $\phi$ and has the same degree as $\phi$. Note that $\mathcal{L}((\infty))$ contains constant functions on $\mathbb{P}^1_K$ and $H$ contains constant functions on $X$.

Suppose $\mathfrak{C}(T)$ is a genus zero saturated metrized complex over $\kappa$ with underlying metric tree $T$. Then any two distinct effective divisors on $\mathfrak{C}(T)$ differ by a principal divisor associated to a degree one rational function on $\mathfrak{C}(T)$. Moreover, by embedding $T$ isometrically into the analytification of $\mathbb{P}^1_\mathbb{K}$ and using the lifting theorem (Theorem~\ref{T:lifting}), we may consider $T$ as a skeleton of $\mathbb{P}^1_\mathbb{K}$ whose associated saturated metrized complex is naturally isomorphic to $\mathfrak{C}(T)$. In addition, since $\mathfrak{C}(T)$ is of genus zero, all divisors and rational functions on $\mathfrak{C}(T)$ are liftable to $\mathbb{P}^1_\mathbb{K}$ with linear equivalence respected.

Now let $\mathfrak{C}$ be a saturated metrized complex with underlying metric graph $\Gamma$.

\begin{lemma} \label{L:SmoothRefined}
A smoothable pre-limit $g^r_d$ on $\mathfrak{C}$ is refined limit $g^r_d$.
\end{lemma}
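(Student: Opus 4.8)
The plan is to verify separately the two assertions packaged in the statement: that a smoothable pre-limit $g^r_d$ is genuinely a \emph{limit} $g^r_d$ (not merely a pre-limit one), and that the representing data $(\mathcal{D},\mathcal{H})$ is \emph{refined} in the sense of the two conditions following Definition~\ref{D:LimLinSeries}. The first assertion is immediate: by Definition~\ref{D:smoothable} a smoothable pre-limit $g^r_d$ is, up to the isomorphism $\mathfrak{C}\cong\mathfrak{C}(\Sigma)$, the specialization of some honest $g^r_d$ on $X$ represented by $(D,H)$, and Theorem~\ref{T:DHspecialization} says precisely that such a specialization $(\tau_*(D),\{\red_p(H)\}_{p\in\Gamma})$ is a limit $g^r_d$. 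So it remains to check the two refinement conditions for the specialized data.

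For the condition that each $H_p$ contain the constant functions, the key point is to realize the smoothable series by a representation $(D,H)$ with $1\in H$. For $r=1$ this is exactly what the discussion preceding the lemma provides: the $g^1_d$ is cut out by a morphism $\phi\colon X\to\mathbb{P}^1_{\mathbb{K}}$ with marked point $\infty$ and $H=\phi^*\mathcal{L}((\infty))$, and since $\mathcal{L}((\infty))$ contains the constants and pullback preserves them, $1\in H$. For general $r$ one obtains $1\in H$ by passing to the linearly equivalent representation $(D+\divf(s_0),\,s_0^{-1}H)$ for a nonzero $s_0\in H$. Once $1\in H$, the constant function on $X$ has seminorm one at every $p$ and reduces to the constant function on $C_p$, so $1=\red_p(1)\in\red_p(H)=H_p$ for all $p\in\Gamma$; in particular $H_p$ contains the constants.

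For the condition that $\supp(D_p)$ be disjoint from the set $A_p$ of marked points, I would argue directly from the geometry of the specialization map $\tau_*$. Write $D=\sum_i a_i x_i$ with $x_i\in X(\mathbb{K})$; then the $C_p$-part of $\tau_*(D)$ is supported on the reductions $\red_p(x_i)$ of those $x_i$ whose retraction to $\Sigma$ is exactly $p$. Such a reduction cannot equal a marked point $\red_p(t)$ for $t\in\Tan_\Gamma(p)$: a closed point reducing to $\red_p(t)$ lies in the connected component of $X^{\rm an}\setminus\{p\}$ in the direction $t$, so its retraction moves off $p$ along the edge $t$ and hence is not $p$. Therefore every supporting point of $D_p$ avoids $A_p$, as required.

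The main obstacle, and the step I would treat most carefully, is the first refinement condition. It is not literally true that an \emph{arbitrary} representation $(\mathcal{D},\mathcal{H})$ of a smoothable series has $1\in H_p$ (twisting $H_p$ by a rational function and adjusting $D_p$ yields an equivalent series with the same underlying linear system), so one must be precise that smoothability is realized by the morphism- (respectively section-)normalized representation, and that the reduction map genuinely transports the constant function on $X$ to the constant on each $C_p$. This forces one to unwind the normalization built into $\red_p$ (scaling a function to seminorm one before taking residues) and, in the $r=1$ case, to use the compatibility of reduction with the $C_p$-parts $\phi_p$ of the harmonic morphism $\mathfrak{C}\phi$ from the commutative diagram, for which $\red_p\circ\phi^*=\phi_p^*\circ(\text{reduction on }\mathbb{P}^1)$ and hence $\red_p$ of the pulled-back constant equals $\phi_p^*(1)=1$. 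The remaining two claims (the ``limit'' property and disjointness from $A_p$) are comparatively formal.
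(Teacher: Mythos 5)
Your proposal is correct, and its skeleton is the same as the paper's: (i) the ``limit'' property comes from Theorem~\ref{T:DHspecialization}, (ii) constants in each $H_p$ come from constants in $H$ together with the fact that the constant function reduces to the constant function, and (iii) the support condition comes from the structure of the specialization map. The differences are in how (ii) and (iii) are discharged. For (iii), the paper simply cites Lemma~\ref{L:specialization}, whose statement is precisely that the image of $\tau_*$ consists of divisors whose $C_p$-parts are supported in $C_p\setminus A_p$; you instead re-derive this fact from the retraction geometry (a point of $X$ retracting to $p$ sits in an open ball of $X^{\rm an}\setminus\Sigma$ attached at $p$, so the tangent direction toward it is never an edge direction). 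That is the same content, just inlined rather than quoted. For (ii), the paper tersely asserts that ``the constant function is contained in $H$ for any smoothing'' and attributes the conclusion to the Poincar\'e--Lelong formula (Theorem~\ref{T:PoicareLelong}), whereas you make explicit the normalization this assertion silently relies on: a representation $(D,H)$ of a $g^r_d$ must be taken section-normalized (equivalently, for $r=1$, pulled back from $\mathcal{L}((\infty))$ along the induced morphism) so that $1\in H$, since an un-normalized $H\subseteq H^0(X,\mathcal{L}(D))$ need not contain constants and its reduction then need not either. Your observation that twisting $(D,H)$ by a rational function changes the specialized data and can destroy the constants is exactly right, and identifying this convention as the crux is a more transparent treatment of the only delicate point in the lemma; the reduction step itself ($1=\red_p(1)\in H_p$) is, as you say, immediate and does not really need Poincar\'e--Lelong.
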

\begin{proof}
A smoothable pre-limit  $g^r_d$ is a limit $g^r_d$ by Theorem~\ref{T:DHspecialization}.

We show that a smoothable limit $g^r_d$ satisfies the two properties of refined limit $g^r_d$. For the first property of refined limit $g^1_d$, we note that the constant function is contained in $H$ where $(D,H)$ is any smoothing of the limit $g^r_d$ and it follows from the Poincar\'e-Lelong Formula (Theorem~\ref{T:PoicareLelong}). Using the characterization of the image of the specialization map obtained in Lemma~\ref{L:specialization}, we deduce that a smoothable limit $g^r_d$ satisfies the second property of a refined limit $g^r_d$.
\end{proof}

\begin{remark}\label{R:smoothdiag}
For a smoothable pre-limit $g^1_d$ represented by $(\mathcal{D},\mathcal{H})$ on $\mathfrak{C}$, we must have $\mathcal{D}$ and $\mathcal{H}$ compatible to each other.
\end{remark}

\begin{theorem}\label{T:SmoothingHar}
 A pre-limit $g^1_d$ represented by $(\mathcal{D},\mathcal{H})$ on $\mathfrak{C}$ is smoothable if and only if there exists a modification $\mathfrak{C}^{\rm mod}$ of $\mathfrak{C}$ and a harmonic morphism $\mathfrak{C}\phi^{\rm mod}=(\phi_{\Gamma^{\rm mod}},\{\phi_p\}_{p \in \Gamma^{\rm mod}})$ from $\mathfrak{C}^{\rm mod}$ to a genus zero saturated metrized complex $\mathfrak{C}(T)$ such that
 \begin{enumerate}
   \item \label{Smoothing_Cond1} $\mathcal{D}$ is the retract onto $\mathfrak{C}$ of the pullback divisor on $\mathfrak{C}^{\rm mod}$ by $\mathfrak{C}\phi^{\rm mod}$ over a degree one effective divisor $(u')$ on $\mathfrak{C}(T)$, and
   \item \label{Smoothing_Cond2} for each $p\in\Gamma$, if $g_p$ is the $C_p$-part of the pullback function of a rational function on $\mathfrak{C}(T)$ of degree $\leqslant1$ with only possible pole at $u'$, then $g_p\in H_p$.
 \end{enumerate}
\end{theorem}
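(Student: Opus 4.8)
The plan is to prove both implications by translating back and forth between the algebraic morphism $\phi\colon X\to\mathbb{P}^1_{\mathbb{K}}$ attached to a two-dimensional linear system containing the constants and its non-archimedean avatar, following the commutative diagram relating $X$, $X^{\rm an}$, $\mathbb{P}^1_{\mathbb{K}}$ and $\mathbb{P}^1_{\rm Berk}$ from Section~\ref{S:introduction}. The two lifting statements of Theorem~\ref{T:lifting} and the specialization statement of Theorem~\ref{T:DHspecialization} will do the heavy lifting in the respective directions.

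For the forward direction, suppose $(\mathcal{D},\mathcal{H})$ is smoothable, witnessed by a $g^1_d$ represented by $(D,H)$ on a smooth curve $X$ with skeleton $\Sigma$ and $\mathfrak{C}\cong\mathfrak{C}(\Sigma)$. By Lemma~\ref{L:SmoothRefined} the space $H$ contains the constants, so by the correspondence in Subsection~\ref{subS:SmoothProp} it defines a morphism $\phi\colon X\to\mathbb{P}^1_{\mathbb{K}}$ with a marked point $u'=\infty$ through which all poles factor, with $H=\phi^*\mathcal{L}((u'))$ and $D$ agreeing with the pullback divisor $\phi^*((u'))$ up to base locus. Analytifying yields $\phi^{\rm an}\colon X^{\rm an}\to\mathbb{P}^1_{\rm Berk}$, and after choosing a skeleton $T$ of $\mathbb{P}^1_{\rm Berk}$ (necessarily a tree, so $\mathfrak{C}(T)$ has genus zero) the lifting and modification results of Amini--Baker--Brugall\'e--Rabinoff~\cite{ABBR13} furnish a modification $\Sigma^{\rm mod}$ of $\Sigma$ together with a finite harmonic morphism $\mathfrak{C}\phi^{\rm mod}\colon\mathfrak{C}(\Sigma^{\rm mod})\to\mathfrak{C}(T)$ compatible with $\phi^{\rm an}$; I set $\mathfrak{C}^{\rm mod}=\mathfrak{C}(\Sigma^{\rm mod})$. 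Condition~(\ref{Smoothing_Cond1}) then asserts that $\tau_*(D)$ equals the retraction to $\mathfrak{C}$ of $(\mathfrak{C}\phi^{\rm mod})^*((u'))$, which I would deduce from the compatibility of the specialization map with pullback along $\mathfrak{C}\phi^{\rm mod}$ (the Poincar\'e--Lelong formula, Theorem~\ref{T:PoicareLelong}, applied through that diagram); the point is that the modification absorbs exactly the ramification and base-point data that the retraction collapses. Condition~(\ref{Smoothing_Cond2}) follows because every degree-$\leqslant 1$ rational function on $\mathfrak{C}(T)$ with pole only at $u'$ pulls back under $\mathfrak{C}\phi^{\rm mod}$ to a function whose $C_p$-part is the reduction at $p$ of an element of $\phi^*\mathcal{L}((u'))=H$, hence lies in $H_p$.

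For the converse, suppose such $\mathfrak{C}^{\rm mod}$, $\mathfrak{C}(T)$ and $\mathfrak{C}\phi^{\rm mod}$ are given. Since $\mathfrak{C}(T)$ has genus zero, by the discussion in Subsection~\ref{subS:SmoothProp} its tree $T$ may be realized as a skeleton of $\mathbb{P}^1_{\mathbb{K}}$ with $\mathfrak{C}(T)$ isomorphic to the associated saturated metrized complex. Applying Theorem~\ref{T:lifting}(2) to $\mathfrak{C}\phi^{\rm mod}$ produces a finite morphism $\phi\colon X\to\mathbb{P}^1_{\mathbb{K}}$ of curves over $\mathbb{K}$ inducing $\mathfrak{C}\phi^{\rm mod}$, with $\mathfrak{C}^{\rm mod}$ isomorphic to the saturated metrized complex of a skeleton $\Sigma^{\rm mod}$ of $X^{\rm an}$; retracting $\Sigma^{\rm mod}$ onto the subgraph corresponding to $\Gamma$ yields a skeleton $\Sigma$ with $\mathfrak{C}(\Sigma)\cong\mathfrak{C}$. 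I then set $D=\phi^*((u'))$ and $H=\phi^*\mathcal{L}((u'))$, obtaining a $g^1_d$ on $X$ of degree $d$, the degrees matching because the retraction of divisors preserves degree and $\deg\mathcal{D}=d$ by~(\ref{Smoothing_Cond1}). Specializing $(D,H)$ to $\Sigma$ and invoking Theorem~\ref{T:DHspecialization} gives a limit $g^1_d$ on $\mathfrak{C}$; its divisor equals $\mathcal{D}$ by~(\ref{Smoothing_Cond1}) together with the same compatibility of specialization and pullback, while its $C_p$-parts are exactly the reductions of $H$, which by~(\ref{Smoothing_Cond2}) lie in $H_p$ and, being two-dimensional by the dimension-preservation of Lemma~\ref{dimpre_lem}, coincide with $H_p$. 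Hence $(\mathcal{D},\mathcal{H})$ is smoothable.

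The step I expect to be the main obstacle is the forward direction's passage from the merely \emph{pseudo}-harmonic retraction of $\phi^{\rm an}$ to a genuinely harmonic morphism on a modification, together with the careful bookkeeping of divisors through the modification--retraction pair. Concretely, one must verify that $T$ can be taken of genus zero, that $\Sigma^{\rm mod}$ is a modification of $\Sigma$ in the precise sense used here (same genus, containing $\Gamma$ isometrically, with unchanged curves and reduction maps over $\Gamma$), and that the retraction of $(\mathfrak{C}\phi^{\rm mod})^*((u'))$ recovers $\tau_*(D)$ on the nose rather than merely up to linear equivalence or stray base-point contributions. Matching the combinatorial notion of modification used in this paper with the analytic modifications produced in~\cite{ABBR13} is the technical heart of the argument.
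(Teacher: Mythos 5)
Your proposal is correct and follows essentially the same route as the paper: the forward direction passes from $H$ to the morphism $\phi\colon X\to\mathbb{P}^1_{\mathbb{K}}$, analytifies, and restricts to skeletons (the paper makes your ABBR13-based modification explicit by taking $\Gamma^{\rm mod}=(\phi^{\rm an})^{-1}(T)$), while the converse lifts the harmonic morphism via Theorem~\ref{T:lifting}(2) and pulls back $(\infty_{\mathbb{K}})$ and $\mathcal{L}((\infty_{\mathbb{K}}))$, exactly as you do. The only cosmetic differences are that the paper marks a point $\infty_{\mathbb{K}}\in\mathbb{P}^1_{\mathbb{K}}$ reducing to $u'$ rather than writing $\phi^*((u'))$, and it disposes of base points by an explicit reduction to the base-point-free case at the end, which your ``up to base locus'' remark matches in spirit.
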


\begin{proof}
First by Lemma~\ref{L:SmoothRefined} and Remark~\ref{R:smoothdiag}, we may assume that $(\mathcal{D},\mathcal{H})$ is refined and let us first assume $(\mathcal{D},\mathcal{H})$ is base-point free.

If $(\mathcal{D},\mathcal{H})$ is smoothable, there exists a smooth proper curve $X$ over $\mathbb{K}$ and a skeleton  $\Sigma(X^{\rm an},V)$ of $X$ such that the saturated metrized complex associated to $(X^{\rm an},V)$ is isomorphic to $\mathfrak{C}$ (we identify them in the following for simplicity of discussion and thus $\Gamma=\Sigma(X^{\rm an},V)$). Let $(D,H)$ represent a $g^1_d$ on $X$ corresponding to $(\mathcal{D},\mathcal{H})$, which is also base-point free.

Consider a map $\phi: X \rightarrow \mathbb{P}^1_\mathbb{K}$ defined by $H$ where $\mathbb{P}^1_\mathbb{K}$ is marked with a point $\infty_\mathbb{K}$ and $D$ is the pullback divisor of $\phi$ over the point $\infty_\mathbb{K}$ of $\mathbb{P}^1_\mathbb{K}$. Then $H$ is the pullback of the two-dimensional linear space $\mathcal{L}((\infty_\mathbb{K}))$ associated to $(\infty_\mathbb{K})\in\Div(\mathbb{P}^1_\mathbb{K})$ by $\phi$.
The analytification functor induces a map $\phi^{\rm an}: X^{\rm an} \rightarrow \mathbb{P}^1_{\rm Berk}$, where $\mathbb{P}^1_{\rm Berk}$ is the Berkovich analytification  of $\mathbb{P}^1_\mathbb{K}$. We restrict the map $\phi^{\rm an}$ to the skeleton $\Gamma$ to obtain a surjective map $\phi_\Gamma: \Gamma \rightarrow T$ where $T$ is a skeleton of $\mathbb{P}^1_{\rm Berk}$. Let $\Gamma^{\rm mod}$ be $(\phi^{\rm an})^{-1}(T)$ which is a skeleton of $X$ such that $\Gamma^{\rm mod}\supseteq \Gamma$. Restricting the map $\phi^{\rm an}$ to the skeleton $\Gamma^{\rm mod}$, we get a map $\phi_{\Gamma^{\rm mod}}: \Gamma^{\rm mod} \rightarrow T$. Let $\mathfrak{C}^{\rm mod}$ be a saturated metrized complex associated to $X$ with skeleton $\Gamma^{\rm mod}$. Let $\mathfrak{C}(T)$ be a saturated metrized complex associated to $\mathbb{P}^1_{\rm Berk}$ with skeleton $T$.

The reduction to $\mathfrak{C}^{\rm mod}$ of $H$ and the reduction to $\mathfrak{C}(T)$ of $\mathcal{L}((\infty_\mathbb{K}))$ define maps $\phi_p: C_p \rightarrow C'_{\phi_{\Gamma^{\rm mod}}(p)}$ for all $p\in \Gamma^{\rm mod}$ which respect $\phi$. Then the data $(\phi_\Gamma, \{\phi_p\}_{p \in \Gamma})$ satisfies the compatibility conditions of a pseudo-harmonic morphism (Definition~\ref{D:pseu_har_morph}) and the data $(\phi_{\Gamma^{\rm mod}}, \{\phi_p\}_{p \in \Gamma^{\rm mod}})$ satisfies the conditions of a harmonic morphism (Definition~\ref{D:har_morph}). Furthermore, the specialization of $D$ to $\mathfrak{C}^{\rm mod}$ is the pullback divisor  $\mathcal{D}^{\rm mod}$ by $(\phi_{\Gamma^{\rm mod}}, \{\phi_p\}_{p \in \Gamma^{\rm mod}})$ over a degree one effective divisor, denoted by $(u')$, which is the specialization of the divisor $(\infty_\mathbb{K})\in\Div(\mathbb{P}^1_{\mathbb{K}})$ to $\mathfrak{C}(T)$. In addition, $\mathcal{D}$ is the specialization of $D$ to $\mathfrak{C}$ which is also the retract of $\mathcal{D}^{\rm mod}$ to $\mathfrak{C}$. On the other hand, since $H$ is the pullback of $\mathcal{L}((\infty_\mathbb{K}))$ by $\phi$ and the space of rational functions $\mathfrak{f}'$ on $\mathfrak{C}(T)$ of degree $\leqslant1$ with only possible pole at $u'$ is exactly the reduction to $\mathfrak{C}(T)$ of $\mathcal{L}((\infty_\mathbb{K}))$, the $C_p$-part of the pullback function of $\mathfrak{f}'$ must be an element of $H_p$, the reduction of $H$ at $C_p$.

Conversely, suppose that there is a harmonic morphism $\mathfrak{C}\phi^{\rm mod}=(\phi_{\Gamma}, \{\phi_p\}_{p \in \Gamma^{\rm mod}})$ between saturated metrized complexes $\mathfrak{C}^{\rm mod}$ which is a modification of $\mathfrak{C}$ and a genus zero metrized complex $\mathfrak{C}(T)$.  Let $\mathcal{D}$ be the retract onto $\mathfrak{C}$ of the pullback divisor $\mathcal{D}^{\rm mod}$ over a point $u'\in C'_{p'}$ in $\mathfrak{C}(T)$ by $\mathfrak{C}\phi^{\rm mod}$. For each $p'\in T$, restricted to the $C'_{p'}$-parts, the rational functions $\mathfrak{f}'$ on $\mathfrak{C}(T)$ of degree $\leqslant1$ with only possible pole at $u'$ make up a two dimensional linear space $H'_{p'}$ of rational functions on $C'_{p'}$. Pulling back $H'_{p'}$ for all $p'\in T$ by $\mathfrak{C}\phi^{\rm mod}$, we obtain two dimensional linear spaces $H_p$ for all $p\in\Gamma$. In this way, we get the data $(\mathcal{D},\mathcal{H})$ where $\mathcal{H}=\{H_p\}$.

By the lifting theorem (Theorem~\ref{T:lifting}), we can lift $\mathfrak{C}\phi^{\rm mod}: \mathfrak{C}^{\rm mod}\rightarrow\mathfrak{C}(T)$ to a finite morphism $\phi: X \rightarrow \mathbb{P}^1_\mathbb{K}$ of $\mathbb{K}$-curves. Mark a point in $\mathbb{P}^1_\mathbb{K}$ by $\infty_\mathbb{K}$ whose reduction to $\mathfrak{C}(T)$ is $u'$. Let $D$ be the pullback divisor of $\phi$ over the point $\infty_\mathbb{K}$ and $H$ be the pullback of the two-dimensional linear space $\mathcal{L}((\infty_\mathbb{K}))$ associated to the divisor $(\infty_\mathbb{K})$ by $\phi$. Then the lifting theorem also guarantees that $(\mathcal{D},\mathcal{H})$ can be smoothed to $(D,H)$.

Suppose that $(\mathcal{D},\mathcal{H})$ has base points $u_1,\cdots,u_m$ with orders $\alpha_1,\cdots,\alpha_m$ respectively. Then $\mathcal{D}'=\mathcal{D}-\sum_{i=1}^m\alpha_i(u_i)$ is base-point free. Since $(\mathcal{D},\mathcal{H})$ is smoothable if and only if $(\mathcal{D}',\mathcal{H})$ is smoothable, the smoothing criterion on $(\mathcal{D}',\mathcal{H})$ can be extended to the smoothing criterion on $(\mathcal{D},\mathcal{H})$.

\end{proof}

\section{Bifurcation Trees and Partition Trees}\label{S:BifParTrees}

In this section, we investigate in detail the definitions and properties of bifurcation trees and partition trees which are notions employed in the smoothing criterion. A solvable diagrammatic pre-limit $g^1_d$ has a solution $\rho$ (unique up addition by a constant function) to its characteristic equation, from which we can canonically construct a rooted metric tree called the bifurcation tree $\mathcal{B}$ and a projection $\pi_\mathcal{B}:\Gamma\rightarrow \mathcal{B}$  for $\rho$ (Subsection~\ref{subS:BifTree}). Partition trees are derived from the bifurcation tree $\mathcal{B}$ by suitably gluing the branches of  $\mathcal{B}$ (Subsection~\ref{subS:ParTree}). In particular, any metric tree $T$ underlying the genus-$0$ metrized complex $\mathfrak{C}(T)$ in the commutative diagram in Section~\ref{S:introduction} must be a partition tree. The treatment will be expanded in Section~\ref {S:ParTreeComp} and \ref{S:SmoothingProof} leading to the proof of the smoothing criterion.

\subsection{Bifurcation trees} \label{subS:BifTree}

 Let $\rho$ be a rational function on $\Gamma$ with everywhere nonzero slopes and let $\hat{\rho}:=\rho- \min \rho$ be the normalized function of $\rho$ with minimum value zero.   For a real number $c$ and $* \in \{\geq,\leq,<,>,=\}$, the set $S^\rho_{*c}$ is defined as $\{ p \in \Gamma |~\rho(p) * c \}$. Denote the set of connected components of $S^\rho_{*c}$ by $\Comp(S^\rho_{*c})$.

For each value $c\in\imag \rho$, the connected components of $S^\rho_{\geqslant c}$ are called \emph{closed superlevel components} at $c$, and the connected components of $S^\rho_{>c}$ are called \emph{open superlevel components} at $c$.

\begin{remark} \label{R:SLS}
Here we summarize some facts about closed and open superlevel components that are immediate from their definition.
\begin{enumerate}

\item For $c\in\imag\rho$, for any open superlevel component $\beta\in \Comp(S^\rho_{>c})$, there exists a unique closed superlevel component $\alpha\in \Comp(S^\rho_{\geqslant c})$ such that $\alpha\supseteq\beta$. 
\item For each $c,c'\in\imag\rho$ such that $c'\leqslant c$ and $\alpha\in \Comp(S^\rho_{\geqslant c})$, there exists a unique element $\alpha'\in \Comp(S^\rho_{\geqslant c'})$ such that $\alpha'\supseteq\alpha$.
\item $\Comp(S^\rho_{\geqslant \min_{p \in \Gamma}\rho(p)})$ is a singleton whose element is the whole metric graph $\Gamma$. 
\item For $\alpha_1\in \Comp(S^\rho_{\geqslant c_1})$ and $\alpha_2\in \Comp(S^\rho_{\geqslant c_2})$, there exists a largest $c_3\in\imag\rho$ such that there exists $\alpha_3\in \Comp(S^\rho_{\geqslant c_3})$ with $\alpha_3\supseteq \alpha_1\bigcup\alpha_2$. In particular, $c_3\leqslant\min(c_1,c_2)$ and $\alpha_3$ is the unique smallest closed superlevel component containing $\alpha_1\bigcup\alpha_2$. 

\end{enumerate}

\end{remark}

We define the notion of bifurcation tree associated to $\rho$ as follows (also see Example~\ref{E:Bif}).

\begin{definition}\label{D:BifTree}{\rm{\bf (Bifurcation Tree)}}
 Consider a rational function $\rho$ with everywhere nonzero slopes. The \emph{bifurcation tree} $\mathcal{B}$ with respect to $\rho$ is a rooted metric tree constructed in the following way:
 \begin{enumerate}
 \item By abuse of notation, we also use $\mathcal{B}$ to represent the set of points of $\mathcal{B}$. We identify the set of points of $\mathcal{B}$ with the set of all closed superlevel components of $\rho$ by the bijection  $\iota_\mathcal{B}:\mathcal{B}\rightarrow\coprod_{c\in\imag \rho}\Comp(S^\rho_{\geqslant c})$.

 \item We assign a metric structure $d_\mathcal{B}$ to $\mathcal{B}$. For $x_1,x_2\in\mathcal{B}$, let $x_1\vee x_2$ be the (unique) element in $\mathcal{B}$ such that $\iota_\mathcal{B}(x_1\vee x_2)$ is the smallest closed superlevel component which contains $\iota_\mathcal{B}(x_1)\bigcup\iota_\mathcal{B}(x_2)$. Suppose $\iota_\mathcal{B}(x_1)\in\Comp(S^\rho_{\geqslant c_1})$, $\iota_\mathcal{B}(x_2)\in\Comp(S^\rho_{\geqslant c_2})$ and $\iota_\mathcal{B}(x_1\vee x_2)\in\Comp(S^\rho_{\geqslant c_3})$. Then we let $d_\mathcal{B}(x_1,x_2)=c_1+c_2-2c_3$.

 \item The root $r(\mathcal{B})$ of $\mathcal{B}$ corresponds to the unique closed superlevel component at $\min_{p \in \Gamma}\rho(p)$, which is the whole metric graph $\Gamma$.
 \end{enumerate}
\end{definition}

For $x\in\mathcal{B}$, if $\iota_\mathcal{B}(x)\in\Comp(S^\rho_{\geqslant c})$ where $c\in\imag\rho$, we let $d^\rho_\mathcal{B}(x)=c$ and $d^0_\mathcal{B}(x)=c-\min_{p \in \Gamma}\rho(p)$. Note that $d^\rho_\mathcal{B}(r(\mathcal{B}))=\min_{p \in \Gamma}\rho(p)$, $d^0_\mathcal{B}(r(\mathcal{B}))=0$ and $d^0_\mathcal{B}(x) = d_\mathcal{B}(r(\mathcal{B}),x)$. We now show that $\mathcal{B}$ is well-defined as a metric tree with  an associated partial order.

\begin{proposition}
$\mathcal{B}$ constructed in Definition~\ref{D:BifTree} is a rooted metric tree.
\end{proposition}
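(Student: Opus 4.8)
The plan is to verify that $d_\mathcal{B}$ is a well-defined metric and that $(\mathcal{B},d_\mathcal{B})$ is a geodesic $0$-hyperbolic space, i.e. an $\mathbb{R}$-tree, rooted at the component $\Gamma$. The first step I would record is the \emph{laminar structure} underlying the construction: since $S^\rho_{\geqslant c}\subseteq S^\rho_{\geqslant c'}$ whenever $c\geqslant c'$, while distinct connected components of a single superlevel set are disjoint, any two closed superlevel components are either nested or disjoint. Remark~\ref{R:SLS} then guarantees that for any $x_1,x_2$ the join $x_1\vee x_2$ exists and is unique, so $d_\mathcal{B}$ is well-defined, and that $d^\rho_\mathcal{B}(x_1\vee x_2)\leqslant\min(d^\rho_\mathcal{B}(x_1),d^\rho_\mathcal{B}(x_2))$, whence $d_\mathcal{B}\geqslant 0$. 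Symmetry is immediate from the symmetry of $\vee$.

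Next I would dispatch nondegeneracy and the triangle inequality. If $d_\mathcal{B}(x_1,x_2)=0$ then $d^\rho_\mathcal{B}(x_1)=d^\rho_\mathcal{B}(x_2)=d^\rho_\mathcal{B}(x_1\vee x_2)=:c$, so $\iota_\mathcal{B}(x_1\vee x_2)$ is a component of $S^\rho_{\geqslant c}$ containing the components $\iota_\mathcal{B}(x_1)$ and $\iota_\mathcal{B}(x_2)$ of $S^\rho_{\geqslant c}$; all three must coincide, giving $x_1=x_2$. The heart of the matter is the single inequality
\begin{equation*}
d^\rho_\mathcal{B}(x_1\vee x_3)\;\geqslant\;\min\bigl(d^\rho_\mathcal{B}(x_1\vee x_2),\,d^\rho_\mathcal{B}(x_2\vee x_3)\bigr)\tag{$\star$}
\end{equation*}
for all $x_1,x_2,x_3$. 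To prove $(\star)$, set $c^*=\min(d^\rho_\mathcal{B}(x_1\vee x_2),d^\rho_\mathcal{B}(x_2\vee x_3))$. Since $\iota_\mathcal{B}(x_1)$ and $\iota_\mathcal{B}(x_2)$ lie in a common component at level $d^\rho_\mathcal{B}(x_1\vee x_2)\geqslant c^*$, they lie in a common component of $S^\rho_{\geqslant c^*}$, and likewise for $\iota_\mathcal{B}(x_2),\iota_\mathcal{B}(x_3)$; these two level-$c^*$ components share $\iota_\mathcal{B}(x_2)$, so by laminarity they are equal, producing one component of $S^\rho_{\geqslant c^*}$ that contains $\iota_\mathcal{B}(x_1)\cup\iota_\mathcal{B}(x_3)$. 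Maximality in the definition of $\vee$ then yields $(\star)$. Reading $d^\rho_\mathcal{B}(x_i\vee x_j)$, up to the additive constant $\min_{p\in\Gamma}\rho(p)$, as the Gromov product of $x_i,x_j$ based at the root, $(\star)$ is exactly $0$-hyperbolicity, from which the triangle inequality for $d_\mathcal{B}$ follows formally.

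Finally I would upgrade this $0$-hyperbolic metric on the point set to an honest $\mathbb{R}$-tree. Because $\Gamma$ is compact and connected and $\rho$ is continuous, $\imag\rho=[\min_{p\in\Gamma}\rho(p),\max_{p\in\Gamma}\rho(p)]$ is a closed interval; hence for every $x_1$ and every intermediate level $c\in[d^\rho_\mathcal{B}(x_1\vee x_2),d^\rho_\mathcal{B}(x_1)]$ the component of $S^\rho_{\geqslant c}$ containing $\iota_\mathcal{B}(x_1)$ is a genuine point of $\mathcal{B}$. Tracing these components as $c$ decreases from $d^\rho_\mathcal{B}(x_1)$ down to $d^\rho_\mathcal{B}(x_1\vee x_2)$ and then, along $\iota_\mathcal{B}(x_2)$, back up to $d^\rho_\mathcal{B}(x_2)$ yields an isometric embedding of the interval $[0,d_\mathcal{B}(x_1,x_2)]$ into $\mathcal{B}$ joining $x_1$ to $x_2$, so $\mathcal{B}$ is geodesic; a $0$-hyperbolic geodesic space is an $\mathbb{R}$-tree with unique arcs, and the component $\Gamma\in\Comp(S^\rho_{\geqslant\min_{p\in\Gamma}\rho(p)})$ is the unique minimum of $d^\rho_\mathcal{B}$, serving as the root. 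I expect the main obstacle to be isolating the laminar property and establishing $(\star)$ cleanly; once $(\star)$ is in hand, both the triangle inequality and the geodesic structure follow with little extra work.
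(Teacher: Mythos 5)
Your argument correctly establishes that $\mathcal{B}$ is a rooted $\mathbb{R}$-tree, but that is one step short of what the proposition asserts. In this paper a rooted \emph{metric tree} is a metric graph of tree type, so it must in particular have only finitely many branch points and leaves, and nothing in ``geodesic $+$ $0$-hyperbolic'' rules this out: for instance, attaching to $[0,1]$ a segment of length $2^{-n}$ at each point $1/n$ produces a compact, geodesic, $0$-hyperbolic space (hence an $\mathbb{R}$-tree with unique arcs) that is not a metric graph. The paper's own proof ends with exactly the sentence you are missing: since $\Gamma$ has only finitely many branch points and $\rho$ is a tropical rational function with everywhere nonzero slopes (so it has finitely many domains of linearity), closed superlevel components can split or terminate only at finitely many values of $c$, hence $\mathcal{B}$ has finitely many points of valence $\neq 2$. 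You should append this finiteness argument; with it, your proof is complete.

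Apart from that omission, your route is correct and genuinely different from the paper's. The paper endows $\mathcal{B}$ with the partial order $x\geqslant x'$ iff $\iota_\mathcal{B}(x)\supseteq\iota_\mathcal{B}(x')$, checks that it is a join-semilattice in which each up-set $\{x'\mid x'\geqslant x\}$ is totally ordered, proves the triangle inequality by a two-case analysis comparing $x_1\vee x_3$ with $x_2\vee x_3$, and then invokes the axioms of parametrized rooted trees (Appendix B5 of \cite{BR10}) to conclude that $\mathcal{B}$ is a rooted $\mathbb{R}$-tree. You instead isolate laminarity of the family of closed superlevel components, recognize $d^\rho_\mathcal{B}(x_1\vee x_2)-\min_{p\in\Gamma}\rho(p)$ as the Gromov product based at the root, and prove the single four-point inequality $(\star)$, from which nondegeneracy, the triangle inequality (together with your observation that $d^\rho_\mathcal{B}(x_1\vee x_2)\leqslant\min(d^\rho_\mathcal{B}(x_1),d^\rho_\mathcal{B}(x_2))$), and $0$-hyperbolicity all follow formally; you then build geodesics explicitly by tracing components across levels. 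Your approach concentrates all the metric verification into one inequality and is self-contained where the paper leans on \cite{BR10}; the paper's semilattice bookkeeping is more pedestrian but keeps the combinatorial structure of $\Gamma$ in view, which is what makes its finiteness step a one-liner. (One small point: in your geodesic construction you should also record the distance between points on \emph{opposite} sides of the path, which follows from laminarity and minimality of the join exactly as in your proof of $(\star)$.)
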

\begin{proof}
Here we give a proof following  a general construction of parametrized rooted trees and  rooted $\mR$-trees  as discussed in Appendix B5 of \cite{BR10}. We will show that $\mathcal{B}$ can be constructed as a tree by gluing subsets of $\mathcal{B}$ which are isometric to line segments and then $d_\mathcal{B}$ is a well defined metric on $\mathcal{B}$.

We first note that a partial order can be associated to $\mathcal{B}$, i.e., for two points $x$ and $x'$, we say $x\geqslant x'$ if $\iota_\mathcal{B}(x)\supseteq \iota_\mathcal{B}(x')$. Clearly $d^\rho_\mathcal{B}(x)\leqslant d^\rho_\mathcal{B}(x')$  if  $x\geqslant x'$. By Remark~\ref{R:SLS}, this partial order is well-defined and it is easy to verify that  $\mathcal{B}$ is a join-semilattice under the join operation $\vee$ where the $x\vee x'$  for any two elements $x$ and $x'$ in $\mathcal{B}$  corresponds to the smallest closed superlevel component which contains $\iota_\mathcal{B}(x)\bigcup\iota_\mathcal{B}(x')$ as in Definition~\ref{D:BifTree}. Note that $x\geqslant x'$ whenever $x\vee x' = x$ and $r(\mathcal{B})$ is the unique maximal element. 

For $x_1,x_2\in\mathcal{B}$, suppose that $x_1\geqslant x_2$ which means $x_1\vee x_2 = x_1$.   Let $d^\rho_\mathcal{B}(x_1)=c_1$ and $d^\rho_\mathcal{B}(x_2)=c_2$. By definition, we have $d_\mathcal{B}(x_1,x_2)=c_1+c_2-2c_1=c_2-c_1$.
We claim that $X=\{x\in\mathcal{B}\mid x_1\geqslant x \geqslant x_2\}$ is isometric to a closed segment of length $c_2-c_1$. First, note that $d^\rho_\mathcal{B}(x)\in[c_1,c_2]$ for any $x\in X$. 
Now for each $c\in [c_1,c_2]$, there exists a unique $x\in \mathcal{B}$ such that $d^\rho_\mathcal{B}(x) = c$ and $x\geqslant x_2$, i.e., $\iota_\mathcal{B}(x)$ is the unique closed superlevel component in $\Comp(S^\rho_{\geqslant c})$ which contains $\iota_\mathcal{B}(x_2)$ (Remark~\ref{R:SLS}). On the other hand we must have $x_1\geqslant x$ at the same time and thus $x\in X$. Therefore, by sending $c$ to $x$, we can define a bijection $\phi: [c_1,c_2]\to X$. It remains to show that $\phi$ is an isometry. Actually, by an analogous argument as above, we see that for any $y_1,y_2\in X$, we either have $y_1\geqslant  y_2$ or $y_2 \geqslant  y_1$ and $d_\mathcal{B}(y_1,y_2)=d_2-d_1$ if $y_1\geqslant  y_2$ and $d^\rho_\mathcal{B}(y_1)=d_1$ and $d^\rho_\mathcal{B}(y_2)=d_2$. It follows that $\phi$ is an isometry. Here, we write $[x_1,x_2]$ to represent $X$ as  a closed line segment, and let $(x_1,x_2]=[x_1,x_2]\setminus \{x_1\}$, $[x_1,x_2)=[x_1,x_2]\setminus \{x_2\}$ and $(x_1,x_2)=[x_1,x_2]\setminus \{x_1,x_2\}$. 

Since $\rho$ is a rational function with everywhere nonzero slopes, there are only finitely many points $x_1,\cdots,x_m$ in $\Gamma$ at which $\rho$ takes  local maximum values. Let $X_i :=\{x\in\mathcal{B}\mid  x \geqslant x_i\}$ for $i=1,\cdots,m$. Then $X_i = [r(\mathcal{B}),x_i]$. Note that we must have $\mathcal{B} = \bigcup_{i=1}^m X_i$. Let us reconstruct $\mathcal{B}$ by gluing $X_i$'s one by one. First let us glue $X_1$ and $X_2$. Note that $x_1\vee x_2\geqslant x_1, x_2$ and thus $x_1\vee x_2 \in X_1\bigcap X_2$. This means that $X_1\bigcap X_2 = [r(\mathcal{B}), x_1\vee x_2]$ and $X_1\bigcup X_2 = [r(\mathcal{B}), x_1\vee x_2]\coprod (x_1\vee x_2, x_1]\coprod  (x_1\vee x_2, x_2]$. Note that it follows that $X_1\bigcup X_2$ is a (topological) tree. Let us do this construction in general. Suppose we have derived $X'=X_1\bigcup \cdots \bigcup X_i$ as a tree already. Consider $Y=\{x_{i+1}\vee x_j\mid j=1,\cdots, i\}$. Clearly $Y$ is a subset of $X_{i+1}$ which means $Y$ is totally ordered. Let $y$ be the minimum element of $Y$. Then $X' \bigcap X_{i+1}=[r(\mathcal{B}),y]$ and $X_1\bigcup \cdots \bigcup X_{i+1}=X'\bigcup X_{i+1}=X'\coprod (y,x_{i+1}]$ which is also a tree.  Thus $\mathcal{B}$ is a tree.

It remains to show that $d_\mathcal{B}$ is a metric on the whole tree $\mathcal{B}$.   We  verify from the definition that for each $x_1,x_2\in \mathcal{B}$, $d_\mathcal{B}(x_1,x_2)= d_\mathcal{B}(x_1,x_1\vee x_2)+d_\mathcal{B}(x_2,x_1\vee x_2)$. For each $x_3\in\mathcal{B}$, consider $x_1\vee x_3$ and $x_2\vee x_3$. Without loss of generality, we may assume: (1) $x_1\vee x_3 > x_2\vee x_3$ or (2) $x_1\vee x_3 = x_2\vee x_3$. For case (1), we have $x_1\vee x_2\vee x_3 = x_1\vee x_2 = x_1\vee x_3$, and thus
\begin{align*}
d_\mathcal{B}(x_1,x_2)=&d_\mathcal{B}(x_1,x_1\vee x_2 \vee x_3)+d_\mathcal{B}(x_1\vee x_2 \vee x_3,x_2\vee x_3)+d_\mathcal{B}(x_2\vee x_3, x_2) \\
\leqslant & (d_\mathcal{B}(x_1,x_1\vee x_2 \vee x_3)+d_\mathcal{B}(x_1\vee x_2 \vee x_3,x_2\vee x_3)+d_\mathcal{B}(x_2\vee x_3, x_3))\\
&+(d_\mathcal{B}(x_2\vee x_3, x_2)+d_\mathcal{B}(x_2\vee x_3, x_3))\\
=&d_\mathcal{B}(x_1,x_3)+d_\mathcal{B}(x_2,x_3)
\end{align*}
where equality holds if and only if $x_3\geqslant x_2$.
For case (2), we have $x_1\vee x_2\vee x_3 = x_1\vee x_3 = x_2\vee x_3\geqslant x_1\vee x_2$, and thus
\begin{align*}
d_\mathcal{B}(x_1,x_2)=&d_\mathcal{B}(x_1,x_1\vee x_2)+d_\mathcal{B}(x_2,x_1\vee x_2)\\
\leqslant & (d_\mathcal{B}(x_1,x_1\vee x_2)+d_\mathcal{B}(x_1\vee x_2,x_1\vee x_2\vee x_3)+d_\mathcal{B}(x_3,x_1\vee x_2\vee x_3))\\
&+(d_\mathcal{B}(x_2,x_1\vee x_2)+d_\mathcal{B}(x_1\vee x_2,x_1\vee x_2\vee x_3)+d_\mathcal{B}(x_3,x_1\vee x_2\vee x_3))\\
=&d_\mathcal{B}(x_1,x_3)+d_\mathcal{B}(x_2,x_3)
\end{align*}
where equality holds if and only if $x_3=x_1\vee x_2$. Therefore, the triangle equality is satisfied and $d_\mathcal{B}$ is a metric.

\end{proof}

\begin{remark}
In the above proof, note that the leaves of $\mathcal{B}$ other than $r(\mathcal{B})$ are in one-to-one correspondence with those closed superlevel sets which are singletons, and in one-to-one correspondence with local maximum points of $\rho$ (which we may also call sink points of $\rho$). Denote the set of leaves of $\mathcal{B}$ by $\Leaf(\mathcal{B})$. We call a point $x$ of $\mathcal{B}$ with $|\Tan^+_\mathcal{B}(x)|\geqslant2$ a \emph{bifurcation point} of $\mathcal{B}$, and denote the set of bifurcation points by $\BifB$. Then $\emph(\mathcal{B})\bigcap\BifB=\emptyset$ and $\Leaf(\mathcal{B})\bigcup\BifB$ is the set of points of valence other than $2$ in $\mathcal{B}$ together with $r(\mathcal{B})$. Note that we have either $r(\mathcal{B})\in\Leaf(\mathcal{B})$ or $r(\mathcal{B})\in\BifB$.
We call the image of $d^\rho_\mathcal{B}$ restricted to the minimal vertex set of $\mathcal{B}$ the set of \emph{bifurcation values}, denoted by $\Bif$. Then $\Bif$ is finite and we have $\Bif\subseteq\mathcal{E}_\rho$, where $\mathcal{E}_\rho$ is the set of exceptional points of $\rho$.
\end{remark}

For a point $p$ in $\Gamma$, recall that $\Tan^{\rho+}_\Gamma(p)$ is the set of tangent directions in $\Tan_\Gamma(p)$ emanating from $p$ where $\rho$ locally increases. Similarly, we let $\Tan^{\rho-}_\Gamma(p)$ be the set of tangent directions where $\rho$ locally decreases. Then $\Tan_\Gamma(p)=\Tan^{\rho+}_\Gamma(p)\coprod\Tan^{\rho-}_\Gamma(p)$.

Let $\mathcal{T}$ be a metric tree rooted at $r(\mathcal{T})$. For a point $x$ in $\mathcal{T}$, we say a tangent direction $t\in\Tan_\mathcal{T}(x)$ is a forward (respectively backward) tangent direction at $x$ if the distance function from the root increases (respectively decreases) along $t$. Denote by $\Tan^+_\mathcal{T}(x)$ (respectively $\Tan^-_\mathcal{T}(x)$) the set of forward (respectively backward) tangent directions at $x$. Note that $\Tan^-_\mathcal{T}(x)$ is empty if $x$ is the root of $\mathcal{T}$ and a singleton otherwise.

We state without proofs of the following lemmas (Lemma ~\ref{L:pi}  and Lemma ~\ref{L:VecIota}) which follow naturally from  the construction of the bifurcation tree $\mathcal{B}$ with respect to $\rho$.  Lemma~\ref{L:pi} states that $\rho$ factors through $d^\rho_\mathcal{B}$ by the canonical projection $\pi_\mathcal{B}:\Gamma\rightarrow \mathcal{B}$. Lemma~\ref{L:VecIota} states that the set of forward tangent directions on $\mathcal{B}$ can be identified with the set of all open superlevel components of $\rho$.
\begin{lemma} \label{L:pi}
For $p\in\Gamma$, there is a unique closed superlevel component $\alpha$ at $\rho(p)$ which contains $p$. By sending $p$ to $\iota^{-1}_\mathcal{B}(\alpha)$, it induces a canonical projection $\pi_\mathcal{B}:\Gamma\rightarrow \mathcal{B}$. Moreover, the map $\pi_\mathcal{B}$ is continuous, piecewise-linear, surjective and satisfies $\rho = d^\rho_\mathcal{B}\circ\pi_\mathcal{B}$.
\end{lemma}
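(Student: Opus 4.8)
The plan is to verify each asserted property directly from the construction of $\mathcal{B}$ in Definition~\ref{D:BifTree}, the decisive observation being that the level $d^\rho_\mathcal{B}(x)$ of a point $x\in\mathcal{B}$ equals the minimum of $\rho$ over the corresponding closed superlevel component $\iota_\mathcal{B}(x)$. First I would establish well-definedness: given $p$, the set $S^\rho_{\geqslant\rho(p)}$ contains $p$, and since its connected components are pairwise disjoint there is a unique component $\alpha$ containing $p$, so $\pi_\mathcal{B}(p):=\iota^{-1}_\mathcal{B}(\alpha)$ is unambiguous. Because every point of $\alpha$ has $\rho$-value $\geqslant\rho(p)$ while $p\in\alpha$ realizes $\rho(p)$, we get $\min_\alpha\rho=\rho(p)$, hence $d^\rho_\mathcal{B}(\pi_\mathcal{B}(p))=\rho(p)$; this is precisely the identity $\rho=d^\rho_\mathcal{B}\circ\pi_\mathcal{B}$. (Here one uses that, since $\Gamma$ is connected and $\rho$ has everywhere nonzero slope, any closed superlevel component other than $\Gamma$ itself attains its defining level on its boundary, so the level is recovered from the component as a set and $d^\rho_\mathcal{B}$ is well defined.) Surjectivity follows from the same principle in reverse: for $x\in\mathcal{B}$ with $\iota_\mathcal{B}(x)=\alpha$ at level $c$, compactness of $\Gamma$ and continuity of $\rho$ give a point $p\in\alpha$ with $\rho(p)=c=\min_\alpha\rho$, and then $\alpha$ is the component of $S^\rho_{\geqslant c}$ containing $p$, so $\pi_\mathcal{B}(p)=x$.

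For continuity I would prove a global Lipschitz estimate. Let $M=\max|\slope(\rho)|$ over the finitely many linear pieces of $\rho$, so $\rho$ is $M$-Lipschitz on $\Gamma$. Given $p,q$, fix a geodesic $\gamma$ from $p$ to $q$ and let $m=\min_\gamma\rho$; every point of $\gamma$ has $\rho$-value $\geqslant m$, so $\gamma\subseteq S^\rho_{\geqslant m}$ joins $p$ and $q$, whence the component of $S^\rho_{\geqslant m}$ containing $\gamma$ contains both $\alpha_p:=\iota_\mathcal{B}(\pi_\mathcal{B}(p))$ and $\alpha_q:=\iota_\mathcal{B}(\pi_\mathcal{B}(q))$. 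Consequently the join $\pi_\mathcal{B}(p)\vee\pi_\mathcal{B}(q)$ sits at a level $c_3\geqslant m$. Using the metric formula $d_\mathcal{B}(x_1,x_2)=d^\rho_\mathcal{B}(x_1)+d^\rho_\mathcal{B}(x_2)-2d^\rho_\mathcal{B}(x_1\vee x_2)$ together with $\rho(p),\rho(q)\leqslant m+M\cdot\dist(p,q)$, I obtain $d_\mathcal{B}(\pi_\mathcal{B}(p),\pi_\mathcal{B}(q))=(\rho(p)-c_3)+(\rho(q)-c_3)\leqslant 2M\cdot\dist(p,q)$, so $\pi_\mathcal{B}$ is $2M$-Lipschitz and in particular continuous (this bound is global, hence automatically handles vertices of $\Gamma$ and local extrema of $\rho$).

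For piecewise-linearity I would choose a model of $\Gamma$ fine enough that $\rho$ is affine with constant integer slope on each edge; it suffices to treat one such edge $e$ on which $\rho$ is, say, increasing with slope $\sigma$, parametrized by arclength as $p_s$. For $s\leqslant s'$ the sub-arc of $e$ between $p_s$ and $p_{s'}$ lies in $S^\rho_{\geqslant\rho(p_s)}$, so $\alpha_{p_{s'}}\subseteq\alpha_{p_s}$; thus the images $\pi_\mathcal{B}(p_s)$ form a totally ordered (nested) chain, which is a geodesic in the tree $\mathcal{B}$, and along comparable points the metric is the difference of levels, giving $d_\mathcal{B}(\pi_\mathcal{B}(p_s),\pi_\mathcal{B}(p_{s'}))=|\rho(p_s)-\rho(p_{s'})|=\sigma|s-s'|$. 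Hence $\pi_\mathcal{B}|_e$ is an affine parametrization of a geodesic segment with integer slope $\sigma$, which is the required piecewise-linearity. I expect the main obstacle to be the bookkeeping around this nesting and the metric identity --- specifically verifying that the images of nested superlevel components genuinely trace out a geodesic and that the join-level $c_3$ behaves as claimed near vertices and near local extrema of $\rho$; once the principle ``level $=$ minimum of $\rho$'' is in hand, well-definedness, surjectivity, and $\rho=d^\rho_\mathcal{B}\circ\pi_\mathcal{B}$ all reduce to it, while continuity and piecewise-linearity follow from the Lipschitz estimate and the monotonicity of $\rho$ on edges.
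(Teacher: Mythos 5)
The paper never actually proves Lemma~\ref{L:pi}: it is stated (together with Lemma~\ref{L:VecIota}) as following ``naturally from the construction'' of $\mathcal{B}$, so there is no proof of record to compare yours against line by line; what you have done is fill that gap, and your argument is correct. The load-bearing observation---that the level of a closed superlevel component $\alpha$ is recovered intrinsically as $\min_\alpha\rho$, attained on the boundary of $\alpha$ (or trivially when $\alpha=\Gamma$)---is exactly what makes $d^\rho_\mathcal{B}$ and hence $\pi_\mathcal{B}$ well defined, and it immediately yields both $\rho=d^\rho_\mathcal{B}\circ\pi_\mathcal{B}$ and surjectivity, as you say. Your continuity argument is sound: for a geodesic $\gamma$ from $p$ to $q$ with $m=\min_\gamma\rho$, the component of $S^\rho_{\geqslant m}$ containing $\gamma$ contains both $\alpha_p$ and $\alpha_q$, so the join sits at level $c_3\geqslant m$ and the metric formula of Definition~\ref{D:BifTree} gives a Lipschitz bound; incidentally, splitting $\gamma$ at a point realizing $m$ sharpens your constant from $2M$ to $M$, since $(\rho(p)-m)+(\rho(q)-m)\leqslant M\operatorname{dist}_\Gamma(p,q)$. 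For piecewise-linearity, the nesting $\alpha_{p_{s'}}\subseteq\alpha_{p_s}$ along an edge on which $\rho$ is affine makes the image a totally ordered chain on which $d_\mathcal{B}$ is the difference of levels, so $\pi_\mathcal{B}$ restricted to the edge is affine with expansion factor $|\sigma|$; the one step you gloss---that a continuous, injective, additively parametrized chain is a geodesic segment---relies on $\mathcal{B}$ being an $\mathbb{R}$-tree, which is legitimate here because the paper's preceding proposition establishes that $\mathcal{B}$ is a rooted metric tree before this lemma is invoked.
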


\begin{lemma} \label{L:VecIota}
There is a canonical bijection $\vec{\iota}_\mathcal{B}:\coprod_{x\in\mathcal{B}} \Tan^+_\mathcal{B}(x)\rightarrow \coprod_{c\in\imag \rho}\Comp(S^\rho_{> c})$. In particular,  $\Tan^+_\mathcal{B}(x)$ is in bijection with $\{\beta\in\Comp(S^\rho_{> d^\rho_\mathcal{B}(x)})|\beta\subseteq \iota_\mathcal{B}(x)\}$.
\end{lemma}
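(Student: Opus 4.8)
The plan is to construct the bijection $\vec{\iota}_\mathcal{B}$ directly from the correspondence underlying the bifurcation tree and then verify that it restricts as claimed to forward tangent cones. First I would recall from Definition~\ref{D:BifTree} that the points of $\mathcal{B}$ are identified via $\iota_\mathcal{B}$ with the closed superlevel components $\coprod_{c\in\imag\rho}\Comp(S^\rho_{\geqslant c})$, and that the partial order on $\mathcal{B}$ coincides with containment of the associated closed superlevel components. A forward tangent direction $t\in\Tan^+_\mathcal{B}(x)$ is an equivalence class of points $y > x$ lying immediately above $x$; by the join-semilattice structure established in the preceding proposition, each such $t$ determines, for $\iota_\mathcal{B}(x)\in\Comp(S^\rho_{\geqslant c})$ with $c = d^\rho_\mathcal{B}(x)$, exactly the maximal connected subsets of $\iota_\mathcal{B}(x)$ that detach from $x$ as the level rises infinitesimally above $c$.

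\medskip

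The key step is to make this detachment precise using Remark~\ref{R:SLS}. Given $x$ with $d^\rho_\mathcal{B}(x)=c$, I would assign to each forward tangent direction $t\in\Tan^+_\mathcal{B}(x)$ the open superlevel component $\beta\in\Comp(S^\rho_{>c})$ with $\beta\subseteq\iota_\mathcal{B}(x)$ that it points toward: concretely, points in the direction $t$ correspond, for all sufficiently small $\epsilon>0$, to closed superlevel components at level $c+\epsilon$ contained in $\iota_\mathcal{B}(x)$, and by Remark~\ref{R:SLS} all of these sit inside a unique open superlevel component $\beta\in\Comp(S^\rho_{>c})$ with $\beta\subseteq\iota_\mathcal{B}(x)$. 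This defines the map $\vec{\iota}_\mathcal{B}$ on each fiber $\Tan^+_\mathcal{B}(x)$ with image inside $\{\beta\in\Comp(S^\rho_{>c})\mid\beta\subseteq\iota_\mathcal{B}(x)\}$. To see it is a bijection onto this set, I would build the inverse: given an open superlevel component $\beta\in\Comp(S^\rho_{>c})$ with $\beta\subseteq\iota_\mathcal{B}(x)$, Remark~\ref{R:SLS} guarantees that for $\epsilon$ small there is a unique $\alpha''\in\Comp(S^\rho_{\geqslant c+\epsilon})$ with $\beta\supseteq\alpha''$, and the family of points $\iota_\mathcal{B}^{-1}(\alpha'')$ as $\epsilon\downarrow 0$ converges to $x$ along a well-defined tangent direction, yielding the forward tangent direction mapping back to $\beta$. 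Checking that these two assignments are mutually inverse is then routine from the uniqueness clauses in Remark~\ref{R:SLS}.

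\medskip

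Finally, to assemble the fiberwise bijections into the global bijection $\vec{\iota}_\mathcal{B}:\coprod_{x\in\mathcal{B}}\Tan^+_\mathcal{B}(x)\rightarrow\coprod_{c\in\imag\rho}\Comp(S^\rho_{>c})$, I would observe that each open superlevel component $\beta\in\Comp(S^\rho_{>c})$ is accounted for exactly once: by Remark~\ref{R:SLS} there is a unique closed superlevel component $\alpha\in\Comp(S^\rho_{\geqslant c})$ with $\alpha\supseteq\beta$, so setting $x=\iota_\mathcal{B}^{-1}(\alpha)$ identifies the unique point whose forward tangent cone contains the preimage of $\beta$. This shows the disjoint union of the fiberwise maps is globally injective and surjective, and the second assertion of the lemma is precisely the fiberwise statement already proved.

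\medskip

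The main obstacle I anticipate is the limiting argument in constructing the inverse map, i.e. justifying that the nested closed superlevel components $\alpha''$ at levels $c+\epsilon$ genuinely limit onto a single well-defined tangent direction at $x$ rather than splitting among several. This requires using that $\rho$ has everywhere nonzero slopes together with the finiteness of branching points of $\Gamma$, so that only finitely many distinct open superlevel components lie immediately above any given level; this is exactly what prevents pathological accumulation and makes the tangent directions at $x$ a finite set in bijection with these components. Since the paper states this lemma without proof as following ``naturally'' from the construction, I would keep the write-up brief, emphasizing the role of Remark~\ref{R:SLS} and deferring the routine metric-tree verifications.
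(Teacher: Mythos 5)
Your proof is correct, and it takes the route the paper itself intends: the paper states this lemma without proof, saying it ``follows naturally from the construction'' of $\mathcal{B}$, and your argument supplies exactly that construction --- the fiberwise identification of $\Tan^+_\mathcal{B}(x)$ with $\{\beta\in\Comp(S^\rho_{>d^\rho_\mathcal{B}(x)})\mid\beta\subseteq\iota_\mathcal{B}(x)\}$ via the uniqueness clauses of Remark~\ref{R:SLS}, assembled globally using the uniqueness of the closed superlevel component containing each open one. The limiting issue you flag is indeed harmless: for a fixed $\beta$ the closed components $\alpha''$ at levels $c+\epsilon$ form a totally ordered chain (again by the uniqueness clause of Remark~\ref{R:SLS}), so the corresponding points of $\mathcal{B}$ trace a single segment descending to $x$ and determine one well-defined tangent direction.
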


\begin{remark}
The projection  $\pi_\mathcal{B}$ naturally induces a pushforward map $\pi_{\mathcal{B}*}: \coprod_{p\in\Gamma}\Tan_\Gamma(p)\rightarrow\coprod_{x\in\mathcal{B}}\Tan_\mathcal{B}(x)$. In particular, (1) if $t\in \Tan^{\rho-}_\Gamma(p)$, then $\pi_{\mathcal{B}*}(t)$ is the unique element in $\Tan^-_\mathcal{B}(\pi(p))$; (2) if $t\in \Tan^{\rho+}_\Gamma(p)$, then $\pi_{\mathcal{B}*}(t)\in\Tan^+_\mathcal{B}(\pi_{\mathcal{B}}(p))$ and more precisely $\vec{\iota}_\mathcal{B}(\pi_{\mathcal{B}*}(t))$ is the unique open superlevel component of $\rho$ with $p$ on its boundary and $t$ pointing inwards. Note that $\pi_{\mathcal{B}*}$ is surjective.
\end{remark}

\begin{example} \label{E:Bif}
In Figure~\ref{F:Bif}, suppose a vertex set of $\Gamma$ (upper panel) is $\{o_1,o_2,p_1,p_2,p_3,q_1,q_2,q_3\}$ and all edges have length $1$. Then a global diagram on $\Gamma$ with all multiplicities being $1$ along directions marked by the arrows is solvable and we suppose a solution is $\rho$ with the corresponding bifurcation tree $\mathcal{B}$ and the canonical projection $\pi_\mathcal{B}$. In particular, as shown by the vertical dashed lines, the point $x$ is the root of the bifurcation tree corresponding to the unique closed superlevel set at $\min_{p\in\Gamma}(\rho)$ (the whole metric graph $\Gamma$) which is also the image of $o_1$ and $o_2$ under $\pi_\mathcal{B}$; $y_1=\pi_\mathcal{B}(p_1)$ corresponds to the closed superlevel component $\{p_1\}$; $y_2=\pi_\mathcal{B}(p_2)=\pi_\mathcal{B}(p_3)$ corresponds to the closed superlevel component which is the union of all closed edges connecting $p_2$, $p_3$, $q_1$, $q_2$ and $q_3$; for $i=1,2,3$, $z_i=\pi_\mathcal{B}(q_i)$ corresponds to closed superlevel component $\{q_i\}$. 
Note that $z_1\vee z_2 = z_1\vee z_3 = z_2\vee z_3=y_2$ and $y_1\vee z_1 = y_1\vee z_2 = y_1\vee z_3=x$. Then for $i,j=1,2,3$ and $i\neq j$, we have $d_\mathcal{B}(z_i,z_j) = d_\mathcal{B}(z_i,y_2)+d_\mathcal{B}(z_j,y_2) =1+1=2$ and $d_\mathcal{B}(z_i,y_1) = d_\mathcal{B}(z_i,x)+d_\mathcal{B}(y_1,x) = 2+1=3$. 
In addition, the tangent direction from $y_2$ to $z_1$ correspponds to the open superlevel component $(p_2,q_1]\bigcup(p_3,q_1]$, the tangent direction from $y_2$ to $z_2$ correspponds to the open superlevel component $(p_2,q_2]\bigcup(p_3,q_2]$, and the tangent direction from $y_2$ to $z_3$ correspponds to the open superlevel component $(p_2,q_3]\bigcup(p_3,q_3]$. Moreover, $\Leaf(\mathcal{B})=\{y_1,z_1,z_2,z_3\}$ and $\BifB=\{x,y_2\}$.

\begin{figure}[tbp]
\centering
\begin{tikzpicture}[>=to,x=1.5cm,y=0.6cm]
\coordinate (x) at (0,3);
    \fill [black] (x) circle (2.5pt);
    \draw (x) node[anchor=east] {\Large $x$};
\coordinate (y1) at (1,4);
    \fill [black] (y1) circle (2.5pt);
    \draw (y1) node[anchor=south east] {\Large $y_1$};
\coordinate (y2) at (1,2);
    \fill [black] (y2) circle (2.5pt);
    \draw (y2) node[anchor=north east] {\Large $y_2$};
\coordinate (z1) at (3,4);
    \fill [black] (z1) circle (2.5pt);
    \draw (z1) node[anchor=west] {\Large $z_1$};
\coordinate (z2) at (3,2);
    \fill [black] (z2) circle (2.5pt);
    \draw (z2) node[anchor=west] {\Large $z_2$};
\coordinate (z3) at (3,0);
    \fill [black] (z3) circle (2.5pt);
    \draw (z3) node[anchor=west] {\Large $z_3$};

\draw [line width=1.5pt] (x)-- (y1);
\draw [line width=1.5pt] (x)-- (y2);
\draw [line width=1.5pt] (y2)-- (z1);
\draw [line width=1.5pt] (y2)-- (z2);
\draw [line width=1.5pt] (y2)-- (z3);

\coordinate (o1) at (0,9);
    \fill [black] (o1) circle (2.5pt);
    \draw (o1) node[anchor=east] {\Large $o_1$};
\coordinate (o2) at (0,8);
    \fill [black] (o2) circle (2.5pt);
    \draw (o2) node[anchor=east] {\Large $o_2$};
\coordinate (p1) at (1,9.5);
    \fill [black] (p1) circle (2.5pt);
    \draw (p1) node[anchor=south east] {\Large $p_1$};
\coordinate (p2) at (1,8);
    \fill [black] (p2) circle (2.5pt);
    \draw (p2) node[anchor=south east] {\Large $p_2$};
\coordinate (p3) at (1,7);
    \fill [black] (p3) circle (2.5pt);
    \draw (p3) node[anchor=north east] {\Large $p_3$};
\coordinate (q1) at (3,9.5);
    \fill [black] (q1) circle (2.5pt);
    \draw (q1) node[anchor=west] {\Large $q_1$};
\coordinate (q2) at (3,7.5);
    \fill [black] (q2) circle (2.5pt);
    \draw (q2) node[anchor=west] {\Large $q_2$};
\coordinate (q3) at (3,5);
    \fill [black] (q3) circle (2.5pt);
    \draw (q3) node[anchor=west] {\Large $q_3$};

\begin{scope}[line width=1.6pt, every node/.style={sloped,allow upside down}]
  \draw (o1) -- node {\midarrow} (p1);
  \draw (o2) -- node {\midarrow} (p1);
  \draw (o2) -- node {\midarrow} (p2);
  \draw (o2) -- node {\midarrow} (p3);
  \draw (p2) -- node {\midarrow} (q1);
  \draw (p2) -- node {\midarrow} (q2);
  \draw (p2) -- node {\midarrow} (q3);
  \draw (p3) -- node {\midarrow} (q1);
  \draw (p3) -- node {\midarrow} (q2);
  \draw (p3) -- node {\midarrow} (q3);
\end{scope}

\draw [dashed, line width=1.6pt] (x) -- (o1);
\draw [dashed, line width=1.6pt] (y2) -- (p1);
\draw [dashed, line width=1.6pt] (z3) -- (q1);

\draw (-0.8,3) node {\LARGE $\mathcal{B}$};
\draw (-0.8,7.5) node {\LARGE $\Gamma$};
\draw [-stealth, line width=1pt] (-0.8,6.5) -- (-0.8,4);
\draw (-0.8,5.25) node[anchor=west] {\LARGE $\pi_\mathcal{B}$};
\end{tikzpicture}
\caption{An illustration of a bifurcation tree $\mathcal{B}$ of $\Gamma$ and the canonical projection $\pi_\mathcal{B}$.}\label{F:Bif}
\end{figure}
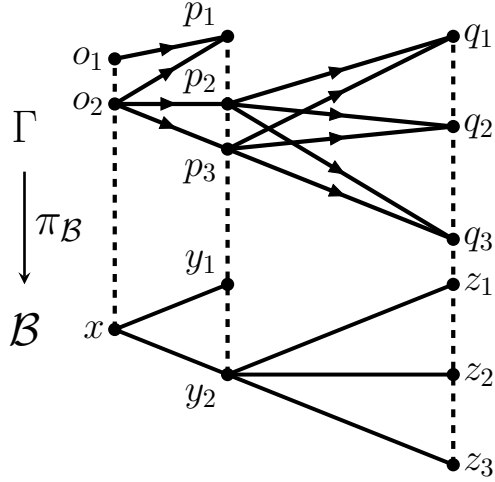
\end{example}

\subsection{Partition trees}\label{subS:ParTree}

We now generalize the notion of bifurcation trees to objects called partition trees. Partition trees are the elements in  the spaces $\LDHone$, $\LDHtwo$, $\LDHthree$ and $\LDHfour$ treated in Section \ref{S:ParTreeComp}.

For each rooted metric tree $\mathcal{T}$ with root $r(\mathcal{T})$, we can define a distance function $d^0_\mathcal{T}: \mathcal{T}\to \mR$ that takes a point $x\in\mathcal{T}$ to the distance between $r(\mathcal{T})$ and $x$. Note that this  is consistent to the definition of  $d^0_\mathcal{B}$  where $\mathcal{B}$ is some bifurcation tree $\mathcal{B}$ with respect to $\rho$ introduced in the previous subsection.

\begin{definition}
Let $\rho: \Gamma \rightarrow \mathbb{R}$ be a rational function on $\Gamma$ with everywhere nonzero slopes.
we call $(\mathcal{T},\pi_\mathcal{T})$ or simply $\mathcal{T}$ a \emph{partition tree} with respect to $\rho$ if
 $\mathcal{T}$  is a rooted metric tree and  $\pi_{\mathcal{T}}:\Gamma\to\mathcal{T}$ is a continuous finite surjection (finite means all fibers are finite) such that $\hat{\rho} = d^0_\mathcal{T}\circ\pi_\mathcal{T}$ where $\hat{\rho}=\rho-\min_{p \in \Gamma}\rho(p)$. We let  $\Lambda_\rho$ be the set of all partition trees with respect to $\rho$.
\end{definition}

\begin{example}
Let $\mathcal{B}$ be the bifurcation tree with respect to $\rho$ and $\pi_\mathcal{B}$ be the canonical projection from $\Gamma$ onto $\mathcal{B}$. Then $(\mathcal{B},\pi_{\mathcal{B}})\in\Lambda_\rho$, following from  Lemma~\ref{L:pi} directly.
\end{example}

\begin{example}
The segment $\imag\rho$ can be considered as a metric graph with root $\min_{p \in \Gamma}\rho(p)$. Clearly $(\imag\rho,\rho)\in\Lambda_\rho$.
\end{example}

The following proposition says that  all partition trees can be constructed by gluing the bifurcation tree properly.

\begin{proposition} \label{P:CanoSurj}
Let $\rho: \Gamma \rightarrow \mathbb{R}$ be a rational function on a metric graph $\Gamma$ with everywhere nonzero slopes and $\mathcal{B}$ be the bifurcation tree with respect to $\rho$. Let $\hat{\rho}=\rho-\min_{p \in \Gamma}\rho(p)$. Let $\mathcal{T}$  be a rooted metric tree and  $\pi_{\mathcal{T}}:\Gamma\to\mathcal{T}$ be a continuous finite surjection. Then
$(\mathcal{T},\pi_\mathcal{T})\in\Lambda_\rho $ if and only if there exists a continuous surjection $\Theta^{\mathcal{B}}_{\mathcal{T}}:
\mathcal{B}\to\mathcal{T} $ such that the following diagram commute:

\[
\begin{tikzpicture}[scale=1.5,
back line/.style={solid},
cross line/.style={preaction={draw=white, -,line width=4pt}},
text height=1.6ex, text depth=0.5ex]
\node (Gamma) at (-2.5,0) {$\Gamma$};
\node (B) at (-1,0) {$\mathcal{B}$};
\node (T) at (1,0) {$\mathcal{T}$};
\node (ImRho) at (2.5,0) {$\imag\hat{\rho}$};
\path[->,font=\scriptsize,>=angle 90]
(Gamma) edge node[pos=0.7,above]{$\pi_{\mathcal{B}}$} (B)
(Gamma) edge[bend right, back line] node[pos=0.4,above]{$\pi_{\mathcal{T}}$} (T)
(Gamma) edge[bend left=40] node[auto]{$\hat{\rho}$} (ImRho)

(B) edge node[pos=0.5,above]{$\Theta^{\mathcal{B}}_{\mathcal{T}}$} (T)

(B) edge[bend left, cross line] node[pos=0.4,above]{$d^0_\mathcal{B}$} (ImRho)
(T) edge node[auto]{$d^0_{\mathcal{T}}$} (ImRho);

\end{tikzpicture}
\]
\end{proposition}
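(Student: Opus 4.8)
The backward implication is a direct diagram chase that requires no new input: assuming a continuous surjection $\Theta^{\mathcal{B}}_{\mathcal{T}}$ making the diagram commute, one has
\[
d^0_{\mathcal{T}}\circ\pi_{\mathcal{T}} = d^0_{\mathcal{T}}\circ\Theta^{\mathcal{B}}_{\mathcal{T}}\circ\pi_{\mathcal{B}} = d^0_{\mathcal{B}}\circ\pi_{\mathcal{B}} = \hat{\rho},
\]
where the last equality is Lemma~\ref{L:pi}; hence $(\mathcal{T},\pi_{\mathcal{T}})\in\Lambda_\rho$. So the whole content lies in the forward implication. The plan there is to produce $\Theta^{\mathcal{B}}_{\mathcal{T}}$ by factoring $\pi_{\mathcal{T}}$ through $\pi_{\mathcal{B}}$: I would set $\Theta^{\mathcal{B}}_{\mathcal{T}}(x) := \pi_{\mathcal{T}}(p)$ for any $p\in\pi_{\mathcal{B}}^{-1}(x)$, which is legitimate precisely when the partition of $\Gamma$ into fibers of $\pi_{\mathcal{B}}$ refines the partition into fibers of $\pi_{\mathcal{T}}$, i.e. when $\pi_{\mathcal{B}}(p_1)=\pi_{\mathcal{B}}(p_2)$ forces $\pi_{\mathcal{T}}(p_1)=\pi_{\mathcal{T}}(p_2)$. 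Establishing this refinement is the heart of the argument.

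To prove the refinement, suppose $\pi_{\mathcal{B}}(p_1)=\pi_{\mathcal{B}}(p_2)$, so that $p_1,p_2$ lie in a common closed superlevel component $\alpha\in\Comp(S^\rho_{\geqslant c})$ with $c=\rho(p_1)=\rho(p_2)$; set $\ell = c-\min_{p\in\Gamma}\rho(p)$. Since $\alpha$ is connected and $\pi_{\mathcal{T}}$ is continuous, $\pi_{\mathcal{T}}(\alpha)$ is a connected subset of the $\mathbb{R}$-tree $\mathcal{T}$; the crucial structural fact I would invoke is that a connected subset of an $\mathbb{R}$-tree is convex, i.e. contains the unique geodesic joining any two of its points (if some interior point of such a geodesic were missing it would be a cut point disconnecting the subset). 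On the other hand, because $\hat{\rho}\geqslant\ell$ throughout $\alpha$ and $\hat{\rho} = d^0_{\mathcal{T}}\circ\pi_{\mathcal{T}}$ by hypothesis, every point of $\pi_{\mathcal{T}}(\alpha)$ lies at distance $\geqslant\ell$ from the root of $\mathcal{T}$. Now if $a:=\pi_{\mathcal{T}}(p_1)$ and $b:=\pi_{\mathcal{T}}(p_2)$ were distinct, both would sit at distance exactly $\ell$ from the root, so the geodesic $[a,b]$ would pass through the point $m$ at which the two root-geodesics to $a$ and $b$ diverge, and necessarily $d^0_{\mathcal{T}}(m)<\ell$. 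But $m\in[a,b]\subseteq\pi_{\mathcal{T}}(\alpha)$ by convexity, contradicting the distance bound. Hence $a=b$, which is the desired refinement.

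With the refinement in hand, $\Theta^{\mathcal{B}}_{\mathcal{T}}$ is well defined on all of $\mathcal{B}$ (using surjectivity of $\pi_{\mathcal{B}}$ from Lemma~\ref{L:pi}) and satisfies $\Theta^{\mathcal{B}}_{\mathcal{T}}\circ\pi_{\mathcal{B}}=\pi_{\mathcal{T}}$ by construction; surjectivity of $\Theta^{\mathcal{B}}_{\mathcal{T}}$ follows from that of $\pi_{\mathcal{T}}$, and the relation $d^0_{\mathcal{T}}\circ\Theta^{\mathcal{B}}_{\mathcal{T}}=d^0_{\mathcal{B}}$ follows by evaluating on a fiber and combining $\hat{\rho}=d^0_{\mathcal{T}}\circ\pi_{\mathcal{T}}$ with Lemma~\ref{L:pi}. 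For continuity I would argue that $\pi_{\mathcal{B}}$ is a quotient map: $\Gamma$ is compact, $\mathcal{B}$ is Hausdorff, and a continuous surjection from a compact space onto a Hausdorff space is closed and therefore a quotient map; since $\Theta^{\mathcal{B}}_{\mathcal{T}}\circ\pi_{\mathcal{B}}=\pi_{\mathcal{T}}$ is continuous, the universal property of quotient maps yields continuity of $\Theta^{\mathcal{B}}_{\mathcal{T}}$. The main obstacle is the refinement claim of the second paragraph, and within it the clean use of $\mathbb{R}$-tree convexity together with the forced dip below level $\ell$ at the meet $m$; the remaining verifications are routine.
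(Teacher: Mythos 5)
Your proof is correct and takes essentially the same route as the paper's: the backward direction is the same triviality, and your refinement claim (fibers of $\pi_{\mathcal{B}}$ refine fibers of $\pi_{\mathcal{T}}$) is exactly the paper's claim that any two points of $\pi_{\mathcal{B}}^{-1}(x)=\partial\iota_{\mathcal{B}}(x)$ share the same image, which the paper proves by observing that the connected set $\pi_{\mathcal{T}}(\iota_{\mathcal{B}}(x))$ lies in the superlevel set $\{y\in\mathcal{T}\mid d^0_{\mathcal{T}}(y)\geq c\}$, each of whose components has a unique boundary point at level $c$ --- the same tree-geometry fact you package as convexity of connected subsets of an $\mathbb{R}$-tree plus the meet point dipping below level $\ell$. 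If anything, your quotient-map argument for continuity of $\Theta^{\mathcal{B}}_{\mathcal{T}}$ (compact source, Hausdorff target) is more explicit than the paper's remark that continuity ``naturally follows.''
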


\begin{proof}
If the diagram commutes, then $(\mathcal{T},\pi_\mathcal{T})\in\Lambda_\rho $ by the definition of partition trees.

Now suppose $(\mathcal{T},\pi_\mathcal{T})\in\Lambda_\rho $. Then $\hat{\rho} = d^0_\mathcal{T}\circ\pi_\mathcal{T}$ and we just need to find the canonical projection $\Theta^{\mathcal{B}}_{\mathcal{T}}$ from $\mathcal{B}$ to $\mathcal{T}$ such that $\pi_{\mathcal{T}}=\Theta^{\mathcal{B}}_{\mathcal{T}}\circ\pi_{\mathcal{B}}$ and $d^0_{\mathcal{B}}=d^0_{\mathcal{T}}\circ \Theta^{\mathcal{B}}_{\mathcal{T}}$.

For a point $x\in\mathcal{B}$, let $c = d^0_{\mathcal{B}} (x)$. Note that $\iota_\mathcal{B}(x)\in\Comp(S^{\hat{\rho}}_{\geqslant c})$ and by definition of $\pi_\mathcal{B}$, we have $\pi_\mathcal{B}^{-1}(x) = \partial \iota_\mathcal{B}(x)$ where $\partial \iota_\mathcal{B}(x)$ is the set of boundary points of $\iota_\mathcal{B}(x)$. Claim that for any two points $p,q\in \partial \iota_\mathcal{B}(x)$, we have $\pi_\mathcal{T}(p)=\pi_\mathcal{T}(q)$. Note that $d^0_{\mathcal{T}}(\pi_\mathcal{T}(p)) = d^0_{\mathcal{T}}(\pi_\mathcal{T}(q)) = \hat{\rho}(p) = \hat{\rho}(q) = c$ since $\hat{\rho} = d^0_\mathcal{B}\circ\pi_\mathcal{B}=d^0_\mathcal{T}\circ\pi_\mathcal{T}$. 

Let $Y = \{y\in \mathcal{T}\mid d^0_\mathcal{T}(y)\geqslant c\}$ and $\partial Y  = \{y\in \mathcal{T}\mid d^0_\mathcal{T}(y)= c\}$. Note that each connected component of $Y$ has exactly one boundary point in $\partial Y$ since $\mathcal{T}$ is a metric tree.   Clearly $\pi_\mathcal{T}(p),\pi_\mathcal{T}(q)\in \partial Y$ and $\pi_\mathcal{T}(\iota_\mathcal{B}(x))\subseteq Y$. Since $\iota_\mathcal{B}(x)$ is connected and $\pi_\mathcal{T}$ is continuous, $\pi_\mathcal{T}(\iota_\mathcal{B}(x))$ must be connected which is only possible when $\pi_\mathcal{T}(\iota_\mathcal{B}(x))$ is contained in one connected component of $Y$. This implies $\pi_\mathcal{T}(p)=\pi_\mathcal{T}(q)$. 

Define $\Theta^{\mathcal{B}}_{\mathcal{T}}(x)$ to be this point $\pi_\mathcal{T}(p)=\pi_\mathcal{T}(q)\in\mathcal{T}$. The above argument shows that $\Theta^{\mathcal{B}}_{\mathcal{T}}$ is well-defined. The continuity of $\Theta^{\mathcal{B}}_{\mathcal{T}}$ and commutativity of the diagram also naturally follow.
\end{proof}

\begin{remark} \label{R:partition}
Consider the bifurcation tree $\mathcal{B}$ and a partition tree $\mathcal{T}$ with respect to $\rho$. The canonical projection $\Theta^{\mathcal{B}}_{\mathcal{T}}$ induces a partition $P_c$ of $(d^\rho_\mathcal{B})^{-1}(c)$ for any $c\in\imag\rho$ as follows: $x_1\sim x_2$ in $P_c$ if and only if $\Theta^{\mathcal{B}}_{\mathcal{T}}(x_1)=\Theta^{\mathcal{B}}_{\mathcal{T}}(x_2)$. (See Appendix~\ref{S:TreeSpace} for further discussions.)
\end{remark}

\begin{remark} 
Like the pushforward $\pi_{\mathcal{B}*}$ induced by the canonical projection $\pi_\mathcal{B}$, we also have the pushforward map $\pi_{\mathcal{T}*}: \coprod_{p\in\Gamma}\Tan_\Gamma(p)\rightarrow\coprod_{x\in\mathcal{T}}\Tan_\mathcal{T}(x)$ such that (1) if $t\in \Tan^{\rho-}_\Gamma(p)$, then $\pi_{\mathcal{T}*}(t)$ is the unique element in $\Tan^-_\mathcal{T}(\pi_\mathcal{T}(p))$; (2) if $t\in \Tan^{\rho+}_\Gamma(p)$, then $\pi_{\mathcal{T}*}(t)\in\Tan^+_\mathcal{T}(\pi_{\mathcal{T}}(p))$.
\end{remark}

\begin{example} \label{E:Partition}
In Figure~\ref{F:Partition}, we show an example of partition tree $\mathcal{T}$ based on the bifurcation tree $\mathcal{B}$ constructed in Example~\ref{E:Bif}. In particular, $\mathcal{T}$ is constructed by gluing edges $y_2z_1$ and $y_2z_2$ of $\mathcal{B}$ (the grey edges) isometrically into edge $y'_2y'_{12}$ of $\mathcal{T}$. As a result, $\pi_\mathcal{T}$ maps all the edges of $\Gamma$ connecting $p_2$ and $p_3$ with $q_1$ and $q_2$ to the edge $y'_2y'_{12}$.

\begin{figure}[tbp]
\centering
\begin{tikzpicture}[>=to,x=1.5cm,y=0.6cm]
\coordinate (x) at (0,3);
    \draw (x) node[anchor=east] {\Large $x$};
\coordinate (y1) at (1,4);
    \draw (y1) node[anchor=south east] {\Large $y_1$};
\coordinate (y2) at (1,2);
    \draw (y2) node[anchor=north east]   {\Large $y_2$};
\coordinate (z1) at (3,4);
    \draw (z1) node[anchor=west] {\Large $z_1$};
\coordinate (z2) at (3,2);
    \draw (z2) node[anchor=west] {\Large $z_2$};
\coordinate (z3) at (3,0);
    \draw (z3) node[anchor=west] {\Large $z_3$};

\draw [line width=1.5pt] (x)-- (y1);
\draw [line width=1.5pt] (x)-- (y2);
\draw [line width=1.5pt, color=gray] (y2)-- (z1);
\draw [line width=1.5pt, color=gray] (y2)-- (z2);
\draw [line width=1.5pt] (y2)-- (z3);
\fill [black] (x) circle (2.5pt);
\fill [black] (y1) circle (2.5pt);
\fill [black] (y2) circle (2.5pt);
\fill [black] (z1) circle (2.5pt);
\fill [black] (z2) circle (2.5pt);
\fill [black] (z3) circle (2.5pt);

\coordinate (xx) at (5,3);
      \draw (xx) node[anchor=east] {\Large $x'$};
\coordinate (yy1) at (6,4);
    \draw (yy1) node[anchor=south east] {\Large $y'_1$};
\coordinate (yy2) at (6,2);
    \draw (yy2) node[anchor=north east] {\Large $y'_2$};
\coordinate (zz12) at (8,2);
    \draw (zz12) node[anchor=west] {\Large $z'_{12}$};
\coordinate (zz3) at (8,0);
    \draw (zz3) node[anchor=west] {\Large $z'_3$};

\draw [line width=1.5pt] (xx)-- (yy1);
\draw [line width=1.5pt] (xx)-- (yy2);
\draw [line width=1.5pt, color=gray] (yy2)-- (zz12);
\draw [line width=1.5pt] (yy2)-- (zz3);
\fill [black] (xx) circle (2.5pt);
\fill [black] (yy1) circle (2.5pt);
\fill [black] (yy2) circle (2.5pt);
\fill [black] (zz12) circle (2.5pt);
\fill [black] (zz3) circle (2.5pt);

\coordinate (o1) at (2,9);
    \fill [black] (o1) circle (2.5pt);
    \draw (o1) node[anchor=east] {\Large $o_1$};
\coordinate (o2) at (2,8);
    \fill [black] (o2) circle (2.5pt);
    \draw (o2) node[anchor=east] {\Large $o_2$};
\coordinate (p1) at (3,9.5);
    \fill [black] (p1) circle (2.5pt);
    \draw (p1) node[anchor=south east] {\Large $p_1$};
\coordinate (p2) at (3,8);
    \fill [black] (p2) circle (2.5pt);
    \draw (p2) node[anchor=south east] {\Large $p_2$};
\coordinate (p3) at (3,7);
    \fill [black] (p3) circle (2.5pt);
    \draw (p3) node[anchor=north east] {\Large $p_3$};
\coordinate (q1) at (5,9.5);
    \fill [black] (q1) circle (2.5pt);
    \draw (q1) node[anchor=west] {\Large $q_1$};
\coordinate (q2) at (5,7.5);
    \fill [black] (q2) circle (2.5pt);
    \draw (q2) node[anchor=west] {\Large $q_2$};
\coordinate (q3) at (5,5);
    \fill [black] (q3) circle (2.5pt);
    \draw (q3) node[anchor=west] {\Large $q_3$};

\draw [line width=1.5pt] (o1) -- (p1);
\draw [line width=1.5pt] (o2) -- (p1);
\draw [line width=1.5pt] (o2) -- (p2);
\draw [line width=1.5pt] (o2) -- (p3);
\draw [line width=1.5pt, color=gray] (p2) -- (q1);
\draw [line width=1.5pt, color=gray] (p2) -- (q2);
\draw [line width=1.5pt] (p2) -- (q3);
\draw [line width=1.5pt, color=gray] (p3) -- (q1);
\draw [line width=1.5pt, color=gray] (p3) -- (q2);
\draw [line width=1.5pt] (p3) -- (q3);
\fill [black] (p2) circle (2.5pt);
\fill [black] (p3) circle (2.5pt);
\fill [black] (q1) circle (2.5pt);
\fill [black] (q2) circle (2.5pt);

\draw (5.5,0.5) node {\LARGE $\mathcal{T}$};
\draw (0.5,0.5) node {\LARGE $\mathcal{B}$};
\draw (1.2,8) node {\LARGE $\Gamma$};
\draw [-stealth, dashed, line width=1.8pt] (2.5,6.5) -- (1.5,4.5);
\draw [-stealth, dashed, line width=1.8pt] (5.5,6.5) -- (6.5,4.5);
\draw [-stealth, dashed, line width=1.8pt] (3.5,1.5) -- (5,1.5);
\draw (2,5.5) node[anchor=south east] {\Large $\pi_\mathcal{B}$};
\draw (6,5.5) node[anchor=south west] {\Large $\pi_\mathcal{T}$};
\draw (4.25,1.5) node[anchor=north] {\Large $\Theta^\mathcal{B}_\mathcal{T}$};
\end{tikzpicture}
\caption{An illustration of a projection $\pi_\mathcal{T}$ from $\Gamma$ to a partition tree $\mathcal{T}$.}\label{F:Partition}
\end{figure}
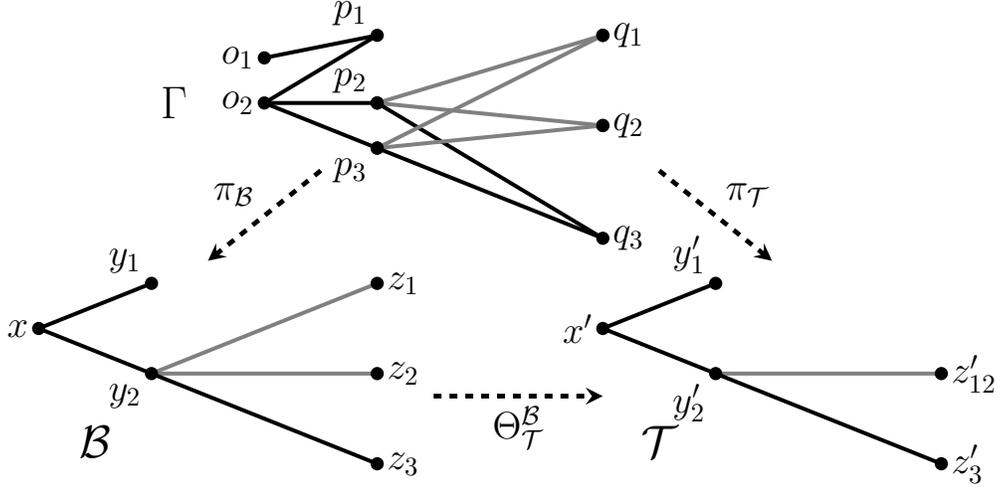
\end{example}

\section{Obstructions of Smoothability and Spaces  $\LDHone$, $\LDHtwo$, $\LDHthree$ and $\LDHfour$ of Partition Trees}\label{S:ParTreeComp}

\subsection{An Example of a Non-Solvable Limit $g^1_d$} \label{S:Nonsolvable}
In this subsection, we will show that the additional restriction on limit $g^1_d$ over pre-limit $g^1_d$ does not guarantee solvability by presenting an example of non-solvable limit $g^1_d$.

\begin{figure}[tbp]
 \centering
\begin{tikzpicture}[>=to,x=2cm,y=2cm]

\coordinate (v1) at (0,0);
    \fill [black] (v1) circle (2.5pt);
    \draw (v1) node[anchor=east] {\Large $v_1$};
\coordinate (v2) at (3,1);
    \fill [black] (v2) circle (2.5pt);
    \draw (v2) node[anchor=south] {\Large $v_2$};
\coordinate (v3) at (3.96824583655,3/4);
    \fill [black] (v3) circle (2.5pt);
    \draw (v3) node[anchor=west] {\Large $v_3$};
\coordinate (v4) at (3,-1);
    \fill [black] (v4) circle (2.5pt);
    \draw (v4) node[anchor=north] {\Large $v_4$};
\coordinate (v5) at (1,-1/3);
    \fill [black] (v5) circle (2.5pt);
    \draw (v5) node[anchor=north east] {\Large $v_5$};
\coordinate (v6) at (2,-2/3);
    \fill [black] (v6) circle (2.5pt);
    \draw (v6) node[anchor=north east] {\Large $v_6$};
\coordinate (v7) at (1,1/3);
    \fill [black] (v7) circle (2.5pt);
    \draw (v7) node[anchor=south east] {\Large $v_7$};
\coordinate (v8) at (2,2/3);
    \fill [black] (v8) circle (2.5pt);
    \draw (v8) node[anchor=south east] {\Large $v_8$};

\begin{scope}[line width=1.6pt, every node/.style={sloped,allow upside down}]
  \draw (v1) -- node {\midarrow} (v7);
  \draw (v7) -- node {\midarrow} (v8);
  \draw (v8) -- node {\midarrow} (v2);
  \draw (v3) -- node {\midarrow} (v2);
  \draw (v3) -- node {\midarrow} (v4);
  \draw (v6) -- node {\midarrow} (v4);
  \draw (v5) -- node {\midarrow} (v6);
  \draw (v1) -- node {\midarrow} (v5);

\end{scope}
\draw [rotate around={0:(v1)},line width=1.2pt,dash pattern=on 2pt off 2pt] (0.2,0) ellipse (0.05 and 0.2);
\draw [rotate around={3:(v2)},line width=1.2pt,dash pattern=on 2pt off 2pt] (3,0.92) ellipse (0.5 and 0.03);
\draw [rotate around={20:(v4)},line width=1.2pt,dash pattern=on 2pt off 2pt] (3,-0.8) ellipse (0.45 and 0.06);
\draw [rotate around={60:(v3)},line width=1.2pt,dash pattern=on 2pt off 2pt] (3.75,3/4) ellipse (0.08 and 0.08);
\draw [rotate around={-13:(v3)},line width=1.2pt,dash pattern=on 2pt off 2pt] (3.75,3/4) ellipse (0.08 and 0.08);
\end{tikzpicture}
 \caption{An example of a diagrammatic limit $g^1_d$ such that the characteristic equation associated to the global diagram does not have a solution.}
 \label{F:level_one}
\end{figure}
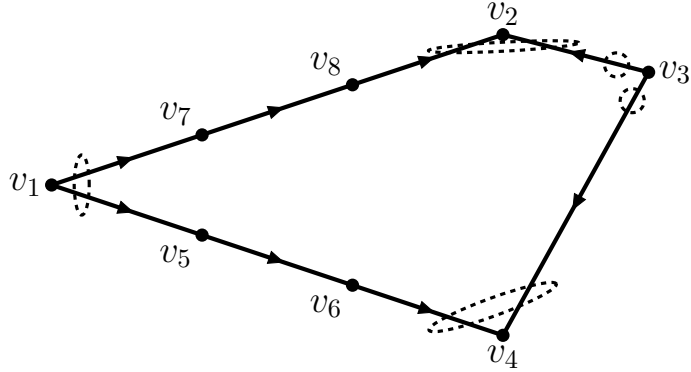

Consider the global diagram on a cycle shown in Figure \ref{F:level_one} with multiplicity $1$ on each edge. The metric graph $\Gamma$ has the lengths: $\ell_{v_2,v_3}=\ell_{v_2,v_8}$, $\ell_{v_4,v_3}=\ell_{v_4,v_5}$,  $\ell_{v_1,v_5}=\ell_{v_1,v_7}$ and $\ell_{v_1,v_6}=\ell_{v_1,v_8}$.  The algebraic curve $C_p$ at every point $p \in \Gamma$ is a projective line over $\kappa$. Let the two outgoing tangent directions $t^o_{1,1}$ and $t^o_{1,2}$ at $v_1$ be locally equivalent, the two outgoing tangent directions $t^o_{3,1}$ and $t^o_{3,2}$ at $v_3$ be in different local equivalence classes, the two incoming tangent directions $t^i_{2,1}$ and $t^i_{2,2}$ at $v_2$ be locally equivalent, and the two incoming tangent directions $t^i_{4,1}$ and $t^i_{4,2}$ at $v_4$ be locally equivalent. For all $p\in\Gamma\setminus\{v_1,v_2,v_3,v_4\}$, there is only one incoming tangent direction $t^i_p$ and one outgoing tangent direction $t^o_p$ at $p$, while $t^i_p$ and $t^o_p$ are in different local equivalence classes. We verify that this global diagram is not solvable.

On the other hand, from this global diagram, we can construct a diagrammatic pre-limit $g^1_d$ represented by $(\mathcal{D},\mathcal{H})$ in the following way. Let $D_{\Gamma}=2(v_1)+(v_3)$. Let $D_{v_1}=(x_{1,1})+(x_{1,2})$ where $x_{1,1}$ and $x_{1,2}$ are two distinct non-marked points in $C_{v_1}$, $D_{v_3}=(x_3)$ where $x_3$ is a non-marked point on $C_{v_3}$, and $D_p=0$ for all $p\in\Gamma\setminus\{v_1,v_3\}$. Using the approach shown in Remark~\ref{R:LocalHp}, we may construct $H_p$'s conversely using the local diagrams at $p$ induced from the global diagram. More precisely, let $f_{v_1}$ be a rational function on $C_{v_1}$ whose associated divisor is $(\red_{v_1}(t^o_{1,1}))+(\red_{v_1}(t^o_{1,2}))-(x_{1,1})-(x_{1,2})$, $f^{(1)}_{v_3}$ be a rational function on $C_{v_3}$ whose associated divisor is $(\red_{v_3}(t^o_{3,1}))-(x_3)$, $f^{(2)}_{v_3}$ be a rational function on $C_{v_3}$ whose associated divisor is $(\red_{v_3}(t^o_{3,2}))-(x_3)$, $f_{v_2}$ be a rational
 function on $C_{v_2}$ whose associated divisor is $(x_{2,1})+(x_{2,2})-(\red_{v_2}(t^i_{2,1}))-(\red_{v_2}(t^i_{2,2}))$ where $x_{2,1}$ and $x_{2,2}$ are two non-marked points in $C_{v_2}$, $f_{v_4}$ be a rational function on $C_{v_4}$ whose associated divisor is $(x_{4,1})+(x_{4,2})-(\red_{v_4}(t^i_{4,1}))-(\red_{v_4}(t^i_{4,2}))$ where $x_{4,1}$ and $x_{4,2}$ are two non-marked points in $C_{v_4}$. For all $p\in\Gamma\setminus\{v_1,v_2,v_3,v_4\}$, let $f_p$ be a rational function on $C_p$ whose associated divisor is
$(\red_p(t^o_p))-(\red_p(t^i_p))$. Then we let $H_p$ be a linear space of rational functions on $C_p$ with a basis $\{1,f_p\}$ for all $p\in\Gamma$ (for $p=v_3$,  we choose $f_{v_3}$ to be either $f^{(1)}_{v_3}$ or $f^{(2)}_{v_3}$, noting that the rational functions $1$, $f^{(1)}_{v_3}$ and $f^{(2)}_{v_3}$ are linear dependent).

We claim that $(\mathcal{D},\mathcal{H})$ constructed this way represents a limit $g^1_d$. To this end, we must show that  for every effective divisor $\mathcal{E}=(u,z_u)$ of degree one on the saturated metrized complex where $u\in\Gamma$ and ${z_u} \in C_u$, there exists a rational function ${\mathfrak g}=(g_{\Gamma},\{g_p\}_{p \in \Gamma})$ such that $g_p \in H_p$ and $\mathcal{D}+\divisor(\mathfrak{g})-\mathcal{E} \geq 0$.

We first specify $g_{\Gamma}$. We  describe $g_\Gamma$ in terms of a series of chip-firing moves:  (1) if $u$ lies in $[v_1,v_8]$ or $[v_1,v_6]$, we can fire both chips from $v_1$ till one of the chips hits $u$; (2) if  $u$ lies in $[v_2,v_8]$ or $[v_2,v_3]$, we first fire both chips from $v_1$ till one of the chips hits $v_8$ and then fire the chip at $v_3$ and $v_8$ simultaneously till one of the chips hits $u$; (3) if $u$ lies in $[v_4,v_5]$ or $[v_4,v_3]$, we fire first both chips from $v_1$ till a chip hits $v_5$ and then fire the chips at $v_3$ and  $v_5$ simultaneously till one of the chips hits $u$.

Now let us specify $g_p$'s. (1) First, let $g_u=f_u-f_u(z_u)$. Then $g_u$ has the same poles as $f_u$ and has a zero at $z_u$. (2) For all the points $p\in\Gamma$ such that $g_\Gamma$ is locally constant, we let $g_p$ be a constant. (3) If the slope of $g_\Gamma$ along $t^o_{3,1}$ is nonzero, let $g_{v_3}=f^{(1)}_{v_3}$. If the slope of $g_\Gamma$ along $t^o_{3,2}$ is nonzero, let $g_{v_3}=f^{(2)}_{v_3}$. (4) For all remaining points $p\in\Gamma$, we let $g_p=f_p$.

For this choice of ${\mathfrak{g}}$, we have  $\mathcal{D}-\mathcal{E}+\divisor({\mathfrak g}) \geq 0$.

Therefore, $(\mathcal{D},\mathcal{H})$ represents a non-solvable limit $g^1_d$ and we have the following proposition.

\begin{proposition}
There exists a non-smoothable diagrammatic limit $g^1_d$. In particular, there exists a diagrammatic limit $g^1_d$ that is not solvable.
\end{proposition}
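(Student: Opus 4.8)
The plan is to show that the pair $(\mathcal{D},\mathcal{H})$ built from the global diagram on the cycle in Figure~\ref{F:level_one} already exhibits everything we need: it is diagrammatic, it represents a limit $g^1_d$, yet its global diagram is not solvable. Given the necessity direction of the smoothing criterion, non-smoothability is then immediate, and since a solution to the characteristic equation fails to exist, the ``in particular'' clause about non-solvable limit $g^1_d$'s is covered by the same example.

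First I would confirm that $(\mathcal{D},\mathcal{H})$ is diagrammatic. Each $H_p$ is spanned by $\{1,f_p\}$ with $f_p$ chosen so that its zeros and poles realize exactly the prescribed local diagram at $p$ (marked points of outgoing directions are zeros, those of incoming directions are poles, with the stated ramification indices), so the local diagrams induced by $\mathcal{H}$ are precisely those of the given global diagram, which satisfies the continuity property by design. A direct check of the compatibility inequality $D_p\geqslant D^-_{f_p}+\sum_{t\in\In_\Gamma(p)}m(p,t)(\red_p(t))$ at $v_1,v_2,v_3,v_4$ and at the valence-two points shows $\mathcal{D}$ and $\mathcal{H}$ are compatible (indeed base-point free). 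That $(\mathcal{D},\mathcal{H})$ is a limit $g^1_d$ is exactly the chip-firing verification already given: for an arbitrary effective degree-one $\mathcal{E}=(u,z_u)$, the prescribed moves produce $g_\Gamma$ with $D_\Gamma+\divisor(g_\Gamma)-(u)\geqslant 0$, and the accompanying choices of the $g_p$ arrange $\mathcal{D}-\mathcal{E}+\divisor(\mathfrak{g})\geqslant 0$.

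The crux is non-solvability. By Remark~\ref{R:GlobalDiagram}, the multiplicity data define a piecewise-constant integer differential form $\omega$ on $\Gamma$, and solvability of the characteristic equation $\slope_t(\rho)=m(p,t)$ is equivalent to exactness of $\omega$; for a graph this holds iff $\omega$ has vanishing period around every cycle. Since $\Gamma$ is a single cycle of first Betti number one, it suffices to integrate $\omega$ once around it. Traversing $v_1\to v_7\to v_8\to v_2\to v_3\to v_4\to v_6\to v_5\to v_1$ and recording the sign of each edge's multiplicity relative to this orientation, the period equals
\[
\ell_{v_1v_7}+\ell_{v_7v_8}+\ell_{v_8v_2}-\ell_{v_2v_3}+\ell_{v_3v_4}-\ell_{v_4v_6}-\ell_{v_6v_5}-\ell_{v_5v_1}.
\]
Imposing the relevant length relations of the example, namely $\ell_{v_2v_3}=\ell_{v_8v_2}$, $\ell_{v_5v_1}=\ell_{v_1v_7}$, and $\ell_{v_3v_4}=\ell_{v_4v_6}+\ell_{v_6v_5}$, the three bracketed groups cancel and the expression telescopes to $\ell_{v_7v_8}\neq 0$. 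Hence $\omega$ is not exact, no function $\rho$ with the prescribed slopes exists, and $(\mathcal{D},\mathcal{H})$ is not solvable.

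Finally, by Theorem~\ref{T:main} (equivalently, the necessity recorded in Section~\ref{S:IGC} that a smoothable pre-limit $g^1_d$ must be solvable), this non-solvable diagrammatic limit $g^1_d$ cannot be smoothable, which proves both assertions of the proposition. I expect the period computation to be the main obstacle: it requires careful bookkeeping of the edge orientations, i.e.\ the signs of the slopes along the chosen traversal, together with the imposed length identities, and it is precisely the interplay of these that forces the surviving term $\ell_{v_7v_8}$ and thereby the failure of exactness.
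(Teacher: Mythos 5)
Your proposal is correct and takes essentially the same route as the paper: it rests on the very example of Subsection~\ref{S:Nonsolvable} (the cycle of Figure~\ref{F:level_one}), with the same verification that $(\mathcal{D},\mathcal{H})$ is a diagrammatic limit $g^1_d$ via the chip-firing argument and the same appeal to Theorem~\ref{T:main} to pass from non-solvability to non-smoothability. The only difference is that you make explicit the period computation that the paper merely asserts, and your bookkeeping is correct: with the stated length relations the period around the cycle collapses to $\ell_{v_7v_8}>0$, so the induced differential form is not exact.
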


\subsection{Four Levels of Obstructions of Pre-limit $g^1_d$'s from Being Smoothable}\label{subS:obstruct}
The following two subsections are a preparation for the proof of the smoothing criterion in Section~\ref{S:SmoothingProof}.

We say a diagrammatic pre-limit $g^1_d$ represented by $(\mathcal{D},\mathcal{H})$ satisfies Level-I restriction if it is solvable and in the following we will introduce Level-II, Level-III and Level-IV restrictions which form additional obstructions of $(\mathcal{D},\mathcal{H})$  from being smoothable. 

Now assume that  $(\mathcal{D},\mathcal{H})$  is solvable with a solution $\rho$ and the corresponding bifurcation tree $\mathcal{B}$. Recall that $(\mathcal{D},\mathcal{H})$ satisfies the intrinsic global compatibility conditions if and only if $\mathcal{H}$ contains an admissible collection $\{g_p\}_{p\in\Gamma}$ of non-constant rational functions $g_p\in H_p$.

\begin{definition}
A \emph{bifurcation partition system} $\{{\vec{P}}_x\}_{x\in\mathcal{B}}$ on the bifurcation tree $\mathcal{B}$ is a collection of partitions $\vec{P}_x$ of $\Tan^+_\mathcal{B}(x)$ for all points $x\in\mathcal{B}$.
\end{definition}

Note that there are only finitely many possible bifurcation partition systems on $\mathcal{B}$ since $\Tan^+_\mathcal{B}(x)$ is a singleton for all but finitely many points $x\in\mathcal{B}$ and for the exceptions, $\vec{P}_x$ is a partition of a finite set.
 
\begin{remark}
Suppose $\mathcal{H}$ contains an admissible collection $G=\{g_p\}_{p\in\Gamma}$ of non-constant rational functions $g_p\in H_p$ (Definition~\ref{D:IGC}). This means that for each $x\in \mathcal{B}$, if $\Tan^+_\mathcal{B}(x)=\{t_1,\cdots,t_n\}$, then we can assign values $c_1, \cdots, c_n \in\kappa$ to $t_1,\cdots,t_n$ respectively such that $G\circ \Red (t)=c_i$ for  $i=1,\cdots,n$ and each $t\in\pi_{\mathcal{B}*}^{-1}(t_i)$. Canonically, we can associate a bifurcation partition system $\{{\vec{P}}_x\}_{x\in\mathcal{B}}$ to $G$ by letting  $t_i$ and $t_j$ be equivalent in $\vec{P}_x$ if and only if $c_i=c_j$. Moreover, 
we say $\{\vec{P}_x\}_{x\in\mathcal{B}}$ is \emph{globally compatible} to $\mathcal{H}$ (or $(\mathcal{D},\mathcal{H})$ even if this compatibility does not depend on $\mathcal{D}$) if $\mathcal{H}$ contains an admissible $G=\{g_p\}_{p\in\Gamma}$ such that $\{\vec{P}_x\}_{x\in\mathcal{B}}$ is exactly the bifurcation partition system associated to $G$. (In this case, to be more specific, we sometimes say $\{\vec{P}_x\}_{x\in\mathcal{B}}$ is globally compatible to $\mathcal{H}$ via $G$.) In addition, the intrinsic global compatibility conditions on $(\mathcal{D},\mathcal{H})$ can be restated equivalently as that there exists some bifurcation partition system globally compatible to $(\mathcal{D},\mathcal{H})$. 
\end{remark}

We state Level-II, Level-III and Level-IV restrictions as follows:
 
  \begin{enumerate}
  \item We say that a bifurcation partition system $\{\vec{P}_x\}_{x\in\mathcal{B}}$ is \emph{Level-II compatible} (respectively, \emph{Level-III compatible}) to $(\mathcal{D},\mathcal{H})$ if it satisfies the following property: for each point $p\in\Gamma$, the tangent directions $t_1$ and $t_2$ in $\Tan^+_\Gamma(p)$ are equivalent in the local diagram at $p$ (we also say $t_1$ and $t_2$ are locally equivalent), if (respectively, if and only if) $\pi_{\mathcal{B}*}(t_1)$ and $\pi_{\mathcal{B}*}(t_2)$ are equivalent in $\vec{P}_{\pi_\mathcal{B}(p)}$.
  \item We also call a bifurcation partition system Level-IV compatible to $(\mathcal{D},\mathcal{H})$ if it is globally compatible to $(\mathcal{D},\mathcal{H})$. 
  \item We say that $(\mathcal{D},\mathcal{H})$ satisfies Level-II (respectively, Level-III and Level-IV) restriction if there exists a bifurcation partition system which is Level-II (respectively, Level-III and Level-IV) compatible to $(\mathcal{D},\mathcal{H})$. Note that by the smoothing criterion (Theorem~\ref{T:main}) actually means that $(\mathcal{D},\mathcal{H})$ is smoothable if and only if Level-IV is satisfied.
 \end{enumerate}
 
The motivation for introducing the two intermediate compatibility levels (Level-II and Level-III) will be addressed in more details in our subsequent work. Note that Level-II and Level-III compatibilities can be determined purely combinatorially given the local diagrams induced by $\mathcal{H}$.

We denote the set of all bifurcation partition systems on $\mathcal{B}$ by $\BPDHone$, the sets of bifurcation partition systems Level-II, Level-III, and Level-IV compatible with $(\mathcal{D},\mathcal{H})$ by $\BPDHtwo$, $\BPDHthree$ and $\BPDHfour$ respectively.

\begin{lemma} \label{L:BPDH_4levels}
``Level~IV $\Rightarrow$ Level~III $\Rightarrow$ Level~II $\Rightarrow$ Level~I''  and $\BPDHone\supseteq\BPDHtwo\supseteq\BPDHthree\supseteq\BPDHfour$.
\end{lemma}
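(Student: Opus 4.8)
The plan is to reduce both halves of the statement to the single chain of inclusions $\BPDHone\supseteq\BPDHtwo\supseteq\BPDHthree\supseteq\BPDHfour$. Since $(\mathcal{D},\mathcal{H})$ satisfies the Level-$k$ restriction precisely when $\operatorname{BP}^{(k)}_{\mathcal{D},\mathcal{H}}$ is nonempty, the implications ``Level IV $\Rightarrow$ Level III $\Rightarrow$ Level II $\Rightarrow$ Level I'' follow immediately once the inclusions are established: a partition system witnessing a lower level is, by the inclusions, also a witness for each higher level. (Here Level I is solvability, which is already presupposed in order to speak of $\mathcal{B}$, $\pi_\mathcal{B}$ and the partition systems at all.) So I would concentrate entirely on proving the three inclusions.

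Two of them are essentially formal. The inclusion $\BPDHtwo\subseteq\BPDHone$ is immediate, since $\BPDHone$ consists of \emph{all} bifurcation partition systems on $\mathcal{B}$. For $\BPDHthree\subseteq\BPDHtwo$, I would simply note that Level-III compatibility is the biconditional ``$t_1,t_2$ locally equivalent $\iff\pi_{\mathcal{B}*}(t_1)\sim\pi_{\mathcal{B}*}(t_2)$ in $\vec{P}_{\pi_\mathcal{B}(p)}$'', whereas Level-II compatibility only demands the single implication ``$\Leftarrow$''; hence any Level-III compatible system is automatically Level-II compatible.

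The substantive step is $\BPDHfour\subseteq\BPDHthree$, i.e.\ that global compatibility forces the Level-III biconditional. Let $\{\vec{P}_x\}_{x\in\mathcal{B}}$ be globally compatible via an admissible $G=\{g_p\}_{p\in\Gamma}\in\mathcal{H}$, and fix a point $p$ together with two forward tangent directions $t_1,t_2\in\Tan^{\rho+}_\Gamma(p)$. The key is to compare three notions of equivalence through the single scalar $g_p(\red_p(t_i))$. First, writing $H_p=\langle 1,f_p\rangle$ and $g_p=\alpha_p+\beta_p f_p$ with $\beta_p\neq 0$ (as $g_p$ is nonconstant), the level sets of $g_p$ and $f_p$ coincide; since on the outgoing directions $\Tan^{\rho+}_\Gamma(p)=\Out_\Gamma(p)$ the local partition induced by $\mathcal{H}$ is exactly the partition into level sets of $f_p$ (the incoming directions forming the separate pole class), $t_1$ and $t_2$ are locally equivalent if and only if $g_p(\red_p(t_1))=g_p(\red_p(t_2))$. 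Second, admissibility provides a function $\eta$ with $g_p(\red_p(t_i))=G\circ\Red(t_i)=\eta(\pi_{\mathcal{B}*}(t_i))$. Third, by the construction of the bifurcation partition system associated to $G$, two forward directions of $\mathcal{B}$ at $\pi_\mathcal{B}(p)$ are $\vec{P}_{\pi_\mathcal{B}(p)}$-equivalent exactly when the $\eta$-values they carry agree. Chaining these three equivalences yields ``$t_1,t_2$ locally equivalent $\iff\pi_{\mathcal{B}*}(t_1)\sim\pi_{\mathcal{B}*}(t_2)$'', which is precisely Level-III compatibility.

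I expect the main obstacle to be purely bookkeeping in this last step: one must use the identification $\Tan^{\rho+}_\Gamma(p)=\Out_\Gamma(p)$ correctly so that the local partition really does restrict to the level-set partition of $f_p$ on the forward directions, and must verify that the value $\eta$ attaches to a forward direction of $\mathcal{B}$ genuinely coincides with the scalar used to define $\vec{P}_{\pi_\mathcal{B}(p)}$. No estimates or hard arguments are involved; the content lies entirely in unwinding the definitions of local diagram, admissibility, and the associated bifurcation partition system so that they line up along the map $\pi_{\mathcal{B}*}$.
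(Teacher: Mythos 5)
Your proof is correct and takes essentially the same route as the paper: both reduce everything to the single substantive implication Level IV $\Rightarrow$ Level III, and both prove it by specializing the global compatibility condition to a single point ($p_1=p_2=p$) and matching the local partition (level sets of a nonconstant $g_p=\alpha_p+\beta_p f_p$, $\beta_p\neq 0$) against the values $\eta\circ\pi_{\mathcal{B}*}$ defining $\vec{P}_{\pi_\mathcal{B}(p)}$. Your write-up simply makes explicit the definitional unwinding that the paper's terser proof leaves implicit.
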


\begin{proof}
To see this note that we only need to verify Level~IV implies Level~III. 
Note that an equivalent way to say that  a bifurcation partition system $\{\vec{P}_x\}_{x\in\mathcal{B}}$ is globally (Level-IV) compatible to $\mathcal{H}$ via $\{g_p\}_{p\in\Gamma}\in H$is as follows: for each pair of tangent directions $t_1\in \Tan^{\rho+}_\Gamma(p_1)$ and $t_2\in\Tan^{\rho+}_\Gamma(p_2)$ such that $\pi_\mathcal{B}(p_1)=\pi_\mathcal{B}(p_2)$, we have the equivalence of the statements (1) $g_{p_1}(\red_{p_1}(t_1))=g_{p_2}(\red_{p_2}(t_2))$, and
(2) $\pi_{\mathcal{B}*}(t_1)$ and $\pi_{\mathcal{B}*}(t_2)$ in the same equivalence class in $\vec{P}_x$ where $x=\pi_\mathcal{B}(p_1)=\pi_\mathcal{B}(p_2)$.

Then by specializing to cases $p_1=p_2$, we conclude that Level~IV  implies Level~III.
\end{proof}

\begin{example} \label{E:BPDH}
Consider a saturated metrized complex $\mathfrak{C}=(\Gamma,\{C_p\}_{p \in \Gamma})$ where the underlying metric graph $\Gamma$ is as shown in Figure~\ref{F:Bif} and all $C_p$'s are projective lines over $\mC$. 
Suppose that $(\mathcal{D},\mathcal{H})$ is a solvable diagrammatic limit $g^1_d$ with solution $\rho$, the corresponding bifurcation tree $\mathcal{B}$ and the canonical projection  $\pi_\mathcal{B}:\Gamma\rightarrow \mathcal{B}$ as in  Figure~\ref{F:Bif}  (see Example~\ref{E:Bif}). For each $p\in\Gamma$, fix a basis $\{1,f_p\}$ of $H_p$ where $f_p$ is a non-constant rational function. 

Here we apply Algorithm~\ref{A:IGC} to check the smoothability of $(\mathcal{D},\mathcal{H})$, i.e., whether $\mathcal{H}$ contains an admissible $G=\{g_p\}_{p\in\Gamma}$. Note that by the algorithm, we only need to check the finite set of exceptional points (see definition in Subsection~\ref{S:Finite}) which in this case is the vertex set, i.e.,   $\mathcal{E}_{\rho}=\{o_1,o_2,p_1,p_2,p_3,q_1,q_2,q_3\}$. For adjacent $p,q\in\mathcal{E}_{\rho}$, we let $t_{pq}$ represent the tangent direction in $\Tan_\Gamma(p)$  emanating from $p$ towards $q$ and then $\red_p(t_{pq})$ is the marked point on $C_p$ associated to $t_{pq}$. Note that 
\begin{align*}
x&=\pi_\mathcal{B}(o_1)=\pi_\mathcal{B}(o_2), \\
 y_1&=\pi_\mathcal{B}(p_1), \quad y_2=\pi_\mathcal{B}(p_2)=\pi_\mathcal{B}(p_3), \\
 z_1&=\pi_\mathcal{B}(q_1), \quad z_2=\pi_\mathcal{B}(q_2), \quad z_3=\pi_\mathcal{B}(q_3),
\end{align*}
and
\begin{align*}
t_{xy_1}&=\pi_{\mathcal{B}*}(t_{o_1p_1})=\pi_{\mathcal{B}*}(t_{o_2p_1}), \\
t_{xy_2}&=\pi_{\mathcal{B}*}(t_{o_2p_2})=\pi_{\mathcal{B}*}(t_{o_2p_3}), \\
t_{y_2z_1}&=\pi_{\mathcal{B}*}(t_{p_2q_1})=\pi_{\mathcal{B}*}(t_{p_3q_1}), \\
t_{y_2z_2}&=\pi_{\mathcal{B}*}(t_{p_2q_2})=\pi_{\mathcal{B}*}(t_{p_3q_2}), \\
t_{y_2z_3}&=\pi_{\mathcal{B}*}(t_{p_2q_3})=\pi_{\mathcal{B}*}(t_{p_3q_3}).
\end{align*}

Consider a collection of finitely many variables $\{\alpha_p,\beta_p\}_{p\in\mathcal{E}_\rho}$. Then the algorithm reduces the intrinsic global compatibility conditions to the solvability of the following linear equations of variables $\{\alpha_p,\beta_p\}_{p\in\mathcal{E}_\rho}$:

\begin{align*}
\alpha_{o_1} + f_{o_1}(\red_{o_1}(t_{o_1p_1}))\beta_{o_1}&=\alpha_{o_2} + f_{o_2}(\red_{o_2}(t_{o_2p_1}))\beta_{o_2}\\
 f_{o_2}(\red_{o_2}(t_{o_2p_2}))&= f_{o_2}(\red_{o_2}(t_{o_2p_3}))\\
\alpha_{p_2} + f_{p_2}(\red_{p_2}(t_{p_2q_1}))\beta_{p_2}&=\alpha_{p_3} + f_{p_3}(\red_{p_3}(t_{p_3q_1}))\beta_{p_3}\\ 
 \alpha_{p_2} + f_{p_2}(\red_{p_2}(t_{p_2q_2}))\beta_{p_2}&=\alpha_{p_3} + f_{p_3}(\red_{p_3}(t_{p_3q_2}))\beta_{p_3}\\ 
 \alpha_{p_2} + f_{p_2}(\red_{p_2}(t_{p_2q_3}))\beta_{p_2}&=\alpha_{p_3} + f_{p_3}(\red_{p_3}(t_{p_3q_3}))\beta_{p_3}.
\end{align*}
Now suppose
\begin{align*}
& f_{o_1}(\red_{o_1}(t_{o_1p_1}))=1, \\
 & f_{o_2}(\red_{o_2}(t_{o_2p_1}))=2,  \quad  f_{o_2}(\red_{o_2}(t_{o_2p_2}))= f_{o_2}(\red_{o_2}(t_{o_2p_3})) = 1, \\
& f_{p_2}(\red_{p_2}(t_{p_2q_1})) =  f_{p_2}(\red_{p_2}(t_{p_2q_2})) = 1, \quad  f_{p_2}(\red_{p_2}(t_{p_2q_3}))  = -2,\\
& f_{p_3}(\red_{p_3}(t_{p_3q_1})) =  f_{p_3}(\red_{p_3}(t_{p_3q_2})) = 2,  \quad f_{p_3}(\red_{p_3}(t_{p_3q_3})) = -1.
 \end{align*}
Then  the above system of  linear equations is solvable and thus $(\mathcal{D},\mathcal{H})$ is smoothable by Algorithm~\ref{A:IGC}. In particular, we have a solution $\alpha_{o_1} =0,\beta_{o_1}=2, \alpha_{o_2} =0,\beta_{o_2}=1,\alpha_{p_2} =2,\beta_{p_2}=1,\alpha_{p_3} =1,\beta_{p_3}=1$. Therefore, if we let $g_{o_1}=2f_{o_1},g_{o_2}=f_{o_2},g_{p_1}=f_{p_1},g_{p_2}=2+f_{p_2},g_{p_3}=1+f_{p_3},g_{q_1}=f_{q_1},g_{q_2}=f_{q_2},g_{q_3}=f_{q_3}$ and $g_p = f_p$ for all $p\notin \mathcal{E}_{\rho}$, then $G=\{g_p\}_{p\in\Gamma}$ is admissible. 

Moreover, let $\mathcal{P}$ be the unique bifurcation partition system  which has a partition $\{\{t_{xy_1}\},\{t_{xy_2}\}\}$ at $x$ and $\{\{t_{y_2z_1},t_{y_2z_2}\},\{t_{y_2z_3}\}\}$ at $y_2$, and $\mathcal{P}'$ be the unique bifurcation partition system  which has a partition $\{\{t_{xy_1}\},\{t_{xy_2}\}\}$ at $x$ and $\{\{t_{y_2z_1}\},\{t_{y_2z_2}\},\{t_{y_2z_3}\}\}$ at $y_2$. Then,  $\mathcal{P}$ is  the bifurcation partition system  associated to $G$,  $\BPDHthree=\BPDHfour=\{\mathcal{P}\}$ and $\BPDHtwo=\{\mathcal{P},\mathcal{Q}\}$.

\qed
\end{example}

\subsection{The Spaces $\LDHone$, $\LDHtwo$, $\LDHthree$ and $\LDHfour$ of Partition Trees}

Suppose that $(\mathcal{D},\mathcal{H})$ represents a solvable diagrammatic pre-limit $g^1_d$ with a solution $\rho$ and the corresponding bifurcation tree $\mathcal{B}$. As we've defined four levels of compatibilities between bifurcation partition systems and $(\mathcal{D},\mathcal{H})$ in the previous subsection, here we define four levels of compatibilities between bifurcation trees and $(\mathcal{D},\mathcal{H})$, and construct the spaces $\LDHone$, $\LDHtwo$, $\LDHthree$ and $\LDHfour$ of partition trees as follows.

\begin{enumerate}
\item Let $\LDHone:=\Lambda_\rho$. (We let $\LDHone = \emptyset$ when $(\mathcal{D},\mathcal{H})$ is non-solvable.) In other words, $\LDHone$ is the space of all possible partition trees with respect to a solution $\rho$ of the global diagram defined by $(\mathcal{D},\mathcal{H})$. 

\item We say that a partition tree $\mathcal{T}$ is Level-II (respectively, Level-III) compatible with $(\mathcal{D},\mathcal{H})$ if for every  point $p \in \Gamma$ and each pair of tangent directions $t_1,t_2\in\Tan^+_\Gamma(p)$, we have $t_1$ is locally equivalent to $t_2$ if (respectively, if and only if) $\pi_{\mathcal{T}*}(t_1)=\pi_{\mathcal{T}*}(t_2)$.

\item We say that a partition tree $\mathcal{T}$ is globally (or Level-IV) compatible with $(\mathcal{D},\mathcal{H})$ if there exists a collection $G=\{g_p\}_{p\in\Gamma}$ of non-constant functions $g_p\in H_p$ such that one of the following equivalent statements are satisfied:
\begin{enumerate}
\item There is a function $\xi:\coprod_{x\in\mathcal{T}} \Tan^+_\mathcal{T}(x)\rightarrow \kappa$ such that $\xi$ is injective restricted to $\Tan^+_\mathcal{T}(x)$ for each $x\in\mathcal{T}$, and $G\circ\Red(t) = \xi\circ\pi_\mathcal{T*}(t)$ for all  $t\in\coprod_{p\in\Gamma}\Tan^{\rho+}_\Gamma(p)$. 
\item Whenever $t_1\in \Tan^{\rho+}_\Gamma(p_1)$ and $t_2\in\Tan^{\rho+}_\Gamma(p_2)$ where $\pi_\mathcal{T}(p_1)=\pi_\mathcal{T}(p_2)$, we have $g_{p_1}(\red_{p_1}(t_1))=g_{p_2}(\red_{p_1}(t_2))$ if and only if $\pi_{\mathcal{T}*}(t_1)=\pi_{\mathcal{T}*}(t_2)$.
\end{enumerate}
 In this sense, to be more specific, we also say that $\mathcal{T}$ and $(\mathcal{D},\mathcal{H})$ are globally compatible via $\{g_p\}_{p\in\Gamma}$. 
\item Denote by $\LDHtwo$, $\LDHthree$ and $\LDHfour$ the spaces of partition trees Level-II, Level-III and Level-IV compatible with $(\mathcal{D},\mathcal{H})$ respectively.
\item As in Lemma~\ref{L:BPDH_4levels}, one can easily see that $\LDHone\supseteq\LDHtwo\supseteq\LDHthree\supseteq\LDHfour$. An example of $\LDHone$, $\LDHtwo$, $\LDHthree$ and $\LDHfour$ for a simple metric graphs is shown in Example~\ref{E:lattice} in the Appendix.
\end{enumerate}

There is a natural map $\phiLambda:\LDHone\rightarrow\BPDHone$ such that for every $x\in\mathcal{B}$, every two forward tangent directions $t_1$ and $t_2$ in $\Tan^+_\mathcal{B}(x)$  are equivalent in $\phiLambda(\mathcal{T})$ if and only if they are pushed forward to the same tangent direction in $\Tan^+_\mathcal{T}(\Theta^\mathcal{B}_\mathcal{T}(x))$ by $\Theta^\mathcal{B}_{\mathcal{T}}$. 

On the other hand, given a  bifurcation partition system $\{\vec{P}_x\}_{x\in\mathcal{B}}$, we want to construct a partition tree. For a small enough $\delta$ (precisely, we can let $\delta$ be less than the minimal distance between two distinct exceptional values of $\rho$), we derive a metric in the following way: for each point $x\in\BifB$ and each equivalence class $E\subseteq\Tan^+_\mathcal{B}(x)$ in $\vec{P}_x$ of $\Tan^+_\mathcal{B}(x)$, we glue isometrically all the segments of length $\delta$ with one endpoint being $x$ and the other being in a forward tangent direction of $\mathcal{B}$ in $E$. Then it can be easily seen that $\mathcal{T}$ is a partition tree induced by the gluing and $\phiLambda(\mathcal{T})=\{\vec{P}_x\}_{x\in\mathcal{B}}$. We call the partition tree constructed in this way the $\delta$-\emph{glued partition tree} with respect to $\{\vec{P}_x\}_{x\in\mathcal{B}}$. An example is shown in Figure~\ref{F:delta_glued} where a $\delta$-glued partition tree on the right panel is derived from the bifurcation tree in Figure~\ref{F:Bif} and a bifurcation partition system on the left panel.
\begin{figure}[tbp]
\centering
\begin{tikzpicture}[>=to,x=2cm,y=1cm]
\coordinate (x) at (0,2);
    \fill [black] (x) circle (2.5pt);
    \draw (x) node[anchor=east] {\Large $x$};
\coordinate (y1) at (1,3);
    \fill [black] (y1) circle (2.5pt);
    \draw (y1) node[anchor=south] {\Large $y_1$};
\coordinate (y2) at (1,1);
    \fill [black] (y2) circle (2.5pt);
    \draw (y2) node[anchor=north] {\Large $y_2$};
\coordinate (z1) at (2,2);
    \fill [black] (z1) circle (2.5pt);
    \draw (z1) node[anchor=west] {\Large $z_1$};
\coordinate (z2) at (2,1);
    \fill [black] (z2) circle (2.5pt);
    \draw (z2) node[anchor=west] {\Large $z_2$};
\coordinate (z3) at (2,0);
    \fill [black] (z3) circle (2.5pt);
    \draw (z3) node[anchor=west] {\Large $z_3$};

\coordinate (xx) at (3.5,2);
    \fill [black] (xx) circle (2.5pt);
\coordinate (xy) at (3.8,2);
    \fill [black] (xy) circle (1pt);
\coordinate (yy1) at (4.5,2.7);
    \fill [black] (yy1) circle (2.5pt);
\coordinate (yy2) at (4.5,1.3);
    \fill [black] (yy2) circle (2.5pt);
\coordinate (yz) at (4.8,1.3);
    \fill [black] (yz) circle (1pt);
\coordinate (zz1) at (5.5,2);
    \fill [black] (zz1) circle (2.5pt);
\coordinate (zz2) at (5.5,1.3);
    \fill [black] (zz2) circle (2.5pt);
\coordinate (zz3) at (5.5,0.6);
    \fill [black] (zz3) circle (2.5pt);

\draw [-stealth, dashed, line width=1.8pt] (2.5,1.5) -- (3,1.5);

\draw [line width=1.8pt] (x)-- (y1);
\draw [line width=1.8pt] (x)-- (y2);
\draw [line width=1.8pt] (y2)-- (z1);
\draw [line width=1.8pt] (y2)-- (z2);
\draw [line width=1.8pt] (y2)-- (z3);

\draw [line width=1.8pt] (xx)-- (xy);
\draw [line width=1.8pt] (xy)-- (yy1);
\draw [line width=1.8pt] (xy)-- (yy2);
\draw [line width=1.8pt] (yy2)-- (yz);
\draw [line width=1.8pt] (yy2)-- (zz1);
\draw [line width=1.8pt] (yz)-- (zz2);
\draw [line width=1.8pt] (yz)-- (zz3);

\draw [dashed, line width=1pt] (xx)-- (3.5,1);
\draw [dashed, line width=1pt] (xy)-- (3.8,1);
\draw [dashed, line width=1pt] (yy2)-- (4.5,0.3);
\draw [dashed, line width=1pt] (yz)-- (4.8,0.3);

\draw (3.65,1.5) node {\Large $\delta$};
\draw (4.65,0.8) node {\Large $\delta$};
\draw (1,0) node {\LARGE $\mathcal{B}$};
\draw (3.3,-0.1) node[anchor = west] {\Large $\delta$-glued partition tree};

\draw [rotate around={0:(x)},line width=1.2pt,dash pattern=on 2pt off 2pt] (0.2,2) ellipse (0.05 and 0.4);
\draw [rotate around={0:(y2)},line width=1.2pt,dash pattern=on 2pt off 2pt] (1.35,0.8	) ellipse (0.05 and 0.3);
\draw [rotate around={0:(y2)},line width=1.2pt,dash pattern=on 2pt off 2pt] (1.35,1.35	) ellipse (0.07 and 0.14);
\end{tikzpicture}
\caption{An illustration of a $\delta$-glued partition tree.}\label{F:delta_glued}
\end{figure}
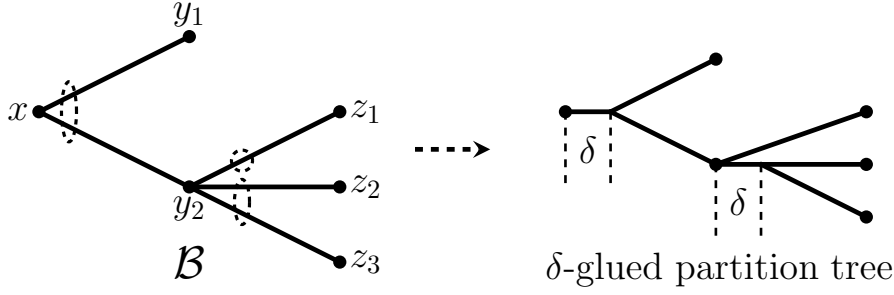
 
\begin{proposition} \label{P:LDH_BPDH}
$\phiLambda$ is a surjection from $\LDHone$ to $\BPDHone$. In addition, the image of $\phiLambda$ restricted to $\LDHtwo$, $\LDHthree$ and $\LDHfour$ are $\BPDHtwo$, $\BPDHthree$ and $\BPDHfour$ respectively.
\end{proposition}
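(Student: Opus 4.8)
The plan is to deduce the whole proposition from two ingredients: the surjectivity of $\phiLambda$, which is already essentially furnished by the $\delta$-glued partition tree construction preceding the statement, and a single ``level-matching'' assertion that for each $i\in\{2,3,4\}$ a partition tree $\mathcal{T}$ lies in $\Lambda^{(i)}_{\mathcal{D},\mathcal{H}}$ if and only if $\phiLambda(\mathcal{T})$ lies in $\operatorname{BP}^{(i)}_{\mathcal{D},\mathcal{H}}$. For the first (plain) surjectivity, every $\{\vec{P}_x\}_{x\in\mathcal{B}}\in\BPDHone$ is the image of its own $\delta$-glued partition tree, so $\phiLambda$ hits everything. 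Granting the level-matching assertion, the equalities $\phiLambda(\Lambda^{(i)}_{\mathcal{D},\mathcal{H}})=\operatorname{BP}^{(i)}_{\mathcal{D},\mathcal{H}}$ then follow formally: the inclusion $\subseteq$ is the ``only if'' direction, while for $\supseteq$ one takes any $\{\vec{P}_x\}_{x\in\mathcal{B}}\in\operatorname{BP}^{(i)}_{\mathcal{D},\mathcal{H}}$, forms its $\delta$-glued partition tree $\mathcal{T}$ so that $\phiLambda(\mathcal{T})=\{\vec{P}_x\}_{x\in\mathcal{B}}$, and applies the ``if'' direction to conclude $\mathcal{T}\in\Lambda^{(i)}_{\mathcal{D},\mathcal{H}}$.

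The engine behind the level-matching is the factorization $\pi_{\mathcal{T}}=\Theta^{\mathcal{B}}_{\mathcal{T}}\circ\pi_{\mathcal{B}}$ supplied by Proposition~\ref{P:CanoSurj}, which on tangent directions reads $\pi_{\mathcal{T}*}=\Theta^{\mathcal{B}}_{\mathcal{T}*}\circ\pi_{\mathcal{B}*}$. Consequently, for any $p\in\Gamma$ with $x=\pi_{\mathcal{B}}(p)$ and any $t_1,t_2\in\Tan^{\rho+}_\Gamma(p)$, the directions $\pi_{\mathcal{B}*}(t_1),\pi_{\mathcal{B}*}(t_2)$ lie in $\Tan^+_\mathcal{B}(x)$, and by the very definition of $\phiLambda$ one has $\pi_{\mathcal{T}*}(t_1)=\pi_{\mathcal{T}*}(t_2)$ exactly when $\pi_{\mathcal{B}*}(t_1)$ and $\pi_{\mathcal{B}*}(t_2)$ are equivalent in the block $\vec{P}_x$ of $\phiLambda(\mathcal{T})$. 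For $i=2$ and $i=3$ this identity immediately reconciles the two notions of compatibility: the Level-II (resp.\ Level-III) condition for $\mathcal{T}$ constrains local equivalence of $t_1,t_2$ by the event $\pi_{\mathcal{T}*}(t_1)=\pi_{\mathcal{T}*}(t_2)$ through an implication (resp.\ biconditional), while the corresponding condition for $\phiLambda(\mathcal{T})$ constrains it by equivalence of $\pi_{\mathcal{B}*}(t_1),\pi_{\mathcal{B}*}(t_2)$ in $\vec{P}_x$; since these events coincide, the two compatibilities are equivalent.

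The substantive work is the Level-IV case, where the functions re-enter. For the ``only if'' direction I would start from a collection $G=\{g_p\}_{p\in\Gamma}$ and an injective-on-fibers labeling $\xi\colon\coprod_{x\in\mathcal{T}}\Tan^+_\mathcal{T}(x)\to\kappa$ witnessing $\mathcal{T}\in\LDHfour$, and set $\eta:=\xi\circ\Theta^{\mathcal{B}}_{\mathcal{T}*}$. Then $\eta\circ\pi_{\mathcal{B}*}=\xi\circ\pi_{\mathcal{T}*}=G\circ\Red$ shows $G$ is admissible in the sense of Definition~\ref{D:IGC}, and the injectivity of $\xi$ on each $\Tan^+_\mathcal{T}(x)$ forces the bifurcation partition system associated to $G$ to separate two directions of $\Tan^+_\mathcal{B}(x)$ precisely according to whether $\Theta^{\mathcal{B}}_{\mathcal{T}*}$ identifies them, that is, to equal $\phiLambda(\mathcal{T})$; hence $\phiLambda(\mathcal{T})\in\BPDHfour$. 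For the ``if'' direction I would reverse this on the $\delta$-glued tree $\mathcal{T}$: given admissible $G$ with witness $\eta$ whose associated system is $\{\vec{P}_x\}=\phiLambda(\mathcal{T})$, the blocks of $\vec{P}_x$ correspond exactly to the forward tangent directions of $\mathcal{T}$ at $\Theta^{\mathcal{B}}_{\mathcal{T}}(x)$, so $\eta$ descends to a well-defined $\xi$ on $\coprod_{x\in\mathcal{T}}\Tan^+_\mathcal{T}(x)$ that is injective on each fiber (distinct blocks carry distinct $\eta$-values), and one verifies $G\circ\Red=\xi\circ\pi_{\mathcal{T}*}$, placing $\mathcal{T}\in\LDHfour$.

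I expect the main obstacle to be precisely this interplay at Level~IV between the ``injective on fibers'' requirement for $\xi$ and the ``partition by equal $\eta$-value'' definition of the associated bifurcation partition system: one must check that $\phiLambda(\mathcal{T})$ coincides with the system associated to $G$, which uses both that $\eta$ is constant on each block (well-definedness of $\xi$) and that distinct blocks receive distinct values (injectivity of $\xi$). The remaining care is bookkeeping---confirming that the outgoing directions of the local diagram, $\Tan^+_\Gamma(p)$, and $\Tan^{\rho+}_\Gamma(p)$ all agree when $\rho$ solves the characteristic equation, and that gluing in the $\delta$-glued tree occurs only at the bifurcation points $\BifB$, so that the forward tangent directions of $\mathcal{T}$ at a glued vertex are in canonical bijection with the blocks of $\vec{P}_x$.
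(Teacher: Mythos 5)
Your architecture---surjectivity via the $\delta$-glued tree, a level-matching equivalence for each level, and the formal deduction of the two inclusions from it---is the same as the paper's, and your treatment of surjectivity, of Levels II and III, and of the ``only if'' half of Level IV (pulling $\xi$ back along $\Theta^{\mathcal{B}}_{\mathcal{T}}$ to get $\eta$) is correct and essentially identical to the paper's argument. The gap is in the ``if'' half of Level IV. You assert that for the $\delta$-glued tree $\mathcal{T}$ of a system $\{\vec{P}_x\}_{x\in\mathcal{B}}\in\BPDHfour$ associated to an admissible $G=\{g_p\}_{p\in\Gamma}$, the witness $\eta$ descends to a well-defined, fiberwise-injective $\xi$ satisfying $G\circ\Red=\xi\circ\pi_{\mathcal{T}*}$. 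That descent is not well defined: admissibility constrains $G\circ\Red$ only on fibers of $\pi_{\mathcal{B}*}$ over forward directions at a \emph{single} point of $\mathcal{B}$, and imposes no relation between the $\eta$-values at distinct points $x_1\neq x_2$ of $\mathcal{B}$ that $\Theta^{\mathcal{B}}_{\mathcal{T}}$ identifies. Concretely, let $x\in\BifB$ have two forward directions equivalent in $\vec{P}_x$, let $y\in\mathcal{T}$ be an interior point of the resulting glued segment, and let $x_1\neq x_2$ be its two preimages in $\mathcal{B}$, with $p_1,p_2\in\Gamma$ lying over them and $t_i\in\Tan^{\rho+}_\Gamma(p_i)$ their unique forward directions. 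Each $g_{p_i}$ is a function on its own curve $C_{p_i}$, so the values $g_{p_1}(\red_{p_1}(t_1))$ and $g_{p_2}(\red_{p_2}(t_2))$ are completely unconstrained by admissibility, yet $\pi_{\mathcal{T}*}(t_1)=\pi_{\mathcal{T}*}(t_2)$, so global compatibility of $\mathcal{T}$ via $G$ would force them to be equal. Symmetrically, at the point of $\mathcal{T}$ where the glued segments separate again (at distance $\delta$ from $\Theta^{\mathcal{B}}_{\mathcal{T}}(x)$), the ``if and only if'' in the Level-IV condition forces the two values to be \emph{distinct}, which the original $G$ also need not satisfy. Your closing remark correctly identifies this interplay as the delicate point, but the resolution you propose (direct descent of $\eta$) is exactly what fails.

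The missing idea, which is how the paper's proof proceeds, is that the witness must be \emph{re-chosen} at ordinary points rather than inherited. Every problematic point above is an ordinary point of $\rho$: since $\delta$ is smaller than the minimal gap between exceptional values, gluing never identifies a point of $\mathcal{B}$ lying over an exceptional point of $\Gamma$ with any other point of $\mathcal{B}$. One therefore defines $\xi$ by keeping the values $g_p(\red_p(t))$ only for exceptional $p\in\mathcal{E}_\rho$ and $t\in\Tan^{\rho+}_\Gamma(p)$ (these assignments are consistent and injective on each fiber $\Tan^+_\mathcal{T}(x)$ precisely because $\{\vec{P}_x\}_{x\in\mathcal{B}}$ is the system \emph{associated} to $G$, i.e.\ blocks are cut out by equal values), extends $\xi$ to the remaining forward directions of $\mathcal{T}$ arbitrarily subject to injectivity on each fiber, and then, at every ordinary point $p$ (which has a unique forward direction $t$), invokes Lemma~\ref{L:tune} to replace $g_p$ by a non-constant $g'_p\in H_p$ with $g'_p(\red_p(t))=\xi(\pi_{\mathcal{T}*}(t))$. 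The tuned collection $\{g'_p\}_{p\in\Gamma}$ then witnesses $\mathcal{T}\in\LDHfour$. With this tuning step inserted, your level-matching ``if'' direction holds for $\delta$-glued trees (which is all your formal deduction needs), and the rest of your argument goes through unchanged.
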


\begin{proof}
The surjectivity of $\phiLambda$ follows from the construction of the $\delta$-glued partition tree with respect to any bifurcation partition system. 

 Now let us show that the image of $\phiLambda$ restricted to $\LDHtwo$ is contained in $\BPDHtwo$, and the image of $\phiLambda$ restricted to $\LDHthree$ is contained in $\BPDHthree$. Suppose $\mathcal{T}$ is Level-II (respectively, Level-III) compatible  to  $(\mathcal{D},\mathcal{H})$. For all $p\in\Gamma$ and each pair $t_1,t_2\in\Tan^{\rho+}_\Gamma(p)$, we have $\pi_{\mathcal{B}*}(t_1), \pi_{\mathcal{B}*}(t_2)\in\Tan^+_\mathcal{B}(\pi_\mathcal{B}(p))$. Also,  $\pi_{\mathcal{B}*}(t_1)\sim\pi_{\mathcal{B}*}(t_2)$ in $\phiLambda(\mathcal{T})$ if and only if $\pi_{\mathcal{T}*}(t_1)=\pi_{\mathcal{T}*}(t_2)$, since $\pi_\mathcal{T}$ factors through $\pi_\mathcal{B}$. Therefore, we derive that $t_1$ and $t_2$ in ${\Tan^{\rho+}_\Gamma(p)}$ are locally equivalent if (respectively, if and only if) $\pi_{\mathcal{B}*}(t_1)\sim\pi_{\mathcal{B}*}(t_2)$ in $\phiLambda(\mathcal{T})$. Therefore, $\phiLambda(\mathcal{T})$ is a bifurcation partition system Level-II (respectively, Level-III) compatible to $(\mathcal{D},\mathcal{H})$.

Next we will show that the image of $\phiLambda$  restricted to $\LDHfour$ is contained in $\BPDHfour$. Let $\mathcal{T}$ be a partition tree globally compatible with $(\mathcal{D},\mathcal{H})$ via  $\{g_p\}_{p\in\Gamma}\in \mathcal{H}$.  Then one can see that  $\phiLambda(\mathcal{T})$ will be a bifurcation partition system globally compatible to $(\mathcal{D},\mathcal{H})$  via $\{g_p\}_{p\in\Gamma}$. 

Conversely, we will show that the image of $\phiLambda$ restricted to $\LDHtwo$, $\LDHthree$ and $\LDHfour$  contains $\BPDHtwo$, $\BPDHthree$ and $\BPDHfour$ respectively. To this end,  we show that a $\delta$-glued partition tree with respect to a bifurcation partition system in $\BPDHtwo$, $\BPDHthree$ and $\BPDHfour$ respectively, is an element in $\LDHtwo$, $\LDHthree$ and $\LDHfour$ respectively. The first two cases for $\BPDHtwo$ and $\BPDHthree$ are straightforward by definitions.

Now let $\{\vec{P}_x\}_{x\in\mathcal{B}}\in \BPDHfour$ be the bifurcation partition system associated to an admissible $\{g_p\}_{p\in \Gamma}\in\mathcal{H}$ and $\mathcal{T}$ be its corresponding $\delta$-glued partition tree.  
We claim that $\mathcal{T}\in\LDHfour$. To show this, we want to tune $\{g_p\}_{p\in \Gamma}$ into $\{g'_p\}_{p\in \Gamma}\in\mathcal{H}$ and define a function $\xi:\coprod_{x\in\mathcal{T}} \Tan^+_\mathcal{T}(x)\rightarrow \kappa$ such that  $\mathcal{T}$ and $(\mathcal{D},\mathcal{H})$ are compatible via $\{g_p\}_{p\in\Gamma}$. 
\begin{enumerate}
\item Assign values $g_p(\red_p(t))$ to $\pi_{\mathcal{T}*}(t)$ for all $p\in\mathcal{E}_\rho$ and $t\in\Tan^{\rho+}_\Gamma(p)$
\item Assign values in $\kappa$ for the remaining elements in  $\coprod_{x\in\mathcal{T}} \Tan^+_\mathcal{T}(x)$ such that for each $x\in\mathcal{T}$, $\xi$ restricted to $\Tan^+_\mathcal{T}(x)$ is injective.
\item Let $g'_p=g_p$ for all $p\in\mathcal{E}_\rho$.
\item For an ordinary point $p\in\mathcal{O}_\rho$,  there is a unique forward tangent direction (denoted by $t$) at $p$. Therefore, we can always find a non-constant rational function $g'_p\in H_p$ such that $g'_p(\red_p(t))=\xi(\pi_{\mathcal{T}*}(t))$ (Lemma~\ref{L:tune}). 
\end{enumerate}

By the above construction,  $\mathcal{T}$ is globally compatible with $(\mathcal{D},\mathcal{H})$ via $\{g'_p\}_{p\in \Gamma}$, which means $\mathcal{T}\in\LDHfour$.
\end{proof}

\begin{remark}
We will employ the surjectivity of the map $\phiLambda|_{\LDHfour}:\LDHfour\rightarrow\BPDHfour$ in the proof of the smoothing criterion in the next section.
\end{remark}

\section{Proof of the Smoothing Criterion}\label{S:SmoothingProof}
We restate the smoothing criterion combining the two versions as follows.
\begin{theorem*}
Given a saturated metrized complex $\mathfrak{C}$ and a pre-limit $g^1_d$ represented by $(\mathcal{D},\mathcal{H})$, the following statements are equivalent:
\begin{enumerate}

\item  $(\mathcal{D},\mathcal{H})$ is smoothable.

\item  $(\mathcal{D},\mathcal{H})$ is solvable and $\LDHfour$ is nonempty.

\item $(\mathcal{D},\mathcal{H})$ is solvable and satisfies the intrinsic global compatibility conditions.
\end{enumerate}
\end{theorem*}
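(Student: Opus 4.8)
The plan is to use Theorem~\ref{T:SmoothingHar} as the bridge between statement (1) and the combinatorics, and to use Proposition~\ref{P:LDH_BPDH} together with the reformulation of the intrinsic global compatibility conditions in terms of bifurcation partition systems to dispatch the equivalence of (2) and (3) almost immediately. Indeed, by Proposition~\ref{P:LDH_BPDH} the map $\phiLambda$ restricts to a surjection $\LDHfour \to \BPDHfour$, so $\LDHfour \neq \emptyset$ if and only if $\BPDHfour \neq \emptyset$; and $\BPDHfour \neq \emptyset$ is exactly the assertion that some bifurcation partition system is globally compatible with $(\mathcal{D},\mathcal{H})$, which is the restatement of the intrinsic global compatibility conditions. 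Since both (2) and (3) carry the solvability hypothesis, this gives (2) $\Leftrightarrow$ (3), and it suffices to prove (1) $\Leftrightarrow$ (2).

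For (1) $\Rightarrow$ (2), I would argue as follows. By Lemma~\ref{L:SmoothRefined} a smoothable $(\mathcal{D},\mathcal{H})$ is a refined limit $g^1_d$, so the global diagram induced by $\mathcal{H}$ is defined. By Theorem~\ref{T:SmoothingHar} there is a modification $\mathfrak{C}^{\rm mod}$ and a harmonic morphism $\mathfrak{C}\phi^{\rm mod} = (\phi_{\Gamma^{\rm mod}}, \{\phi_p\})$ onto a genus zero $\mathfrak{C}(T)$ compatible with $(\mathcal{D},\mathcal{H})$. First I would produce the solution: pulling back along $\phi_{\Gamma^{\rm mod}}$ the distance-to-root function on the metric tree $T$ and restricting to $\Gamma$ yields a tropical rational function $\rho$ whose outgoing slope along each tangent direction $t$ is, by the ramification/expansion-factor compatibility in Definition~\ref{D:pseu_har_morph}, exactly the multiplicity $m(p,t)$; hence the characteristic equation is solved and $(\mathcal{D},\mathcal{H})$ is solvable. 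With $\rho$ in hand, the retraction of $\phi_{\Gamma^{\rm mod}}$ to $\Gamma$ gives a continuous finite surjection $\pi_\mathcal{T} : \Gamma \to T$ with $\hat{\rho} = d^0_\mathcal{T} \circ \pi_\mathcal{T}$, so $T$ is a partition tree $\mathcal{T}$. Finally, the $C_p$-parts $g_p$ of the pullbacks of a coordinate on $\mathbb{P}^1$ lie in $H_p$ by condition (2) of Theorem~\ref{T:SmoothingHar}, and compatibility condition (1) of Definition~\ref{D:pseu_har_morph}---two tangent directions share an image if and only if their marked points do---translates directly into the global compatibility of $\mathcal{T}$ with $(\mathcal{D},\mathcal{H})$ via $\{g_p\}$, witnessed by the function $\xi$ recording the values of the coordinate at the images of the marked points. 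Thus $\mathcal{T} \in \LDHfour$.

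For (2) $\Rightarrow$ (1), starting from a partition tree $\mathcal{T} \in \LDHfour$ globally compatible with $(\mathcal{D},\mathcal{H})$ via $\{g_p\}_{p\in\Gamma}$ and a function $\xi$ as in the definition, I would build the data required by Theorem~\ref{T:SmoothingHar} and then invoke that theorem. I would equip $\mathcal{T}$ with a genus zero saturated metrized complex $\mathfrak{C}(\mathcal{T})$ by placing a projective line $C'_x$ at each $x \in \mathcal{T}$, with marked points dictated by $\xi$ on $\Tan^+_\mathcal{T}(x)$ together with the backward direction. The curve morphisms $\phi_p : C_p \to C'_{\pi_\mathcal{T}(p)}$ would be the degree-one maps sending the chosen nonconstant $g_p \in H_p$ to a coordinate on $C'_{\pi_\mathcal{T}(p)}$; the injectivity of $\xi$ on each $\Tan^+_\mathcal{T}(x)$ and the global compatibility relation $g_p \circ \Red = \xi \circ \pi_{\mathcal{T}*}$ guarantee the marked-point matching in Definition~\ref{D:pseu_har_morph}(1). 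The remaining issue is that $\pi_\mathcal{T}$ is only a finite surjection, not yet harmonic; I would repair balancing by passing to a modification $\mathfrak{C}^{\rm mod}$---inserting into $\Gamma$ the contractible trees needed so that at every point the expansion factors over each outgoing tangent direction of $\mathcal{T}$ sum to a common degree---and extending $\phi_p$ accordingly, using the surjectivity of $\phiLambda|_{\LDHfour}$ onto $\BPDHfour$ to realize the requisite combinatorics. Conditions (1) and (2) of Theorem~\ref{T:SmoothingHar} would then hold by construction, so $(\mathcal{D},\mathcal{H})$ is smoothable.

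The main obstacle I anticipate is this last modification step: turning the partition-tree projection $\pi_\mathcal{T}$, which is merely a continuous finite surjection respecting $\hat\rho$, into a genuine finite harmonic morphism of saturated metrized complexes. One must simultaneously (i) enforce the balancing condition of Definition~\ref{D:har_morph} at every point by choosing edge lengths and inserting trees without altering the genus, (ii) keep the inserted curves of genus zero and the reduction maps consistent with the marked points prescribed by $\xi$, and (iii) ensure the pullback of the degree-one linear system with pole at $u'$ recovers exactly $\mathcal{D}$ after retraction and lands in each $H_p$. Keeping these three requirements compatible---while invoking only the combinatorial input packaged in $\LDHfour$---is the technical heart of the argument; by contrast the forward direction and the equivalence (2) $\Leftrightarrow$ (3) are comparatively formal consequences of Theorem~\ref{T:SmoothingHar} and Proposition~\ref{P:LDH_BPDH}.
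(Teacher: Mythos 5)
Your proposal follows essentially the same route as the paper's own proof: (2) $\Leftrightarrow$ (3) via the surjectivity statement of Proposition~\ref{P:LDH_BPDH}, (1) $\Rightarrow$ (2) by reading the solution $\rho$ and a globally compatible partition tree directly off the harmonic morphism supplied by Theorem~\ref{T:SmoothingHar}, and (2) $\Rightarrow$ (1) by building the genus-zero complex $\mathfrak{C}(T)$ with reduction maps dictated by $\xi$, taking $\phi_p$ induced by $g_p$, and then repairing harmonicity by passing to a modification before invoking Theorem~\ref{T:SmoothingHar} again. The technical heart you flag---upgrading the pseudo-harmonic morphism $(\pi_{\mathcal{T}},\{\phi_p\})$ to a finite harmonic morphism by inserting trees---is exactly what the paper isolates as Proposition~\ref{P:modification}, whose construction matches your sketch.
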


\begin{proof}
(2) is equivalent to saying that there exists a partition tree globally compatible to $(\mathcal{D},\mathcal{H})$, and (3) is equivalent to saying that there exists a bifurcation partition system globally compatible to $(\mathcal{D},\mathcal{H})$. Then the equivalence of (2) and (3) follows from Proposition~\ref{P:LDH_BPDH}.

(1) $\Rightarrow$ (2):

Let $(\mathcal{D},\mathcal{H})$ represent a smoothable pre-limit $g^1_d$ on $\mathfrak{C}$.
Then by Theorem~\ref{T:SmoothingHar}, we know that there exists a harmonic morphism $\mathfrak{C}\phi^\text{mod}=(\phi_{\Gamma^\text{mod}},\{\phi_p\}_{p \in \Gamma^\text{mod}})$ of degree $\deg(\mathcal{D})$ from a modification $\mathfrak{C^\text{mod}}$ of $\mathfrak{C}$ to a genus zero saturated metrized complex $\mathfrak{C}(T)$  such that (1) $\mathcal{D}$ is the retraction to $\mathfrak{C}$ of a divisor $\mathcal{D}^\text{mod}$ on $\mathfrak{C}\phi^\text{mod}$ which is a pullback divisor by $\mathfrak{C}\phi^\text{mod}$ of an effective degree one divisor on $\mathfrak{C}(T)$, and  (2) $\phi_p$ coincides with the morphism from $C_p$ to $\mathbb{P}^1_{\kappa}$ defined by $H_p$. Here the underlying metric graphs of $\mathfrak{C}$, $\mathfrak{C}^\text{mod}$ and $\mathfrak{C}(T)$ are denoted by $\Gamma$, $\Gamma^\text{mod}$ and $T$ respectively. Now it remains to show that $T$ must be an element in $\LDHfour$.

Denote by $r(T)$ the root of $T$ which is the image of $D_\Gamma^\text{mod}$ (the tropical part of $\mathcal{D}^\text{mod}$) under $\phi_{\Gamma^\text{mod}}$.
Then $r(T)$ and the map $\phi_\Gamma:\Gamma\rightarrow T$ which is the restriction to $\Gamma$ of the harmonic morphism $\phi_{\Gamma^\text{mod}}:\Gamma^\text{mod}\rightarrow T$  induces a global diagram on $\Gamma$ in the following way: for any point $p \in \Gamma$ and any tangent direction $t \in \Tan_\Gamma(p)$, the multiplicity $m_1(p,t)$ is the expansion factor with sign `$-$' if the pushforward of $t$ by $\phi_\Gamma$ coincides with the tangent direction on $T$ along the unique path from $\phi_\Gamma(p)$ to $r(T)$, and with sign `$+$' otherwise.

On the other hand, we also construct local diagrams from $(\mathcal{D},\mathcal{H})$ (Remark~\ref{R:LocalHp}). In particular, the multiplicity $m_2(p,t)$ in the local diagram at point $p$ associated to $(\mathcal{D},\mathcal{H})$ equals the ramification index of $\phi_p$ at $\red_p(t)$ with an appropriate sign. By the compatibility property of the harmonic morphisms, we know that the ramification index of $\phi_p$ at the marked point $\red_p(t)$ on $C_p$ corresponding to $t$ equals the expansion factor of $\phi_\Gamma$ at $t$. Therefore $m_1(p,t)=m_2(p,t)$, which means $(\mathcal{D},\mathcal{H})$ is diagrammatic and solvable with a solution $d^0_T\circ\phi_\Gamma$ where $d^0_T$ is the distance from points on $T$ to the root point $r(T)$.  Hence, $(T,\phi_\Gamma)$ is an element of $\LDHone$. In addition,   the compatibility property of the harmonic morphisms also guarantees that $(T,\phi_\Gamma)$ must be in $\LDHfour$.

(2) $\Rightarrow$ (1):
 Let $(T,\pi_T)\in\LDHfour$ be globally compatible with $(\mathcal{D},\mathcal{H})$. 
 
 To show that $(\mathcal{D},\mathcal{H})$ is smoothable, using Theorem~\ref{T:SmoothingHar}, we need to find some harmonic morphism  compatible to $(\mathcal{D},\mathcal{H})$. More precisely,  we want to construct a modification $\mathfrak{C}^\text{mod}$ of $\mathfrak{C}$ with underlying metric graph $\Gamma^\text{mod}$ such that (1) $\phi_{\Gamma^\text{mod}}:\Gamma^\text{mod}\rightarrow T$ is a harmonic morphism (between metric graphs) whose restriction to $\Gamma$ is $\pi_T$, and (2) $\phi_{\Gamma^\text{mod}}$ can be lifted to a harmonic morphism $\mathfrak{C}\phi^\text{mod}$ from $\mathfrak{C}^\text{mod}$ to a genus zero saturated metrized complex $\mathfrak{C}(T)$ whose underlying metric tree is $T$. In Subsection~\ref{S:harmmor}, we have also introduced the notion of pseudo-harmonic morphism which does not require the balancing condition as for harmonic morphisms. In the rest of the proof, we will show that we can first find a compatible pseudo-harmonic morphism from $\mathfrak{C}$ to a genus zero saturated metrized complex $\mathfrak{C}(T)$. Then we will show that we can always extend this pseudo-harmonic morphism to a desired harmonic morphism by generating a suitable modification $\mathfrak{C}^\text{mod}$ of $\mathfrak{C}$, while we single out the statement and proof in Proposition~\ref{P:modification} together with Example~\ref{E:Modification} to aid our exposition.

Assume $\{g_p\}_{p\in\Gamma}$ is a collection of rational functions $g_p\in H_p$ on $C_p$ that makes $(\mathcal{D},\mathcal{H})$ and $(T,\pi_T)$ compatible. There is a function $\xi:\coprod_{x\in T} \Tan^+_T(x)\rightarrow \kappa$ such that $\xi$ is injective restricted to $\Tan^+_T(x)$ for each $x\in T$, and $g_p\circ\red_p = \xi\circ\pi_{T*}$ for all $p\in\Gamma$. Let $\bar{g}_p:C_p\rightarrow \kappa_\infty$ be the function on $C_p$ extending $g_p$ to its poles, and let $\bar{\xi}:\coprod_{x\in T} \Tan_T(x)\rightarrow \kappa_\infty$ be the extension of $\xi$ such that for each $x\in T$, $\bar{\xi}$ maps the incoming tangent direction at $x$ to $\infty$ in $\kappa_\infty$. Then we also have $\bar{g}_p\circ\red_p = \bar{\xi}\circ\pi_{T*}$.

Since $\kappa_\infty$ is isomorphic to a projective line over $\kappa$, we can build a genus zero metrized complex $\mathfrak{C}(T)$ from $T$ (for all $x\in T$, the curve $C'_x$ associated to $x$ is a projective line) by letting $\gamma_x\circ\bar{\xi}_x$ be the reduction map at $x\in T$ where $\bar{\xi}_x$ is the function $\bar{\xi}$ restricted to $\Tan_T(x)$ and $\gamma_x:\kappa_\infty \stackrel{\sim}{\rightarrow} C'_x$ is the isomorphism between $\kappa_\infty$ and $C'_x$.

Now let $\phi_p=\gamma_{\pi_T(p)}\circ\bar{g}_p$. Then $(\pi_T,\{\phi_p\}_{p\in\Gamma})$ is a pseudo-harmonic morphism from $\mathfrak{C}$ to $\mathfrak{C}(T)$, since the compatibility conditions of a pseudo-harmonic morphism (Definition~\ref{D:pseu_har_morph}) are guaranteed by the solvability of $(\mathcal{D},\mathcal{H})$ and the relation $\phi_p\circ\red_p=\gamma_{\pi_T(p)}\circ\bar{g}_p\circ\red_p = \gamma_{\pi_T(p)}\circ \bar{\xi}_{\pi_T(p)}\circ\pi_{T*}$ where $\gamma_{\pi_T(p)}\circ \bar{\xi}_{\pi_T(p)}$ is the reduction map at $\pi_T(p)$ by the construction of $\mathfrak{C}(T)$.

By Proposition~\ref{P:modification}, we can extend the pseudo-harmonic morphism $(\pi_T,\{\phi_p\}_{p\in\Gamma})$ to a harmonic morphism $(\phi^\text{mod},\{\phi_p\}_{p\in\Gamma^\text{mod}})$ from a modification $\mathfrak{C}^\text{mod}$ of $\mathfrak{C}$ to $\mathfrak{C}(T)$.
\end{proof}

\begin{proposition}\label{P:modification}
Let $\mathfrak{C}\phi$ be a pseudo-harmonic morphism from a saturated metrized complex $\mathfrak{C}$ to a saturated metrized complex $\mathfrak{C}(T)$ of genus zero. If $\mathfrak{C}\phi$ is harmonic at all but finitely many points in its underlying metric graph, then there is a modification $\mathfrak{C}^\text{mod}$ of $\mathfrak{C}$ and a harmonic morphism $\mathfrak{C}\phi^\text{mod}$ from $\mathfrak{C}^\text{mod}$ to $\mathfrak{C}(T)$ such that $\mathfrak{C}\phi$ is the restriction of $\mathfrak{C}\phi^\text{mod}$ to $\mathfrak{C}$.
\end{proposition}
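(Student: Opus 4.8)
The plan is to repair harmonicity one point at a time by grafting trees of projective lines onto the source, exploiting in an essential way that the target tree $T$ carries no cycles, so that the repair propagates strictly outward and terminates. Let $S$ be the finite set of points of $\Gamma$ at which $\mathfrak{C}\phi$ fails to be harmonic; after refining the vertex set I may assume every point of $S$ is a vertex. Fix $p\in S$, set $q=\phi_\Gamma(p)$ and $d_p=\deg(\phi_p)$. For each tangent direction $t'\in\Tan_T(q)$, the compatibility conditions of Definition~\ref{D:pseu_har_morph} identify the expansion factor $d_t(\phi_\Gamma)$ of each $t\in\Tan_\Gamma(p)$ mapping to $t'$ with the ramification index of $\phi_p$ at the marked point $\red_p(t)$, and all such marked points lie over $\red_q(t')$. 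Since the total ramification of $\phi_p$ over $\red_q(t')$ equals $d_p$, I get $\sum_{t\mapsto t'}d_t(\phi_\Gamma)=d_p-\sum_i e_i$, where $e_1,\dots,e_k$ are the ramification indices of $\phi_p$ at the \emph{non-marked} preimages $u_1,\dots,u_k$ of $\red_q(t')$. The failure of the balancing condition at $p$ is measured precisely by these deficiencies $\sum_i e_i$.

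First I would remove each deficiency by enlarging $\Gamma$ locally at $p$. For every non-marked preimage $u_i$ of $\red_q(t')$ I attach a new edge $\varepsilon_i$ at $p$, of length equal to the length of the $T$-edge $qq_1$ divided by $e_i$ (where $q_1$ is the vertex of $T$ adjacent to $q$ in direction $t'$), declare $\varepsilon_i$ to point in a new tangent direction with $\red_p$-image $u_i$ and expansion factor $e_i$, and let $\varepsilon_i$ map isometrically-up-to-scale onto $qq_1$. Carrying this out for every $t'\in\Tan_T(q)$ turns the balancing sum at $p$ into $d_p=\deg(\phi_p)$ for every $t'$, so the morphism is now harmonic at $p$. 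The $u_i$ are disjoint from the old marked points and from one another, so $\red_p$ stays injective and the compatibility conditions persist; since the marked preimages already accounted for the existing directions of $\Gamma$, each $\varepsilon_i$ is genuinely new and introduces no cycle.

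Next I would extend the morphism over the new endpoints and iterate. At the far endpoint $p_i$ of $\varepsilon_i$, lying over $q_1$, I set $C_{p_i}=\mathbb{P}^1$ and choose $\phi_{p_i}\colon C_{p_i}\to C'_{q_1}$ to be a degree-$e_i$ map totally ramified over the marked point of $C'_{q_1}$ corresponding to the direction at $q_1$ pointing back toward $q$, realized at the marked point of $C_{p_i}$ determined by $\varepsilon_i$; such a map (a power map) exists because $\kappa$ is algebraically closed of characteristic $0$. The back direction is then totally ramified and needs no further edges, while for every other $t''\in\Tan_T(q_1)$ all preimages of $\red_{q_1}(t'')$ are non-marked, so I repeat the attachment step at $p_i$, moving strictly farther from $q$ inside $T$. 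Because $T$ is a finite tree, removing the edge $qq_1$ disconnects it and each onward step enters the component not containing $q$; thus the propagation increases tree-distance from $q$ and halts at the leaves of $T$. This termination is the one place where the genus-zero hypothesis on the target is indispensable, and it is the main obstacle: with a cycle in the target the outward process could loop forever. By construction every newly created point is harmonic, and the points of $\Gamma\setminus S$ are left untouched, hence remain harmonic.

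Finally I would assemble the data. Let $\Gamma^{\text{mod}}$ be $\Gamma$ together with all grafted edges, and let $\mathfrak{C}^{\text{mod}}$ carry the projective lines, reduction maps and morphisms $\phi_{p_i}$ specified above. Grafting trees onto $\Gamma$ leaves the first Betti number unchanged and the inserted curves have genus $0$, so $g(\mathfrak{C}^{\text{mod}})=g(\mathfrak{C})$ and $\Gamma^{\text{mod}}$ is a modification of $\Gamma$. Since $\phi_{\Gamma^{\text{mod}}}|_\Gamma=\phi_\Gamma$, and the $\phi_p$ and reduction maps for $p\in\Gamma$ are unchanged, $\mathfrak{C}\phi$ is the restriction of the resulting harmonic morphism $\mathfrak{C}\phi^{\text{mod}}$ to $\mathfrak{C}$. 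Beyond termination, the remaining care is to check simultaneously that harmonicity holds at the repaired points, at the new interior points, and at the leaf points; all three follow from the deficiency computation together with the elementary realizability of prescribed ramification on $\mathbb{P}^1$ over an algebraically closed field of characteristic zero.
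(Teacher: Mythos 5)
Your proof is correct, and it shares the paper's core idea: measure the failure of balancing at a point by the non-marked points $u_i$ in the fibers of $\phi_p$ over marked points of the target curve, then graft trees of projective lines onto $\Gamma$ at those $u_i$ to restore balancing, exploiting that $T$ is a tree. Where you differ is in how the grafted branch is produced. The paper builds it in one shot: for a non-marked preimage $u$ with ramification index $m$ in direction $t'$, it takes the entire component $T'$ of $T\setminus\{\phi_\Gamma(q)\}$ in that direction, glues $m$ copies of $T'$ along a short initial segment (shrunk by the factor $m$), and maps this branch onto $T'$ by a covering that has degree $m$ only on that short segment and is a disjoint union of isomorphisms beyond it; harmonicity of the whole branch is immediate from the construction and no induction is needed. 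You instead propagate edge-by-edge: your new edge covers the whole edge $qq_1$ with expansion factor $e_i$, the full deficiency $e_i$ reappears at its far endpoint, and you recurse strictly outward, with termination supplied by finiteness of the tree. Both arguments are sound and both use genus zero essentially (in the paper it guarantees the branch is a tree so the genus of the modification is unchanged; in yours it guarantees termination). The paper's version buys a closed-form description of the modification, while yours trades that for an induction plus the bookkeeping that each step preserves the compatibility conditions of Definition~\ref{D:pseu_har_morph}; the one item you should spell out rather than defer to the final sentence is that interior points of each grafted edge must also carry curves and degree-$e_i$ maps (power maps totally ramified at the two marked points), which is exactly what the paper does on its glued segment.
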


\begin{proof}
Assume that the underlying metric graphs of $\mathfrak{C}$ and $\mathfrak{C}(T)$ are $\Gamma$ and $T$ respectively and the associated curves of $\mathfrak{C}$ and $\mathfrak{C}(T)$  are $\{C_p\}_{p\in\Gamma}$ and $\{C'_x\}_{x\in T}$ respectively. Let $\mathfrak{C}\phi=(\phi_\Gamma,\{\phi_p\}_{p\in\Gamma})$ where $\phi_\Gamma:\Gamma\rightarrow T$ is the associated pseudo-harmonic morphism of metric graphs and $\phi_p:C_p\rightarrow C'_{\phi_\Gamma(p)}$ is the associated finite morphism of curves at $p$. We will derive a modification $\mathfrak{C}^\text{mod}$ of $\mathfrak{C}$ in the following way.

Consider a point $q\in\Gamma$. For each tangent direction $t'\in\Tan_T(\phi_\Gamma(q))$ at $\phi_\Gamma(q)$ on $T$, let $u\in C_q$ be a non-marked point of $C_q$ which is an element in the fiber $\phi_q^{-1}(\red_{\phi_\Gamma(q)}(t'))$. Suppose the ramification index of $\phi_q$ at $u$ is $m$.

Let $T'$ be the connected component of $T \setminus \{\phi_\Gamma(q)\}$ corresponding to the tangent direction $t$ at $\phi_\Gamma(q)$. Let $T'_1,\cdots,T'_m$ be $m$ copies of $T'$. For $i=1,\cdots,m$, let $x_i$ be the open end of $T'_i$ corresponding to the open end $\phi_\Gamma(q)$ of $T'$ and let $y_i$ be the point in $T'_i$ with a small distance $l$ to $x_i$ ($l$ is less than the minimum distance of branching points of $T'$ to $\phi_\Gamma(q)$).

Now we want to attach $\Gamma$ with an extra branch $\Gamma'_u$ with respect to $u$. Then by equipping $\Gamma'_u$ with projective lines, we will get a modification of $\mathfrak{C}$ with respect to $u$.

We construct $\Gamma'_u$ from $T'_1,\cdots,T'_m$ by first identifying the segments $(x_i,y_i]$'s and then shrinking the glued segment by a factor of $m$. Denote by $(x,y]$ the corresponding segment in $\Gamma'_u$ with $x$ being its open end. Then by this construction, the length of $(x,y]$ is $l/m$ and $\Gamma'_u\setminus (x,y]$ is a disjoint union of $T'_i\setminus(x_i,y_i]$'s. Forgetting the compactness restriction of a metric graph, we also call $T'$, $T'_i$'s and $\Gamma'_u$ metric graphs. Then there is a natural harmonic morphism $\phi_{\Gamma'_u}$ from $\Gamma'_u$ to $T'$ where the balancing condition (Definition~\ref{D:har_morph}) is automatically satisfied by the construction of $\Gamma'_u$.

Let $\mathfrak{C}(T')$ be $\mathfrak{C}(T)$ restricted to $T'$. We can construct a saturated metrized complex $\mathfrak{C}(\Gamma'_u)$ with underlying metric graph $\Gamma'_u$ by associating each point $p\in\Gamma'_u$ with a projective line $C_p$. Let $x'=\phi_{\Gamma'_u}(p)$. The reduction map $\red_p$ at $p$ is derived as follows.
\begin{enumerate}
  \item If $p\in(x,y)$, then there are two tangent directions $t_1$ and $t_2$ in $\Tan_{\Gamma'_u}(p)$ and two tangent directions $t'_1$ and $t'_2$ in $\Tan_{T'}(x')$ where $t_1$ and $t_2$ are pullbacks of $t'_1$ and $t'_2$ by $\phi_{\Gamma'_u}$. Let $\phi_p:C_p\rightarrow C'_{x'}$ be a degree $m$ morphism from $C_p$ to $C'_{x'}$ (the curve associated to $\phi_{\Gamma'_u}(p)$ in $\mathfrak{C}(T')$) such that there are two points $v_1$ and $v_2$ in $C_p$ with ramification index $m$ over the marked points $\red_{x'}(t'_1)$ and $\red_{x'}(t'_2)$ in $C'_{x'}$ respectively. Let the marked point $\red_p(t_1)$ associated to $t_1$ be $v_1$ and the marked point $\red_p(t_2)$ associated to $t_2$ be $v_2$.
  \item If $p=y$, then there are $m+1$ tangent directions $t_1,\cdots,t_{m+1}$ in  $\Tan_{\Gamma'_u}(p)$ and two tangent directions $t'_1$ and $t'_2$ in $\Tan_{T'}(x')$. We may assume that $t'_1$ is the tangent direction corresponding to the edge between $x'$ and the open end of $T'$, $t_1$ is the pullback of $t'_1$ by $\phi_{\Gamma'_u}$, and $\{t_2,\cdots,t_{m+1}\}$ is the pullback of $t'_2$ by $\phi_{\Gamma'_u}$. Let $\phi_p:C_p\rightarrow C'_{x'}$ be a degree $m$ morphism from $C_p$ to $C'_{x'}$ such that there is a point $v_1\in C_p$ with ramification index $m$ over the marked point $\red_{x'}(t'_1)$ and there are distinct points $v_2,\cdots,v_{m+1}\in C_p$ with ramification index $1$ over the marked point $\red_{x'}(t'_2)$. Then we let the marked point $\red_p(t_i)$ associated to $t_i$ be $v_i$ for $i=1,\cdots,m+1$.
  \item If $p\in\Gamma'_u\setminus (x,y]$, then $\Tan_{T'}(x')$ pulls back bijectively to $\Tan_{\Gamma'_u}(p)$ by $\phi_{\Gamma'_u}$. We let $\phi_p:C_p\rightarrow C'_{x'}$ be an isomorphism. For every pair of corresponding tangent directions $t_p\in\Tan_{\Gamma'_u}(p)$ and $t_{x'}\in\Tan_{T'}(x')$,  we let the marked point $\red_p(t_p)$ associated to $t_p$ be $\phi_p^{-1}(\red_{x'}(t_{x'}))$.
\end{enumerate}

Note that in all cases above, the morphism $\phi_p:C_p\rightarrow C'_{x'}$ always exists since $C_p$ and $C'_{x'}$ are projective lines.
We conclude that $(\phi_{\Gamma'_u},\{\phi_p\}_{p\in\Gamma'_u})$ is a harmonic morphism from $\mathfrak{C}(\Gamma'_u)$ to $\mathfrak{C}(T')$, since $\phi_{\Gamma'_u}$ is a harmonic morphism of metric graphs and the compatibility conditions of Definition~\ref{D:pseu_har_morph} are automatically satisfied by the above construction of $\mathfrak{C}(\Gamma'_u)$.

Now we get a modification of $\Gamma$ with respect to $u$ by attaching the open end of the extra branch $\Gamma'_u$ to $\Gamma$ at $q$, and a modification of $\mathfrak{C}$ with respect to $u$ by adding $u$ as a marked point of $C_q$ and attaching $\mathfrak{C}(\Gamma'_u)$ to $\mathfrak{C}$. Moreover, the pseudo-harmonic morphisms $\phi_\Gamma$ and $\mathfrak{C}\phi$ also naturally extend respectively to these modifications of $\Gamma$ and $\mathfrak{C}$ with respect to $u$, which are harmonic at all the points in the extra branch $\Gamma'_u$ (not necessarily at $q$).

Recall that $u$ is a non-marked point of $C_q$ which at the same time is an element of $\phi_q^{-1}(\red_{\phi_\Gamma(q)}(t'))$ where $t'$ is a tangent direction at $\phi_\Gamma(q)$ on $T$. Therefore, we can get modifications of $\Gamma$ and $\mathfrak{C}$ with respect to $q$ by performing modifications of $\Gamma$ and $\mathfrak{C}$ to all possible $u$'s in this sense at the same time. Moreover, the pseudo-harmonic morphisms $\phi_\Gamma$ and $\mathfrak{C}\phi$ also naturally extend respectively to these modifications of $\Gamma$ and $\mathfrak{C}$ with respect to $q$, which are harmonic at the point $q$ and all the points in the extra branches. Note that if $\mathfrak{C}\phi$ is already harmonic at $q$, then no modification is performed.

The final modifications of $\Gamma$ and $\mathfrak{C}$, denoted by of $\Gamma^\text{mod}$ and $\mathfrak{C}^\text{mod}$ respectively, are derived by performing modifications of $\Gamma$ and $\mathfrak{C}$ at the same time to all $q\in\Gamma$ at which $\mathfrak{C}\phi$ is not harmonic. In this way, we get a harmonic morphism $\mathfrak{C}\phi^\text{mod}:\mathfrak{C}^\text{mod}\rightarrow\mathfrak{C}(T)$ as required.

\end{proof}

\begin{example} \label{E:Modification}
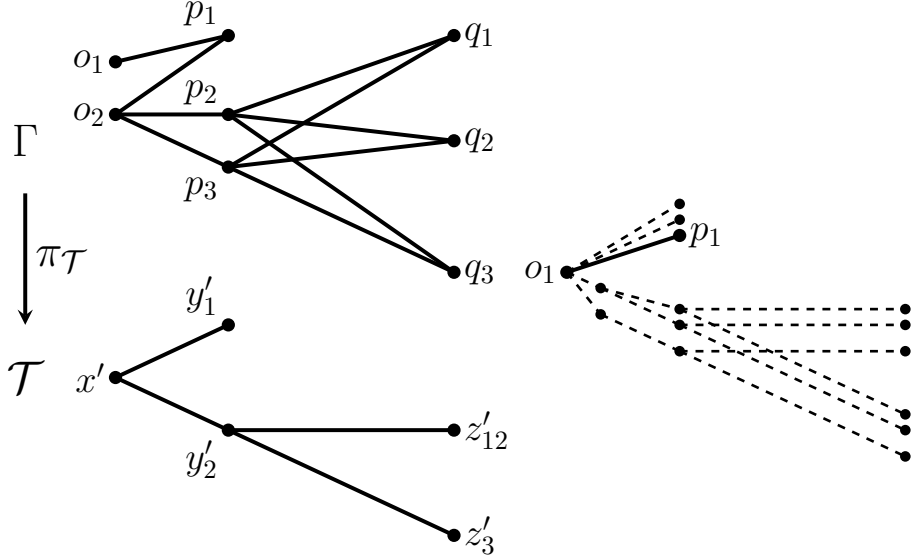
\begin{figure}[tbp]
\centering
\begin{tikzpicture}[>=to,x=1.5cm,y=0.7cm]
\coordinate (xx) at (0,3);
    \fill [black] (xx) circle (2.5pt);
    \draw (xx) node[anchor=east] {\Large $x'$};
\coordinate (yy1) at (1,4);
    \fill [black] (yy1) circle (2.5pt);
    \draw (yy1) node[anchor=south east] {\Large $y'_1$};
\coordinate (yy2) at (1,2);
    \fill [black] (yy2) circle (2.5pt);
    \draw (yy2) node[anchor=north east] {\Large $y'_2$};
\coordinate (zz12) at (3,2);
    \fill [black] (zz12) circle (2.5pt);
    \draw (zz12) node[anchor=west] {\Large $z'_{12}$};
\coordinate (zz3) at (3,0);
    \fill [black] (zz3) circle (2.5pt);
    \draw (zz3) node[anchor=west] {\Large $z'_3$};

\draw [line width=1.5pt] (xx)-- (yy1);
\draw [line width=1.5pt] (xx)-- (yy2);
\draw [line width=1.5pt] (yy2)-- (zz12);
\draw [line width=1.5pt] (yy2)-- (zz3);

\coordinate (o1) at (0,9);
    \fill [black] (o1) circle (2.5pt);
    \draw (o1) node[anchor=east] {\Large $o_1$};
\coordinate (o2) at (0,8);
    \fill [black] (o2) circle (2.5pt);
    \draw (o2) node[anchor=east] {\Large $o_2$};
\coordinate (p1) at (1,9.5);
    \fill [black] (p1) circle (2.5pt);
    \draw (p1) node[anchor=south east] {\Large $p_1$};
\coordinate (p2) at (1,8);
    \fill [black] (p2) circle (2.5pt);
    \draw (p2) node[anchor=south east] {\Large $p_2$};
\coordinate (p3) at (1,7);
    \fill [black] (p3) circle (2.5pt);
    \draw (p3) node[anchor=north east] {\Large $p_3$};
\coordinate (q1) at (3,9.5);
    \fill [black] (q1) circle (2.5pt);
    \draw (q1) node[anchor=west] {\Large $q_1$};
\coordinate (q2) at (3,7.5);
    \fill [black] (q2) circle (2.5pt);
    \draw (q2) node[anchor=west] {\Large $q_2$};
\coordinate (q3) at (3,5);
    \fill [black] (q3) circle (2.5pt);
    \draw (q3) node[anchor=west] {\Large $q_3$};

\draw [line width=1.5pt] (o1) -- (p1);
\draw [line width=1.5pt] (o2) -- (p1);
\draw [line width=1.5pt] (o2) -- (p2);
\draw [line width=1.5pt] (o2) -- (p3);
\draw [line width=1.5pt] (p2) -- (q1);
\draw [line width=1.5pt] (p2) -- (q2);
\draw [line width=1.5pt] (p2) -- (q3);
\draw [line width=1.5pt] (p3) -- (q1);
\draw [line width=1.5pt] (p3) -- (q2);
\draw [line width=1.5pt] (p3) -- (q3);

\draw (-0.8,3) node {\LARGE $\mathcal{T}$};
\draw (-0.8,7.5) node {\LARGE $\Gamma$};
\draw [-stealth, line width=1.5pt] (-0.8,6.5) -- (-0.8,4);
\draw (-0.8,5.25) node[anchor=west] {\LARGE $\pi_\mathcal{T}$};

\coordinate (mo1) at (4,5);
    \fill [black] (mo1) circle (2.5pt);
    \draw (mo1) node[anchor=east] {\Large $o_1$};
\coordinate (mp1) at (5,5.7);
    \fill [black] (mp1) circle (2.5pt);
    \draw (mp1) node[anchor=west] {\Large $p_1$};
\coordinate (mp12) at (5,6);
    \fill [black] (mp12) circle (2pt);
\coordinate (mp13) at (5,6.3);
    \fill [black] (mp13) circle (2pt);
\coordinate (mg1) at (4.3,4.7);
    \fill [black] (mg1) circle (2pt);
\coordinate (mg2) at (4.3,4.2);
    \fill [black] (mg2) circle (2pt);
\coordinate (my21) at (5,3.5);
    \fill [black] (my21) circle (2pt);
\coordinate (my22) at (5,4);
    \fill [black] (my22) circle (2pt);
\coordinate (my23) at (5,4.3);
    \fill [black] (my23) circle (2pt);
\coordinate (mz121) at (7,3.5);
    \fill [black] (mz121) circle (2pt);
\coordinate (mz122) at (7,4);
    \fill [black] (mz122) circle (2pt);
\coordinate (mz123) at (7,4.3);
    \fill [black] (mz123) circle (2pt);
\coordinate (mz31) at (7,1.5);
    \fill [black] (mz31) circle (2pt);
\coordinate (mz32) at (7,2);
    \fill [black] (mz32) circle (2pt);
\coordinate (mz33) at (7,2.3);
    \fill [black] (mz33) circle (2pt);

\draw [line width=1.5pt] (mo1) -- (mp1);
\draw [line width=1pt, dashed] (mo1) -- (mp12);
\draw [line width=1pt, dashed] (mo1) -- (mp13);
\draw [line width=1pt, dashed] (mo1) -- (mg1);
\draw [line width=1pt, dashed] (mo1) -- (mg2);
\draw [line width=1pt, dashed] (mg1) -- (my23);
\draw [line width=1pt, dashed] (mg1) -- (my22);
\draw [line width=1pt, dashed] (mg2) -- (my21);
\draw [line width=1pt, dashed] (my21) -- (mz121);
\draw [line width=1pt, dashed] (my22) -- (mz122);
\draw [line width=1pt, dashed] (my23) -- (mz123);
\draw [line width=1pt, dashed] (my21) -- (mz31);
\draw [line width=1pt, dashed] (my22) -- (mz32);
\draw [line width=1pt, dashed] (my23) -- (mz33);
\end{tikzpicture}
\caption{A modification performed at point $o_1$ based on the projection map $\pi_\mathcal{T}$ in Example~\ref{E:Partition} and the local data in $H_{o_1}$.}\label{F:Modification}
\end{figure}
In Figure~\ref{F:Modification}, we show how a modification is performed at point $o_1$ in Figure \ref{F:Partition} of Example ~\ref{E:Partition}. The image of $o_1$ under $\pi_\mathcal{T}$ is $x'$. First note that there are only one outgoing edge $o_1p_1$ from $o_1$ with expansion factor $1$ of the map $\pi_\mathcal{T}$. Suppose that the degree of the nonconstant rational function $g_{o_1}\in H_{o_1}$ is $3$. Suppose that the forward tangent direction from $x'$ to $y'_1$ corresponds to $c_1\in\kappa$ and the forward tangent direction from $x'$ to $y'_2$ corresponds to $c_2\in\kappa$. Suppose that $g_{o_1}^{-1}(c_1)=\{u_1,u_2,u_3\}$ and $u_1$ is the reduction of the tangent direction from $o_1$ to $p_1$. Then two copies of $x'y'_1$  will be attached to $o_1$ as extra branches corresponding to $u_2$ and $u_3$ respectively. Suppose that $g_{o_1}^{-1}(c_2)=\{v_1,v_2\}$ while the ramification index of $v_1$ is $1$ and the ramification index of $v_2$ is $2$. Let $T'$ be the subgraph of $\mathcal{T}$ connecting $x'$, $z'_{12}$ and $z'_3$. Then one copy of $T'$ is attached to $o_1$ as the extra branch corresponding to $v_1$. Accordingly, the extra branch corresponding to $v_2$ is made from two copies $T'_1$ and $T'_2$ of $T'$ by first gluing from the open ends of $T'_1$ and $T'_2$ along small segments of the same length  and then shrinking the glued segment by a factor of $2$.
\end{example}

\section{Applications}\label{application_sect}

We apply the smoothing criterion to the saturated metrized complex versions of certain types of curves : curves of compact type studied by Eisenbud and Harris, nodal curves with dual graph made of separate loops and curves considered by Harris and Mumford to characterize gonality stratification. We also extend the smoothing criterion to metrized complexes by showing a concrete example. 

\subsection{Saturated Metrized Complexes of Compact Type} \label{S:CompactType}
We show that every diagrammatic pre-limit $g^1_d$ on a saturated metrized complex of compact type (i.e., whose underlying metric graph is a metric tree) is smoothable.  This theorem is an analogue of Proposition 3.1 of \cite{EH86} by Eisenbud and Harris for curves of compact type.

\begin{theorem} \label{T:CompactType}
Every diagrammatic pre-limit $g^1_d$ represented by $(\mathcal{D},\mathcal{H})$ on a saturated metrized complex $\mathfrak{C}$ of compact type is smoothable.
\end{theorem}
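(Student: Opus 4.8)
The plan is to reduce the statement to the smoothing criterion (Theorem~\ref{T:main}, equivalently Theorem~\ref{thm:main_criterion}). Since $(\mathcal{D},\mathcal{H})$ is assumed diagrammatic, it suffices to check that it is \emph{solvable} and that it satisfies the \emph{intrinsic global compatibility conditions}; both facts should follow from the hypothesis that the underlying metric graph $\Gamma$ is a tree.

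For solvability I would use the reformulation in Remark~\ref{R:GlobalDiagram}: the global diagram induced by $(\mathcal{D},\mathcal{H})$ is a piecewise-constant integer-valued differential form on $\Gamma$, and solvability is precisely its exactness. Because $\Gamma$ is a tree it is simply connected, so every such form is exact; concretely, fixing a base point $p_0$ and integrating the prescribed slopes along the unique path from $p_0$ to an arbitrary point yields a well-defined tropical rational function $\rho$ with $\slope_t(\rho)=m(p,t)$ for every $p$ and $t\in\Tan_\Gamma(p)$. This produces the solution $\rho$, its bifurcation tree $\mathcal{B}$, and the projection $\pi_\mathcal{B}\colon\Gamma\to\mathcal{B}$.

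For the intrinsic global compatibility conditions it is enough, by Definition~\ref{D:IGC}, to exhibit a single admissible collection $G=\{g_p\}_{p\in\Gamma}$, i.e.\ nonconstant $g_p\in H_p$ together with a function $\eta\colon\coprod_{x}\Tan^+_\mathcal{B}(x)\to\kappa$ satisfying $g_p(\red_p(t))=\eta(\pi_{\mathcal{B}*}(t))$ for all forward directions. Unwinding the definitions via Lemma~\ref{L:VecIota}, two forward directions $t_1\in\Tan^{\rho+}_\Gamma(p_1)$ and $t_2\in\Tan^{\rho+}_\Gamma(p_2)$ are identified by $\pi_{\mathcal{B}*}$ exactly when they point into a common open superlevel component $\beta$ of $\rho$, in which case $p_1,p_2$ lie on $\partial\beta$ at the single level $c=\min_{\bar\beta}\rho$. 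Hence $\eta$ is well defined precisely when, for each open superlevel component $\beta$, the value $g_p(\red_p(t_{p\to\beta}))$ is independent of the boundary point $p\in\partial\beta$. I would record two consequences of $\Gamma$ being a tree: first, the constraints at a level $c$ involve only points and components at that level (a point $p$ bounds only components whose boundary sits at level $\rho(p)$), so the levels decouple; second — and this is the crux — the bipartite incidence graph $B_c$ whose vertices are the level-$c$ points and the components $\beta$ of $\{\rho>c\}$, with an edge for each incidence $p\in\partial\beta$, is a \emph{forest}. Indeed, a cycle in $B_c$ would concatenate interior-disjoint paths through distinct components $\beta$ joining distinct level-$c$ points, producing an honest cycle in $\Gamma$ and contradicting that $\Gamma$ is a tree.

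Given this forest structure, the admissible collection can be built level by level and, within each level, by traversing each tree of $B_c$ from a chosen root. When the traversal reaches a point $p$ through a parent component whose $\eta$-value $e$ is already fixed, the single condition $g_p(\red_p(t_{p\to\beta_{\mathrm{par}}}))=e$ cuts out a line of nonconstant functions in the two-dimensional space $H_p$ (Lemma~\ref{L:tune}(1)); choosing any such $g_p$ determines the values on the remaining forward directions of $p$, which are then assigned as the $\eta$-values of the as-yet-unvisited (hence, by acyclicity, conflict-free) child components. At a root point one simply picks any nonconstant $g_p\in H_p$, and ordinary points, having a unique forward direction, are handled automatically by the same rule. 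This yields an admissible $G$, so $(\mathcal{D},\mathcal{H})$ satisfies the intrinsic global compatibility conditions and Theorem~\ref{T:main} gives smoothability. The main obstacle is exactly the well-definedness of $\eta$ across the several boundary points of a superlevel component; the forest/acyclicity property, which is where the tree hypothesis enters decisively, is what guarantees that the greedy assignment never runs into an inconsistency.
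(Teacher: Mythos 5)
Your proof is correct, but it takes a genuinely different route from the paper's. The paper's own proof is short and inductive: solvability follows from simple connectivity of $\Gamma$ exactly as you argue, but then, instead of verifying the intrinsic global compatibility conditions globally, the paper subdivides $\Gamma$ at the exceptional points $\mathcal{E}_\rho$ into segments on which all interior points are ordinary (so $\rho$ is strictly monotone and the compatibility conditions are vacuous), and then concludes by repeatedly applying the gluing statement Proposition~\ref{P:DHglue} (smoothability of $(\mathcal{D},\mathcal{H})$ on $\Gamma_1\cup\Gamma_2$ with $\Gamma_1\cap\Gamma_2$ a singleton is equivalent to smoothability of both restrictions); all of the work of matching values across a glue point, including the affine re-normalization $f^{(2)}_q=\alpha+\beta f^{(1)}_q$ inside the two-dimensional space $H_q$, is encapsulated in the proof of that proposition. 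You instead verify the intrinsic global compatibility conditions directly on all of $\Gamma$: your key observation, which replaces Proposition~\ref{P:DHglue}, is that for each level $c$ the bipartite incidence graph between level-$c$ points and components of $\{\rho>c\}$ is a forest when $\Gamma$ is a tree (a multi-edge or a cycle in it would produce an honest cycle in $\Gamma$), after which a rooted traversal plus Lemma~\ref{L:tune}(1) produces an admissible collection, each non-root point facing exactly one linear constraint, which a nonconstant element of the two-dimensional space $H_p$ can always satisfy. Both arguments bottom out in the same elementary fact (Lemma~\ref{L:tune}), but they are organized differently, and the trade-off is real: the paper's decomposition reuses Proposition~\ref{P:DHglue}, which it needs anyway for the separate-loops application in Subsection~\ref{S:GenusOne}, whereas your argument is self-contained, avoids the clutching machinery, and makes completely transparent where the tree hypothesis enters, namely in the acyclicity of the level-wise incidence graphs, which is exactly the well-definedness of the function $\eta$ in Definition~\ref{D:IGC}.
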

\begin{proof}

The underlying metric graph $\Gamma$ of $\mathfrak{C}$ is a tree. Therefore $(\mathcal{D},\mathcal{H})$ must be solvable. Let $\rho$ be a solution to $(\mathcal{D},\mathcal{H})$ and $\mathcal{E}_\rho$ be the set of exceptional points of $\rho$ (Section~\ref{S:Finite}). Then we can subdivide $\Gamma$ into segments $L_1,\cdots,L_n$ by $\mathcal{E}_\rho$ (the end points of $L_i$ are exceptional points). The restrictions of $(\mathcal{D},\mathcal{H})$ to each $L_i$ must be smoothable since the intrinsic global compatibility conditions will be trivial. This theorem then follows directly from the following Proposition~\ref{P:DHglue}. 
\end{proof}

Consider a diagrammatic pre-limit $g^1_d$ represented by $(\mathcal{D},\mathcal{H})$ on a saturated metrized complex $\mathfrak{C}$  whose underlying metric graph is $\Gamma$. Let $\Gamma'$ be a connected closed metric subgraph of $\Gamma$.  A saturated metrized complex $\mathfrak{C}'$ is said to be the restriction of $\mathfrak{C}$ to $\Gamma'$ if  the underlying metric graph of $\mathfrak{C}'$ is $\Gamma'$, the associated curves of $\mathfrak{C}'$ and $\mathfrak{C}$ at $p$ are identical for all $p\in\Gamma'$, and the reduction maps of $\mathfrak{C}'$ and $\mathfrak{C}$ at $p$  restricted to $\Tan_{\Gamma'}(p)$  are identical for all $p\in\Gamma'$ (note that $\Tan_{\Gamma'}(p) \subsetneqq\Tan_\Gamma(p)$ when $p$ is a boundary point of $\Gamma'$ in  $\Gamma$). Moreover, a diagrammatic pre-limit $g^1_d$ represented by $(\mathcal{D}',\mathcal{H}')$ on $\mathfrak{C}'$ is said to be the restriction of  $(\mathcal{D},\mathcal{H})$ to  $\Gamma'$ (or $\mathfrak{C}'$) if the following are satisfied:
 
\begin{enumerate}
\item $H'_p$ (the $C_p$-part of $\mathcal{H}'$) is identical to  $H_p$ (the $C_p$-part of $\mathcal{H}$) for all $p\in\Gamma'$. 
\item  $D'_p$ (the $C_p$-part of $\mathcal{D}'$) is identical to $D_p$ (the $C_p$-part of $\mathcal{D}$) for all $p\in\Gamma'\setminus \partial \Gamma'$ (here $\partial \Gamma'$ stands for the boundary  of $\Gamma'$ in $\Gamma$). 
\item For all $p\in\partial \Gamma'$, $D'_p$ is modified from $D_p$  as $D_p=D'_p+\Sigma_{t\in\In_\Gamma(p)\setminus\In_{\Gamma'}(p)} (-(m(p,t))(\red_p(t))$ where $m(p,t)$ is the multiplicity of $t$ (which is negative if $t\in\In_\Gamma(p)$) in the local diagram induced by $\mathcal{H}$ at $p$. Note that this  modification guarantees the compatibility between $D_p$ and $H_p$ which further implies that $(\mathcal{D}',\mathcal{H}')$ is diagrammatic (Definition~\ref{D:DiaSolvDH}). 
\end{enumerate}

\begin{proposition}{\label{P:DHglue}}
Let $\Gamma_1$ and $\Gamma_2$ be connected metric subgraphs of a metric graph $\Gamma$ such that $\Gamma=\Gamma_1\cup\Gamma_2$ and $\Gamma_1\cap\Gamma_2$ is a singleton. For a saturated metrized complex  $\mathfrak{C}$  whose underlying metric graph is $\Gamma$, let
$(\mathcal{D},\mathcal{H})$ represents a diagrammatic pre-limit $g^1_d$ on $\mathfrak{C}$. Let  $(\mathcal{D}_1,\mathcal{H}_1)$ and  $(\mathcal{D}_2,\mathcal{H}_2)$ be the restrictions of $(\mathcal{D},\mathcal{H})$  to $\Gamma_1$ and $\Gamma_2$ respectively. Then $(\mathcal{D},\mathcal{H})$ is smoothable if and only if $(\mathcal{D}_1,\mathcal{H}_1)$ and  $(\mathcal{D}_2,\mathcal{H}_2)$ are both smoothable. 
\end{proposition}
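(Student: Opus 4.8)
The plan is to apply the smoothing criterion (Theorem~\ref{T:main}), reducing the statement to the claim that \emph{both} solvability and the intrinsic global compatibility conditions glue across the cut point $v$, where $\{v\}=\Gamma_1\cap\Gamma_2$. Solvability is the easy half: since the characteristic equation $\slope_t(\rho)=m(p,t)$ is local and the multiplicities $m(p,t)$ are read off from the (unchanged) spaces $H_p$, a solution $\rho$ on $\Gamma$ restricts to solutions on each $\Gamma_i$, and conversely two solutions $\rho_1,\rho_2$ --- each unique up to an additive constant --- can be translated so that $\rho_1(v)=\rho_2(v)$ and then glued to a solution on $\Gamma$ with everywhere nonzero slopes (at $v$ the $\Gamma_1$-directions are governed by $\rho_1$ and the $\Gamma_2$-directions by $\rho_2$). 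Fix such a glued $\rho$ and write $\mathcal{B}$, $\mathcal{B}_1$, $\mathcal{B}_2$ for the associated bifurcation trees.

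The structural input is that $v$ is a cut point: for $c>\rho(v)$ every component of $S^\rho_{>c}$ lies in a single side, so the forward subtrees of $\mathcal{B}$ above $x_v:=\pi_\mathcal{B}(v)$ are exactly the forward subtrees of $\mathcal{B}_1$ and $\mathcal{B}_2$ above $\pi_{\mathcal{B}_1}(v)$ and $\pi_{\mathcal{B}_2}(v)$, glued only at $x_v$; in particular the forward directions at $x_v$ are one-sided. For $c<\rho(v)$ the component containing $v$ is the union of one $\Gamma_1$-component and one $\Gamma_2$-component, so the spines of $\mathcal{B}_1$ and $\mathcal{B}_2$ from their roots up to $\pi_{\mathcal{B}_i}(v)$ are identified, level by level, into a single spine of $\mathcal{B}$ below $x_v$ off which the one-sided branches hang. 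With this picture the ($\Rightarrow$) direction is immediate: given an admissible collection $G=\{g_p\}$ on $\Gamma$ witnessed by $\eta$, each $\mathcal{B}_1$-direction corresponds to a unique $\mathcal{B}$-direction (distinct $\Gamma_1$-components have distinct $\Gamma$-containers, at most one of which contains $v$), so setting $\eta_1$ to be $\eta$ pulled back along this correspondence shows $G|_{\Gamma_1}$ is admissible for $(\mathcal{D}_1,\mathcal{H}_1)$, and likewise for $\Gamma_2$.

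For ($\Leftarrow$) I would glue admissible collections $G_1,G_2$. The value at $v$ causes no trouble: the equalities forced by admissibility at $v$ depend only on the level sets of $f_v$, which are common to both restrictions (the space $H_v$ is unchanged), so any nonconstant $g_v\in H_v$ simultaneously satisfies the $\Gamma_1$- and $\Gamma_2$-constraints at $v$. Setting $g_p=g^{(i)}_p$ on $\Gamma_i\setminus\{v\}$, the only remaining requirement is that the two sides assign the \emph{same} value $\eta(\beta)$ to each straddling component $\beta$ (those with $v\in\beta$), that is, along the merged spine below $x_v$.

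The main obstacle is exactly this value-alignment on the merged spine. I would resolve it with a prescribability lemma: for a smoothable restriction $(\mathcal{D}_1,\mathcal{H}_1)$ the values of $\eta_1$ on the \emph{toward-$v$} spine directions can be prescribed arbitrarily by re-selecting the admissible collection $G_1$. Indeed, at each boundary point $q$ of a straddling component the toward-$v$ marked value is governed by one affine degree of freedom of $g_q=\alpha+\beta f_q$; ordinary $q$ are tuned freely by Lemma~\ref{L:tune}, while at an exceptional (branch) $q$ setting the toward-$v$ value forces a compensating change in the values on the other, \emph{one-sided} forward directions, which one absorbs by propagating the adjustment up those branches to the sink points of $\rho$, where no forward constraint exists. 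Since both spines are aligned by level and all exceptional spine points are interior to a single side, I can prescribe a common value $\eta(\beta)$ on each straddling $\beta$, adjust $G_1$ and $G_2$ accordingly, and obtain an admissible collection on $\Gamma$. By the criterion this proves $(\mathcal{D},\mathcal{H})$ smoothable, completing the equivalence.
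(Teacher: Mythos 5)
Your overall strategy is the paper's: reduce via Theorem~\ref{T:main} to gluing solvability and admissible collections, observe that $\mathcal{B}$ is $\mathcal{B}_1$ and $\mathcal{B}_2$ merged along the spine from $r(\mathcal{B}_2)$ up to $\pi_{\mathcal{B}}(v)$, and align values along that spine. Two of your steps, however, do not hold up as written.

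First, the treatment of the cut point $v$ is wrong. Admissibility is not just a condition on the internal level sets of $g_v$: by Definition~\ref{D:IGC} it couples the values of $g_v$ at its forward marked points to the values of $g_p$ at \emph{every other point} $p$ in the fiber $\pi_{\mathcal{B}}^{-1}(\pi_{\mathcal{B}}(v))$, and this fiber generally contains points of both sides besides $v$ (take each $\Gamma_i$ to be a circle through $v$ with a single maximum of $\rho$; the closed superlevel set at level $\rho(v)$ then has a second boundary point $v_i'$ on each side). If you freeze $g_p=g^{(i)}_p$ for $p\neq v$, the side-$1$ constraints force $g_v$ to reproduce the values of $g^{(1)}_v$ at the side-$1$ marked points and the side-$2$ constraints force agreement with $g^{(2)}_v$ at the side-$2$ ones; an arbitrary nonconstant $g_v$ satisfies neither, and there are configurations (e.g.\ when the two required values happen to coincide while the corresponding marked points lie in distinct level sets of $f_v$) where \emph{no} choice of $g_v$ alone works. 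The missing idea --- and the one place where two-dimensionality of $H_v$ is genuinely used --- is the paper's move at this fiber: write $f^{(2)}_v=\alpha+\beta f^{(1)}_v$ inside $H_v$ and apply the affine map $z\mapsto\alpha+\beta z$ to \emph{every} side-$1$ function over this fiber; affine maps preserve all equality constraints, and afterwards the two candidate functions at $v$ coincide, so they glue.

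Second, your justification of the prescribability lemma uses the wrong mechanism, although the lemma itself is true. Admissibility constraints only relate points lying in the \emph{same} fiber of $\pi_{\mathcal{B}}$; there are no constraints between different fibers. Hence changing the value assigned to a one-sided forward direction at a spine point never forces any change at higher levels, and "propagating up the branch to the sink points" is vacuous. The propagation that is actually at risk is \emph{within} a fiber: a fiber can contain a cycle of points and forward directions (two points each bounding two common components), and since admissibility does not force distinct directions to carry distinct values, the greedy per-point affine adjustments you describe can become overdetermined and inconsistent at the last point of such a cycle. The uniform fix, which is what the paper does over every spine point below $\pi_{\mathcal{B}}(v)$, is to renormalize each whole fiber at once: subtract from all $g_p$ over that fiber (on side $i$) the constant equal to side $i$'s value on the straddling direction, so both sides read $0$ there; no point-by-point tuning via Lemma~\ref{L:tune}, and no propagation, is needed. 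With these two corrections your argument becomes essentially the paper's proof.
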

\begin{proof}
Let $\Gamma_1\cap\Gamma_2=\{q\} $. 
Let $\mathfrak{C}_1$ and  $\mathfrak{C}_2$ be the restrictions of $\mathfrak{C}$ to $\Gamma_1$ and $\Gamma_2$ respectively. Then $(\mathcal{D}_1,\mathcal{H}_1)$ is on  $\mathfrak{C}_1$ and $(\mathcal{D}_2,\mathcal{H}_2)$ is on  $\mathfrak{C}_2$. It follows easily from the smoothing criterion that $(\mathcal{D},\mathcal{H})$ is smoothable implies that  $(\mathcal{D}_1,\mathcal{H}_1)$ and  $(\mathcal{D}_2,\mathcal{H}_2)$ are both smoothable. 

Now suppose that $(\mathcal{D}_1,\mathcal{H}_1)$ and  $(\mathcal{D}_2,\mathcal{H}_2)$ are both smoothable and we claim that $(\mathcal{D},\mathcal{H})$ is smoothable. This means $(\mathcal{D}_1,\mathcal{H}_1)$ and  $(\mathcal{D}_2,\mathcal{H}_2)$ are solvable,  and since $\Gamma=\Gamma_1\cup\Gamma_2$ and $\Gamma_1\cap\Gamma_2=\{q\}$, $(\mathcal{D},\mathcal{H})$ must be  solvable. Thus we may assume $\rho$ is a solution to the global diagram of $(\mathcal{D},\mathcal{H})$, while the restriction of $\rho$ to $\Gamma_1$ (respectively $\Gamma_2$), denoted by $\rho_1$ (respectively $\rho_2$),  is a solution to the global diagram of $(\mathcal{D}_1,\mathcal{H}_1)$ (respectively $(\mathcal{D}_2,\mathcal{H}_2)$). Let $\mathcal{B}$, $\mathcal{B}_1$ and $\mathcal{B}_2$ be the bifurcation trees with respect to $\rho$, $\rho_1$ and $\rho_2$ respectively. 


By the smoothing criterion, $\mathcal{H}_1$ contains an admissible collection  $\{f^{(1)}_p\}_{p \in \Gamma_1}$ of rational functions $f^{(1)}_p\in H_p$ and $\mathcal{H}_2$ contains an admissible collection $\{f^{(2)}_p\}_{p \in \Gamma_2}$ of rational functions $f^{(2)}_p\in H_p$. 

Note that $\mathcal{B}_1$ and $\mathcal{B}_2$ are subtrees of $\mathcal{B}$ and $\mathcal{B}_1\cup\mathcal{B}_2=\mathcal{B}$. Let $r(\mathcal{B})$, $r(\mathcal{B}_1)$ and $r(\mathcal{B}_2)$ be the roots of $\mathcal{B}$, $\mathcal{B}_1$ and $\mathcal{B}_2$ respectively. Clearly $r(\mathcal{B})$ must be either $r(\mathcal{B}_1)$ or $r(\mathcal{B}_2)$. Without loss of generality, we assume $r(\mathcal{B})=r(\mathcal{B}_1)$.  Let $y=\pi_\mathcal{B}(q)$ and $L$ be the closed segment connecting $r(\mathcal{B}_2)$ and $y$ in $\mathcal{B}$. Then one can observe that $\mathcal{B}_1\cap\mathcal{B}_2=L$. We construct a desirable admissible  $\{f_p\}_{p \in \Gamma}\in \mathcal{H}$ by clutching $\{f^{(1)}_p\}_{p \in \Gamma_1}$ and $\{f^{(2)}_p\}_{p \in \Gamma_2}$ as follows:

\begin{enumerate}
\item For $x\in \mathcal{B}_1 \setminus L$, let $\vec{P}_x=\vec{P}^{(1)}_x$.
\item For $x\in \mathcal{B}_2 \setminus L$, let $\vec{P}_x=\vec{P}^{(2)}_x$.
\item For $p\in\Gamma_1\setminus\pi_\mathcal{B}^{-1}(L)$, let $f_p=f^{(1)}_p$.
\item For $p\in\Gamma_2\setminus\pi_\mathcal{B}^{-1}(L)$, let $f_p=f^{(2)}_p$.
\item For $x\in L$, consider all the forward tangent directions in $\Tan^+_{\mathcal{B}_1}(x)=\{t^{(1)}_1,\cdots,t^{(1)}_k\}$ and all the forward tangent directions  in $\Tan^+_{\mathcal{B}_2}(x)=\{t^{(2)}_1,\cdots,t^{(2)}_l\}$. Then by the smoothing criterion on  $(\mathcal{D}_1,\mathcal{H}_1)$ and  $(\mathcal{D}_2,\mathcal{H}_2)$, we can assign values $c^{(1)}_1,\cdots,c^{(1)}_k\in \kappa$ to $t^{(1)}_1,\cdots,t^{(1)}_k$ respectively and values  $c^{(2)}_1,\cdots,c^{(2)}\in\kappa$ to $t^{(2)}_1,\cdots,t^{(2)}_l$ respectively such that  
\begin{enumerate}
\item $f^{(1)}_p(\red_p(t))=c^{(1)}_i$ for all $i=1,\cdots,k$ whenever $p\in\pi_{\mathcal{B}_1}^{-1}(x)$ and  $t\in\Tan^{\rho_1+}_{\Gamma_1}(p)\bigcap\pi_{\mathcal{B}_1*}^{-1}(t^{(1)}_i)$, and
\item $f^{(2)}_p(\red_p(t))=c^{(2)}_j$  for all $j=1,\cdots,l$ whenever $p\in\pi_{\mathcal{B}_2}^{-1}(x)$ and $t\in\Tan^{\rho_2+}_{\Gamma_2}(p)\bigcap\pi_{\mathcal{B}_2*}^{-1}(t^{(2)}_j)$. 
\end{enumerate}

 When $x\in L\setminus\{y\}$, we have $\pi_\mathcal{B}^{-1}(x)=\pi_{\mathcal{B}_1}^{-1}(x)\cup\pi_{\mathcal{B}_1}^{-1}(x)$, $\pi_{\mathcal{B}_1}^{-1}(x)\cap\pi_{\mathcal{B}_1}^{-1}(x)=\emptyset$, $\Tan^+_\mathcal{B}(x)=\Tan^+_{\mathcal{B}_1}(x)\cup \Tan^+_{\mathcal{B}_2}(x)$ and $\Tan^+_{\mathcal{B}_1}(x)\cap \Tan^+_{\mathcal{B}_2}(x)$ is a singleton. Without loss of generality, we let   $t_1=t^{(1)}_1=t^{(2)}_1$ be the forward tangent direction common to both $\Tan^+_{\mathcal{B}_1}(x)$ and $\Tan^+_{\mathcal{B}_2}(x)$, which means that 
 $\vec{\iota}_{\mathcal{B}_1}(t_1)$ and $\vec{\iota}_{\mathcal{B}_2}(t_1)$ are the open superlevel components (for $\Gamma_1$ and $\Gamma_2$ respectively) containing  $q$ and  $\vec{\iota}_\mathcal{B}(t_1)=\vec{\iota}_{\mathcal{B}_1}(t_1)\cup  \vec{\iota}_{\mathcal{B}_2}(t_1)$.  So we can let $f_p=f^{(1)}_p-c^{(1)}_1$ for all $p\in\pi_{\mathcal{B}_1}^{-1}(x)$ and $f_p=f^{(2)}_p-c^{(2)}_1$ for all $p\in\pi_{\mathcal{B}_2}^{-1}(x)$. In this way of clutching, we conclude:
 \begin{enumerate}
\item $f_p(\red_p(t))=c^{(1)}_i-c^{(1)}_1$ for all $i=1,\cdots,k$ whenever $p\in\pi_{\mathcal{B}_1}^{-1}(x)$ and  $t\in\Tan^{\rho_1+}_{\Gamma_1}(p)\bigcap\pi_{\mathcal{B}_1*}^{-1}(t^{(1)}_i)$, 
\item $f_p(\red_p(t))=c^{(2)}_j-c^{(2)}_1$  for all $j=1,\cdots,l$ whenever $p\in\pi_{\mathcal{B}_2}^{-1}(x)$ and $t\in\Tan^{\rho_2+}_{\Gamma_2}(p)\bigcap\pi_{\mathcal{B}_2*}^{-1}(t^{(2)}_j)$, and 
\item in particular, by (a) and (b), $f_p(\red_p(t))=0$ whenever $p\in\pi_\mathcal{B}^{-1}(x)$ and $t\in\Tan^{\rho+}_\Gamma(p)\bigcap\pi_{\mathcal{B}*}^{-1}(t_1)$. (Note that $\pi_{\mathcal{B}*}^{-1}(t_1) =\pi_{\mathcal{B}_1*}^{-1}(t^{(1)}_1)\cup \pi_{\mathcal{B}_2*}^{-1}(t^{(2)}_1)$.)
 \end{enumerate}
 
 When $x=y$, we have $\pi_\mathcal{B}^{-1}(x)=\pi_{\mathcal{B}_1}^{-1}(x)\cup\pi_{\mathcal{B}_1}^{-1}(x)$, $\pi_{\mathcal{B}_1}^{-1}(x)\cap\pi_{\mathcal{B}_1}^{-1}(x)=\{q\}$, $\Tan^+_\mathcal{B}(x)=\Tan^+_{\mathcal{B}_1}(x)\cup \Tan^+_{\mathcal{B}_2}(x)$ and $\Tan^+_{\mathcal{B}_1}(x)\cap \Tan^+_{\mathcal{B}_2}(x)=\emptyset$. Since $f^{(1)}_q, f^{(2)}_q\in H_q$, we must have $f^{(2)}_q=\alpha+\beta f^{(1)}_q$ for some $\alpha,\beta\in\kappa$. So we can let $f_p=\alpha+\beta f^{(1)}_p$ for all $p\in\pi_{\mathcal{B}_1}^{-1}(x)$ and $f_p=f^{(2)}_p$ for all $p\in\pi_{\mathcal{B}_2}^{-1}(x)$. In this way of clutching, we conclude:
 \begin{enumerate}
\item $f_p(\red_p(t))=\alpha+\beta c^{(1)}_i$ for all $i=1,\cdots,k$ whenever $p\in\pi_{\mathcal{B}_1}^{-1}(x)$ and  $t\in\Tan^{\rho_1+}_{\Gamma_1}(p)\bigcap\pi_{\mathcal{B}_1*}^{-1}(t^{(1)}_i)$, 
\item $f_p(\red_p(t))=c^{(2)}_j$  for all $j=1,\cdots,l$ whenever $p\in\pi_{\mathcal{B}_2}^{-1}(x)$ and $t\in\Tan^{\rho_2+}_{\Gamma_2}(p)\bigcap\pi_{\mathcal{B}_2*}^{-1}(t^{(2)}_j)$, and 
\item in particular, (a) and (b) coincide at $p=q$ by our assumption as $f_q(\red_q(t))=\alpha+\beta f^{(1)}_q(\red_q(t))=f^{(2)}_q(\red_q(t))$ for all $t\in\Tan^{\rho+}_\Gamma(q)=\Tan^{\rho_1+}_{\Gamma_1}(q)\cup\Tan^{\rho_2+}_{\Gamma_2}(q)$.
 \end{enumerate}
 
\end{enumerate}

The above construction guarantees that $\{f_p\}_{p \in \Gamma}$  is admissible in $\mathcal{H}$. Therefore, $(\mathcal{D},\mathcal{H})$  is smoothable. 
  
\end{proof}

\begin{example}
For a smootbable pre-limit $g^1_d$ represented by $(\mathcal{D},\mathcal{H})$, it is possible that we can construct different smoothings, or equivalently different pseudo-harmonic morphisms (see Section~\ref{S:harmmor} for a precise definition) from the saturated metrized complex to a genus $0$ metrized complex. Figure~\ref{F:CompactType} is an example for a case of a saturated metrized complex of compact type whose underlying metric graph is a segment. Fix a rational function $f_1\in H_{p_1}$. Suppose the value of $f_1$ on the marked point corresponding to $p_1p_0$ is $c_0$ and on the marked point corresponding to $p_1p_3$ is $c_3$.  Then the rational functions in $H_{p_2}$ which takes the value $c_0$ at the marked point corresponding to $p_2p_0$ form a one-dimensional subspace $H$ of $H_{p_2}$. Let $f_2\in H$ take value $c_3$ on the marked point corresponding to $p_2p_4$ and $f'_2\in H$ take a value other than $c_3$ on the marked point corresponding to $p_2p_4$. Then $f_1$ and $f_2$ can be used to construct a pseudo-harmonic morphism as in Figure~\ref{F:CompactType}(a), while $f_1$ and $f'_2$ can be used to construct a pseudo-harmonic morphism as in Figure~\ref{F:CompactType}(b).

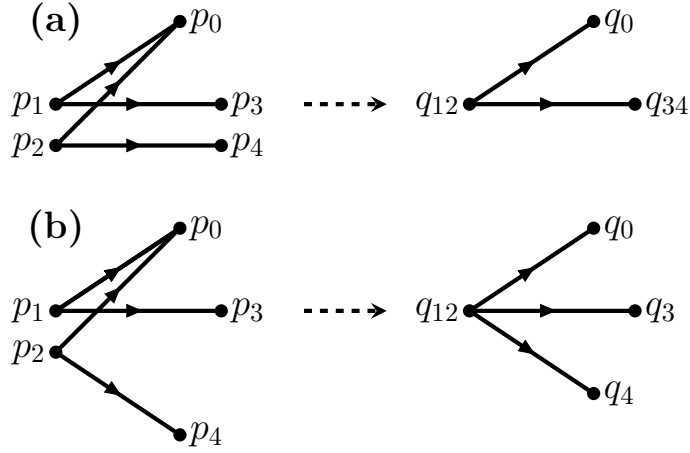
\begin{figure}[tbp]
\centering
\begin{tikzpicture}[>=to,x=1.1cm,y=1.1cm]
\draw (0,5.5) node {\Large \textbf{(a)}};
\draw (0,3) node {\Large \textbf{(b)}};

\coordinate (a_p0) at (1.5,5.5);
    \fill [black] (a_p0) circle (2.5pt);
    \draw (a_p0) node[anchor=west] {\Large $p_0$};
\coordinate (a_p1) at (0,4.5);
    \fill [black] (a_p1) circle (2.5pt);
    \draw (a_p1) node[anchor=east] {\Large $p_1$};
\coordinate (a_p2) at (0,4);
    \fill [black] (a_p2) circle (2.5pt);
    \draw (a_p2) node[anchor=east] {\Large $p_2$};
\coordinate (a_p3) at (2,4.5);
    \fill [black] (a_p3) circle (2.5pt);
    \draw (a_p3) node[anchor=west] {\Large $p_3$};
\coordinate (a_p4) at (2,4);
    \fill [black] (a_p4) circle (2.5pt);
    \draw (a_p4) node[anchor=west] {\Large $p_4$};
\coordinate (a_q0) at (6.5,5.5);
    \fill [black] (a_q0) circle (2.5pt);
    \draw (a_q0) node[anchor=west] {\Large $q_0$};
\coordinate (a_q12) at (5,4.5);
    \fill [black] (a_q12) circle (2.5pt);
    \draw (a_q12) node[anchor=east] {\Large $q_{12}$};
\coordinate (a_q34) at (7,4.5);
    \fill [black] (a_q34) circle (2.5pt);
    \draw (a_q34) node[anchor=west] {\Large $q_{34}$};

\coordinate (b_p0) at (1.5,3);
    \fill [black] (b_p0) circle (2.5pt);
    \draw (b_p0) node[anchor=west] {\Large $p_0$};
\coordinate (b_p1) at (0,2);
    \fill [black] (b_p1) circle (2.5pt);
    \draw (b_p1) node[anchor=east] {\Large $p_1$};
\coordinate (b_p2) at (0,1.5);
    \fill [black] (b_p2) circle (2.5pt);
    \draw (b_p2) node[anchor=east] {\Large $p_2$};
\coordinate (b_p3) at (2,2);
    \fill [black] (b_p3) circle (2.5pt);
    \draw (b_p3) node[anchor=west] {\Large $p_3$};
\coordinate (b_p4) at (1.5,0.5);
    \fill [black] (b_p4) circle (2.5pt);
    \draw (b_p4) node[anchor=west] {\Large $p_4$};
\coordinate (b_q0) at (6.5,3);
    \fill [black] (b_q0) circle (2.5pt);
    \draw (b_q0) node[anchor=west] {\Large $q_0$};
\coordinate (b_q12) at (5,2);
    \fill [black] (b_q12) circle (2.5pt);
    \draw (b_q12) node[anchor=east] {\Large $q_{12}$};
\coordinate (b_q3) at (7,2);
    \fill [black] (b_q3) circle (2.5pt);
    \draw (b_q3) node[anchor=west] {\Large $q_3$};
\coordinate (b_q4) at (6.5,1);
    \fill [black] (b_q4) circle (2.5pt);
    \draw (b_q4) node[anchor=west] {\Large $q_4$};

\begin{scope}[line width=1.6pt, every node/.style={sloped,allow upside down}]
  \draw (a_p1) -- node {\midarrow} (a_p0);
  \draw (a_p2) -- node {\midarrow} (a_p0);
  \draw (a_p1) -- node {\midarrow} (a_p3);
  \draw (a_p2) -- node {\midarrow} (a_p4);
  \draw (a_q12) -- node {\midarrow} (a_q0);
  \draw (a_q12) -- node {\midarrow} (a_q34);
  \draw (b_p1) -- node {\midarrow} (b_p0);
  \draw (b_p2) -- node {\midarrow} (b_p0);
  \draw (b_p1) -- node {\midarrow} (b_p3);
  \draw (b_p2) -- node {\midarrow} (b_p4);
  \draw (b_p1) -- node {\midarrow} (b_p0);
  \draw (b_q12) -- node {\midarrow} (b_q0);
  \draw (b_q12) -- node {\midarrow} (b_q3);
  \draw (b_q12) -- node {\midarrow} (b_q4);

  \draw [-stealth, dashed] (3,2) -- (4,2);
  \draw [-stealth, dashed] (3,4.5) -- (4,4.5);
\end{scope}
\end{tikzpicture}
\caption{Examples of pseudo-harmonic morphisms (see Section~\ref{S:harmmor} for a precise definition) that can be derived from the same $(\mathcal{D},\mathcal{H})$ on a saturated metrized complex of compact type.}\label{F:CompactType}
\end{figure}
\end{example}

\subsection{Saturated Metrized Complexes with Genus-$g$ Underlying Metric Graphs containing $g$ Separate Loops} \label{S:GenusOne}
For a generalization of saturated metrized complexes of compact type, we consider a saturated metrized complex $\mathfrak{C}$ whose underlying metric graph $\Gamma$ has genus $g$ and contains $g$ separate loops $\Omega_1,\cdots,\Omega_g$ (see Figure~\ref{F:GenusG} for such a metric graph of genus $6$).  Here $\Omega_i$ and  $\Omega_j$ are separate if the intersection of  $\Omega_i$ and  $\Omega_j$ is either empty or just a singleton.  By the smoothing criterion, one prerequisite for  being smoothable  is solvability, i.e., the integration along each $\Omega_i$ for $i=1,\cdots,g$ with respect to the global diagram induced by $(\mathcal{D},\mathcal{H})$ is $0$. We let $\rho$ be a solution to the global diagram and $\mathcal{B}$ be the corresponding bifurcation tree. Consider a loop $\Omega\in\{\Omega_1,\cdots,\Omega_g\}$. We let $\Omega_{\min}(\rho)$ be the set of points where $\rho$ restricted to $\Omega_i$ achieves minimum. Then $\Omega_{\min}(\rho)$ is a finite set with at least one element and for each point $p\in\Omega_{\min}(\rho)$, there are exact two forward tangent directions in $\Tan^{\rho+}_\Omega(p)$ (forward tangent directions restricted to $\Omega$). We say $p$ is a closing point if these two tangent directions are locally equivalent in the local diagram at $p$ induced by $\mathcal{H}$, and say $p$ is an opening point otherwise. Denote the set of closing points in $\Omega_{\min}(\rho)$ by $\Omega^c_{\min}(\rho)$, and the set of opening points in $\Omega_{i,\min}(\rho)$ by $\Omega^o_{\min}(\rho)$. The following theorem says that to verify whether $(\mathcal{D},\mathcal{H})$ is smoothable can be reduced to a purely combinatorial point-counting problem of  $\Omega^c_{\min}(\rho)$ and  $\Omega^o_{\min}(\rho)$.

\begin{figure}[tbp]
\centering
\begin{tikzpicture}[line cap=round,line join=round,>=to,x=.7cm,y=.7cm]
\draw [line width=1.4pt] (2.5,3.) ellipse (1.5 and 1.5);
\draw [line width=1.4pt] (5.,3.) ellipse (1 and 1);
\draw [line width=1.4pt] (10.,3.) ellipse (1 and 1);
\draw [line width=1.4pt] (10.,1.) ellipse (.5 and .5);
\draw [line width=1.4pt] (12.,4.) ellipse (.5 and .5);
\draw [line width=1.4pt] (7.5,0.) ellipse (1 and 1);

\fill  (4,3) circle (2.5pt);
\fill  (6,3) circle (2.5pt);
\fill  (9,3) circle (2.5pt);
\fill  (10,4) circle (2.5pt);
\fill  (9.5,1) circle (2.5pt);
\fill  (7.5,1) circle (2.5pt);
\fill  (7.5,3) circle (2.5pt);
\fill  (7.5,4) circle (2.5pt);
\fill  (7,4.5) circle (2.5pt);
\fill  (8,4.5) circle (2.5pt);
\fill  (11.5,4) circle (2.5pt);
\fill  (5,0) circle (2.5pt);

\draw (2.5,1) node {\LARGE $\Omega_1$};
\draw (5,1.5) node {\LARGE $\Omega_2$};
\draw (11,3) node[anchor=west] {\LARGE $\Omega_3$};
\draw (12.5,4) node[anchor=west] {\LARGE $\Omega_4$};
\draw (8.5,0) node[anchor=west] {\LARGE $\Omega_5$};
\draw (10.5,1) node[anchor=west] {\LARGE $\Omega_6$};

\draw [line width=1.4pt] (6,3)-- (9,3);
\draw [line width=1.4pt] (7.5,1)-- (7.5,4);
\draw [line width=1.4pt] (7,4.5)-- (7.5,4);
\draw [line width=1.4pt] (8,4.5)-- (7.5,4);
\draw [line width=1.4pt] (9.5,1)-- (7.5,1);
\draw [line width=1.4pt] (11.5,4)-- (10,4);
\draw [line width=1.4pt] (5,0)-- (6.5,0);

\end{tikzpicture}

\caption{A genus-$6$ metric graph containing $6$ separate loops $\Omega_1,\cdots,\Omega_6$. All the loops are disjoint with each other except that $\Omega_1$ and $\Omega_2$ intersect at a single point.}\label{F:GenusG}
\end{figure}
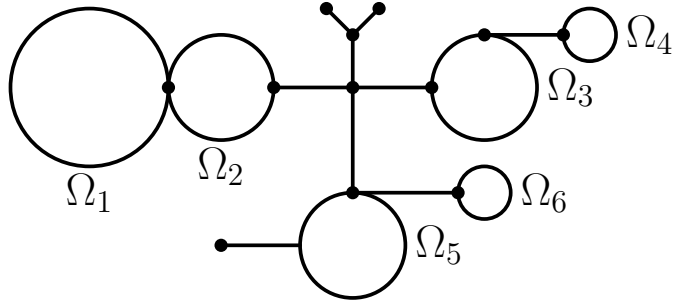

\begin{theorem}
Let $\mathfrak{C}$ be a saturated metrized complex  whose underlying metric graph $\Gamma$ has genus $g$ and contains $g$ separate loops $\Omega_1,\cdots,\Omega_g$. Let $(\mathcal{D},\mathcal{H})$ represent a solvable pre-limit $g^1_d$ on $\mathfrak{C}$ with a solution $\rho$. Then $(\mathcal{D},\mathcal{H})$ is smoothable if and only if  the following are satisfied on each loop $\Omega\in\{\Omega_1,\cdots,\Omega_g\}$:
\begin{enumerate}
\item In case that $\Omega_{\min}(\rho)$ is a singleton, the unique point $p\in\Omega_{\min}(\rho)$ is a closing point;
\item In case that the cardinality of $\Omega_{\min}(\rho)$ is at least $2$, either $\Omega^c_{\min}(\rho)=\emptyset$ or $\Omega^c_{\min}(\rho)$ has the same parity as $\Omega_{\min}(\rho)$.
\end{enumerate}
\end{theorem}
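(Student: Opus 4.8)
\subsection*{Proof proposal}

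The plan is to reduce the statement to the case of a single loop and then to read off the condition from the smoothing criterion applied to the bifurcation tree of that loop. First I would exploit the decomposition results already available: since the loops $\Omega_1,\dots,\Omega_g$ are separate and $g(\Gamma)=g$, every point of $\Gamma$ outside the loops either lies in a tree part or is a cut vertex, so $\Gamma$ is assembled from the loops $\Omega_i$ and finitely many tree pieces by repeatedly gluing along single points. By Proposition~\ref{P:DHglue}, smoothability of $(\mathcal{D},\mathcal{H})$ is equivalent to smoothability of the restriction of $(\mathcal{D},\mathcal{H})$ to each such piece, and the tree pieces are automatically smoothable by Theorem~\ref{T:CompactType}. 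Hence it suffices to prove the stated criterion for the restriction of $(\mathcal{D},\mathcal{H})$ to a single loop $\Omega$, and I may assume $\Gamma=\Omega$ carries a solvable $(\mathcal{D},\mathcal{H})$ with solution $\rho$.

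Next I would describe the bifurcation tree $\mathcal{B}$ of $\rho$ on $\Omega$ explicitly. Writing $c_{\min}=\min_{p\in\Omega}\rho(p)$, the closed superlevel component at $c_{\min}$ is all of $\Omega$, so the root $r(\mathcal{B})$ is the common image under $\pi_{\mathcal{B}}$ of every point of $\Omega_{\min}(\rho)$. If $\Omega_{\min}(\rho)=\{p_1,\dots,p_k\}$ in cyclic order, then deleting these points cuts $\Omega$ into $k$ arcs $A_1,\dots,A_k$, with $A_i$ joining $p_i$ to $p_{i+1}$; by Lemma~\ref{L:VecIota} these arcs are exactly the open superlevel components at $c_{\min}$, hence correspond to the forward tangent directions $\tau_1,\dots,\tau_k\in\Tan^+_\mathcal{B}(r(\mathcal{B}))$. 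At each $p_i$ the two forward directions point into $A_{i-1}$ and $A_i$, so $\pi_{\mathcal{B}*}$ carries them to $\tau_{i-1}$ and $\tau_i$. By the smoothing criterion (Theorem~\ref{T:main}), smoothability is equivalent to the existence of an admissible collection $G=\{g_p\}$ (Definition~\ref{D:IGC}); such a $G$ amounts to a function $\eta$ on $\{\tau_1,\dots,\tau_k\}$ together with nonconstant $g_{p_i}\in H_{p_i}$ realizing $g_{p_i}(\red_{p_i}(t))=\eta(\pi_{\mathcal{B}*}(t))$. Because $g_{p_i}$ must be nonconstant, Lemma~\ref{L:tune} shows that the two prescribed values at $p_i$ are realizable precisely when they are forced equal at a closing point and forced distinct at an opening point.

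This turns the problem into a cyclic consistency question for the values $a_i:=\eta(\tau_i)$: a closing point $p_i$ imposes $a_{i-1}=a_i$ and an opening point imposes $a_{i-1}\neq a_i$. For $k=1$ the two forward directions at the unique minimum both push to the single direction $\tau_1$ and are therefore forced equal, which is realizable only if the point is closing; this is condition~(1). For $k\ge 2$ I would analyze the cyclic system around $\Omega$ by organizing the arcs into the maximal runs cut out by the opening points, and then tracking how this assignment interacts with the requirement that the induced map fold $\Omega$ onto a partition tree in $\LDHfour$ and lift to a harmonic morphism, the fiber degrees over each branch being balanced by Proposition~\ref{P:modification}. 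In the all-opening case each arc double-covers its own branch with uniform degree and no gluing is needed, so the system is consistent; when some minima are closing the identifications $a_{i-1}=a_i$ glue branches and force $\Omega$ to fold with a parity constraint on the number of minima, which is condition~(2). I would prove necessity by exhibiting the inconsistency of the cyclic constraints when the parity fails, and sufficiency by explicitly constructing $\eta$ and the $g_p$ (coloring the runs of arcs and tuning the ordinary points via Lemma~\ref{L:tune}), together with the associated partition tree.

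The step I expect to be the main obstacle is pinning down condition~(2) exactly: the cyclic value-assignment must be compatible not only pointwise at the minima but globally with the folding of $\Omega$ onto a tree, and the delicate point is to show that once $\Omega^c_{\min}(\rho)\neq\emptyset$ the construction can close up around the loop precisely when $|\Omega^c_{\min}(\rho)|$ and $|\Omega_{\min}(\rho)|$ have the same parity, whereas the all-opening case evades this constraint entirely. A secondary but necessary technical point, which I expect to be routine, is to check that local minima of $\rho$ at values above $c_{\min}$ create no additional obstruction: each such minimum maps to an interior bifurcation point of $\mathcal{B}$ whose $\pi_{\mathcal{B}}$-fiber is a single point, so its two forward directions can always be matched by a suitable choice of $\eta$, leaving only the minima in $\Omega_{\min}(\rho)$ to govern smoothability.
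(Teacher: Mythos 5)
Your overall strategy is the paper's: reduce to a single loop via Proposition~\ref{P:DHglue} (the tree pieces being disposed of by Theorem~\ref{T:CompactType}), then apply the smoothing criterion of Theorem~\ref{T:main} at the root fiber of the bifurcation tree, where Lemma~\ref{L:tune} converts admissibility (Definition~\ref{D:IGC}) into a cyclic system of constraints on the values $a_i=\eta(\tau_i)$: equality forced at closing points, inequality forced at opening points. Your treatment of case (1) is complete and agrees with the paper. The genuine gap is in case (2): the step you defer as ``the main obstacle'' --- showing the cyclic system closes up exactly when the parity condition holds --- is not only unproven, it cannot be established along the route you describe. Contracting the equalities imposed by the closing points groups the arcs into $m=\#\Omega^o_{\min}(\rho)$ runs arranged in a cycle, with adjacent runs required to take distinct values in $\kappa$; since $\kappa$ is infinite, such an assignment exists if and only if $m\neq 1$ (for $m=0$ all values are equal, which is consistent; for $m=1$ the unique run abuts itself at the unique opening point, a contradiction; for $m\geq 2$ a proper coloring of a cycle exists). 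No parity obstruction can come out of this system --- that would require $\kappa$ to have only two available values --- and concretely, for $\#\Omega_{\min}(\rho)=4$ with one closing point the assignment $(a_1,a_2,a_3,a_4)=(0,1,2,0)$ satisfies every constraint, so this configuration passes the intrinsic global compatibility test although it violates the displayed parity condition. Invoking Proposition~\ref{P:modification} or fiber-degree balancing cannot reinstate a parity constraint, because Theorem~\ref{T:main} has already absorbed all lifting and harmonicity considerations into the admissibility combinatorics; there is no residual geometric condition left to impose. Note that the paper's own proof is exactly this merge-and-recurse analysis (its case (2)(b) deletes one closing point at a time and merges the two adjacent arcs), and the criterion that analysis actually yields, $\#\Omega^o_{\min}(\rho)\neq 1$, agrees with the stated parity condition only when $\#\Omega_{\min}(\rho)\leq 3$; so as a proof of statement (2) as written, your plan cannot be completed, and proving necessity ``by exhibiting the inconsistency of the cyclic constraints when the parity fails'' is impossible, since those constraints are consistent whenever $\#\Omega^o_{\min}(\rho)\geq 2$.

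A secondary, fixable error: a local minimum $q$ of $\rho$ with $\rho(q)>\min_{p\in\Omega}\rho(p)$ does not map to a point of $\mathcal{B}$ with singleton $\pi_\mathcal{B}$-fiber. The closed superlevel component at $\rho(q)$ containing $q$ is a closed arc whose two endpoints lie in the same fiber as $q$, so that fiber has at least three points, and $q$ contributes two distinct forward tangent directions of $\mathcal{B}$ there. The conclusion you want (no obstruction away from the root) is still correct, but it requires the argument the paper gives: for $x\neq r(\mathcal{B})$, order the forward tangent directions at $x$ so that each open superlevel component attaches to the union of the previously assembled ones at a single point of the fiber, then choose the values of $\eta$ branch by branch, tuning the functions at the attaching points via Lemma~\ref{L:tune}.
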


\begin{proof}
By Proposition~\ref{P:DHglue}, we can subdivide $\Gamma$ into segments and loops and test whether $(\mathcal{D},\mathcal{H})$ restricted to each segment or loop is smoothable. The case of segments can dealt with analogously as for compact-type saturated metrized complexes, and it only needs to show the case for $(\mathcal{D},\mathcal{H})$ restricted to a single loop $\Omega$. Without loss of generality, we may assume $\Gamma=\Omega$ and apply intrinsic global compatibility conditions to this specific graph. . 

Let $\rho$ be a solution to $(\mathcal{D},\mathcal{H})$ and $\mathcal{B}$ be the corresponding bifurcation tree. We need to test the possibility of constructing an admissible $\{f_p\}_{p \in \Gamma}\in\mathcal{H}$. Actually we will consider cases for all $p\in\pi_\mathcal{B}^{-1}(x)$ for each $x\in\mathcal{B}$ separately. 

First we consider the root $y$ of $\Omega$. Note that the closed superlevel component $\iota_\mathcal{B}(y)$ corresponding to  $y$ and  $\Omega_{\min}(\rho)=\pi_\mathcal{B}^{-1}(y)$.

\begin{enumerate}
\item When $\Omega_{\min}(\rho)=\{p\}$, there is only one open superlevel component with the boundary point $p$. Therefore, to pass the compatibility test, the two tangent directions in  $\Tan^{\rho+}_\Omega(p)$ must be locally equivalent, which means $p$ must be a closing point.

\item When $\Omega_{\min}(\rho)=\{p_1,\cdots, p_k\}$ with $k\geqslant 2$, there are exactly $k$ open superlevel components $\beta_1,\cdots,\beta_k$ with the boundary points in $\Omega_{\min}(\rho)$ (the $k$ open edges in $\Omega$ with end points in $\Omega_{\min}(\rho)$). We let the end points of $\beta_i$ be $p_i$ and $p_{i+1}$ for $i=1,\cdots,k-1$ and the end points of $\beta_k$ be $p_k$ and $p_1$.

First note that for any point $\Omega_{\min}(\rho)$, if the two tangent directions in $\Tan^{\rho+}_\Omega(p)$ are not locally equivalent, then for any two arbitrarily chosen distinct values, we can always find a rational function $f_p\in H_p$ taking these values respectively at the two reduction points in $C_p$ corresponding these two tangent directions (Lemma~\ref{L:tune}). Assigning values $c_1,\cdots, c_k\in\kappa$ to $\beta_1,\cdots,\beta_k$ respectively, we have the following cases:
\begin{enumerate}
\item If $c_1,\cdots, c_k$ are all distinct, then to pass the compatibility test, it is equivalent to say that the two tangent directions in  $\Tan^{\rho+}_\Omega(p_i)$ for each $i=1,\cdots,k$ must not be locally equivalent, i.e., $p_1,\cdots,p_k$ are all opening points.
\item If at least one point in $\Omega_{\min}(\rho)$ is a closing point, we may assume suppose $p_k$ is a closing point without loss of generality. Then to pass the compatibility test, $c_k$ must be equal to $c_{k-1}$ and the whole case reduces to dropping $p_k$ from $\Omega_{\min}(\rho)$ and assigning values $c_1,\cdots, c_{k-1}$ to $\beta_1,\cdots,\beta_{k-1}$.
\end{enumerate}
\end{enumerate}
It is straightforward to verify that the above arguments afford the conditions (1) and (2) stated in the theorem.

It remains to show that the compatibility test can always get passed for those points $x$ in $\mathcal{B}\setminus\{y\}$. 
Note that  each forward tangent direction $t\in\Tan^+_\mathcal{B}(x)$ corresponds to an open superlevel component $\vec{\iota}_\mathcal{B}(t)$ of $\rho$. If $\iota_\mathcal{B}(x)$ is a singleton $\{p\}$, then $\Tan^{\rho+}_\mathcal{B}(p)=\Tan^+_\mathcal{B}(x)=\emptyset$ and we simply let $f_p$ be any non-constant function in $H_p$. Otherwise, if $\partial\vec{\iota}_\mathcal{B}(t)$ is the set of boundary points of $\vec{\iota}_\mathcal{B}(t)$, we must have $\bigcup_{t\in\Tan^+_\mathcal{B}(x)}\partial\vec{\iota}_\mathcal{B}(t) = \pi_\mathcal{B}^{-1}(x)$, which is assumed in the following discussion.

Note that for $x\in \mathcal{B}\setminus\{y\}$, $\Tan^+_\mathcal{B}(x)$ is either empty or satisfies the ``ordering'' property: starting from an arbitrary $t_1\in\Tan^+_\mathcal{B}(x)$, there exists an ordering $t_1,t_2,\cdots,t_k$ of all the tangent directions in $\Tan^+_\mathcal{B}(x)$ such that $(\bigcup^{i-1}_{j=1}\partial\vec{\iota}_\mathcal{B}(t_j)) \bigcap \partial\vec{\iota}_\mathcal{B}(t_i)$ is a singleton $\{q_i\}$ for $i=2,\cdots,k$. In other words, we can rebuild the closed superlevel component $\iota_\mathcal{B}(x)$ from the open superlevel components $\vec{\iota}_\mathcal{B}(t)$ with $t\in\Tan^+_\mathcal{B}(x)$ one by one by choosing the attaching point from $\pi_\mathcal{B}^{-1}(x)$ properly.

Note that at each $q_i$, there exists two forward tangent directions in $\Tan^{\rho+}_\Gamma(q_i)$. On the other hand, if a point $p$ in $\pi_\mathcal{B}^{-1}(x)$ is not any of the $q_i$'s, then there is exactly one forward tangent direction in $\Tan^{\rho+}_\Gamma(p)$.

We use the following procedure to assign values to the tangent directions in $\Tan^+_\mathcal{B}(x)$ and find compatible rational functions $f_p\in H_p$ for all points $p\in\pi_\mathcal{B}^{-1}(x)$:

\begin{enumerate}
\item We assign an arbitrary value $c_1\in\kappa$ to $t_1$. For each $p\in\partial\vec{\iota}_\mathcal{B}(t_1)$, we are able to find a nonconstant rational function $f_p\in H_p$ such that $f_p(\red_p(t_{1,p})) = c_1$ where $t_{p,1}$ is the unique tangent in $\Tan^{\rho+}_\Gamma(p)$ such that $\pi_{\mathcal{B}*}(t_{1,p})=t_1$.

\item Now suppose we have already assigned values $c_1, \cdots, c_i \in\kappa$ to $t_1,\cdots,t_i$ respectively and found rational functions $f_p\in H_p$ for all $p\in \bigcup^{i}_{j=1}\partial\vec{\iota}_\mathcal{B}(t_j)$ such that for $j=1,\cdots, i$, we have $f_p(\red_{p}(t))=c_j$ as long as $t\in\Tan^{\rho+}_\Gamma(p)\bigcap\pi_{\mathcal{B}*}^{-1}(t_j)$.
    Note that $q_{i+1}$ is the unique element in both $\bigcup^i_{j=1}\partial\vec{\iota}_\mathcal{B}(t_j)$ and $\partial\vec{\iota}_\mathcal{B}(t_{i+1})$. Let $t_{q_{i+1}}$ be the unique tangent direction in $\Tan^{\rho+}_\Gamma(q_{i+1})$ such that $\pi_{\mathcal{B}*}(t_{q_{i+1}})=t_{i+1}$. We let $c_{i+1}=f_{q_{i+1}}(\red_{q_{i+1}}(t_{q_{i+1}}))$, and for each $p\in \partial\vec{\iota}_\mathcal{B}(t_{i+1})\setminus\{t_{q_{i+1}}\}$, we let $f_p$ be a nonconstant rational function in $H_p$ such that $f_p(\red_p(t))=c_{i+1}$ where $t$ is the unique tangent direction in $\Tan^{\rho+}_\Gamma(p)\bigcap\pi_{\mathcal{B}*}^{-1}(t_{i+1})$.
\end{enumerate}
In this way, we derive a sequence of $c_1, \cdots, c_k \in\kappa$ and a family of rational functions $\{f_p\}_{p\in \pi_\mathcal{B}^{-1}(x)}$. By our construction, for $j=1,\cdots, k$ and all $p\in\pi_\mathcal{B}^{-1}(x)$, we have $f_p(\red_{p}(t))=c_j$ as long as $t\in\Tan^{\rho+}_\Gamma(p)\bigcap\pi_{\mathcal{B}*}^{-1}(t_j)$. Hence, the compatibility test get passed at all $x\in\mathcal{B}\setminus\{y\}$. 
\end{proof}

\begin{remark}
Here are a few cases of the conditions in the above theorem: if $\#\Omega_{\min}(\rho)=1$, then $\#\Omega^c_{\min}(\rho)$ can only be $1$; if $\#\Omega_{\min}(\rho)=2$, then $\#\Omega^c_{\min}(\rho)$ can only be $0$ or $2$; if $\#\Omega_{\min}(\rho)=3$, then $\#\Omega^c_{\min}(\rho)$ can only be $0$, $1$ or $3$; if $\#\Omega_{\min}(\rho)=4$, then $\#\Omega^c_{\min}(\rho)$ can only be $0$, $2$ or $4$.
\end{remark}

\subsection{Saturated Metrized Complexes of Harris-Mumford Type}\label{S:harrismumapplication}

Here we study saturated metrized complexes arising from the construction of Harris and Mumford in Theorem~5 of \cite{HM82}.  These correspond to two types of saturated metrized complexes.

A \emph{Harris-Mumford saturated metrized complex of type I} is a saturated metrized complex $\mathfrak{C}$ with underlying metric graph $\Gamma$ homeomorphic to a topological bouquet graph obtained by gluing together finitely many ($0$ is allowed) circles along a single vertex $o$ (Figure~\ref{F:HarrisMumford}) while $g(\mathfrak{C})=g(C_o)+g(\Gamma)$. We call this central vertex $o$ the \emph{eye vertex} and $C_o$ the \emph{eye} of $\mathfrak{C}$. We also call the middle points of the attached circles \emph{pedal vertices}. In particular, we have $g(\Gamma)$ pedal vertices. We call the pair of marked points on the eye corresponding to the two edges connecting $o$ and $p_i$ the \emph{$p_i$-marked points} on the eye.

A \emph{Harris-Mumford saturated metrized complex of type II} is a saturated metrized complex $\mathfrak{C}$ with underlying metric graph $\Gamma$ homeomorphic to a topological graph obtained by gluing together finitely many ($0$ is allowed) circles along the endpoints $o_1$ and $o_2$ of a line segment (Figure~\ref{F:HarrisMumford}) while $g(\mathfrak{C})=g(C_{o_1})+g(C_{o_2})+g(\Gamma)$. We call the two vertices $o_1$ and $o_2$ the \emph{eye vertices} and the associated curves $C_{o_1}$ and $C_{o_2}$ the \emph{eyes} of $\mathfrak{C}$. Analogously, the middle points of the attached circles are called \emph{pedal vertices}. We also call the pair of marked points on the eyes corresponding to the two edges connecting the segment $o_1o_2$ and $p_i$ the \emph{\emph{$p_i$-marked points}}.

For both types, we say a global diagram on $\Gamma$ is \emph{regulated} if edge multiplicities only possibly change across the eye vertices and the pedal vertices (Figure~\ref{F:HarrisMumford}). The following theorem is an analogue of \cite[Theorem 5]{HM82} for Harris-Mumford saturated metrized complexes.

\begin{figure}[tbp]
\centering
\begin{tikzpicture}[>=to,x=2.2cm,y=2.2cm]
\def\AnglePone{210}
\def\LengthPone{0.8}
\def\AnglePtwo{90}
\def\LengthPtwo{1.2}
\def\AnglePthree{-30}
\def\LengthPthree{1}

\def\AnglePPone{220}
\def\LengthPPone{1}
\def\AnglePPtwo{120}
\def\LengthPPtwo{1.2}
\def\AnglePPthree{90}
\def\LengthPPthree{1.2}
\def\AnglePPfour{0}
\def\LengthPPfour{1}
\def\AnglePPfive{-100}
\def\LengthPPfive{0.8}

\coordinate (o) at (-1.5,0);
    \fill [black] (o) circle (2.5pt);
    \draw (o) node[anchor=south west] {\Large $o$};
\coordinate (p1) at ($(o)+\LengthPone*({cos(\AnglePone)},{sin(\AnglePone)})$);
    \fill [black] (p1) circle (2.5pt);
    \draw (p1) node[anchor=east] {\Large $p_1$};
\coordinate (p2) at ($(o)+\LengthPtwo*({cos(\AnglePtwo)},{sin(\AnglePtwo)})$);
    \fill [black] (p2) circle (2.5pt);
    \draw (p2) node[anchor=south] {\Large $p_2$};
\coordinate (p3) at ($(o)+\LengthPthree*({cos(\AnglePthree)},{sin(\AnglePthree)})$);
    \fill [black] (p3) circle (2.5pt);
    \draw (p3) node[anchor=west] {\Large $p_3$};

\coordinate (o1) at (1,0);
    \fill [black] (o1) circle (2.5pt);
    \draw (o1) node[anchor=south west] {\Large $o_1$};
\coordinate (o2) at (1.8,0);
    \fill [black] (o2) circle (2.5pt);
    \draw (o2) node[anchor=south east] {\Large $o_2$};
\coordinate (pp1) at ($(o1)+\LengthPPone*({cos(\AnglePPone)},{sin(\AnglePPone)})$);
    \fill [black] (pp1) circle (2.5pt);
    \draw (pp1) node[anchor=east] {\Large $p_1$};
\coordinate (pp2) at ($(o1)+\LengthPPtwo*({cos(\AnglePPtwo)},{sin(\AnglePPtwo)})$);
    \fill [black] (pp2) circle (2.5pt);
    \draw (pp2) node[anchor=south] {\Large $p_2$};
\coordinate (pp3) at ($(o2)+\LengthPPthree*({cos(\AnglePPthree)},{sin(\AnglePPthree)})$);
    \fill [black] (pp3) circle (2.5pt);
    \draw (pp3) node[anchor=south] {\Large $p_3$};
\coordinate (pp4) at ($(o2)+\LengthPPfour*({cos(\AnglePPfour)},{sin(\AnglePPfour)})$);
    \fill [black] (pp4) circle (2.5pt);
    \draw (pp4) node[anchor=west] {\Large $p_4$};
\coordinate (pp5) at ($(o2)+\LengthPPfive*({cos(\AnglePPfive)},{sin(\AnglePPfive)})$);
    \fill [black] (pp5) circle (2.5pt);
    \draw (pp5) node[anchor=north] {\Large $p_5$};

\begin{scope}[line width=1.6pt]
\path[-,font=\scriptsize]
(o) edge[out=\AnglePone-10,in=\AnglePone-90] node[pos=0.5,sloped,allow upside down]{\midarrow} node[pos=0.5,anchor=south east]{\Large $2$}  (p1)
(o) edge[out=\AnglePone+10,in=\AnglePone+90] node[pos=0.5,sloped,allow upside down]{\midarrow} node[pos=0.5,anchor=north west]{\Large $2$}  (p1)
(o) edge[out=\AnglePtwo-10,in=\AnglePtwo-90] node[pos=0.5,sloped,allow upside down]{\midarrow} node[pos=0.5,anchor=west]{\Large $1$}  (p2)
(o) edge[out=\AnglePtwo+10,in=\AnglePtwo+90] node[pos=0.5,sloped,allow upside down]{\midarrow} node[pos=0.5,anchor=east]{\Large $1$}  (p2)
(o) edge[out=\AnglePthree-10,in=\AnglePthree-90] node[pos=0.5,sloped,allow upside down]{\midarrow} node[pos=0.5,anchor=north east]{\Large $2$}  (p3)
(o) edge[out=\AnglePthree+10,in=\AnglePthree+90] node[pos=0.5,sloped,allow upside down]{\midarrow} node[pos=0.5,anchor=south west]{\Large $2$}  (p3)

(o1) edge[out=0,in=180] node[pos=0.5,sloped,allow upside down]{\midarrow} node[pos=0.5,anchor=north]{\Large $2$}  (o2)
(o1) edge[out=\AnglePPone-10,in=\AnglePPone-90] node[pos=0.5,sloped,allow upside down]{\midarrow} node[pos=0.5,anchor=south east]{\Large $2$}  (pp1)
(o1) edge[out=\AnglePPone+10,in=\AnglePPone+90] node[pos=0.5,sloped,allow upside down]{\midarrow} node[pos=0.5,anchor=north west]{\Large $2$}  (pp1)
(pp2) edge[in=\AnglePPtwo-10,out=\AnglePPtwo-90] node[pos=0.5,sloped,allow upside down]{\midarrow} node[pos=0.5,anchor=west]{\Large $1$}  (o1)
(pp2) edge[in=\AnglePPtwo+10,out=\AnglePPtwo+90] node[pos=0.5,sloped,allow upside down]{\midarrow} node[pos=0.6,anchor=east]{\Large $1$}  (o1)
(o2) edge[out=\AnglePPthree-10,in=\AnglePPthree-90] node[pos=0.5,sloped,allow upside down]{\midarrow} node[pos=0.5,anchor=west]{\Large $2$}  (pp3)
(o2) edge[out=\AnglePPthree+10,in=\AnglePPthree+90] node[pos=0.5,sloped,allow upside down]{\midarrow} node[pos=0.5,anchor=east]{\Large $2$}  (pp3)
(pp4) edge[in=\AnglePPfour-10,out=\AnglePPfour-90] node[pos=0.5,sloped,allow upside down]{\midarrow} node[pos=0.5,anchor=north]{\Large $2$}  (o2)
(pp4) edge[in=\AnglePPfour+10,out=\AnglePPfour+90] node[pos=0.5,sloped,allow upside down]{\midarrow} node[pos=0.5,anchor=south]{\Large $2$}  (o2)
(o2) edge[out=\AnglePPfive-10,in=\AnglePPfive-90] node[pos=0.5,sloped,allow upside down]{\midarrow} node[pos=0.5,anchor=east]{\Large $2$}  (pp5)
(o2) edge[out=\AnglePPfive+10,in=\AnglePPfive+90] node[pos=0.5,sloped,allow upside down]{\midarrow} node[pos=0.5,anchor=west]{\Large $2$}  (pp5);
\end{scope}

\draw ($(o)+(0,-1)$) node {\Large Type I};
\draw ($(o1)+(0,-1.1)$) node {\Large Type II};
\end{tikzpicture}

\caption{Example of the underlying metric graphs of Harris-Mumford saturated metrized complex of type I and  II. For type I, $o$ is the eye vertex, $p_1$, $p_2$ and $p_3$ are the three pedal vertices. For type II, $o_1$ and $o_2$ are the eye vertices, $p_1$, $p_2$, $p_3$, $p_4$ and $p_5$ are the three pedal vertices. Examples of regulated global diagrams are also shown with edge multiplicities marked.} \label{F:HarrisMumford}
\end{figure}
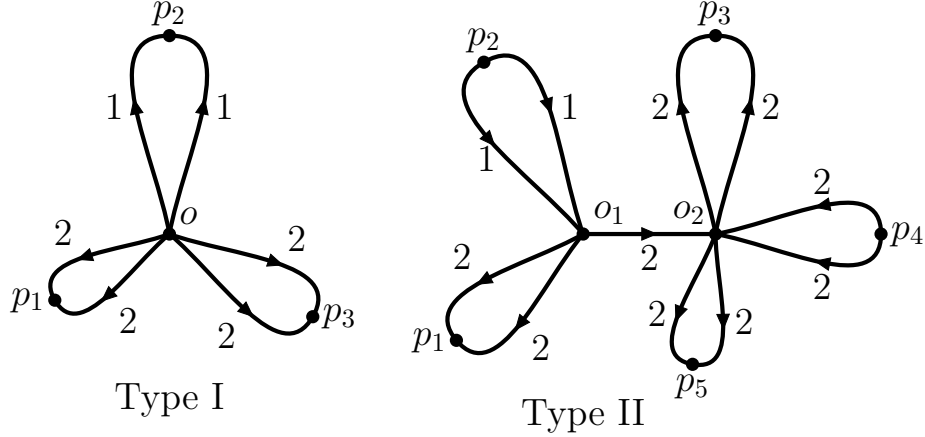

\begin{theorem} \label{T:HarrisMumford}
\begin{enumerate}
  \item A Harris-Mumford saturated metrized complex $\mathfrak{C}$ of type I has a base-point-free smoothable limit $g^1_d$ whose global diagram is regulated if and only if there exists a rational function $f$ of degree $d$ on the eye which has the same value and ramification indices on each pair of the $p_i$-marked points for all pedal vertices $p_i$.
  \item A Harris-Mumford saturated metrized complex $\mathfrak{C}$ of type II has a base-point-free smoothable limit $g^1_d$ whose global diagram is regulated if and only if there exist rational functions $f_1$ and $f_2$ on the two eyes respectively such that (i) for all pedal vertices $p_i$, the pair of $p_i$-marked points have the same value and ramification indices for the corresponding $f_j$ ($j=1$ or $2$), and (ii) $d=d_1+d_2-l$ where $d_1$ and $d_2$ are the degrees of $f_1$ and $f_2$ respectively and both ramification indices of $f_1$ and $f_2$ at respective marked points corresponding to the central segment take the same value $l$.
\end{enumerate}

\end{theorem}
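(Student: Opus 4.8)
The plan is to apply the smoothing criterion in the form of Theorem~\ref{T:main}: a regulated pre-limit $g^1_d$ represented by $(\mathcal{D},\mathcal{H})$ is smoothable if and only if it is diagrammatic, solvable, and satisfies the intrinsic global compatibility conditions, and to translate each requirement into the stated conditions on the eye function(s). Throughout I orient a solution $\rho$ so that the eye $o$ (respectively $o_1$) is the global minimum; base-point-freeness together with the requirement that the eye function have degree exactly $d$ forces this, since $\mathcal{D}=\phi^*(\text{pt})$ is then supported where $\rho$ attains its minimum and the local degree of the harmonic morphism at $o$ must be the full degree. The geometric input comes from the pedal vertices: since $p_i$ is by definition the \emph{midpoint} of its circle, the two arcs of $\Omega_i$ joining the eye to $p_i$ have equal length, and for a \emph{regulated} diagram the multiplicity is constant on each arc. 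Hence a single-valued solution $\rho$ exists around $\Omega_i$ only if the two signed multiplicities coincide; this shows that regulated $+$ solvable is equivalent to the two $p_i$-marked points being either both poles or both non-poles of the eye function with equal ramification index, giving the ``same ramification index'' clause of~(i) and excluding the mixed-sign case, so that every loop rises from the eye.

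For Type~I I observe that $\Gamma$ is exactly a genus-$g$ metric graph made of $g$ separate loops attached at $o$, so I can invoke the separate-loops theorem of Subsection~\ref{S:GenusOne}. Since each $\Omega_{i,\min}(\rho)$ is the singleton $\{o\}$, smoothability is equivalent to $o$ being a \emph{closing point} of every loop, i.e.\ to the two tangent directions into $\Omega_i$ being locally equivalent, which by the construction of local diagrams from $\mathcal{H}$ means that the two $p_i$-marked points lie in the same level set of $f=f_o$, that is, $f$ takes the \emph{same value} on them. This yields the ``same value'' clause. For the converse I would build $\mathcal{H}$ from $\langle 1,f\rangle$ at $o$ and propagate the boundary values through the ordinary interior points of each arc using Lemma~\ref{L:tune}; compatibility at the pedal maxima is automatic because both directions there are poles and hence locally equivalent. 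The equality $\deg f=d$ follows from base-point-freeness: all poles of $f$ sit at unmarked points, so $D_o=D^-_f$ has degree $d$ and $D_\Gamma=d\,(o)$.

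For Type~II I would cut $\Gamma$ at the midpoint $m$ of the central segment and apply Proposition~\ref{P:DHglue}, reducing smoothability to that of the two sides $\Gamma_1\ni o_1$ and $\Gamma_2\ni o_2$, each a Type~I bouquet carrying a pendant arc. The pedal conditions~(i) at $o_1$ and $o_2$ then follow exactly as above, independently on the two sides, and the clutching step of Proposition~\ref{P:DHglue} succeeds at the ordinary point $m$ after an affine adjustment $f_p\mapsto\alpha+\beta f_p$ (Lemma~\ref{L:tune}), so the two eye functions may be chosen separately. The central segment is a single regulated edge, so its constant slope $l$ simultaneously equals the ramification index of $f_1$ and that of $f_2$ at the respective central marked points. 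Finally the degree formula~(ii) is the base-point-free support count: at the minimum $o_1$ all poles of $f_1$ are unmarked, giving $\deg D_{o_1}=d_1$, whereas at $o_2$ the central marked point is an incoming pole of $f_2$ of order $l$ that is absorbed by the compatibility between $D_{o_2}$ and $H_{o_2}$, leaving $\deg D_{o_2}=d_2-l$; since base-point-freeness confines $\mathcal{D}$ to the two eyes, $d=\deg\mathcal{D}=d_1+(d_2-l)$.

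I expect the main obstacle to be the Type~II analysis, specifically verifying that the central segment \emph{decouples} the two eyes: that the only interaction it forces is the matching of both central ramification indices to the common slope $l$ together with the resulting degree bookkeeping, with no hidden constraint linking the values of $f_1$ and $f_2$. Here I must argue carefully that the incoming central direction at $o_2$ — a pole, governed by the backward-direction convention in the admissibility definition — imposes no compatibility on finite values beyond those already recorded in~(i) and~(ii), and consequently that clutching the two admissible collections along the central segment never obstructs. By contrast, the geometric ramification-index computation and the separate-loops reduction should be comparatively routine once the midpoint and regulation hypotheses are exploited.
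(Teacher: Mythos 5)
Your overall strategy (reduce to Theorem~\ref{T:main}, then route Type~I through the separate-loops theorem of Subsection~\ref{S:GenusOne} and Type~II through Proposition~\ref{P:DHglue}) is a legitimate alternative to the paper's argument, which instead applies the intrinsic global compatibility conditions directly at the eye and, for Type~II, uses the normalization $f_1=1/f_1'$, $f_2=1/(f_2'-f_2'(u_2))$ rather than cutting at the midpoint. But there is a genuine gap: your standing assumption that the eye is the global minimum of $\rho$, i.e.\ that every loop rises from the eye. The justification you give is circular in the forward direction --- you invoke ``the eye function has degree exactly $d$,'' which is part of what is being proved --- and the assumption itself is false. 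Solvability of a regulated diagram only forces the two arcs of each loop to carry equal multiplicities \emph{of the same sign}; the both-poles case, in which a loop \emph{descends} from the eye, is fully consistent with base-point-freeness and smoothability. Concretely, take a single loop, let $f_o$ have simple poles at both $p_1$-marked points (so both arcs descend with multiplicity $1$), and let $H_{p_1}=\langle 1,f_{p_1}\rangle$ with $f_{p_1}$ of degree $2$ taking a common value with ramification index $1$ at the two marked points of $C_{p_1}$, so that $D_{p_1}$ is the polar divisor of $f_{p_1}$. This is regulated, base-point-free, solvable, and satisfies the intrinsic global compatibility conditions (the root of $\mathcal{B}$ is $\pi_\mathcal{B}(p_1)$ and $p_1$ is a closing point), hence smoothable; yet $\Omega_{\min}(\rho)=\{p_1\}$, the divisor sits at the pedal vertex, and the eye is the \emph{maximum}. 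Your two supporting facts (the divisor sits at the minimum locus; the local degree at $o$ equals $d$) both hold in this example and still do not imply your conclusion.

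Because of this, your forward direction is incomplete in both parts: the separate-loops theorem is invoked under the hypothesis $\Omega_{i,\min}(\rho)=\{o\}$, which fails for descending loops, and for those loops the stated conclusion (equal value, namely $\infty$, and equal ramification index at the pair) must instead be read off from the local diagram, with the smoothability constraint being that $p_i$ is a closing point --- a condition on $H_{p_i}$, not on $f$. The paper's proof avoids fixing an orientation and so covers both cases at once. Your converse has the matching defect: if the given $f$ has poles at some pair of marked points (which the statement permits), that loop must descend, the pedal vertex is a minimum rather than a maximum, and compatibility there is \emph{not} automatic --- you must additionally choose $H_{p_i}$ so that the two marked points of $C_{p_i}$ lie in a common level set (alternatively, in the converse only, first apply a M\"obius transformation moving all pair-values away from $\infty$, the analogue of the paper's Type~II normalization; this repair is unavailable in the forward direction, where $(\mathcal{D},\mathcal{H})$ is given). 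The same orientation assumption underlies your Type~II bookkeeping (``base-point-freeness confines $\mathcal{D}$ to the two eyes''), so the gap propagates there too. I am not faulting the degree assertion $\deg f=d$ itself, which the paper states just as tersely --- only the additional, and false, structural conclusion you draw from it.
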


\begin{proof}
For (1), first let $(\mathcal{D},\mathcal{H})$ represent a base-point-free smoothable limit $g^1_d$ with regulated global diagram. Then the only possible global diagrams are those with the same multiplicity along each pair of edges connecting the eye vertex $o$ and a pedal vertex. Consider a nonconstant function $f$ in $H_o$ where $o$ is the eye vertex. By the smoothing criterion (version II, Theorem~\ref{T:main}), $f$ has the same value and ramification indices on each pair of the $p_i$-marked points for all pedal vertices $p_i$. In addition, since $(\mathcal{D},\mathcal{H})$  is base-point free, $f$ must also have degree $d$.

On the other hand, if $f$ is a rational function of degree $d$ on the eye $C_o$ which has the same value and ramification indices on each pair of the $p_i$-marked points for all pedal vertices $p_i$, then we can construct a base-point-free smoothable limit $g^1_d$ represented by $(\mathcal{D},\mathcal{H})$  with regulated global diagram in the following way. First we let $H_o$ be the space of rational functions spanned by a constant function and $f$. Since for each pedal vertex $p_i$, the distances of the two edges connecting $p_i$ and $o$ are the same and $f$ has the same multiplicities on the pair of $p_i$-marked points, we can construct a regulated global diagram compatible with the local diagram for $H_p$ (Remark~\ref{R:LocalHp}). Associate projective lines to the points $p$ other than $o$ and we can always construct a two dimensional space $H_p$ of rational functions on $C_p$ whose local diagram is compatible with the global diagram. Now for all $p\in\Gamma$, let $D_p=\Sigma_{t\in\In(p)} m(p,t)(\red_p(t)) + D^-_{f_p}$ based on the notations in Remark~\ref{R:LocalHp}. By the smoothing criterion, the $(\{D_p\}_{p\in\Gamma},\{H\}_{p\in\Gamma})$ constructed in this way represents a smoothable $g^1_d$.

For (2), suppose $(\mathcal{D},\mathcal{H})$  represent a base-point-free smoothable limit $g^1_d$ with regulated global diagram. Then the only possible global diagrams are those with the same multiplicity along each pair of edges connecting a pedal vertex to its corresponding eye vertex $o_1$ or $o_2$ and with unchanged multiplicity along $o_1o_2$. Consider nonconstant functions $f_1\in H_{o_1}$ and $f_2\in H_{o_2}$. Therefore condition (i) is satisfied using a similar argument as for (1) and the ramification indices of $f_1$ and $f_2$ at respective marked points $u_1$ and $u_2$ corresponding to the segment $o_1o_2$ take the same value, say $l$. Moreover, if $u_1$ is a pole of $f_1$, then $u_2$ is not a pole of $f_2$ and vice versa. Then the total degree $d$ of $(\mathcal{D},\mathcal{H})$  must be $d=d_1+d_2-l$ where $d_1$ and $d_2$ are the degrees of $f_1$ and $f_2$ respectively.

On the other hand, suppose we have rational functions $f'_1$ on $C_{o_1}$ and $f'_2$ on $C_{o_2}$ satisfying conditions (i) and (ii). If none of $u_1$ and $u_2$ are respectively poles of $f_1$ and $f_2$, we let $f_1=f'_1$ and $f_2=1/(f'_2-f'_2(u_2))$. If both $u_1$ and $u_2$ are respectively poles of $f_1$ and $f_2$, we let $f_1=1/f'_1$ and $f_2=f'_2$. Now $u_1$ is not a pole of $f_1$ and $u_2$ is a pole of $f_2$ while both $f_1$ and $f_2$ satisfy conditions (i) and (ii). Moreover, the ramification index of $f_1$ at $u_1$ is the same as the ramification index of $f_2$ at $u_2$. For $i=1,2$, let $H_{o_i}$ be spanned by a constant function and $f_i$. Then we can construct a regulated global diagram compatible to the local diagrams associated to $H_{o_1}$ and $H_{o_2}$ and build a base-point-free smoothable limit $g^1_d$ on this global diagram following a similar approach as in (1).
\end{proof}

\subsection{The Smoothing Criterion on Metrized Complexes} \label{S:SmoothingMC}
Recall that in Remark~\ref{R:restriction}, we compared the notion of metrized complexes introduced by Amini and Baker \cite{AB12} and the notion of saturated metrized complexes. Consider a metric graph $\Gamma$ and a vertex set $A$ of $\Gamma$. Suppose $\mathfrak{C}_A$ is a  metrized complex with its underlying metric graph being $\Gamma$ and each point $p\in A$ is associated with a curve $C_p$. As for pre-limit $g^r_d$s defined on saturated metrized complexes (Definition~\ref{D:LimLinSeries}), we say that a pre-limit $g^r_d$  on the metrized complex $\mathfrak{C}_A$  is represented by the data $(\mathcal{D}_A, \mathcal{H}_A)$  where $\mathcal{D}_A= (D_\Gamma,\{ D_p\}_{p \in A})$ is an effective divisor of degree $d$ on $\mathfrak{C}_A$ and  $\mathcal{H}_A = \{ H_p\}_{p \in A}$ where $H_p$ is an $(r+1)$-dimensional subspace of the function field of $C_p$. (See \cite{AB12} for more details about the divisor theory on metrized complexes.) In addition, as in Definition~\ref{D:smoothable}, we say $(\mathcal{D}_A, \mathcal{H}_A)$ is smoothable if there exists a $g^r_d$ represented by $(D,H)$ on exists a smooth proper curve $X/\mathbb{K}$ specialized to $(\mathcal{D}_A, \mathcal{H}_A)$ on $\mathfrak{C}_A$. 

Consider a saturated metrized complex $\mathfrak{C}$ which is a saturation of $\mathfrak{C}_A$ (Remark~\ref{R:restriction}). We say a divisor $\mathcal{D}= (D'_\Gamma,\{ D'_p\}_{p \in \Gamma})$ on $\mathfrak{C}$ is a saturation of $\mathcal{D}_A= (D_\Gamma,\{ D_p\}_{p \in A})$ on $\mathfrak{C}_A$ if $D_\Gamma=D'_\Gamma$ and $D_p=D'_p$ for all $p\in A$, and we say $\mathcal{H} = \{ H'_p\}_{p \in \Gamma}$  is a saturation of $\mathcal{H}_A = \{ H_p\}_{p \in A}$ if $H'_p=H_p$ for all $p\in A$. Then naturally we have the following statement. 

\begin{lemma}
A pre-limit $g^r_d$ represented by $(\mathcal{D}_A, \mathcal{H}_A)$ on a metrized complex $\mathfrak{C}_A$ is smoothable if and only if there exists a saturation $(\mathcal{D}, \mathcal{H})$ of $(\mathcal{D}_A, \mathcal{H}_A)$ on a saturation $\mathfrak{C}$ of $\mathfrak{C}_A$ such that $(\mathcal{D}, \mathcal{H})$ is smoothable. 
\end{lemma}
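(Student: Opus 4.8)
The plan is to deduce both implications from a single structural fact: the specialization of a linear series on $X$ to a saturated metrized complex is compatible with restriction to a vertex set. Throughout I will use Remark~\ref{R:restriction}, which identifies $\mathfrak{C}_A$ with the restriction of a saturation $\mathfrak{C}$ to $A$, together with the fact that a saturation of $\mathfrak{C}_A$ is characterized by having restriction $\mathfrak{C}_A$ and equal genus. The two directions are then almost symmetric, each reducing to this compatibility plus the definition of smoothability (Definition~\ref{D:smoothable}) and, in the forward direction, a genus count.

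For the reverse implication, suppose $(\mathcal{D},\mathcal{H})$ on a saturation $\mathfrak{C}$ of $\mathfrak{C}_A$ is smoothable. By Definition~\ref{D:smoothable} there is a smooth proper curve $X/\mathbb{K}$, a skeleton $\Sigma$ with $\mathfrak{C}\cong\mathfrak{C}(\Sigma)$, and a $g^r_d$ represented by $(D,H)$ on $X$ whose specialization to $\mathfrak{C}$ is $(\mathcal{D},\mathcal{H})$. Since $\mathfrak{C}_A$ is the restriction of $\mathfrak{C}(\Sigma)$ to $A$, it is the metrized complex associated to $\Sigma$ with vertex set $A$. I would show that the Amini--Baker specialization of $(D,H)$ to $\mathfrak{C}_A$ is obtained from the specialization to $\mathfrak{C}$ by discarding the $C_p$-parts at points $p\notin A$: the tropical part (the retraction of $D$ to $\Gamma$) is unchanged, and the reductions at the points of $A$ do not depend on the curves placed elsewhere. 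Because $(\mathcal{D},\mathcal{H})$ is a saturation of $(\mathcal{D}_A,\mathcal{H}_A)$, this restriction is precisely $(\mathcal{D}_A,\mathcal{H}_A)$, so the same $X$ and $(D,H)$ witness smoothability of $(\mathcal{D}_A,\mathcal{H}_A)$ on $\mathfrak{C}_A$.

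For the forward implication, suppose $(\mathcal{D}_A,\mathcal{H}_A)$ is smoothable on $\mathfrak{C}_A$, witnessed by $X$, a skeleton $\Sigma$ realizing $\mathfrak{C}_A$, and a $g^r_d$ represented by $(D,H)$. I would form the saturated metrized complex $\mathfrak{C}:=\mathfrak{C}(\Sigma)$ and specialize $(D,H)$ to it; by Theorem~\ref{T:DHspecialization} this yields a limit $g^r_d$ represented by some $(\mathcal{D},\mathcal{H})$, which is smoothable by construction. The same restriction-compatibility used above shows that restricting $(\mathcal{D},\mathcal{H})$ to $A$ recovers $(\mathcal{D}_A,\mathcal{H}_A)$, so $(\mathcal{D},\mathcal{H})$ is a saturation of $(\mathcal{D}_A,\mathcal{H}_A)$. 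It remains to verify that $\mathfrak{C}$ is a genuine saturation, i.e. that $g(\mathfrak{C})=g(\mathfrak{C}_A)$. Both genera equal $g(X)$, since the genus of the (saturated) metrized complex attached to any skeleton of $X^{\mathrm{an}}$ equals the genus of $X$; hence $\sum_{p\in\Gamma\setminus A}g(C_p)=0$, forcing every curve inserted by the saturation to be a projective line, exactly as the definition of saturation requires.

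The step I expect to be the main obstacle is making precise the compatibility of specialization with restriction: that for a fixed $(D,H)$ on $X$, the Amini--Baker specialization to $\mathfrak{C}_A$ is the $A$-restriction of the specialization of Theorem~\ref{T:DHspecialization} to $\mathfrak{C}(\Sigma)$. This is intuitively clear, since both the reduction of rational functions and the reduction of divisors are point-local operations on the skeleton, but it must be argued carefully because the two specialization maps are a priori defined on different objects; I would reconcile them through the retraction of divisors to $\Gamma$ and the common reduction maps at points of $A$. By comparison, the genus-matching point is routine once one invokes the standard identity between the genus of a skeleton's metrized complex and $g(X)$.
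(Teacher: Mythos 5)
Your proof is correct and is exactly the argument the paper intends: the lemma is stated there without proof (introduced by ``Then naturally we have the following statement''), and your unwinding---that specialization of $(D,H)$ commutes with restriction to the vertex set $A$ because both the divisor specialization and the reduction of rational functions are local at the type-II points of the skeleton, combined with the genus identity $g(\mathfrak{C}(\Sigma))=g(X)=g(\mathfrak{C}_A)$ forcing the curves inserted at points of $\Gamma\setminus A$ to be projective lines---is precisely the natural-from-definitions argument the paper alludes to. The genus bookkeeping you flag is the one point of real substance, since it is what guarantees in the forward direction that $\mathfrak{C}(\Sigma)$ is a genuine saturation of $\mathfrak{C}_A$, and in the reverse direction that $A$ (containing all positive-genus points) is a legitimate vertex set for the Amini--Baker specialization.
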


Our smoothing criterion for the rank one case on saturated metrized complexes can be extended to the case for metrized complexes. The subtlety here is that we need to consider all possible saturations of $(\mathcal{D}_A, \mathcal{H}_A)$ and each saturation $(\mathcal{D}, \mathcal{H})$ affords its own global diagram which might either be solvable or not solvable. So to say $(\mathcal{D}_A, \mathcal{H}_A)$ is smoothable, we should be able to single out a solvable 
saturation $(\mathcal{D}, \mathcal{H})$ which satisfies the intrinsic global compatibility conditions. Fortunately, even though the process of determining rank-one smoothability on metrized complexes is usually more complicated, it is still finitely verifiable, as in the following example. 

\begin{example}
\begin{figure}[tbp] 
\centering
\begin{tikzpicture}[>=to,x=2cm,y=2cm]
\def\LengthPQ{2.2}
\def\Gap{0.45}

\coordinate (a) at (0,0);
\coordinate (b) at (-0.8,-3);
\coordinate (c) at (3.2,0);
\coordinate (d) at (3.2,-2.3);
\coordinate (e) at (3.2,-4.6);

\draw (a) node {\Large (a)};
\draw ($(a)+(1.2,-2)$) node {\LARGE $\Gamma$};

\coordinate (p) at ($(a)+(0,-0.8)$);
    \fill [black] (p) circle (2.5pt);
    \draw (p) node[anchor=east] {\Large $p$};
    
\coordinate (q) at ($(p)+(\LengthPQ,0)$);
    \fill [black] (q) circle (2.5pt);
    \draw (q) node[anchor=west] {\Large $q$};
    
\coordinate (o) at ($(p)+(1/2*\LengthPQ,0.29*\LengthPQ)$);
    \fill [black] (o) circle (2.5pt);
    \draw (o) node[anchor=north] {\Large $o$};
    \draw (o) node[anchor=south] {\Large $3$};

\begin{scope}[line width=1.5pt]
\path[-,font=\scriptsize]
(p) edge[out=90,in=90] node[pos=0.87,sloped,allow upside down]{\midarrow} node[pos=0.87,anchor=south]{\large $1$} node[pos=0.35,anchor=south]{\Large $L_1$}   (q)
(q) edge[out=90,in=90] node[pos=0.87,sloped,allow upside down]{\midarrow} node[pos=0.87,anchor=south]{\large $1$}  (p)

(p) edge[out=20,in=160] node[pos=0.13,sloped,allow upside down]{\midarrow} node[pos=0.13,anchor=south]{\large $2$} node[pos=0.35,anchor=south]{\Large $L_2$}  (q)
(q) edge[out=160,in=20] node[pos=0.13,sloped,allow upside down]{\midarrow} node[pos=0.13,anchor=south]{\large $1$}  (p)

(p) edge[out=-20,in=-160] node[pos=0.13,sloped,allow upside down]{\midarrow} node[pos=0.13,anchor=north]{\large $1$} node[pos=0.35,anchor=north]{\Large $L_3$}  (q)
(q) edge[out=-160,in=-20] node[pos=0.13,sloped,allow upside down]{\midarrow} node[pos=0.13,anchor=north]{\large $1$}  (p)

(p) edge[out=-90,in=-90] node[pos=0.13,sloped,allow upside down]{\midarrow} node[pos=0.13,anchor=north]{\large $1$} node[pos=0.35,anchor=north]{\Large $L_4$}  (q)
(q) edge[out=-90,in=-90] node[pos=0.13,sloped,allow upside down]{\midarrow} node[pos=0.13,anchor=north]{\large $2$}  (p);

\end{scope}

\draw (b) node {\Large (b)};

 \fill [black] ($(b)+(0.8*\LengthPQ,0)$) circle (2.5pt);
 \draw ($(b)+(0.8*\LengthPQ,0)$) node[anchor=south] {\Large $o$};
 \fill [black] ($(b)+(0.3*\LengthPQ,0)$) circle (2.5pt);
 \draw ($(b)+(0.3*\LengthPQ,0)$) node[anchor=east] {\Large $p$};
 \fill [black] ($(b)+(1.3*\LengthPQ,0)$) circle (2.5pt);
 \draw ($(b)+(1.3*\LengthPQ,0)$) node[anchor=west] {\Large $q$};
 
 \fill [black] ($(b)+(0.8*\LengthPQ,-\Gap)$) circle (2.5pt);
 \draw ($(b)+(0.8*\LengthPQ,-\Gap)$) node[anchor=south] {\Large $o$};
 \fill [black] ($(b)+(0.3*\LengthPQ,-\Gap)$) circle (2.5pt);
 \draw ($(b)+(0.3*\LengthPQ,-\Gap)$) node[anchor=east] {\Large $p$};
 \fill [black] ($(b)+(1.3*\LengthPQ,-\Gap)$) circle (2.5pt);
 \draw ($(b)+(1.3*\LengthPQ,-\Gap)$) node[anchor=west] {\Large $q$};
 \fill [black] ($(b)+(0.55*\LengthPQ,-\Gap)$) circle (2.5pt);
 
 \fill [black] ($(b)+(0.8*\LengthPQ,-2*\Gap)$) circle (2.5pt);
 \draw ($(b)+(0.8*\LengthPQ,-2*\Gap)$) node[anchor=south] {\Large $o$};
 \fill [black] ($(b)+(0.3*\LengthPQ,-2*\Gap)$) circle (2.5pt);
 \draw ($(b)+(0.3*\LengthPQ,-2*\Gap)$) node[anchor=east] {\Large $p$};
 \fill [black] ($(b)+(1.3*\LengthPQ,-2*\Gap)$) circle (2.5pt);
 \draw ($(b)+(1.3*\LengthPQ,-2*\Gap)$) node[anchor=west] {\Large $q$};
 \fill [black] ($(b)+(1.05*\LengthPQ,-2*\Gap)$) circle (2.5pt);
 
 \fill [black] ($(b)+(0.3*\LengthPQ,-3*\Gap)$) circle (2.5pt);
 \draw ($(b)+(0.3*\LengthPQ,-3*\Gap)$) node[anchor=east] {\Large $p$};
 \fill [black] ($(b)+(0.6*\LengthPQ,-3*\Gap)$) circle (2.5pt);
  \fill [black] ($(b)+(0.9*\LengthPQ,-3*\Gap)$) circle (2.5pt); 
 \fill [black] ($(b)+(1.3*\LengthPQ,-3*\Gap)$) circle (2.5pt);
 \draw ($(b)+(1.3*\LengthPQ,-3*\Gap)$) node[anchor=west] {\Large $q$};

  \fill [black] ($(b)+(0.3*\LengthPQ,-4*\Gap)$) circle (2.5pt);
 \draw ($(b)+(0.3*\LengthPQ,-4*\Gap)$) node[anchor=east] {\Large $p$};
 \fill [black] ($(b)+(0.8*\LengthPQ,-4*\Gap)$) circle (2.5pt);
 \fill [black] ($(b)+(1.3*\LengthPQ,-4*\Gap)$) circle (2.5pt);
 \draw ($(b)+(1.3*\LengthPQ,-4*\Gap)$) node[anchor=west] {\Large $q$};
 
  \fill [black] ($(b)+(0.3*\LengthPQ,-5*\Gap)$) circle (2.5pt);
 \draw ($(b)+(0.3*\LengthPQ,-5*\Gap)$) node[anchor=east] {\Large $p$};
 \fill [black] ($(b)+(0.7*\LengthPQ,-5*\Gap)$) circle (2.5pt);
  \fill [black] ($(b)+(1.0*\LengthPQ,-5*\Gap)$) circle (2.5pt); 
 \fill [black] ($(b)+(1.3*\LengthPQ,-5*\Gap)$) circle (2.5pt);
 \draw ($(b)+(1.3*\LengthPQ,-5*\Gap)$) node[anchor=west] {\Large $q$};
 
\begin{scope}[line width=1.5pt]
\path[-,font=\scriptsize]
($(b)+(0.8*\LengthPQ,0)$)  edge node[pos=0.5,sloped,allow upside down]{\midarrow}  node[pos=0.5,anchor=south]{\large $1$}  ($(b)+(0.3*\LengthPQ,0)$)
($(b)+(0.8*\LengthPQ,0)$)  edge node[pos=0.5,sloped,allow upside down]{\midarrow}  node[pos=0.5,anchor=south]{\large $1$}   ($(b)+(1.3*\LengthPQ,0)$)

($(b)+(0.8*\LengthPQ,-\Gap)$)  edge node[pos=0.5,sloped,allow upside down]{\midarrow} node[pos=0.5,anchor=south]{\large $1$}   ($(b)+(1.3*\LengthPQ,-\Gap)$)
($(b)+(0.8*\LengthPQ,-\Gap)$)  edge node[pos=0.5,sloped,allow upside down]{\midarrow}  node[pos=0.5,anchor=south]{\large $2$}  ($(b)+(0.55*\LengthPQ,-\Gap)$)
($(b)+(0.55*\LengthPQ,-\Gap)$)  edge node[pos=0.5,sloped,allow upside down]{\midarrow}  node[pos=0.5,anchor=south]{\large $1$}  ($(b)+(0.3*\LengthPQ,-\Gap)$)

($(b)+(0.8*\LengthPQ,-2*\Gap)$)  edge node[pos=0.5,sloped,allow upside down]{\midarrow}  node[pos=0.5,anchor=south]{\large $1$}   ($(b)+(0.3*\LengthPQ,-2*\Gap)$)
($(b)+(0.8*\LengthPQ,-2*\Gap)$)  edge node[pos=0.5,sloped,allow upside down]{\midarrow} node[pos=0.5,anchor=south]{\large $2$}    ($(b)+(1.05*\LengthPQ,-2*\Gap)$)
($(b)+(1.05*\LengthPQ,-2*\Gap)$)  edge node[pos=0.5,sloped,allow upside down]{\midarrow} node[pos=0.5,anchor=south]{\large $1$}    ($(b)+(1.3*\LengthPQ,-2*\Gap)$)

($(b)+(0.3*\LengthPQ,-3*\Gap)$) edge node[pos=0.5,sloped,allow upside down]{\midarrow}  node[pos=0.5,anchor=south]{\large $2$}   ($(b)+(0.6*\LengthPQ,-3*\Gap)$)
($(b)+(0.6*\LengthPQ,-3*\Gap)$)  edge node[pos=0.5,sloped,allow upside down]{\midarrow} node[pos=0.5,anchor=south]{\large $1$}    ($(b)+(0.9*\LengthPQ,-3*\Gap)$)
($(b)+(1.3*\LengthPQ,-3*\Gap)$)  edge node[pos=0.5,sloped,allow upside down]{\midarrow} node[pos=0.5,anchor=south]{\large $1$}    ($(b)+(0.9*\LengthPQ,-3*\Gap)$)

($(b)+(0.3*\LengthPQ,-4*\Gap)$) edge node[pos=0.5,sloped,allow upside down]{\midarrow}  node[pos=0.5,anchor=south]{\large $1$}   ($(b)+(0.8*\LengthPQ,-4*\Gap)$)
($(b)+(1.3*\LengthPQ,-4*\Gap)$)  edge node[pos=0.5,sloped,allow upside down]{\midarrow} node[pos=0.5,anchor=south]{\large $1$}    ($(b)+(0.8*\LengthPQ,-4*\Gap)$)

($(b)+(0.3*\LengthPQ,-5*\Gap)$) edge node[pos=0.5,sloped,allow upside down]{\midarrow}  node[pos=0.5,anchor=south]{\large $1$}   ($(b)+(0.7*\LengthPQ,-5*\Gap)$)
($(b)+(1.0*\LengthPQ,-5*\Gap)$)  edge node[pos=0.5,sloped,allow upside down]{\midarrow} node[pos=0.5,anchor=south]{\large $1$}    ($(b)+(0.7*\LengthPQ,-5*\Gap)$)
($(b)+(1.3*\LengthPQ,-5*\Gap)$)  edge node[pos=0.5,sloped,allow upside down]{\midarrow} node[pos=0.5,anchor=south]{\large $2$}    ($(b)+(1.0*\LengthPQ,-5*\Gap)$);
\end{scope}

 \draw [line width=1pt, decorate,decoration={brace,amplitude=8pt},xshift=0pt,yshift=0pt]
($(b)+(1.4*\LengthPQ,0)$)  -- ($(b)+(1.4*\LengthPQ,-2*\Gap)$) node [black,midway,xshift=0.6cm] 
{\Large $L_1$};

 \draw [decorate,decoration={brace,amplitude=8pt},xshift=0pt,yshift=0pt]
($(b)+(0.8*\LengthPQ,-\Gap)$)  -- ($(b)+(0.55*\LengthPQ,-\Gap)$) node [black,midway,yshift=-0.5cm] 
{\large $x_1$};
 \draw [decorate,decoration={brace,amplitude=8pt},xshift=0pt,yshift=0pt]
($(b)+(0.55*\LengthPQ,-\Gap)$)  -- ($(b)+(0.3*\LengthPQ,-\Gap)$) node [black,midway,yshift=-0.5cm] 
{\large $y_1$}; 
 
 \draw [decorate,decoration={brace,amplitude=8pt,mirror},xshift=0pt,yshift=0pt]
($(b)+(0.8*\LengthPQ,-2*\Gap)$)  -- ($(b)+(1.05*\LengthPQ,-2*\Gap)$) node [black,midway,yshift=-0.5cm] 
{\large $x'_1$};
 \draw [decorate,decoration={brace,amplitude=8pt},xshift=0pt,yshift=0pt]
($(b)+(1.3*\LengthPQ,-2*\Gap)$)  -- ($(b)+(1.05*\LengthPQ,-2*\Gap)$) node [black,midway,yshift=-0.5cm] 
{\large $y'_1$};

 \draw [decorate,decoration={brace,amplitude=8pt},xshift=0pt,yshift=0pt]
($(b)+(1.3*\LengthPQ,-3*\Gap)$)  -- ($(b)+(0.9*\LengthPQ,-3*\Gap)$) node [black,midway,yshift=-0.5cm] 
{\large $z _2$};
 \draw [decorate,decoration={brace,amplitude=8pt},xshift=0pt,yshift=0pt]
($(b)+(0.9*\LengthPQ,-3*\Gap)$)  -- ($(b)+(0.6*\LengthPQ,-3*\Gap)$) node [black,midway,yshift=-0.5cm] 
{\large $y_2$};
 \draw [decorate,decoration={brace,amplitude=8pt},xshift=0pt,yshift=0pt]
($(b)+(0.6*\LengthPQ,-3*\Gap)$)  -- ($(b)+(0.3*\LengthPQ,-3*\Gap)$) node [black,midway,yshift=-0.5cm] 
{\large $x_2$};

 \draw [decorate,decoration={brace,amplitude=8pt},xshift=0pt,yshift=0pt]
($(b)+(1.3*\LengthPQ,-4*\Gap)$)  -- ($(b)+(0.8*\LengthPQ,-4*\Gap)$) node [black,midway,yshift=-0.5cm] 
{\large $y _3$};
 \draw [decorate,decoration={brace,amplitude=8pt},xshift=0pt,yshift=0pt]
($(b)+(0.8*\LengthPQ,-4*\Gap)$)  -- ($(b)+(0.3*\LengthPQ,-4*\Gap)$) node [black,midway,yshift=-0.5cm] 
{\large $x_3$};

 \draw [decorate,decoration={brace,amplitude=8pt},xshift=0pt,yshift=0pt]
($(b)+(1.3*\LengthPQ,-5*\Gap)$)  -- ($(b)+(1.0*\LengthPQ,-5*\Gap)$) node [black,midway,yshift=-0.5cm] 
{\large $x _4$};
 \draw [decorate,decoration={brace,amplitude=8pt},xshift=0pt,yshift=0pt]
($(b)+(1.0*\LengthPQ,-5*\Gap)$)  -- ($(b)+(0.7*\LengthPQ,-5*\Gap)$) node [black,midway,yshift=-0.5cm] 
{\large $y_4$};
 \draw [decorate,decoration={brace,amplitude=8pt},xshift=0pt,yshift=0pt]
($(b)+(0.7*\LengthPQ,-5*\Gap)$)  -- ($(b)+(0.3*\LengthPQ,-5*\Gap)$) node [black,midway,yshift=-0.5cm] 
{\large $z_4$};

\draw ($(b)+(1.5*\LengthPQ,-3*\Gap)$) node {\Large $L_2$};
\draw ($(b)+(1.5*\LengthPQ,-4*\Gap)$) node {\Large $L_3$};
\draw ($(b)+(1.5*\LengthPQ,-5*\Gap)$) node {\Large $L_4$};

\draw (c) node {\Large (c)};

\coordinate (o) at ($(c)+(0.2,-0.5)$);
    \draw (o) node[anchor=east] {\Large $o$};
\coordinate (p) at ($(c)+(1.2,-0.3)$);
    \draw (p) node[anchor=south east] {\Large $p$};
\coordinate (q) at ($(c)+(1.2,-0.7)$);
    \draw (q) node[anchor=north east] {\Large $q$};
\coordinate (pq1) at ($(c)+(2.2,0)$);
\coordinate (pq2) at ($(c)+(2.2,-0.5)$);
\coordinate (pq3) at ($(c)+(2.2,-1)$);

\coordinate (x) at ($(c)+(0.2,-1.7)$);
\coordinate (y) at ($(c)+(1.2,-1.7)$);
\coordinate (z1) at ($(c)+(2.2,-1.4)$);
\coordinate (z2) at ($(c)+(2.2,-1.7)$);
\coordinate (z3) at ($(c)+(2.2,-2.0)$);

\begin{scope}[line width=1.6pt, every node/.style={sloped,allow upside down}]
  \draw (o) --  (p);
  \draw (o) --  (q);
  \draw (p) --  (pq1);
  \draw (p) --  (pq2);
  \draw (p) --  (pq3);
  \draw (q) --  (pq1);
  \draw (q) --  (pq2);
  \draw (q) --  (pq3);
  \draw (x) --  (y);
  \draw (y) --  (z1);
  \draw (y) --  (z2);
  \draw (y) --  (z3);
\end{scope}

\begin{scope}[line width=1pt, dashed]
\draw (o)-- (x);
\draw (p)-- (y);
\end{scope}

\draw ($(c)+(2.5,-1.7)$) node {\LARGE $\mathcal{B}$};
\draw ($(c)+(2.5,-0.5)$) node {\LARGE $\Gamma$};
\draw [-stealth, line width=1.2pt] ($(c)+(2.5,-0.8)$)  -- ($(c)+(2.5,-1.4)$);

\draw [rotate around={0:(o)},line width=1pt,dash pattern=on 2pt off 2pt] ($(o)+(0.18,0)$) ellipse (0.05 and 0.14);

\draw (d) node {\Large (d)};

\coordinate (o) at ($(d)+(0.2,-0.7)$);
    \draw (o) node[anchor=east] {\Large $o$};
\coordinate (p) at ($(d)+(1.2,-0.7)$);
    \draw (p) node[anchor=north east] {\Large $p$};
\coordinate (q) at ($(d)+(1,-0.3)$);
    \draw (q) node[anchor=south east] {\Large $q$};
\coordinate (pq1) at ($(d)+(2.2,0)$);
\coordinate (pq2) at ($(d)+(2.2,-0.5)$);
\coordinate (pq3) at ($(d)+(2.2,-1)$);

\coordinate (x) at ($(d)+(0.2,-1.7)$);
\coordinate (y1) at ($(d)+(1,-1.7)$);
\coordinate (y2) at ($(d)+(1.2,-1.7)$);
\coordinate (z1) at ($(d)+(2.2,-1.4)$);
\coordinate (z2) at ($(d)+(2.2,-1.7)$);
\coordinate (z3) at ($(d)+(2.2,-2.0)$);

\begin{scope}[line width=1.6pt, every node/.style={sloped,allow upside down}]
  \draw (o) --  (p);
  \draw (o) --  (q);
  \draw (p) --  (pq1);
  \draw (p) --  (pq2);
  \draw (p) --  (pq3);
  \draw (q) --  (pq1);
  \draw (q) --  (pq2);
  \draw (q) --  (pq3);
  \draw (x) --  (y2);
  \draw (y2) --  (z1);
  \draw (y2) --  (z2);
  \draw (y2) --  (z3);
\end{scope}

\begin{scope}[line width=1pt, dashed]
\draw (o)-- (x);
\draw (p)-- (y2);
\draw (q)-- (y1);
\end{scope}

\draw ($(d)+(2.5,-1.7)$) node {\LARGE $\mathcal{B}$};
\draw ($(d)+(2.5,-0.5)$) node {\LARGE $\Gamma$};
\draw [-stealth, line width=1.2pt] ($(d)+(2.5,-0.8)$)  -- ($(d)+(2.5,-1.4)$);

\draw [rotate around={0:(p)},line width=1pt,dash pattern=on 2pt off 2pt] ($(q)+(0.18,-0.04)$) ellipse (0.05 and 0.18);
\draw [rotate around={0:(o)},line width=1pt,dash pattern=on 2pt off 2pt] ($(o)+(0.18,0.04)$) ellipse (0.05 and 0.14);

\draw (e) node {\Large (e)};

\coordinate (o) at ($(e)+(0.2,-0.7)$);
    \draw (o) node[anchor=east] {\Large $o$};
\coordinate (p) at ($(e)+(1,-0.3)$);
    \draw (p) node[anchor=south east] {\Large $p$};
\coordinate (q) at ($(e)+(1.2,-0.7)$);
    \draw (q) node[anchor=north east] {\Large $q$};
\coordinate (pq1) at ($(e)+(2.2,0)$);
\coordinate (pq2) at ($(e)+(2.2,-0.5)$);
\coordinate (pq3) at ($(e)+(2.2,-1)$);

\coordinate (x) at ($(e)+(0.2,-1.7)$);
\coordinate (y1) at ($(e)+(1,-1.7)$);
\coordinate (y2) at ($(e)+(1.2,-1.7)$);
\coordinate (z1) at ($(e)+(2.2,-1.4)$);
\coordinate (z2) at ($(e)+(2.2,-1.7)$);
\coordinate (z3) at ($(e)+(2.2,-2.0)$);

\begin{scope}[line width=1.6pt, every node/.style={sloped,allow upside down}]
  \draw (o) --  (p);
  \draw (o) --  (q);
  \draw (p) --  (pq1);
  \draw (p) --  (pq2);
  \draw (p) --  (pq3);
  \draw (q) --  (pq1);
  \draw (q) --  (pq2);
  \draw (q) --  (pq3);
  \draw (x) --  (y2);
  \draw (y2) --  (z1);
  \draw (y2) --  (z2);
  \draw (y2) --  (z3);
\end{scope}

\begin{scope}[line width=1pt, dashed]
\draw (o)-- (x);
\draw (p)-- (y1);
\draw (q)-- (y2);
\end{scope}

\draw ($(e)+(2.5,-1.7)$) node {\LARGE $\mathcal{B}$};
\draw ($(e)+(2.5,-0.5)$) node {\LARGE $\Gamma$};
\draw [-stealth, line width=1.2pt] ($(e)+(2.5,-0.8)$)  -- ($(e)+(2.5,-1.4)$);

\draw [rotate around={0:(p)},line width=1pt,dash pattern=on 2pt off 2pt] ($(p)+(0.18,-0.04)$) ellipse (0.05 and 0.18);
\draw [rotate around={0:(o)},line width=1pt,dash pattern=on 2pt off 2pt] ($(o)+(0.18,0.04)$) ellipse (0.05 and 0.14);

\end{tikzpicture}
\caption{An example of smoothability test on a metrized complex with its underlying metric graph being a banana graph of genus $3$ and its vertex set being the two valence-$4$ points.} \label{F:ExMetCom}
\end{figure}
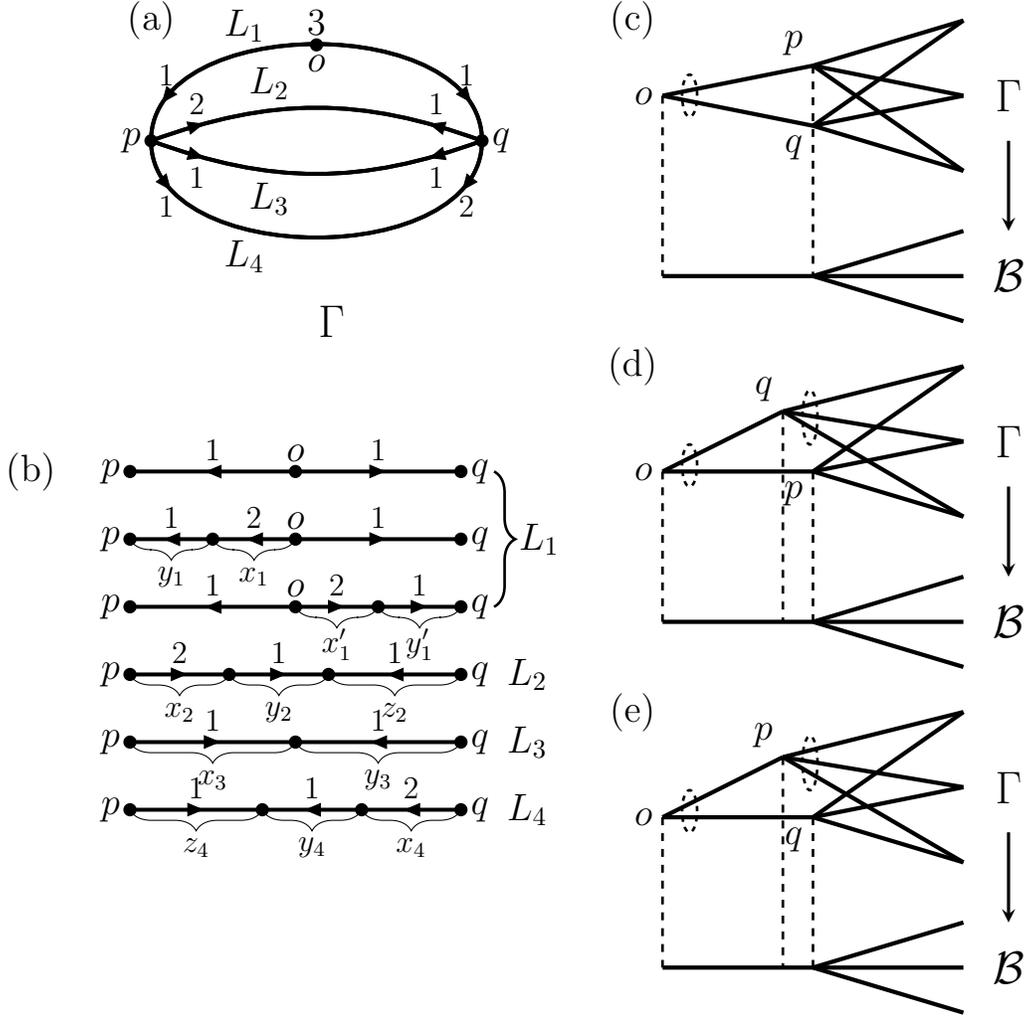
Consider a metric banana graph $\Gamma$ of genus $3$ as shown in Figure~\ref{F:ExMetCom}(a). Let $A=\{p,q\}$ be a vertex set of $\Gamma$ and the suppose the four edges $L_1$, $L_2$, $L_3$ and $L_4$ connecting $p$ and $q$ have the same length $1$. Let $o$ be the middle point of $L_1$. Let $\mathfrak{C}_A$ be a metrized complex with underlying metric graph being $\Gamma$ and associated curves being $C_p$ and $C_q$ with respect to $p$ and $q$ respectively. Let $(\mathcal{D}_A,\mathcal{H}_A)$ where $\mathcal{D}_A=(D_\Gamma,\{D_p,D_q\})$ has degree $d$ and $\mathcal{H}_A=\{H_p,H_q\}$ represent a pre-limit $g^1_d$ on $\mathfrak{C}_A$.  More specifically,   we assume that  $D_\Gamma(o)=3$, $D_p$ is compatible to $H_p$, $D_q$ is compatible to $H_q$, and $D_\Gamma(p')=0$ for every $p'\in\Gamma\setminus\{o,p,q\}$. The local diagrams at $p$ and $q$ induced by $H_p$ and $H_q$ respectively are also shown in Figure~\ref{F:ExMetCom}(a), i.e., the tangent directions at $p$ corresponding to  $L_1$, $L_2$, $L_3$ and $L_4$ have multiplicities $-1$, $2$, $1$ and $1$ respectively, and  the tangent directions at $q$ corresponding to  $L_1$, $L_2$, $L_3$ and $L_4$ have multiplicities $-1$, $1$, $1$ and $2$ respectively. 

Let $\mathfrak{C}$ be a saturation of  $\mathfrak{C}_A$. Then $(\mathcal{D}_A,\mathcal{H}_A)$ is smoothable if and only if there exists a saturation  of $(\mathcal{D}_A,\mathcal{H}_A)$ on $\mathfrak{C}$ which is smoothable.  Let $(\mathcal{D},\mathcal{H})$ be a diagrammatic saturation of $(\mathcal{D}_A,\mathcal{H}_A)$ on $\mathfrak{C}$. First we need to determine whether $(\mathcal{D},\mathcal{H})$ is solvable. Figure~\ref{F:ExMetCom}(b) shows the allowable cases of the discrete 1-form $\omega$ in the global diagram of $(\mathcal{D},\mathcal{H})$ restricted to the four edges $L_1$, $L_2$, $L_3$ and $L_4$. Note that the variation of the multiplicity along $L_2$, $L_3$ or $L_4$ from $p$ to $q$ must be non-increasing since $D_\Gamma(p')=0$ for $p'$ in the interior of these edges. In addition, we have the following restrictions for the lengths of segments with uniform multiplicities: $x_2+y_2+z_2=x_3+y_3=x_4+y_4+z_4=1$, $x_2,z_2,x_3,y_3,x_4,z_4>0$ and $y_2,y_4\geqslant 0$.  The case for $L_1$ is a little bit special since $D_\Gamma$ has value $3$ at the middle point $o$ of $L_1$. Again, the variation of  multiplicity from $o$ to $p$ or $q$ must be non-increasing. 
Thus the two tangent directions at $o$ (one from $o$ to $p$ and the other from $o$ to $q$) must be outgoing tangent directions in  the local diagram at $o$ induced by $(\mathcal{D},\mathcal{H})$ . In addition, these two tangent directions must be locally equivalent if we want  $(\mathcal{D},\mathcal{H})$ to be smoothable since $o$ must map to the root of the corresponding bifurcation tree and there is only one forward tangent direction from the root (see Figure~\ref{F:ExMetCom}(c) (d) and (e)). The three possible cases on $L_1$ are shown in Figure~\ref{F:ExMetCom}(b) and the restrictions are $x_1+y_1=x'_1+y'_1=1/2$, $x_1,x'_1\geqslant 0$ and $y_1,y'_1>0$. 

Let $\int_{L_i}\omega$ be the integration of $\omega$ along $L_i$ from $p$ to $q$. Then $(\mathcal{D},\mathcal{H})$ is solvable if and only if $\int_{L_1}\omega=\int_{L_2}\omega=\int_{L_3}\omega=\int_{L_4}\omega$. Depending on the cases, $\int_{L_1}\omega=0$ or $1/2-2x_1-y_1$ or $2x'+y'-1/2$. Therefore, $-1/2<\int_{L_1}\omega<1/2$. Similarly, $-1<\int_{L_2}\omega=2x_2+y_2-z_2<2$, $-1<\int_{L_3}\omega=x_3-y_3<1$ and 
$-2<\int_{L_4}\omega=-2x_4-y_4+z_4<1$. Therefore,  by adjusting the values of $x_2,y_2,z_2,x_3,y_3,x_4,y_4,z_4$, for the three cases of the global diagram restricted to $L_1$ in Figure~\ref{F:ExMetCom}(b), we can always find a solvable global diagram and the corresponding $(\mathcal{D},\mathcal{H})$. Moreover, the projection from $\Gamma$ to a bifurcation tree corresponding to each of the three solvable cases are sketched in  Figure~\ref{F:ExMetCom}(c), (d) and (e) respectively. 

For the case in Figure~\ref{F:ExMetCom}(c), $p$ and $q$ maps to the same point which has three forward tangent directions in the bifurcation tree. Therefore, to determine the smoothability of a solvable $(\mathcal{D},\mathcal{H})$ whose projection to its bifurcation tree is as in Figure~\ref{F:ExMetCom}(c), the intrinsic global compatibility conditions are trivially satisfied and only need to be tested for $H_p$ and $H_q$. More precisely, suppose $H_p$ has a basis $\{1,f_p\} $, $H_q$ has a basis $\{1,f_q\}$, $u_2$, $u_3$ and $u_4$ are the marked points on $C_p$ which are the reductions of the tangent directions at $p$ corresponding to the edges $L_2$, $L_3$ and $L_4$ respectively, and $v_2$, $v_3$ and $v_4$ are the marked points on $C_q$ which are the reduction of the tangent directions at $q$ corresponding to the edges $L_2$, $L_3$ and $L_4$ respectively. Then by Algorithm~\ref{A:IGC}, $(\mathcal{D},\mathcal{H})$  is smoothable if and only if the linear equations $\alpha_p+f_p(u_i)\beta_p=\alpha_q+f_q(v_i)\beta_q$   have a solution for the unknowns $\alpha_p,\beta_p,\alpha_q,\beta_q$ such that $\beta_p\neq 0$  and $\beta_q\neq 0$. 

To determine the smoothability of a solvable $(\mathcal{D},\mathcal{H})$ whose projection to its bifurcation tree is as in Figure~\ref{F:ExMetCom}(d) (respectively, Figure~\ref{F:ExMetCom}(e)), one only need to test the intrinsic global compatibility conditions at $p$ (respectively, at $q$) which reduces to saying that   $(\mathcal{D},\mathcal{H})$ is smoothable if and only if the three forward tangent directions at $p$ (respectively, at $q$)  are locally equivalent. 

Finally, let us sum up the smoothability test for  $(\mathcal{D}_A,\mathcal{H}_A)$ based on the above discussion on all cases of possible saturations of $(\mathcal{D}_A,\mathcal{H}_A)$ as follows: $(\mathcal{D}_A,\mathcal{H}_A)$ is smoothable if and only if either (1) at one of $p$ and $q$, the three forward tangent directions  are locally equivalent, or (2) the linear equations $\alpha_p+f_p(u_i)\beta_p=\alpha_q+f_q(v_i)\beta_q$   have a solution for the unknowns $\alpha_p,\beta_p,\alpha_q,\beta_q$ such that $\beta_p\neq 0$  and $\beta_q\neq 0$. \qed
\end{example}

\section*{Acknowledgments}
We are very thankful to Matthew Baker for his guidance and support during the course of
this work. We thank Janko Boehm, Christian Haase, Eric Katz,  Yoav Len,  Hannah Markwig, Dhruv Ranganathan and Frank-Olaf Schreyer for helpful discussions.  Thanks to Bernd Sturmfels for his encouragement and support.

\bibliographystyle{alpha}
\bibliography{citation}

\appendix
\section{Berkovich Skeleta and Saturated Metrized Complexes}\label{S:Berkovich}
\subsection{Saturated Metrized Complex associated to a Berkovich Skeleton}\label{subS:SkelRefMet}

 We begin by briefly recalling the concept of skeleton of the Berkovich analytic curve.  A semistable vertex set $V$ of $X^{\rm an}$ is a finite set of type-II points of $X^{\rm an}$ such that the complement of $V$ in $X^{\rm an}$ is a disjoint union of a finite number of open annuli and an infinite number of open balls.  Let $\Sigma(X^{\rm an},V)$ be the skeleton of $X^{\rm an}$ with respect to a semistable vertex set $V$.

In order to associate a saturated metrized complex $\mathfrak{C}(\Sigma)$ to $\Sigma(X^{\rm an},V)$, we must associate the following data to it: a metric graph $\Gamma$, a smooth algebraic curve $C_p$ for each point $p \in \Gamma$ and for each $C_p$, we must specify a set $A_p$ of marked points that are in bijection with the set of tangent directions at $p$. The metric graph $\Gamma$ underlying the saturated metrized complex is defined as being isometric to $\Sigma(X^{\rm an},V)$.  We associate the algebraic curve $C_p$ to each point $p \in \Gamma$ as follows: since the value group of $\mathbb{K}$ is $\mathbb{R}$, every point in $\Sigma(X^{\rm an},V)$ is a type-II point \cite{Berkovich12}. Hence, the double residue field has transcendence degree one over $\kappa$ and is isomorphic to the function field of a smooth curve over $\kappa$. This smooth curve is well defined up to isomorphism and we associate this curve $C_p$ to the point $p \in \Gamma$. We define marked points associated to the algebraic curve $C_p$ as follows: let $x$ be the type-II point corresponding to $p$,  the set of tangent directions at any type-II point in $X^{\rm an}$ has a canonical bijection with the set of discrete valuations of the double residue field at that point  \cite[Chapter 1]{Berkovich12}.  The set of discrete valuations of the double residue field is in turn in bijection with the set of closed points of $C_p$ \cite[Chapter 1]{Berkovich12}.  For each tangent direction $t \in \Tan_\Gamma(p)$, we define its marked point  as the point in $C_p$ associated to the corresponding tangent direction in the skeleton $\Sigma(X^{\rm an},V)$. Note that the marked point associated to each tangent direction is distinct.

\begin{lemma}
For any skeleton $\Sigma(X^{\rm an},V)$ of $X^{\rm an}$,  the data $\mathfrak{C}(\Sigma)$ defines a saturated metrized complex. In particular, for all but a finite number of points in $\Gamma$, the curve $C_p$ is a projective line over $\kappa$.
\end{lemma}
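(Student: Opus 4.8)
The plan is to verify the data $\mathfrak{C}(\Sigma)$ against each clause of Definition~\ref{D:MetComp}, dispatching the structural clauses quickly and then isolating the one substantive point, namely that the associated curve $C_p$ is a projective line for all but finitely many $p$.

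First I would handle the structural clauses, all of which are essentially recorded in the construction preceding the statement. The underlying metric graph $\Gamma$ is declared isometric to $\Sigma(X^{\rm an},V)$, which is a finite metric graph, so the first clause is immediate. Since the value group of $\mathbb{K}$ is $\mathbb{R}$, every point of $\Sigma(X^{\rm an},V)$ is of type~II \cite{Berkovich12}; hence at each $p\in\Gamma$ the double residue field of the corresponding type-II point $x$ has transcendence degree one over $\kappa$ and determines, uniquely up to isomorphism, a smooth complete irreducible curve $C_p/\kappa$. The reduction map $\red_p\colon \Tan_\Gamma(p)\to C_p$ is the composite of the canonical bijection between tangent directions at $x$ in $X^{\rm an}$ and the discrete valuations of the double residue field with the bijection between those valuations and the closed points of $C_p$ \cite[Chapter~1]{Berkovich12}. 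As $\Tan_\Gamma(p)$ is a subset of the tangent directions at $x$ in $X^{\rm an}$, this composite restricts to an injection, and distinct tangent directions yield distinct marked points. Thus the first three clauses hold.

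The remaining assertion is that $C_p\cong\mathbb{P}^1_\kappa$ (equivalently $g(C_p)=0$, since $\kappa$ is algebraically closed) for all but finitely many $p$. Here I would invoke the defining structure of a semistable vertex set: $X^{\rm an}\setminus V$ is a disjoint union of finitely many open annuli together with an infinite number of open balls. Each open ball is contractible onto its single boundary point of $V$ and so contributes nothing to the skeleton; hence $\Sigma(X^{\rm an},V)$ is the union of the finite set $V$ with the skeleta of the finitely many annuli. Consequently every $p\in\Gamma$ with $p\notin V$ is an interior point of the skeleton of some annulus, and every branch or endpoint of $\Sigma$ lies in $V$. For a point $p$ interior to an annulus skeleton, the double residue field of the corresponding type-II point is the rational function field $\kappa(t)$, whose smooth complete model is $\mathbb{P}^1_\kappa$, so $C_p\cong\mathbb{P}^1_\kappa$. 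As $V$ is finite, $C_p$ is a projective line outside the finite set $V$, which establishes the last clause and finishes the proof that $\mathfrak{C}(\Sigma)$ is a saturated metrized complex.

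I expect the one genuinely technical step to be the identification, for $p$ interior to an annulus skeleton, of the double residue field with $\kappa(t)$ and hence of $C_p$ with $\mathbb{P}^1_\kappa$; this is a local computation on the Gauss points of the annulus and is where the structure theory of \cite{Berkovich12,BPR11} is really used. As an independent cross-check I would note the genus formula $g(X)=g(\Gamma)+\sum_{p\in\Gamma}g(C_p)$ for skeleta of semistable models: since $g(X)$ is finite and each $g(C_p)$ is a nonnegative integer, only finitely many $p$ can satisfy $g(C_p)>0$, and for all remaining $p$ one has $C_p\cong\mathbb{P}^1_\kappa$, consistent with the conclusion above.
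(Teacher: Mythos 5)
Your proof is correct, but it reaches the key finiteness claim by a genuinely different route than the paper. The paper's entire proof is a one-liner: it invokes the genus formula (Formula (5.45.1) of \cite{BPR11}), $g(X)=g(\Gamma)+\sum_{p\in\Gamma}g(C_p)$, so that finiteness of $g(X)$ forces $g(C_p)=0$ --- hence $C_p\cong\mathbb{P}^1_\kappa$, as $\kappa$ is algebraically closed --- for all but finitely many $p$. That is exactly the argument you relegate to a ``cross-check'' at the end. Your main argument instead exploits the defining structure of a semistable vertex set: since $X^{\rm an}\setminus V$ is finitely many open annuli plus open balls, every point of $\Sigma(X^{\rm an},V)\setminus V$ is interior to an annulus skeleton, where the double residue field is $\kappa(t)$ and so $C_p\cong\mathbb{P}^1_\kappa$. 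This buys a sharper conclusion --- the exceptional set is not merely finite but contained in $V$ --- at the cost of the local computation identifying the residue curve over an annulus skeleton, which requires knowing that completed (hence double) residue fields are insensitive to restriction to an analytic subdomain, so the computation can be done in a standard annulus inside $\mathbb{A}^{1,\rm an}$. The paper's genus-formula argument is shorter and bypasses that local analysis entirely, but yields only the cruder finiteness statement; both suffice for Definition~\ref{D:MetComp}. Your treatment of the structural clauses (metric graph, curves from double residue fields at type-II points, injectivity of $\red_p$ via the bijection with discrete valuations) matches the construction the paper gives just before the lemma, which the paper's proof takes as already established.
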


\begin{proof}
To show that $\mathfrak{C}(\Sigma)$ is a saturated metrized complex, we must verify that the curve $C_p$ has genus zero for all but finitely many points of $\Gamma$. Using Formula (5.45.1) of \cite{BPR11}, we have $g(X)=g(\Gamma)+\sum_{p \in \Gamma} g(C_p)$. Hence, $g(C_p)=0$ for all but finitely many $p$.
\end{proof}

\begin{remark}
The semistable vertex sets of $X$ are in one to one correspondence with the semistable models of $X$, we refer to \cite[Sections 5.14 and 5.29]{BPR11} for a detailed treatment of the topic.  Via this correspondence, we can associate  a saturated metrized complex to a semistable model of $X$. This saturated metrized complex is the ``limit" of the metrized complexes associated to semistable models obtained by successively blowing up the special fiber at its nodes.
\end{remark}

We define a morphism from $\tau_{*}: {\rm Div}(X) \rightarrow {\rm Div}(\mathfrak{C}(\Sigma))$ called the \emph{specialization map} and a map that takes a rational function on $X$ to a rational function on $\mathfrak{C}(\Sigma)$ called the \emph{reduction map}. We follow the analogous construction for metrized complexes by Amini and Baker \cite[Section 4]{AB12}.

\subsection{Specialization Map}
Suppose that $r_{V}: X^{\rm an} \rightarrow \Sigma(X^{\rm an},V)$ is the retraction map and let $\{r_{V,s}\}_{s \in [0,1]}$ be the family of retraction maps associated with the deformation retraction from $X^{\rm an}$ to $\Sigma(X^{\rm an},V)$. In particular, $r_{V,1}=r_{V}$.  For a closed point $z \in X$, the point $r_{V}(z)$ has a unique  tangent direction $t^{\rm an}_{V}(z)$ in $X^{\rm an}$ that lies in the image of the retraction map $r_{V,s}$  where $s$ is in an open neighborhood of $1$.  The map $\tau_*$ takes $z$ to the point $(r_{V}(z), \red_p(t^{\rm an}_{V}(z))$ on $\mathfrak{C}(\Sigma)$ where $\red_p(t^{\rm an}_{V}(z))$ is the marked point in $C_p$ corresponding to the tangent direction $t^{\rm an}_{V}(z)$. We extend this map linearly to define a \emph{specialization} map from ${\rm Div}(X)$ to ${\rm Div}(\mathfrak{C}(\Sigma))$.

\begin{lemma}\label{L:specialization}
The specialization map $\tau_{*}$ is a homomorphism from $\Div(X)$ to $\Div(\mathfrak{C}(\Sigma))$ that takes effective divisors on ${\rm Div}(X)$ to effective divisors on ${\rm Div}(\mathfrak{C}(\Sigma))$. The image of $\tau_{*}$ is the set of all divisors $(D_{\Gamma}, \{D_p\}_{p \in \Gamma}) \in {\rm Div}(\mathfrak{C}(\Sigma))$ such that the support of $D_p$ is contained in the set $C_p \setminus A_p$ for all $p \in \Gamma$.
\end{lemma}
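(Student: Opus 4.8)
The plan is to treat the three assertions in turn, noting that the first two are essentially formal while the real content lies in the description of the image. For the homomorphism property I would simply observe that $\tau_*$ is defined on closed points and extended $\mathbb{Z}$-linearly, so it is a group homomorphism by construction; there is nothing to check beyond recording that for a closed point $z$ the datum $\tau_*(z) = (r_V(z), \red_{r_V(z)}(t^{\rm an}_V(z)))$ is a legitimate degree-one divisor on $\mathfrak{C}(\Sigma)$, i.e. that its tropical part $(r_V(z))$ and its $C_p$-part agree in degree. Effectivity is then immediate: each $\tau_*(z)$ is an effective divisor of degree one (a single point with coefficient $+1$ in both the tropical and the $C_p$ parts), so any non-negative integer combination, i.e. any effective divisor on $X$, maps to an effective divisor on $\mathfrak{C}(\Sigma)$.

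For the inclusion ``image $\subseteq$ \dots'', the key observation is that the extended reduction map at a type-II point $x$ of $X^{\rm an}$ gives a bijection between the set of all tangent directions at $x$ and the closed points of the residue curve $C_p$, under which the marked set $A_p$ is exactly the image of the finitely many tangent directions lying along the skeleton $\Gamma$. Thus it suffices to show that the direction $t^{\rm an}_V(z)$ is never a skeleton direction. I would argue this from the fact that, the value group being $\mathbb{R}$, every point of $\Sigma(X^{\rm an},V)$ is of type II, whereas a closed point $z$ of $X$ is of type I; hence $z \notin \Sigma$, and the deformation-retraction path $s\mapsto r_{V,s}(z)$ leaves the skeleton for $s<1$, so the limiting tangent direction $t^{\rm an}_V(z)$ at $r_V(z)$ points off the skeleton. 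Consequently $\red_{r_V(z)}(t^{\rm an}_V(z)) \in C_p\setminus A_p$, and by linearity every divisor in the image has all its $C_p$-parts supported away from $A_p$.

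The substantive part is the reverse inclusion, for which I would argue one point at a time and then assemble by linearity. Given a target divisor $(D_\Gamma,\{D_p\}_{p\in\Gamma})$ whose $C_p$-parts are supported in $C_p\setminus A_p$, fix a point $p$ and a point $u\in\supp(D_p)$, and let $t$ be the unique off-skeleton tangent direction at the type-II point $x$ corresponding to $p$ with $\red_p(t)=u$. The plan is to produce a closed point $z_{p,u}$ of $X$ with $r_V(z_{p,u})=p$ and $t^{\rm an}_V(z_{p,u})=t$; then $D:=\sum_{p}\sum_{u}D_p(u)\,z_{p,u}$ satisfies $\tau_*(D)=(D_\Gamma,\{D_p\})$, the tropical part matching precisely because $D_\Gamma(p)=\deg(D_p)$. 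Since each $D_p$ is a finite divisor and only finitely many $p$ occur, $D$ is an honest divisor on $X$, and the same formula handles negative coefficients by linearity of $\tau_*$.

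The hard part will be establishing the existence of such a $z_{p,u}$, i.e. that every off-skeleton direction at $x$ is populated by a classical point retracting exactly to $x$ along that direction. For this I would invoke the local structure theory of $X^{\rm an}$ at type-II points (as in \cite{Berkovich12} and \cite{BPR11}): the tangent directions at $x$ not lying along $\Sigma$ are in bijection with $C_p\setminus A_p$, and each such direction is the direction from $x$ into a region of $X^{\rm an}\setminus\Sigma$ retracting to $x$ --- an open ball when $x$ is a vertex of $V$, and the off-spine locus over the spine point $x$ of an annulus when $x$ lies in the interior of an edge. Because $\mathbb{K}$ is algebraically closed with algebraically closed residue field $\kappa$, such a region contains type-I points realizing every residue class, and in particular a classical point $z_{p,u}$ whose retraction direction reduces to the prescribed $u$. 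I expect this realization statement, rather than any of the formal manipulations, to be the crux, and I would lean on the parallel construction for metrized complexes in \cite[Section 4]{AB12} to keep the argument short.
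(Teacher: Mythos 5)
Your proposal is correct, but there is nothing in the paper to compare it against line by line: the paper states Lemma~\ref{L:specialization} without proof, with the surrounding text only remarking that the construction ``follows the analogous construction for metrized complexes by Amini and Baker [AB12, Section 4]''. So where the paper discharges the lemma by appeal to that analogy, you supply a self-contained argument, and it is sound. The formal parts (homomorphism by $\mathbb{Z}$-linear extension, effectivity, and the fact that the image condition is stable under linear combinations) are handled correctly; the forward inclusion correctly rests on the observation that a closed point $z$ is a type-I point, hence lies off the skeleton $\Sigma$ (whose points are all of type II since the value group is $\mathbb{R}$), so the retraction direction $t^{\rm an}_V(z)$ is an off-skeleton direction and its associated point of $C_p$ avoids $A_p$, which by the paper's construction is exactly the image of the skeleton directions under $\red_p$. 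You also correctly locate the crux in the reverse inclusion: given $u \in C_p \setminus A_p$, the corresponding tangent direction at the type-II point determines a connected component of $X^{\rm an} \setminus \Sigma$ (an open ball), and density of classical points in $X^{\rm an}$ over an algebraically closed complete field produces a point $z_{p,u}$ retracting to $p$ along precisely that direction; assembling $D = \sum_p \sum_u D_p(u)\, z_{p,u}$ then matches the tropical part because $D_\Gamma(p) = \deg(D_p)$ and matches each $C_p$-part by construction. One could quibble that your description of the off-skeleton locus over an interior edge point (``the off-spine locus of an annulus'') should be stated as a disjoint union of open balls, one per off-skeleton direction, but this is exactly the standard structure theory you cite and does not affect the argument.
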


\subsection{Reduction of Rational Functions}

Consider a point $p \in \Gamma$ and let $x$ be the corresponding type (2) point in $\Sigma(X^{\rm an},V)$. By $f(x)$, we denote the multiplicative semi-norm defined by $x$ evaluated at $f$ and let $c \in \mathbb{K}^{*}$ such that $|c|=|f(x)|$.  Let ${\tilde{H}}(x)$ be the double residue field of $x$ and note that the field ${\tilde{H}}(x)$ is isomorphic to the function field of $C_p$.  Suppose that $f$ maps to $f_x$ in ${\tilde{H}}(x)$. The reduction map takes $f$ to $(c^{-1}f)_x$, we denote $(c^{-1}f)_x$ by $\tilde{f}_x$ and the corresponding rational function in $C_p$ by $\tilde{f}_p$. Note that $f_x$ is only defined up to multiplication by $\kappa^{*}$ and hence, its divisor is well-defined.

\begin{lemma}\label{dimpre_lem}\cite[Lemma 4.3]{AB12}  The dimension of any finite dimensional subspace of $\kappa(X)$ is preserved by reduction.\end{lemma}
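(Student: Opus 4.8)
The statement is local in $p$, so the plan is to fix a point $p\in\Gamma$, let $x$ be the corresponding type-II point of $\Sigma(X^{\rm an},V)$, and show that the reduction at $x$ carries a finite-dimensional $\mathbb{K}$-subspace $H\subseteq\mathbb{K}(X)$ onto a $\kappa$-subspace $H_p\subseteq\kappa(C_p)$ of the same dimension. The essential point to keep in mind throughout is that the reduction $f\mapsto\tilde f_x$ is \emph{not} $\mathbb{K}$-linear: each $f$ is individually rescaled by some $c$ with $|c|=|f(x)|$ before being passed to the residue field. Hence dimension preservation cannot be read off directly and must be extracted from the geometry of the multiplicative seminorm $|\cdot|_x$ restricted to $H$.

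First I would record that $|\cdot|_x$ restricts to a genuine nonarchimedean norm on $H$, since on the field $\mathbb{K}(X)$ multiplicativity forces $|f(x)|>0$ for every nonzero $f$. I would then pass to the reduction realized intrinsically as $\tilde H:=H^\circ/H^{\circ\circ}$, where $H^\circ=\{f:|f(x)|\le 1\}$ and $H^{\circ\circ}=\{f:|f(x)|<1\}$; the $R$-module quotient $\tilde H$ is a $\kappa$-vector space, and the place $\mathbb{K}(X)\to\kappa(C_p)$ embeds $\tilde H$ into $\kappa(C_p)$ with image exactly $H_p$, so it suffices to prove $\dim_\kappa\tilde H=\dim_{\mathbb{K}}H$. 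The inequality $\dim_\kappa\tilde H\le\dim_{\mathbb{K}}H$ is formal: lifts of $\kappa$-independent classes are $\mathbb{K}$-independent, because a nontrivial $\mathbb{K}$-relation may be normalized to have coefficients in $R$ with at least one unit, and reducing it $R$-linearly produces a nontrivial $\kappa$-relation among the classes.

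The reverse inequality is the substance, and here the plan is to produce an orthogonal basis $g_1,\dots,g_n$ of $(H,|\cdot|_x)$, normalized so that $|g_i(x)|=1$. Orthogonality means $|\sum_i a_i g_i|_x=\max_i|a_i|$ for all $a_i\in\mathbb{K}$, and it then follows at once that the classes $\tilde g_{1,x},\dots,\tilde g_{n,x}$ are $\kappa$-independent: any nontrivial dependence lifts to $a_i\in R$, not all in the maximal ideal, forcing $|\sum_i a_ig_i|_x<\max_i|a_i|=1$ and contradicting orthogonality. This yields $\dim_\kappa\tilde H\ge n$ and completes the count. I expect the main obstacle to be precisely the existence of such an orthogonal basis; this is where the hypotheses that $\mathbb{K}$ is complete and has value group all of $\mathbb{R}$ (so that $|\cdot|_x$ is $|\mathbb{K}^*|$-valued) are indispensable, guaranteeing that the inductive approximation defects defining the orthogonalization are attained rather than merely approached. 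Modulo this standard fact about finite-dimensional normed spaces---which is exactly the content invoked in \cite[Lemma 4.3]{AB12}---everything else is a formal consequence of the ultrametric inequality.
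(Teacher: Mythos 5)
You should first note that the paper supplies no argument for this statement at all: it is quoted verbatim from \cite[Lemma 4.3]{AB12}, so the only meaningful comparison is with the proof there. Your formal reductions are correct as far as they go: the identification of the reduction of $H$ with $H^{\circ}/H^{\circ\circ}$, the inequality $\dim_\kappa \tilde H\le \dim_{\mathbb{K}}H$, and the implication that an orthogonal basis $g_1,\dots,g_n$ with $|g_i(x)|=1$ has $\kappa$-independent reductions are all sound. So, as you say, everything hinges on producing an orthogonal basis.

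That is exactly where there is a genuine gap: the ``standard fact'' you invoke is false at the stated level of generality. Completeness of $\mathbb{K}$ plus value group $\mathbb{R}$ does not make the orthogonalization defects attained; attainment in general requires spherical completeness, which is \emph{not} among the paper's hypotheses (the generalized power series field $\kappa((t^{\mathbb{R}}))$ is offered only as an example of $\mathbb{K}$, not as an assumption, and there exist complete algebraically closed fields with value group $\mathbb{R}$ that are not spherically complete). For such a $\mathbb{K}$, the analytification of $X=\mathbb{P}^1$ contains type-IV points $x$, i.e.\ limits of nested balls $B(a_k,d_k)$ with $d_k\downarrow d>0$ and empty intersection. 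At such a point, $|\cdot|_x$ is still a multiplicative, $|\mathbb{K}^*|$-valued norm on $\mathbb{K}(T)$ and $\mathbb{K}$ is still complete---every hypothesis your argument uses holds---yet the two-dimensional space spanned by $1$ and $T$ admits no orthogonal basis, since a short ultrametric computation shows an orthogonal basis would force $\operatorname{dist}(T,\mathbb{K})=\inf_k d_k$ to be attained, and it is not. Indeed the conclusion of the lemma itself fails there: the double residue field of a type-IV point is $\kappa$, so the reduction of any subspace is at most one-dimensional. Consequently, any correct proof must use the one hypothesis your argument never touches: $x$ lies on the skeleton and is therefore a type-II point, i.e.\ $\tilde{H}(x)=\kappa(C_p)$ has transcendence degree one over $\kappa$. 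That is the real content of \cite[Lemma 4.3]{AB12}: attainment (equivalently, orthogonalizability) does hold at type-II points, but it is a theorem about such points---provable, for instance, via the Grauert--Remmert/BGR stability theory of Gauss valuations, or by observing that a nonconstant $g\in\mathbb{K}(X)$ defines a finite morphism $X\to\mathbb{P}^1$, that finite morphisms preserve the type of a point, and that at a type-II point of the Berkovich projective line the distance from the coordinate to $\mathbb{K}$ is visibly attained at the center of the corresponding ball---and not a formal property of finite-dimensional normed spaces over $\mathbb{K}$.
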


Given a rational function $f$ on $X$, we let $f_{\Gamma}$ be a rational function on $\Gamma$ given by the restriction to the skeleton $\Gamma=\Sigma(X^{\rm an},V)$ of the function ${\rm log}|f|:X^{\rm an}\rightarrow \mathbb{R}\bigcup\{\pm\infty\}$. Hence, given a rational function $f$ on $X$ we associate a rational function $\mathfrak{f}=(f_{\Gamma}, \{\tilde{f}_p\}_{p \in \Gamma})$ on $\mathfrak{C}(\Sigma)$.
The following  version of the Poincar\'e-Lelong Formula for saturated metrized complexes establishes a compatibility between the specialization and the reduction maps.

\begin{theorem}{\rm ({\bf Poincar\'e-Lelong Formula})} \label{T:PoicareLelong}
For any non-zero rational function $f$ on $X$, suppose that $\mathfrak{f}$ is the reduction of $f$ on $\mathfrak{C}(\Sigma)$, we have $\tau_{*}({\rm div}(f))={\rm div}(\mathfrak{f})$.  Hence, the map $\tau_*$ takes principal divisors in $X$ to principal divisors in $\mathfrak{C}(\Sigma)$.
 \end{theorem}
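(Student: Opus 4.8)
The plan is to deduce the saturated version from two established ingredients: the non-archimedean slope formula (tropical Poincar\'e--Lelong) for the tropical part, and the Poincar\'e--Lelong formula for metrized complexes of Amini and Baker for each curve-part. Since both $\tau_*(\divisor(f))$ and $\divisor(\mathfrak{f})$ are divisors on $\mathfrak{C}(\Sigma)$, and by Remark~\ref{R:DivFreeAb} a divisor is determined by its tropical part together with its $C_p$-part at every $p\in\Gamma$, it suffices to verify the two equalities $\tau_*(\divisor(f))_\Gamma=\divisor(f_\Gamma)$ and, for each $p\in\Gamma$, $\tau_*(\divisor(f))_p=\divisor(\tilde{f}_p)+\divisor_p(f_\Gamma)$ separately. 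I would first record that the right-hand side is genuinely a divisor: $\divisor_p(f_\Gamma)$ is nonzero only at the finitely many points where $f_\Gamma=\log|f|$ restricted to $\Gamma$ bends, and $\divisor(\tilde{f}_p)$ is nonzero only at points $p$ in the retraction of $\supp(\divisor(f))$, consistent with Lemma~\ref{L:specialization}.

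For the tropical part, I would invoke the non-archimedean Poincar\'e--Lelong (slope) formula for $\log|f|$ on $X^{\rm an}$ (cf.\ \cite{BPR11}): the function $f_\Gamma$ obtained by restricting $\log|f|$ to the skeleton $\Gamma=\Sigma(X^{\rm an},V)$ is piecewise linear with integer slopes, and the retraction to $\Gamma$ of $\divisor(f)$ equals $\divisor(f_\Gamma)$. Since the tropical part of $\tau_*(\divisor(f))$ is by construction this retraction, the tropical parts agree. For the $C_p$-parts, the key observation is that both $\tilde{f}_p$ and the local slopes $\slope_t(f_\Gamma)$ are intrinsic to the type-II point $x$ corresponding to $p$: the function $\tilde{f}_p$ is the reduction of $f$ in the double residue field $\tilde{H}(x)\cong\kappa(C_p)$, and the slopes are local invariants of $\log|f|$ at $x$; neither depends on the choice of semistable vertex set. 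Fixing $p\in\Gamma$, I would choose a semistable vertex set $V$ with $p\in V$ (possible since any type-II point extends to a semistable vertex set). By Remark~\ref{R:restriction} the restriction $\mathfrak{C}_0$ of $\mathfrak{C}(\Sigma)$ to $V$ is a metrized complex having $p$ as a vertex, and the specialization and reduction maps of $\mathfrak{C}(\Sigma)$ restrict at the points of $V$ to those of $\mathfrak{C}_0$. The Poincar\'e--Lelong formula for metrized complexes \cite[Section~4]{AB12} then gives exactly $\tau_*(\divisor(f))_p=\divisor(\tilde{f}_p)+\divisor_p(f_\Gamma)$ at the vertex $p$. As $p$ was arbitrary, this holds for every $p\in\Gamma$, and combined with the tropical identity it yields $\tau_*(\divisor(f))=\divisor(\mathfrak{f})$; the final assertion that $\tau_*$ carries principal divisors to principal divisors is then immediate.

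The main obstacle I anticipate is the consistency of this pointwise reduction: one must check that enlarging $V$ (so that a non-vertex $p$ becomes a vertex) does not change $\tilde{f}_p$, $\divisor_p(f_\Gamma)$, or the $C_p$-component of $\tau_*(\divisor(f))$, and that at the newly created edge-interior vertices the metrized complex formula reproduces the expected annulus computation. Concretely, at an edge-interior point $p$ through which no zero or pole of $f$ retracts, $C_p\cong\mathbb{P}^1$ carries exactly the two marked points attached to the two tangent directions $t_1,t_2$, the reduction $\tilde{f}_p$ is, up to scaling, a monomial $\bar{c}\,T^{m}$ with $m=\slope_{t_1}(f_\Gamma)=-\slope_{t_2}(f_\Gamma)$, and $\divisor(\tilde{f}_p)+\divisor_p(f_\Gamma)=0=\tau_*(\divisor(f))_p$ by the balancing of slopes along the annulus. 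Verifying that this balancing is precisely the content of the Amini--Baker formula on the refined model is where the only real work lies; everything else is bookkeeping.
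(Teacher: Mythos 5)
Your proposal is correct and follows essentially the same route as the paper: both arguments reduce the statement to the Baker--Payne--Rabinoff slope formula (which handles the tropical part and the edge-interior points, where the reduction is a monomial and the contributions cancel) together with the Poincar\'e--Lelong formula of Amini--Baker for metrized complexes (which the paper applies once to a single finite model $S$ given by $\supp(\divisor(f_\Gamma))$ and the higher-valence points, whereas you apply it pointwise over refined vertex sets --- an organizational rather than mathematical difference). One caveat: your opening finiteness claims are misstated, since $\divisor_p(f_\Gamma)$ and $\divisor(\tilde{f}_p)$ are each nonzero at \emph{every} point of an edge along which $f_\Gamma$ has nonzero slope, not merely at bend points or at retractions of $\supp(\divisor(f))$; this is harmless, however, because your own annulus computation in the final paragraph shows that their \emph{sum} vanishes at all such points, which is all the argument actually requires.
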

\begin{proof} For a point $x$ in the skeleton $\Sigma(X^{\rm an},V)$, we  partition  the set of $\Tan_x$ of tangent directions at $x$ into the tangent directions in $\Sigma(X^{\rm an},V)$  and its complement and denote them by $\Tan_{i,x}$ and $\Tan_{r,x}$ respectively. By parts (2) and (5) of the slope formula \cite[Theorem 5.69]{BPR11}, we note that ${\rm ord}_t(\tilde{f}_x)=0$ for all but points $x \in \Sigma(X^{\rm an},V)$ and $t \in \Tan_{r,x}$ except those that lie in the image (under the retraction map) of the support of ${\rm div}(f)$. By part (2) of the slope formula, ${\rm sl}_{t}(f_{\Gamma})={\rm ord}_t(f_x)$.  Hence,  ${\rm div}(\mathfrak{f})$ has support at a finite number of points and its support coincides with the support of $\tau_*({\rm div}(f))$.  Hence, ${\rm div}(\mathfrak{f})$ is a divisor (not just a pseudo-divisor).  Let $S$ be the union of the support of ${\rm div}(f_{\Gamma})$ and the points of $\Gamma$ with valence at least three. Thus, $\tau_{*}({\rm div}(f))$ and ${\rm div}(\mathfrak{f})$ coincide on points in $\Gamma \setminus S$.  Consider the metrized complex $\mathfrak{C}(\Sigma)|S$ obtained by restricting $\mathfrak{C}$ to $S$. More precisely, $\mathfrak{C}(\Sigma)|S$ is a metrized complex whose metric graph is $\Gamma$ with the model given by the set $S$ and the algebraic curves $C_v$ for every point in $v \in S$ and the marked points exactly as in $\mathfrak{C}(\Sigma)$. By the Poincare-Lelong formula shown in Amini and Baker \cite{AB12}, we have $\tau_{*}({\rm div}(f))$ and ${\rm div}(\mathfrak{f})$ coincide on $\mathfrak{C}(\Sigma)|S$. \end{proof}

\section{$\LDHone$, $\LDHtwo$, $\LDHthree$, and $\LDHfour$ as Partially Ordered Sets}\label{S:TreeSpace}
In this section, we show that a partial order can be naturally imposed to the spaces $\LDHone$, $\LDHtwo$, $\LDHthree$, and $\LDHfour$ of partition trees.

Let $\rho$ be a solution to $(\mathcal{D},\mathcal{H})$ and  $\mathcal{B}$ be the corresponding partition tree. Recall that for a partition tree $(\mathcal{T},\pi_\mathcal{T})$  in $\LDHone=\Lambda_\rho$, we have a  canonical projection $\Theta^{\mathcal{B}}_{\mathcal{T}}$ from $\mathcal{B}$ to $\mathcal{T}$ (Proposition~\ref{P:CanoSurj}) which induces a partition $P_c$ of $(d^\rho_\mathcal{B})^{-1}(c)$ for any $c\in\imag\rho$ (Remark~\ref{R:partition}).

For two partition trees $(\mathcal{T}_1,\pi_{\mathcal{T}_1})$ and $(\mathcal{T}_2,\pi_{\mathcal{T}_2})$, we say $(\mathcal{T}_1,\pi_{\mathcal{T}_1})\leqslant (\mathcal{T}_2,\pi_{\mathcal{T}_2})$ or simply $\mathcal{T}_1\leqslant \mathcal{T}_2$ if the partition of $(d^\rho_\mathcal{B})^{-1}(c)$ induced by $\mathcal{T}_1$ is a refinement of the partition induced by $\mathcal{T}_1$ for each $c\in\imag\rho$.  Moreover, if $\mathcal{T}_1\leqslant \mathcal{T}_2$, there is a natural map $\Theta^{\mathcal{T}_1}_{\mathcal{T}_2}:\mathcal{T}_1\rightarrow\mathcal{T}_2$ with $x\mapsto y$ if  $(\Theta^{\mathcal{B}}_{\mathcal{T}})^{-1}(x)\subseteq(\Theta^{\mathcal{B}}_{\mathcal{T}})^{-1}(y)$. Clearly, in this sense, the coarsest partition tree $(\imag\rho,\rho)$ and the finest partition tree  $(\mathcal{B},\pi_\mathcal{B})$ are the maximum and minimum
 of $\Lambda_\rho$ respectively (recall that as a rooted metric tree, $\imag\rho=[\min_{p \in \Gamma}\rho(p),\max_{p \in \Gamma} \rho(p)]$ has its root at $\min_{p \in \Gamma}\rho(p)$).

The following lemma is a natural consequence of the definitions of partition trees and the maps between them.

\begin{lemma}
If $\mathcal{T}_1\leqslant\mathcal{T}_2\leqslant\mathcal{T}_3$ as partition trees, the following diagram commutes.
\[
\begin{tikzpicture}[scale=1.5,
back line/.style={solid},
cross line/.style={preaction={draw=white, -,line width=4pt}},
text height=1.6ex, text depth=0.5ex]
\node (Gamma) at (-2.5,0) {$\Gamma$};
\node (P) at (-1,0) {$\mathcal{T}_1$};
\node (Q) at (0,1.5) {$\mathcal{T}_2$};
\node (R) at (1,0) {$\mathcal{T}_3$};
\node (ImRho) at (2.5,0) {$\imag\rho$};
\path[->,font=\scriptsize,>=angle 90]
(Gamma) edge node[pos=0.7,above]{$\pi_{\mathcal{T}_1}$} (P)
(Gamma) edge node[above]{$\pi_{\mathcal{T}_2}$} (Q)
(Gamma) edge[bend right, back line] node[pos=0.4,above]{$\pi_{\mathcal{T}_3}$} (R)
(Gamma) edge[bend right=50] node[auto]{$\rho$} (ImRho)

(P) edge node[pos=0.3,left]{$\Theta^{\mathcal{T}_1}_{\mathcal{T}_2}$} (Q)
(P) edge node[auto]{$\Theta^{\mathcal{T}_1}_{\mathcal{T}_3}$} (R)
(P) edge[bend right, cross line] node[pos=0.6,above]{$d^\rho_{\mathcal{T}_1}$} (ImRho)
(Q) edge node[pos=0.7,right]{$\Theta^{\mathcal{T}_2}_{\mathcal{T}_3}$} (R)
(Q) edge node[auto]{$d^\rho_{\mathcal{T}_2}$} (ImRho)
(R) edge node[pos=0.3,auto]{$d^\rho_{\mathcal{T}_3}$} (ImRho);
\end{tikzpicture}
\]\qed
\end{lemma}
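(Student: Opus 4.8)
The plan is to verify the commutativity of the pentagon-shaped diagram by checking each of its constituent triangles and the outermost relations directly from the definitions, treating the maps $\Theta^{\mathcal{T}_i}_{\mathcal{T}_j}$ as the canonical maps induced by refinement of partitions. First I would recall that since $\mathcal{T}_1 \leqslant \mathcal{T}_2 \leqslant \mathcal{T}_3$, each refinement relation yields, via the same argument as in Proposition~\ref{P:CanoSurj}, a well-defined continuous surjection $\Theta^{\mathcal{T}_i}_{\mathcal{T}_j}$ sending a point $x \in \mathcal{T}_i$ to the unique $y \in \mathcal{T}_j$ with $(\Theta^{\mathcal{B}}_{\mathcal{T}_i})^{-1}(x) \subseteq (\Theta^{\mathcal{B}}_{\mathcal{T}_j})^{-1}(y)$. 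The key observation is that all of these maps are, by construction, compatible with the fibers of the common projection $\Theta^{\mathcal{B}}_{(-)}$ from the bifurcation tree $\mathcal{B}$, so everything factors through $\mathcal{B}$ in a coherent way.

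The steps I would carry out, in order, are as follows. \emph{Step 1:} Establish $\pi_{\mathcal{T}_j} = \Theta^{\mathcal{T}_i}_{\mathcal{T}_j} \circ \pi_{\mathcal{T}_i}$ for each pair $i < j$; this is the statement that the three projections from $\Gamma$ are related through the induced maps, and it follows because both sides send $p \in \Gamma$ to the $\mathcal{T}_j$-class determined by the $\mathcal{B}$-class $\pi_{\mathcal{B}}(p)$. \emph{Step 2:} Establish $d^\rho_{\mathcal{T}_i} = d^\rho_{\mathcal{T}_j} \circ \Theta^{\mathcal{T}_i}_{\mathcal{T}_j}$, i.e.\ the distance-from-root functions are preserved, which holds because refinement only merges points lying at the same $\rho$-value (recall each partition $P_c$ lives on a single level set $(d^\rho_\mathcal{B})^{-1}(c)$), so the level is unchanged by $\Theta^{\mathcal{T}_i}_{\mathcal{T}_j}$. \emph{Step 3:} Establish the cocycle (transitivity) relation $\Theta^{\mathcal{T}_1}_{\mathcal{T}_3} = \Theta^{\mathcal{T}_2}_{\mathcal{T}_3} \circ \Theta^{\mathcal{T}_1}_{\mathcal{T}_2}$; this is the genuinely two-dimensional part of the diagram, and it follows by tracing fibers: if $x \in \mathcal{T}_1$ has $(\Theta^{\mathcal{B}}_{\mathcal{T}_1})^{-1}(x) \subseteq (\Theta^{\mathcal{B}}_{\mathcal{T}_2})^{-1}(y)$ and that $y$ has $(\Theta^{\mathcal{B}}_{\mathcal{T}_2})^{-1}(y) \subseteq (\Theta^{\mathcal{B}}_{\mathcal{T}_3})^{-1}(z)$, then by transitivity of inclusion $(\Theta^{\mathcal{B}}_{\mathcal{T}_1})^{-1}(x) \subseteq (\Theta^{\mathcal{B}}_{\mathcal{T}_3})^{-1}(z)$, so both composites send $x$ to $z$.

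The main obstacle I anticipate is not any single computation but rather the bookkeeping needed to confirm that each $\Theta^{\mathcal{T}_i}_{\mathcal{T}_j}$ is genuinely well-defined as a map of \emph{metric trees} (not merely of underlying sets) and that the factorization $\Theta^{\mathcal{B}}_{\mathcal{T}_j} = \Theta^{\mathcal{T}_i}_{\mathcal{T}_j} \circ \Theta^{\mathcal{B}}_{\mathcal{T}_i}$ holds, since this latter identity is what glues Steps 1--3 together and reduces the whole diagram to relations already proved for the single map $\Theta^{\mathcal{B}}_{(-)}$ in Proposition~\ref{P:CanoSurj}. Once that factorization is in hand, every triangle in the pentagon is obtained by composing or restricting the commutative triangle of Proposition~\ref{P:CanoSurj}, and the result follows. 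I expect the argument to be essentially formal once the partition-refinement framework of Remark~\ref{R:partition} and Appendix~\ref{S:TreeSpace} is invoked, so I would keep the written proof short, citing Proposition~\ref{P:CanoSurj} for well-definedness and continuity and merely indicating the fiber-chasing for the transitivity relation.
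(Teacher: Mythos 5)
Your proof is correct and follows exactly the route the paper intends: the paper in fact omits any proof of this lemma, stating only that it is ``a natural consequence of the definitions of partition trees and the maps between them,'' and your three steps (factorization of the projections through $\Theta^{\mathcal{T}_i}_{\mathcal{T}_j}$, preservation of the distance-from-root functions, and the cocycle relation $\Theta^{\mathcal{T}_1}_{\mathcal{T}_3} = \Theta^{\mathcal{T}_2}_{\mathcal{T}_3} \circ \Theta^{\mathcal{T}_1}_{\mathcal{T}_2}$ by fiber-chasing in $\mathcal{B}$) are precisely the verification left to the reader. In particular, your identification of the factorization $\Theta^{\mathcal{B}}_{\mathcal{T}_j} = \Theta^{\mathcal{T}_i}_{\mathcal{T}_j} \circ \Theta^{\mathcal{B}}_{\mathcal{T}_i}$ as the key identity reducing everything to Proposition~\ref{P:CanoSurj} is the right observation, and no gap remains.
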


The following lemma says that $\LDHtwo$ is lower closed.
\begin{lemma} \label{L:LDH2_lowerclosed}
If $\mathcal{T}\in\LDHtwo$, then any element $\mathcal{T}'\in\LDHone$ with $\mathcal{T}'\leqslant\mathcal{T}$ is also in $\LDHtwo$.
\end{lemma}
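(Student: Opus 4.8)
The plan is to prove the statement directly from the definitions of Level-II compatibility, using the fact that the partition trees are ordered by refinement of the induced partitions. Let me unpack what needs to be shown: $\mathcal{T} \in \LDHtwo$ means that for every point $p \in \Gamma$ and each pair of tangent directions $t_1, t_2 \in \Tan^+_\Gamma(p)$, local equivalence of $t_1, t_2$ is \emph{implied by} the equality $\pi_{\mathcal{T}*}(t_1) = \pi_{\mathcal{T}*}(t_2)$. I want to show the same one-directional implication holds with $\mathcal{T}$ replaced by a finer partition tree $\mathcal{T}' \leqslant \mathcal{T}$.

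The key observation is the behavior of the pushforward maps under refinement. First I would record the precise relationship: since $\mathcal{T}' \leqslant \mathcal{T}$, Proposition~\ref{P:CanoSurj} and the discussion in Appendix~\ref{S:TreeSpace} give a natural projection $\Theta^{\mathcal{T}'}_{\mathcal{T}}: \mathcal{T}' \to \mathcal{T}$ with $\pi_\mathcal{T} = \Theta^{\mathcal{T}'}_{\mathcal{T}} \circ \pi_{\mathcal{T}'}$. This commutativity passes to tangent directions: for any $p \in \Gamma$ and $t \in \Tan^{\rho+}_\Gamma(p)$, we have $\pi_{\mathcal{T}*}(t) = (\Theta^{\mathcal{T}'}_{\mathcal{T}})_* \circ \pi_{\mathcal{T}'*}(t)$. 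Consequently, for a fixed point $p$ and two forward tangent directions $t_1, t_2 \in \Tan^+_\Gamma(p)$, the equality $\pi_{\mathcal{T}'*}(t_1) = \pi_{\mathcal{T}'*}(t_2)$ forces $\pi_{\mathcal{T}*}(t_1) = \pi_{\mathcal{T}*}(t_2)$ by applying $(\Theta^{\mathcal{T}'}_{\mathcal{T}})_*$ to both sides. This is the direction of implication I need: the finer tree $\mathcal{T}'$ separates tangent directions at least as much as the coarser tree $\mathcal{T}$, so equality under $\pi_{\mathcal{T}'*}$ is a \emph{stronger} hypothesis than equality under $\pi_{\mathcal{T}*}$.

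With this in hand the proof is a short chain of implications. Suppose $t_1, t_2 \in \Tan^+_\Gamma(p)$ satisfy $\pi_{\mathcal{T}'*}(t_1) = \pi_{\mathcal{T}'*}(t_2)$. By the previous paragraph this gives $\pi_{\mathcal{T}*}(t_1) = \pi_{\mathcal{T}*}(t_2)$, and then Level-II compatibility of $\mathcal{T}$ gives that $t_1$ and $t_2$ are locally equivalent. Since $p$ and the pair $t_1, t_2$ were arbitrary, this is exactly the Level-II compatibility condition for $\mathcal{T}'$, so $\mathcal{T}' \in \LDHtwo$.

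I expect the only subtlety — and hence the main thing to get right — is the compatibility of the pushforward maps on tangent directions under $\Theta^{\mathcal{T}'}_{\mathcal{T}}$, that is, the identity $\pi_{\mathcal{T}*} = (\Theta^{\mathcal{T}'}_{\mathcal{T}})_* \circ \pi_{\mathcal{T}'*}$ restricted to forward directions. This should follow formally from $\pi_\mathcal{T} = \Theta^{\mathcal{T}'}_{\mathcal{T}} \circ \pi_{\mathcal{T}'}$ together with the definition of the pushforward on tangent directions (which sends a tangent direction at $p$ to the forward direction at the image determined by the local increase of $\hat\rho$), but because $\Theta^{\mathcal{T}'}_{\mathcal{T}}$ is merely a continuous surjection obtained by gluing, one should verify it maps forward tangent directions to forward tangent directions compatibly — this is where the distance-preservation property $d^0_{\mathcal{T}'} = d^0_{\mathcal{T}} \circ \Theta^{\mathcal{T}'}_{\mathcal{T}}$ does the work, since it guarantees that increasing distance from the root is preserved. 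Everything else is a routine unwinding of definitions, so the proof should be brief.
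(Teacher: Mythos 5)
Your proof is correct and follows essentially the same route as the paper: both arguments reduce to the single implication that $\pi_{\mathcal{T}'*}(t_1)=\pi_{\mathcal{T}'*}(t_2)$ forces $\pi_{\mathcal{T}*}(t_1)=\pi_{\mathcal{T}*}(t_2)$ (which the paper phrases via ``finer partitions'' and you phrase via the factorization $\pi_\mathcal{T} = \Theta^{\mathcal{T}'}_{\mathcal{T}} \circ \pi_{\mathcal{T}'}$ and its pushforward), after which Level-II compatibility of $\mathcal{T}$ yields local equivalence. Your extra care in checking that $\Theta^{\mathcal{T}'}_{\mathcal{T}}$ respects forward tangent directions via $d^0_{\mathcal{T}'} = d^0_{\mathcal{T}} \circ \Theta^{\mathcal{T}'}_{\mathcal{T}}$ is a welcome elaboration of a point the paper leaves implicit, but it is not a different argument.
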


\begin{proof}
Recall that by definition, to say $\mathcal{T}\in\LDHtwo$ is equivalent to saying that for every  point $p \in \Gamma$ and each pair of tangent directions $t_1,t_2\in\Tan^+_\Gamma(p)$,  $t_1$ is locally equivalent to $t_2$ if  $\pi_{\mathcal{T}*}(t_1)=\pi_{\mathcal{T}*}(t_2)$.

On the other hand, $\mathcal{T}'\leqslant\mathcal{T}$ means that $\mathcal{T}'$ induces finer partitions on forward tangent directions of $\mathcal{B}$ than $\mathcal{T}$. Thus if $\pi_{\mathcal{T'}*}(t_1)=\pi_{\mathcal{T'}_2*}(t_2)$ then $\pi_{\mathcal{T}*}(t_1)=\pi_{\mathcal{T}*}(t_2)$ and we conclude $\mathcal{T}'\in\LDHtwo$.
\end{proof}

Since $\mathcal{B}$ is the minimum element of $\LDHone$, we have the following corollary.
\begin{corollary}
If $\LDHtwo$ is nonempty, then $\mathcal{B}$ is an element of $\LDHtwo$.
\end{corollary}

\begin{example} \label{E:lattice}
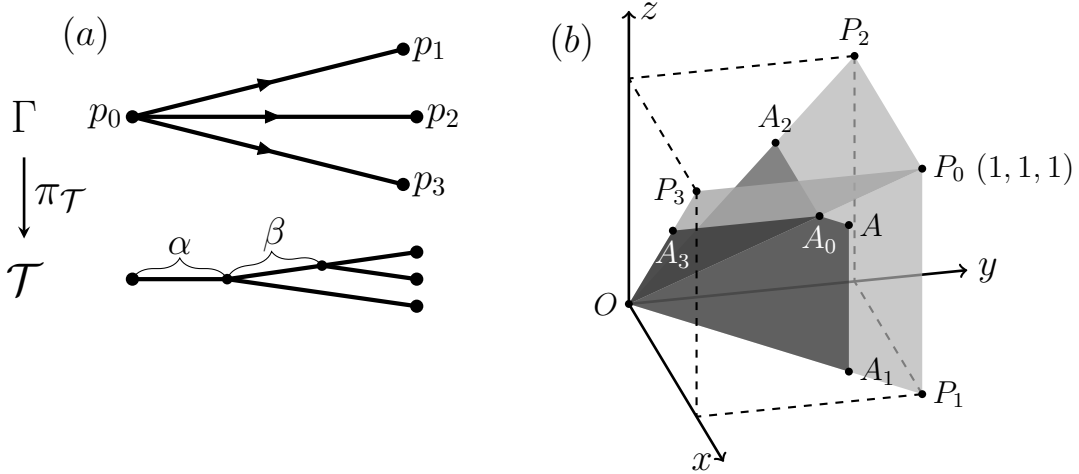
\begin{figure}[tbp]
 \centering
\begin{tikzpicture}[x=1.8cm,y=1.8cm]
\coordinate (o) at (0,4);
\draw ($(o)+(-0.6,-0.4)$) node[anchor=west] {\Large $(a)$};

\coordinate (p0) at ($(o)+(0,-1)$);
    \fill [black] (p0) circle (2.5pt);
    \draw (p0) node[anchor=east] {\Large $p_0$};
\coordinate (p1) at ($(p0)+(2,+0.5)$);
    \fill [black] (p1) circle (2.5pt);
    \draw (p1) node[anchor=west] {\Large $p_1$};
\coordinate (p2) at ($(p0)+(2.1,0)$);
    \fill [black] (p2) circle (2.5pt);
    \draw (p2) node[anchor=west] {\Large $p_2$};
\coordinate (p3) at ($(p0)+(2,-0.5)$);
    \fill [black] (p3) circle (2.5pt);
    \draw (p3) node[anchor=west] {\Large $p_3$};

\begin{scope}[line width=1.6pt, every node/.style={sloped,allow upside down}]
  \draw (p0) -- node {\midarrow} (p1);
  \draw (p0) -- node {\midarrow} (p2);
  \draw (p0) -- node {\midarrow} (p3);
\end{scope}

\coordinate (x0) at ($(o)+(0,-2.2)$);
    \fill [black] (x0) circle (2.5pt);
\coordinate (x123) at ($(x0)+(0.7,0)$);
    \fill [black] (x123) circle (2pt);
\coordinate (x12) at ($(x123)+(0.7,0.1)$);
    \fill [black] (x12) circle (2pt);
\coordinate (x1) at ($(x12)+(0.7,0.1)$);
    \fill [black] (x1) circle (2.5pt);
\coordinate (x2) at ($(x12)+(0.7,-0.1)$);
    \fill [black] (x2) circle (2.5pt);
\coordinate (x3) at ($(x123)+(1.4,-0.2)$);
    \fill [black] (x3) circle (2.5pt);
\begin{scope}[line width=1.6pt]
  \draw (x0) --  (x123);
  \draw (x123) --  (x12);
  \draw (x12) --   (x1);
  \draw (x12) --   (x2);
  \draw (x123) --  (x3);
\end{scope}
\draw [decorate,decoration={brace,amplitude=8pt},xshift=0pt,yshift=0pt]  (x0) -- (x123) node [black,midway,yshift=0.45cm] {\Large $\alpha$};
\draw [decorate,decoration={brace,amplitude=8pt},xshift=0pt,yshift=0pt]  (x123) -- (x12) node [black,midway,yshift=0.5cm] {\Large $\beta$};

\draw ($(p0)+(-0.8,0)$) node {\LARGE $\Gamma$};
\draw ($(x0)+(-0.8,0)$) node {\LARGE $\mathcal{T}$};
\draw [-stealth, line width=1pt] ($(p0)+(-0.8,-0.3)$) -- ($(x0)+(-0.8,0.3)$);
\draw ($1/2*(p0)+1/2*(x0)+(-0.8,0)$) node[anchor=west] {\LARGE $\pi_\mathcal{T}$};

\begin{scope}[x  = {(0.9cm,-1.5cm)},
                    y  = {(3cm,0.3cm)},
                    z  = {(0cm,3cm)},
                    axis/.style={->,black,line width=1pt}, line width=0.8pt]

\coordinate (o) at (0,2.2,0.75);
\draw ($(o)+(0,-0.4,1.2)$) node[anchor=west] {\Large $(b)$};

\coordinate (xx) at ($(o)+(1,0,0)$);
\coordinate (xy) at ($(o)+(1,1,0)$);
\coordinate (xz) at ($(o)+(1,0,1)$);
\coordinate (yy) at ($(o)+(0,1,0)$);
\coordinate (yz) at ($(o)+(0,1,1)$);
\coordinate (zz) at ($(o)+(0,0,1)$);
\coordinate (xyz) at ($(o)+(1,1,1)$);

\def\ratiobeta{0.75}
\def\ratioalpha{0.65}

\coordinate (A) at ($(o)+(\ratiobeta,\ratiobeta,\ratioalpha)$);
\coordinate (A0) at ($(o)+(\ratioalpha,\ratioalpha,\ratioalpha)$);
\coordinate (A1) at ($(o)+(\ratiobeta,\ratiobeta,0)$);
\coordinate (A2) at ($(o)+(0,\ratioalpha,\ratioalpha)$);
\coordinate (A3) at ($(o)+(\ratioalpha,0,\ratioalpha)$);

\draw[axis] (o) -- ($-0.4*(o)+1.4*(xx)$) node[anchor=east]{\Large $x$};
\draw[axis] (o) -- ($-0.5*(o)+1.5*(yy)$) node[anchor=west]{\Large $y$};
\draw[axis] (o) -- ($-0.3*(o)+1.3*(zz)$) node[anchor=west]{\Large $z$};

\draw[dashed] (yz) --  (yy);
\fill [black!30,opacity=0.7] (o) -- (yz) -- (xyz);
\fill [black!60,opacity=0.7] (o) -- (A2) -- (A0);

\draw[dashed] (xy) --  (yy);
\fill [black!30,opacity=0.7] (o) -- (xy) -- (xyz);
\fill [black!80,opacity=0.7] (o) -- (A1) -- (A) -- (A0);
\fill [black!35,opacity=0.8] (o) -- (xz) -- (xyz);
\fill [black!80,opacity=0.8] (o) -- (A3) -- (A0);
\draw[dashed] (yz) --  (zz);

\draw[dashed] (xy) --  (xx);

\draw[dashed] (xz) --  (xx);
\draw[dashed] (xz) --  (zz);

\draw (o) node[anchor=east] {\large $O$};
\fill [black] (o) circle (1.5pt);

\draw (A) node[anchor=west] {\large $A$};
\fill [black] (A) circle (1.5pt);
\draw (A0) node[anchor=north,white] {\large $A_0$};
\fill [black] (A0) circle (1.5pt);
\draw (A1) node[anchor=west] {\large $A_1$};
\fill [black] (A1) circle (1.5pt);
\draw (A2) node[anchor=south] {\large $A_2$};
\fill [black] (A2) circle (1.5pt);
\draw (A3) node[anchor=north,white] {\large $A_3$};
\fill [black] (A3) circle (1.5pt);

\draw (xyz) node[anchor=west] {\large $P_0~(1,1,1)$};
\fill [black] (xyz) circle (1.5pt);
\draw (xy) node[anchor=west] {\large $P_1$};
\fill [black] (xy) circle (1.5pt);
\draw (yz) node[anchor=south] {\large $P_2$};
\fill [black] (yz) circle (1.5pt);
\draw (xz) node[anchor=east] {\large $P_3$};
\fill [black] (xz) circle (1.5pt);
\end{scope}

\end{tikzpicture}
 \caption{(a) A metric tree $\Gamma$ with the projection onto a partition tree. (b) $\LDHone$ is a union of three triangles $\triangle OP_0P_1$, $\triangle OP_0P_2$ and $\triangle OP_0P_3$, and the space $\{T\in\LDHone\mid T\leqslant A\}$ is the union of triangles $\triangle OA_0A_2$, $\triangle OA_0A_3$ and trapezoid $OA_0AA_1$.}\label{F:Lattice}

\end{figure}

We consider a simple metric graph $\Gamma$ which is a metric tree with root $p_0$ and leaves $p_1$, $p_2$ and $p_3$ and all edge lengths being $1$ as shown in Figure~\ref{F:Lattice}(a). Let $\mathfrak{C}$ be a saturated metrized complex of genus $0$ with underlying metric graph $\Gamma$. Suppose the global diagram of a base-point-free diagrammatic pre-limit $g^1_d$ $(\mathcal{D},\mathcal{H})$ on $\mathfrak{C}$ has its differential form part with multiplicity $1$ on each direct edge from $p_0$ to $p_i$ ($i=1,2,3$). Then each partition tree $\mathcal{T}$ in $\LDHone$ can be derived by the following procedure: (1) glue the three edges $p_0p_1$, $p_0p_2$ and $p_0p_3$ continuously from $p_0$ of length $\alpha$, and then (2) choose two edges and continue the gluing on the selected edges for length $\beta$ (as shown in Figure~\ref{F:Lattice}(a)). Note that we have three cases for step (2) based on which two edges are selected: Case-1 for $p_0p_1$ and $p_0p_2$ being selected, Case-2 for $p_0p_2$ and $p_0p_3$ being selected, and Case-3 for $p_0p_1$ and $p_0p_3$ being selected. Let $x$ be the total length glued for $p_0p_1$, $y$ be the total length glued for $p_0p_2$, and $z$ be the total length glued for $p_0p_3$. Then we can represent $\mathcal{T}$ uniquely by a point with coordinates $(x,y,z)$. In particular, the coordinates are $(\alpha+\beta,\alpha+\beta,\alpha)$ for Case-1, $(\alpha,\alpha+\beta,\alpha+\beta)$ for Case-2, and $(\alpha+\beta,\alpha,\alpha+\beta)$ for Case-3. Therefore $\LDHone$ is a union of three triangles inside a unit cube. As shown in Figure~\ref{F:Lattice}(b), $\LDHone=\triangle OP_0P_1\bigcup \triangle OP_0P_2 \bigcup\triangle OP_0P_3$ where $\triangle OP_0P_1$ corresponds to Case-1, $\triangle OP_0P_2$ corresponds to Case-2 and $\triangle OP_0P_3$ corresponds to Case-3. Moreover, let point $A$ with coordinates $(\alpha_A+\beta_A,\alpha_A+\beta_A,\alpha_A)$ be a point in $\triangle OP_0P_1$. In Figure~\ref{F:Lattice}(b), the space $\{T\in\LDHone\mid T\leqslant A\}$ is shown as the darker region (polyhedral complex with vertices $O$, $A$, $A_0$, $A_1$, $A_2$ and $A_3$). 

For this simple metric graph, to derive $\LDHtwo$, $\LDHthree$ and $\LDHfour$, the only local data need to be examined is for the curve at $p_0$. Note that this also means $\LDHthree=\LDHfour$. Denote the outgoing tangent directions at $p_0$ by $t_1$, $t_2$ and $t_3$ where $t_i$ is the tangent direction from $p_0$ to $p_i$. We have the following cases under the assumption that $(\mathcal{D},\mathcal{H})$ is base-point free (the conventional notations of open, closed and half-open-half-closed intervals are used for those of segments):
\begin{enumerate}
\item $(\mathcal{D},\mathcal{H})$ has degree $1$. Then the local partition at $p_0$ is the finest partition $\{\{t_1\},\{t_2\},\{t_3\}\}$. We have $\LDHtwo=\LDHthree=\LDHfour=\{O\}$.
\item $(\mathcal{D},\mathcal{H})$ has degree $2$. Then the local partition at $p_0$ is made of a singleton and a set of two elements.
    \begin{enumerate}
    \item The local partition at $p_0$ is $\{\{t_1,t_2\},\{t_3\}\}$. Then $\LDHtwo=[O,P_1]$ and $\LDHthree=\LDHfour=(O,P_1]$.
    \item The local partition at $p_0$ is $\{\{t_2,t_3\},\{t_1\}\}$. Then $\LDHtwo=[O,P_2]$ and $\LDHthree=\LDHfour=(O,P_2]$.
    \item The local partition at $p_0$ is $\{\{t_1,t_3\},\{t_2\}\}$. Then $\LDHtwo=[O,P_3]$ and $\LDHthree=\LDHfour=(O,P_3]$.
    \end{enumerate}
\item $(\mathcal{D},\mathcal{H})$ has degree $3$. Then all tangent directions at $p_0$ are locally equivalent. We have $\LDHtwo=\LDHone$ and $\LDHthree=\LDHfour=\LDHone\setminus([O,P_1]\bigcup[O,P_2]\bigcup[O,P_3])$. One can consider $\LDHone$ as the compactification of $\LDHfour$'s in case (3) by $\LDHfour$ in case (1) and (2).
\end{enumerate}

\end{example}

\begin{remark}
The spaces $\LDHone$, $\LDHtwo$, $\LDHthree$, and $\LDHfour$ have much richer structures (e.g., lattice structure, metric and convexity) which will be presented in our follow-up work.
Moreover, we expect that the space $\LDHfour$ has an interpretation in terms of a skeleton of the analytification of the following moduli space of maps: the moduli space of all maps of the form $X \rightarrow \mathbb{P}^1_{\mathbb{K}}$ where $X$ is the smooth curve in the commutative diagram \ref{commutdia}.  Results of this flavor have been obtained in Cavalieri et al. \cite{CMR14} for spaces of admissible covers and for moduli spaces of curves by Abramovich et al. \cite{ACP12}. As we vary over all limit $g^1_d$s on the saturated metrized complex $\mathfrak{C}$. The space $\cup_{(\mathcal{D},\mathcal{H})}\LDHfour$  parameterizes the space of metric trees underlying all smoothings of limit $g^1_d$s in $\mathfrak{C}$.
 \end{remark}

\end{document}